\documentclass{amsart}

\usepackage{epsfig}
\usepackage{amscd}
\usepackage{amsmath, amssymb, amsthm}
\usepackage{graphics}
\usepackage{color}
\usepackage{dsfont}
\newtheorem*{main-theorem}{Main Theorem}
\newtheorem*{main-theorem'}{Main Theorem'}
\newtheorem{proposition}{Proposition}[section]
\newtheorem{theorem}{Theorem}
\newtheorem*{old-thm}{Theorem}
\newtheorem{lemma}[proposition]{Lemma}
\newtheorem{corollary}[proposition]{Corollary}

\theoremstyle{definition}

\newtheorem{remark}[proposition]{Remark}
\newtheorem{claim}[proposition]{Claim}

\numberwithin{equation}{section}

\def\ZZ{{\mathbb Z}}
\def\NN{{\mathbb N}}
\def\reals{{\mathbb R}}
\def\cx{{\mathbb C}}

\def\Ci{{\mathcal C}^\infty}
\def\Re{\,\mathrm{Re}\,}
\def\Im{\,\mathrm{Im}\,}

\def\WF{\mathrm{WF}_h}

\def\supp{\mathrm{supp}\,}

\def\id{\,\mathrm{id}\,}

\def\O{{\mathcal O}}
\def\SS{{\mathbb S}}
\def\s{{\mathcal S}}

\def\cchar{\mathrm{char}\,}
\def\nbhd{\mathrm{neigh}\,}
\def\neigh{\mathrm{neigh}\,}
\def\Op{\mathrm{Op}\,}

\def\esssupp{\text{ess-supp}\,}
\def\ad{\mathrm{ad}\,}

\def\hsc{{\left( \tilde{h}/h \right)}}

\def\csh{{\left( h/\tilde{h} \right)}}
\def\phi{\varphi}
\def\half{{\frac{1}{2}}}
\def\dist{\text{dist}\,}
\def\diag{\text{diag}\,}

\def\be{\begin{eqnarray*}}
\def\ee{\end{eqnarray*}}
\def\ben{\begin{eqnarray}}
\def\een{\end{eqnarray}}
\def\ker{\text{ker}}
\def\km{\text{ker}_{m_0(z)}(P-z)}
\def\kmm{\text{ker}_{m_1(z)}(P-z)}
\def\MM{\mathcal{M}}
\def\lll{\left\langle}
\def\rrr{\right\rangle}
\def\rqf{\right\rangle_{QF}}
\def\PP{\mathcal{P}}
\def\DD{\mathcal{D}}
\def\EE{\mathcal{E}}
\def\L2R{L_{\text{Rest}}^2}
\def\contraction{\lrcorner}
\def\11{\mathds{1}}

\def\tM{\widetilde{M}}
\def\tz{\tilde{z}}

\def\L2c{L^2_{\text{comp}}}

\def\tkappa{\tilde{\kappa}}
\def\hyp{\text{hyp}}
\def\th{\tilde{h}}
\def\el{\text{ell}}
\def\tDelta{\widetilde{\Delta}}
\def\tV{\widetilde{V}}
\def\sech{\text{sech}}
\def\Diff{\text{Diff}}

\begin{document}
\title[Quantum Monodromy]{Quantum Monodromy and Non-concentration near
  a Closed Semi-Hyperbolic Orbit}
\author{Hans Christianson}
\address{Department of Mathematics, Massachusetts Institute of
Technology, 77 Mass. Ave., Cambridge, MA 02139, USA}
\email{hans@math.mit.edu}

\begin{abstract}
For a large class of semiclassical operators $P(h)-z$
which 
includes Schr\"odinger operators on manifolds with boundary,
we construct the Quantum Monodromy operator $M(z)$ associated to a
periodic orbit $\gamma$ of
the classical flow.  Using estimates relating $M(z)$ and $P(h)-z$, we
prove semiclassical estimates for small complex perturbations of $P(h)
-z$ in the case $\gamma$ is semi-hyperbolic.  As our main
application, we give logarithmic lower bounds on the mass of
eigenfunctions away from semi-hyperbolic orbits of the associated
classical flow.

As a second application of the Monodromy Operator construction, we prove if
$\gamma$ is an elliptic orbit, then $P(h)$ admits quasimodes which are
well-localized near $\gamma$.
\end{abstract}

\maketitle

\addcontentsline{toc}{section}{Table of Contents}

\tableofcontents



\section{Introduction}
\numberwithin{equation}{section}
\label{introduction}
\subsection{Statement of Results}
To motivate our general results, we first present a few applications.
Suppose $(X,g)$ is a compact 
Riemannian manifold with or without boundary.  Let $-\Delta_g$ be the
Laplace-Beltrami operator on $X$ and assume 
$u$ solves the eigenvalue problem
\be
- \Delta_g u = \lambda^2 u, \,\,\, \| u \|_{L^2(X)} = 1.
\ee
Assume $\gamma$ is a closed semi-hyperbolic geodesic satisfying
either $\gamma \cap \partial X = \emptyset$, or the reflection at the
boundary is transversal.  Then if $U$ is a
sufficiently small neighbourhood of $\gamma$, we prove
\ben
\label{non-conc-lambda}
\int_{X \setminus U} |u|^2 dx \geq \frac{C}{\log |\lambda|}, \,\,\,
|\lambda| \to \infty.
\een

From \cite{Ch}, we have an application to exponential decay of $L^2$
energy for the damped wave equation: suppose $a(x)$ is positive outside of
$U$ and $u$ satisfies
\begin{eqnarray*}
\left\{ \begin{array}{l}
\left( \partial_t^2 - \Delta + 2a(x) \partial_t \right) u(x,t) = 0, \quad (x,t) \in X \times (0, \infty) \\
u(x,0) = 0, \quad \partial_t u(x,0) = f(x).
\end{array} \right.
\end{eqnarray*}
Then 
\be
\| \partial_t u \|_{L^2(X)}^2 + \left\| \nabla u
\right\|_{L^2(X)}^2 \leq C e^{-t/C} \|f\|_{H^\epsilon(X)}^2,
\ee
for each $\epsilon>0$.  

In addition, we have two dispersive type estimates from
\cite{Ch-str}.  The first is a local smoothing estimate for the
Schr\"odinger equation.  Suppose $X$ is
a non-compact manifold which is asymptotically Euclidean, and there is a hyperbolic closed geodesic $\gamma
\subset U \Subset X$.  Then for every
$\epsilon >0$
\be
\int_0^T \left\| \rho_s e^{it (\Delta_g - V(x)) } u_0 \right\|_{H^{1/2 -
    \epsilon}(X)}^2 dt  \leq C \| u_0 \|_{L^2(X)}^2,
\ee
where $\rho_s \in \Ci(M)$ satisfies
\be
\rho_s(x) \equiv \lll d_g(x, x_0 )\rrr^{-s}
\ee
for $x_0$ fixed and $x$ outside a compact set, and $V \in \Ci(M)$, $0 \leq V \leq C$
satisfies 
\be
|\nabla V| \leq C \lll \dist(x,x_0) \rrr^{-1-\delta}
\ee
for some $\delta>0$.

The second dispersive estimate is a sub-exponential local energy decay rate for
solutions to the wave equation in odd dimensions $n \geq 3$.  Suppose
$X$ is a non-compact Riemannian manifold which is Euclidean outside a
compact set, and suppose $u$ solves
\be
\left\{ \begin{array}{l} (-D_t^2 - \Delta_g + V(x) ) u(x,t) = 0, \,\,\, X
    \times [0,\infty) \\
u(x,0) = u_0 \in H^1(X), \,\, D_t u(x,0) = u_1 \in L^2(X) \end{array}
\right.
\ee
for $u_0$ and $u_1$ smooth, compactly supported, where $V\in \Ci(M)$ satisfies 
\be
\exp ( -\dist_g (x,x_0)^{2}) V = o(1).
\ee 
Let $\psi \in
\Ci(X)$ satisfy
\be
\psi \equiv \exp ( -\dist_g (x,x_0)^{2})
\ee
for $x$ outside a compact set and $x_0$ fixed.  Then 
\be
\lefteqn{ \left\| \psi \partial_t u
\right\|_{L^2(X)}^2 + \left\| \psi u \right\|_{H^{1}(X)}^2} \\ &&  \leq C 
e^{-t^{1/2}/C} \left(   \|\partial_t u(x,0)\|_{H^\epsilon(X)}^2 + \|u(x,0)\|_{H^{1+\epsilon}(X)}^2
\right).
\ee

For the general statement of results, let $X$ be a smooth, compact
manifold.  In this introduction, we state the Main Theorem only in the
case $\partial X = \emptyset$.  The case with boundary will be
considered in \S \ref{manifold-boundary-1}.  We take $P(h) \in \Psi_h^{k,0}$ for $k
\geq 1$ and assume $P(h)$ is of real principal type.  That is, if $p = \Ci (T^*X)$ is the principal symbol of
$P(h)$, then $p$ is real-valued, independent of $h$.  Assume $p^{-1}(E)$
is a smooth, compact hypersurface and 
$dp(x, \xi) \neq 0$ for energies $E$ near $0$.  We assume $p$ is classically elliptic
outside of a compact subset of $T^*X$: there exists $C>0$ such that 
\be
|\xi|>C \implies p(x,\xi) \geq \lll \xi \rrr^k/C.  
\ee
We refer the reader to \S
\ref{preliminaries} for definitions.

Let $\Phi_t = \exp tH_p$ be
the Hamiltonian flow of $p$, and suppose $\Phi_t$ has a closed,
semi-hyperbolic orbit $\gamma \subset \{p=0\}$ of period $T$.  The
assumption that $\gamma$ be semi-hyperbolic means if $N$ is a Poincar{\'e}
section for $\gamma$ and $S:N \to S(N)$ is the
Poincar{\'e} map, then the linearization of $S$, $dS(0)$, is
nondegenerate and has at least one eigenvalue off the unit circle.
For the eigenvalues of modulus $1$ we also require the following
nonresonance assumption for energies near $0$:
\ben
\label{non-res}
\left\{ \begin{array}{l}
\text{if } e^{\pm i \alpha_1}, e^{\pm i \alpha_2}, \ldots, e^{\pm i \alpha_k} \text{ are eigenvalues of
modulus 1, then } \\   \alpha_1 , \alpha_2, \ldots,
 \alpha_k 
\text{ are independent over } \pi \ZZ.
\end{array} \right.
\een

We prove for a family of eigenfunctions $u(h)$ for $P(h)$,
\be
P(h)u(h) = E(h) u(h), \,\,\, E(h) \to 0 \,\, \text{as} \,\, h \to 0,
\ee
$u(h)$ has its mass concentrated away from $\gamma$.  This is made
precise in the following theorem.
\begin{main-theorem}
Let $A \in \Psi_h^{0,0}(X)$ be a pseudodifferential operator whose
principal symbol is $1$ near $\gamma$ and $0$ away from $\gamma$.  There exist constants $h_0>0$ and $C>0$ such that 
\be
\| u \| \leq C \frac{ \sqrt{\log ( 1/h )}} h \| P ( h ) u \| + C
\sqrt{ \log (1/h ) } \| ( I - A ) u \|
\ee
uniformly in $0<h<h_0$, where the norms are $L^2$ norms on $X$.  In particular, if $u(h)$
satisfies 
\be
\left\{ \begin{array}{l} P(h) u(h) = \O(h^\infty); \\
\|u(h)\|_{L^2(X)} = 1, \end{array} \right.
\ee
\be
\left\| (I - A) u \right\|_{L^2(X)} \geq \frac{1}{C} 
\left(\log \left(1/h\right)\right)^{-\half} \,, \ \ 0 < h < h_0 .
\ee
\end{main-theorem}

\begin{remark}
In \S \ref{manifold-boundary-1} we assume $P(h) \in \Diff_h^2(X)$
is a differential operator on $X$ and that $\partial X$ is
noncharacteristic with respect to the principal symbol of $P(h)$.
Then a similar conclusion to the Main Theorem holds (see Main Theorem'
in that section).
\end{remark}

\begin{remark}
In \S \ref{quasi}, we give a partial converse to the Main
Theorem in Theorem \ref{main-theorem-1}.  That is, the techniques of the proof
of the Main Theorem are used to show if the periodic orbit $\gamma$
is elliptic, then $P(h)$ admits quasimodes which are well-localized to
$\gamma$.
\end{remark}

\begin{remark}
The estimates in this work are all microlocal in nature, hence we lose
nothing by assuming $X$ is compact.  In order to apply these estimates
in the case of non-compact manifolds, we assume $P$ is classically
elliptic and the geometry is non-trapping outside of a compact
submanifold and then apply our results there.  See \cite{Ch-str} for more on this.
\end{remark}

The Main Theorem is the similar to \cite[Main Theorem]{Ch} with three generalizations,
namely that we no longer assume the linearized Poincar\'e map
has no negative eigenvalues, we allow some eigenvalues of modulus
$1$, and in \S \ref{manifold-boundary-1} we allow $\gamma$ to reflect transversally off 
$\partial X$ with some extra assumptions on $P(h)$.  This allows study of, for example, billiard
problems in any dimension.  The problems encountered in \cite{Ch} 
with these cases come from attempting to put $p$ into a
normal form in a neighbourhood of $\gamma \subset T^*X$.  

The
motivation for the proof in this paper is to reduce the
problem of studying the resolvent $(P-z)^{-1}$ in a microlocal 
neighbourhood of $\gamma$ to studying a related operator on the
Poincar{\'e} section $N$.  

If we identify $N$ with $T_0^*N \simeq \reals^{2n-2}$ near $0$,
we are led to study operators acting on $L^2(V)$, where $V \subset
\reals^{n-1}$.  In the course of this work, we will see the relevent
object of study is the Quantum Monodromy operator $M(z):L^2(V)
\to L^2(V)$.  By setting up a Grushin problem in a
neighbourhood of 
\be
\gamma \times (0,0) \subset T^*X \times T^*\reals^{n-1},
\ee
and using the microlocal inverse constructed by Sj\"ostrand-Zworski in \cite{SjZw}, we will see it is 
sufficient to bound $\| I - M(z) \|_{L^2(V) \to L^2(V)}$ from
below.  This will result in the following theorem.
\begin{theorem}
\label{main-theorem-5}
Suppose $P \in \Psi_h^{k,0}$ is a semiclassical pseudodifferential
operator of real principal type satisfying all of the assumptions of
the introduction, and assume $\gamma \cap \partial X = \emptyset$.  Then there exist positive constants $C$, $c_0$,
$h_0$, $\epsilon_0$, and a positive integer $N$ such that for $0 < h < h_0$, $z \in
[-\epsilon_0, \epsilon_0] + i(-c_0h, c_0h)$, if $u \in L^2(X)$ has
$h$-wavefront set sufficiently close to $\gamma$, then
\ben
\label{main-theorem-5-est}
\left\| (P-z) u \right\|_{L^2(X)} \geq C^{-1} h^{N} \|u\|_{L^2(X)}.
\een
\end{theorem}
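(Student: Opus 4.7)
The plan is to execute the Grushin-type reduction outlined in the introduction and then exploit the semi-hyperbolic structure of the Poincar\'e map to extract a polynomial lower bound on $I - M(z)$. First I would fix a local Poincar\'e section $N$ transverse to $\gamma$ and identify a microlocal neighbourhood of $\gamma \cap N$ in $T^*N$ with a neighbourhood of $0$ in $T^*\reals^{n-1}$. Following the framework of Sj\"ostrand--Zworski, I would construct auxiliary operators $R_\pm$ between $L^2(X)$ and $L^2(V)$, $V \subset \reals^{n-1}$, so that the Grushin problem
$$
\mathcal{P}(z) = \begin{pmatrix} P - z & R_- \\ R_+ & 0 \end{pmatrix}
$$
is microlocally well-posed near $\gamma \times \{(0,0)\}$. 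The construction is arranged so that the lower-right block $E_{-+}(z)$ of the inverse is, microlocally near $(0,0)$, identified with $I - M(z)$, where $M(z)$ is the Quantum Monodromy operator associated to the Poincar\'e return map, twisted by the classical action and by the spectral parameter $z$.

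The next step would use the standard Schur-complement identity for Grushin problems: $(P-z)$ admits a microlocal inverse on functions whose $h$-wavefront set is close to $\gamma$ precisely when $I - M(z)$ is invertible, with the quantitative relation
$$
\|u\|_{L^2(X)} \leq C \|(P-z)u\|_{L^2(X)} + C \bigl\| (I - M(z))^{-1} \bigr\|_{L^2(V) \to L^2(V)} \|(P-z)u\|_{L^2(X)}
$$
for $u$ with $\WF(u)$ sufficiently close to $\gamma$. Thus \eqref{main-theorem-5-est} reduces to proving a polynomial lower bound $\|(I - M(z))^{-1}\|_{L^2(V) \to L^2(V)} \leq C h^{-N}$ uniformly for $z \in [-\epsilon_0,\epsilon_0] + i(-c_0 h, c_0 h)$.

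To establish that bound I would put $M(z)$ into a quantum Birkhoff-type normal form adapted to the invariant splitting of the linearized Poincar\'e map $dS(0)$ into hyperbolic and elliptic blocks. On the hyperbolic block, iterating $M(z)^k$ produces exponential contraction in one direction and exponential expansion in the transverse direction, and a standard $\log(1/h)$-iteration argument together with the spectral bound from $|\Im z| \leq c_0 h$ gives an $h^{-N}$ bound on the inverse there. On the elliptic block, the normal form makes $M(z)$ approximately a rotation by $e^{i\alpha_j}$ in each eigendirection; the nonresonance assumption \eqref{non-res} then keeps the eigenvalues bounded away from $1$ (modulo $h^N$ remainders), so $I - M(z)$ is invertible on this block with at most polynomial loss. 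Combining the two blocks through the normal-form reduction, then converting back from $V$-side estimates to $X$-side estimates via $R_\pm$, yields \eqref{main-theorem-5-est}.

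The main obstacle I expect is the treatment of the mixed elliptic block: the hyperbolic directions alone would already yield the desired invertibility by crude Neumann-series arguments, but the elliptic eigenvalues $e^{\pm i \alpha_j}$ create small denominators $1 - e^{i \alpha_j}$ that, on their own, are only of size $O(1)$ but can combine badly with $z$ and the hyperbolic remainder terms under iteration. Controlling these small denominators requires the full force of \eqref{non-res} and careful propagation of the normal form uniformly in $z$; this is where the polynomial loss $h^{-N}$ (rather than $O(1)$) genuinely arises, and where any failure of nonresonance would produce quasimodes obstructing the estimate.
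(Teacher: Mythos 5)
Your overall architecture is right and agrees with the paper: reduce to the Grushin problem from \S\ref{grushin}, identify the lower-right block of $\EE$ with $I-M(z)$, and deduce \eqref{main-theorem-5-est} from a polynomial lower bound on $\|(I-M(z))v\|$ for $v$ microlocalized near $(0,0)$. The quantitative relation you write (via Theorem \ref{main-theorem-4}) is exactly the bridge the paper uses.

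Where the proposal breaks is in the treatment of the elliptic block. You claim that the nonresonance hypothesis \eqref{non-res} ``keeps the eigenvalues bounded away from $1$'' so that $I-M(z)$ is invertible on that block with polynomial loss. This is not what nonresonance does. Microlocally, on the elliptic block $M(z)$ is conjugate to $e^{-i(Q(z,h)-z)/h}$ with $Q(z,h)\approx \sum_j \lambda_j(z) I_j$ a sum of harmonic oscillators; its spectrum consists of $e^{-i(\sum_j \lambda_j(z)h(2\beta_j+1)-z)/h}$, which for suitable integer multi-indices $\beta$ and real $z$ is $1$ to all orders. Nonresonance (independence of the $\alpha_j$ over $\pi\ZZ$) is needed to put $S(z)$ into Birkhoff normal form — it does nothing to push the spectrum away from $1$. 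Indeed, \S\ref{quasi} of the paper constructs $\O(h^\infty)$ quasimodes precisely when the orbit is fully elliptic, showing that on the elliptic block alone no such lower bound can hold. Consequently, the ``standard iteration argument'' you invoke cannot close on its own: $M(z)$ is microlocally unitary, so $\|M(z)^k v\| \approx \|v\|$ for all $k$, and there is no Neumann series to sum; the escape of wavefront set along the unstable manifold needs to be converted to an $L^2$ statement, which is exactly the nontrivial step.

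The paper resolves this differently: it conjugates the propagator for $M(z)^{-1}$ by $e^{sG^w}$ for the complex escape function $G = \half\log\frac{1+|X|_\hyp^2}{1+|\Xi|_\hyp^2} + \frac{i}{2}(|X_\el|^2-|\Xi_\el|^2)$ in two-parameter calculus, after a deformation (Step 1 of \S\ref{theorem-proof-chapter}) that separates the elliptic, artificial-hyperbolic, and genuinely hyperbolic parts of the effective Hamiltonian into disjoint $t$-supports. The real part of $G$ produces positivity of $\Re H_q G$ \emph{only from the hyperbolic directions} (Proposition \ref{non-distinct-theorem}), while the imaginary part $iG_2$ is a unitary twist whose sole role is to cancel the hyperbolic--elliptic cross-terms in the Poisson bracket. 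The polynomial loss $h^{-N}$ comes from $\|e^{\pm sK^w}\| = \O(h^{-N})$, not from small denominators in the elliptic directions. If you want to pursue the iteration route, you would need a genuinely new argument to handle the elliptic subspace (something that exploits its \emph{coupling} to the hyperbolic directions, as the paper's escape function does), since the elliptic subspace alone is an honest obstruction, not a case that ``just works'' by nonresonance.
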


Theorem \ref{main-theorem-5} allows us
to add a complex absorption term of order $h$ supported away from
$\gamma$.  Let $a \in \Ci(T^*X)$ equal $0$ in a neighbourhood of $\gamma$ and $1$ away
from $\gamma$, and define
\ben
\label{Q(z)}
Q(z) u = P(h) - z - ih Ca^w
\een
for a constant $C>0$ to be chosen later.  Then a semiclassical adaptation of the ``three-lines'' theorem from complex analysis, 
will allow us to deduce the following estimate.
\begin{theorem}
\label{main-theorem-2}
Suppose $Q(z)$ is given by \eqref{Q(z)}, and $z \in [-\epsilon_0/2 ,
\epsilon_0/2] \Subset \reals$.  Then there 
is $h_0 >0$ and $0 < C < \infty$ such that for $0 < h < h_0 $,
\begin{eqnarray}
\label{main-theorem-2-est-1}
\left\| Q(z)^{-1} \right\|_{L^2(X) \to L^2(X)} \leq C \frac{\log (1/h) }{h}.
\end{eqnarray}
If $\phi \in \Ci_c(X)$ is supported away from $\gamma$, then
\begin{eqnarray}
\label{main-theorem-2-est-2}
\left\| Q(z)^{-1} \phi \right\|_{L^2(X) \to L^2(X)} \leq C \frac{\sqrt{ \log (1/h)}}{h}.
\end{eqnarray}
\end{theorem}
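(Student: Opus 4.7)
The plan is to combine Theorem \ref{main-theorem-5} with a semiclassical adaptation of the three-lines theorem applied to the operator-valued holomorphic function $z \mapsto Q(z)^{-1}$.

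First I would establish a crude polynomial-in-$h^{-1}$ bound $\|Q(z)^{-1}\|_{\l \to \l} \leq C h^{-N}$ on the complex strip $\{|\Re z| \leq \epsilon_0,\ |\Im z| \leq c_0 h/2\}$. To do this, microlocally decompose $u = A u + (I - A) u$, where $A \in \Psi_h^{0,0}(X)$ has principal symbol $1$ near $\gamma$ and vanishing on $\supp a$. For $A u$, Theorem \ref{main-theorem-5} applies directly, and $a^w A u = \O(h^\infty)\|u\|$ by disjoint microsupports, giving $\|Q(z) A u\| \geq C^{-1} h^N \|A u\| - \O(h^\infty)\|u\|$. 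For $(I - A) u$, the damping symbol $a$ is bounded below by some $c_0 > 0$ on its microsupport, so the full symbol $p - z - ihCa$ is elliptic at scale $h$ there; a standard symbolic-calculus argument gives $\|Q(z) (I - A) u\| \geq c h \|(I - A) u\|$ provided the constant $C$ in \eqref{Q(z)} is chosen large enough. Adding the two pieces yields the crude bound.

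Next, for $\Im z > 0$, the energy identity $-\Im\lll Q(z) u, u\rrr = (\Im z)\|u\|^2 + hC \lll a^w u, u\rrr$ together with sharp G{\aa}rding applied to $a \geq 0$ gives the much stronger bound $\|Q(z)^{-1}\|_{\l \to \l} \leq 2/\Im z$. Thus $Q(z)^{-1}$ is holomorphic on the rectangle $R_L = \{|\Re z| \leq \epsilon_0,\ -c_0 h/2 \leq \Im z \leq L\}$ for any $L > 0$. Choosing $L = M h \log(1/h)$ with $M$ a large constant depending on $N$, and forming the scalar function $F(z) = \lll Q(z)^{-1} v, w\rrr$ for unit vectors $v, w \in \l$, one has $|F| \leq C h^{-N}$ throughout $R_L$ and $|F| \leq 2/L$ on the top edge $\Im z = L$. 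The semiclassical three-lines / harmonic-measure interpolation, exactly as in \cite{Ch}, produces $|F(x_0)| \leq C \log(1/h)/h$ for real $x_0 \in [-\epsilon_0/2, \epsilon_0/2]$; taking the supremum over $v, w$ yields \eqref{main-theorem-2-est-1}.

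Finally, for \eqref{main-theorem-2-est-2}, the support hypothesis $\supp \phi \cap \gamma = \emptyset$ places $\phi$ in the elliptic region $a \geq c_0 > 0$ of the damping. For $u = Q(z)^{-1} \phi f$ with $\Im z > 0$, the factorization $|\lll \phi f, u\rrr| = |\lll (\phi/\sqrt{a}) f, \sqrt{a}\, u\rrr| \leq \|(\phi/\sqrt{a}) f\| \cdot \|\sqrt{a}\, u\|$, combined with the same energy identity and a Peter--Paul inequality absorbing $\|\sqrt{a}\, u\|^2$ into the $\lll a^w u, u\rrr$ term, yields the improved bound $\|Q(z)^{-1} \phi\|_{\l \to \l} \leq C/\sqrt{h \Im z}$ for $\Im z > 0$. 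Rerunning the three-lines step with this better top-edge bound produces the claimed $\sqrt{\log(1/h)}/h$ estimate. The principal obstacle throughout is the three-lines interpolation itself: one must carefully balance the rectangle height $L \sim h \log(1/h)$ against the polynomial $h^{-N}$ background so that the harmonic-measure weights on $\partial R_L$ combine to give exactly the stated logarithmic loss, and this closely parallels the corresponding argument in \cite{Ch}.
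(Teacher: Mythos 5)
Your overall strategy -- a crude polynomial bound on $Q(z)^{-1}$ near the real axis, a good bound $\|Q(z)^{-1}\|\leq C/\Im z$ on the upper half-plane, and then complex-analytic interpolation -- matches the paper, which invokes Proposition \ref{Q(z)u-prop} and Theorem \ref{main-theorem-5} for the polynomial bound and Lemma \ref{BZ-lemma} (the Burq/Tang--Zworski ``semiclassical three-lines'' estimate) for the interpolation. Steps 1 and 2 of your proposal are essentially correct, modulo the commutator and microlocalization care that the paper handles via Lemma \ref{wf-lemma}. Your route to the $\sqrt{\log}$ improvement by factoring $\phi = \sqrt{a}\cdot(\phi/\sqrt{a})$ and absorbing $\|\sqrt{a}\,u\|^2$ into the damping via Peter--Paul is a genuinely different (and appealing) mechanism from the paper's, which instead uses part (b) of Lemma \ref{BZ-lemma} directly.

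However, there is a real gap in your step 3. You propose a rectangle $R_L$ of height $L = Mh\log(1/h)$ with $M$ large, using the crude bound $Ch^{-N}$ everywhere and the good bound $2/L$ only on the top edge. Running the harmonic-measure estimate at $z_0$ on the real axis: the top edge has harmonic measure roughly $(c_0 h/2)/(c_0h/2+L)\sim c_0/(2M\log(1/h))$, so the interpolation yields
\[
\log|F(z_0)| \lesssim \frac{c_0}{2M\log(1/h)}\log\frac{2}{L} + \Bigl(1-\frac{c_0}{2M\log(1/h)}\Bigr)\log(Ch^{-N}) \approx N\log(1/h) + O(1),
\]
which is no improvement over $h^{-N}$ -- taking $M$ large makes matters worse, not better. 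Moreover, this rectangle cannot be fed into Lemma \ref{BZ-lemma} either, since that lemma requires the crude bound on a \emph{symmetric} band $\{|\Im z|\leq \delta h\}$, and you only control $Q(z)^{-1}$ for $\Im z \geq -c_0 h/2$. The logarithm genuinely comes from the dissipativity hypothesis $\Re F \geq C^{-1}\Im w$ being used throughout $\Im z>0$, not just on one horizontal line: the standard argument multiplies by a weight $e^{-i\mu z}$ with $\mu \sim \log(1/h)/h$ and interpolates on a rectangle whose \emph{upper} edge is at $\Im z \sim h/\log(1/h)$ (much \emph{smaller} than $h$), at which height the bound $2/\Im z$ already beats $e^{\mu\Im z}\cdot h^{-N}$. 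Equivalently, you should simply invoke Lemma \ref{BZ-lemma} with the paper's scaling $\delta$ fixed and $\epsilon = (Ch)^{-1}$, i.e.\ a rectangle of $z$-height $\sim h$, not $h\log(1/h)$. With that correction, your Peter--Paul variant for \eqref{main-theorem-2-est-2} would be a valid alternative to the paper's appeal to part (b) of the lemma.
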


\subsection{Examples}


There are many examples in which the hypotheses of the theorem are
satisfied, the simplest of which is the case in which $p = |\xi|^2- E(h)$ for $E(h)>0$.
Then the Hamiltonian flow of $p$ is the geodesic flow, so if the
geodesic flow has a closed semi-hyperbolic orbit, there is
non-concentration of eigenfunctions, $u(h)$, for the equation
\be
-h^2 \Delta u(h) = E(h) u(h).
\ee
Another example of such a $p$ is the case $p = |\xi|^2 + V(x)$, where
$V(x)$ is a confining potential with two ``bumps'' or ``obstacles''
in the lowest energy level (see Figure \ref{fig:fig13}).  In the
appendix to \cite{Sjo2a} it is shown that for an interval of energies
$V(x) \sim 0$, there is a closed hyperblic orbit $\gamma$ of the Hamiltonian
flow which ``reflects'' off the bumps (see Figure \ref{fig:fig14}).
Complex hyperbolic orbits may be constructed by considering $3$-dimensional
hyperbolic billiard problems (see, for example, \cite[\S 2]{AuMa}).  In
addition, Proposition 4.3 from \cite{Ch} gives a somewhat artificial means of constructing a manifold diffeomorphic to a neighbourhood in $T^*\SS_{(t, \tau)}^1
\times T^* \reals_{(x, \xi)}^{n-1}$ which contains a hyperbolic orbit $\gamma$
by starting with the Poincar\'{e} map $\gamma$ is to have.

\begin{figure}
\hfill
\begin{minipage}[t]{.45\textwidth}
\centerline{\epsfig{figure=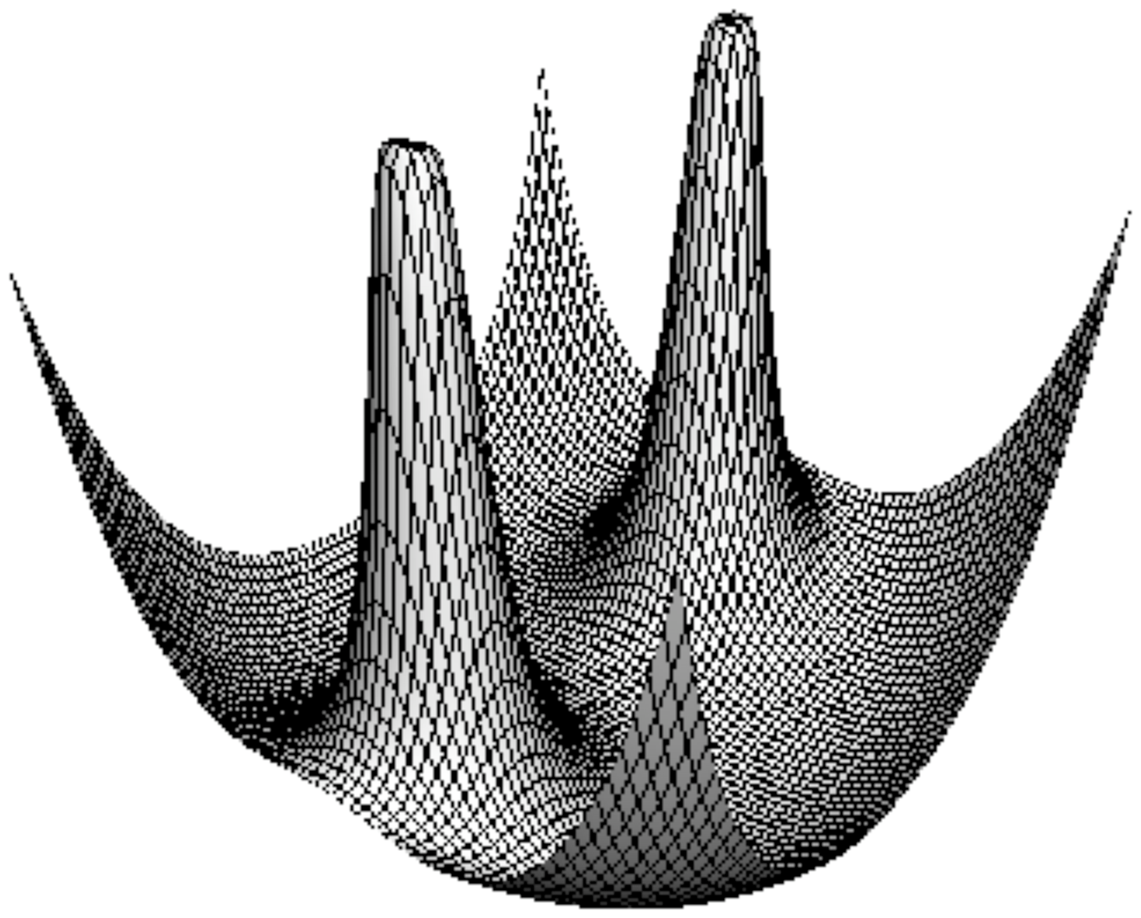, height=2in, width=3in}}
\caption{\label{fig:fig13} A confining potential $V(x)$ with two
  bumps at the lowest energy level $E <0$.}
\end{minipage}
\hfill
\begin{minipage}[t]{.45\textwidth}
\centerline{
\input{fig14a}}
\caption{\label{fig:fig14} The level set $V(x) = 0$ and the closed
  hyperbolic orbit $\gamma$ reflecting off the ``soft'' boundary.}
\end{minipage}
\hfill
\end{figure}

In the appendix, we examine the Riemannian manifold
\be
M = \reals_x / \ZZ \times \reals_y \times \reals_z
\ee
equipped with the metric
\be
ds^2 = \cosh^2 y (2z^4 -z^2 +1)^2 dx^2 + dy^2 + dz^2,
\ee
which has a semi-hyperbolic closed geodesic at $y=z=0$ and two
hyperbolic closed geodesics 
at $y=0, z=\pm 1/2$.  Restricting $y$ and
$z$ to compact intervals yields a compact manifold satisfying the
hypotheses of the Main Theorem, while the non-compact manifold
provides 
a model for possibly extending the dispersive-type estimates to the semi-hyperbolic case.

\subsection{Organization}

This work is organized as follows.  In \S \ref{preliminaries} we
recall 
basic facts from the calculus of semiclassical pseudodifferential
operators on manifolds and \S \ref{FIO} contains some results from the
theory of $h$-Fourier Integral Operators ($h$-FIOs).  \S
\ref{manifold-boundary-1} reviews the 
classical picture of a closed orbit reflecting transversally off the
boundary and contains the details of a propagation of singularities
result and the statement of the Main Theorem in the case of manifolds
with boundary.  \S \ref{mono} gives the definition and basic facts about the
Quantum Monodromy operator $M(z)$, while \S \ref{grushin} shows how
$M(z)$ arises naturally in the context of a Grushin problem.  \S
\ref{model-chapter} presents the main ideas of the proof of Theorem
\ref{main-theorem-5} by considering a
model.  In \S \ref{linearization}-\ref{theorem-proof-chapter} the
proof of Theorem \ref{main-theorem-5} is
presented, while the proof of Theorem \ref{main-theorem-2} and the Main Theorem is reserved for \S
\ref{main-theorem-proof-chapter}.  Finally, in \S \ref{quasi}, we show how the
Monodromy operator construction can be used to construct well-localized quasimodes
if $\gamma$ is elliptic.  In the appendix, we  
review some facts from the theory of ODEs, and present some tools from
symplectic geometry, and provide a concrete example of a
semi-hyperbolic orbit.

\subsection{Acknowledgements}

The author would like to thank Maciej Zworski for much help
  and support during the writing of this work, as well as Nicolas
  Burq for suggesting study of the monodromy operator as a means of
  tackling the boundary problem.  He would also like to thank Herbert Koch for
  suggesting the generalization to semi-hyperbolic orbits, and Michael
  Hitrik for much help in working out the model case for Theorem
  \ref{main-theorem-1}.  The majority of this work was conducted while the author was a graduate student in the Mathematics
  Department at UC-Berkeley and he is very grateful for the support
  received 
  while there.



\section{Preliminaries}
\label{preliminaries}

\numberwithin{equation}{section}

This section comes almost directly from \cite[\S 2]{Ch} and the references
cited therein, but we include it
here for completeness.

\subsection{$h$-Pseudodifferential Operators on Manifolds}
 We will be operating on half-densities,
\begin{eqnarray*}
u(x)|dx|^{\frac{1}{2}} \in \Ci\left(X, \Omega_X^{\frac{1}{2}}\right),
\end{eqnarray*}
with the informal change of variables formula
\begin{eqnarray*}
u(x)|dx|^{\frac{1}{2}} = v(y)|dy|^\half, \,\, \text{for}\,\, y = \kappa(x) 
\Leftrightarrow v(\kappa(x))|\kappa'(x)|^\half = u(x).
\end{eqnarray*}
By symbols on $X$ we mean
\begin{eqnarray*}
\lefteqn{\s^{k,m} \left(T^*X, \Omega_{T^*X}^\half \right):= } \\
& = &\left\{ a \in \Ci(T^*X \times (0,1], \Omega_{T^*X}^\half): \left| \partial_x^\alpha 
\partial_\xi^\beta a(x, \xi; h) \right| \leq C_{\alpha \beta}h^{-m} \langle \xi \rangle^{k - |\beta|} \right\}.
\end{eqnarray*}
We remark that the symbols we consider are half-densities on the
cotangent bundle with the natural symplectic structure, and in the course of this work we will use only
symplectic changes of variables on $T^*X$.  Consequently the change of
variables formula is invariant on symbols: if $\kappa: T^*X \to T^*X$ is
symplectic and $a \in \s^{k,m}\left(T^*X, \Omega_{T^*X}^\half \right)$,
\be
\kappa^* \left[ a(x, \xi) |d \xi \wedge dx |^{\half} \right] & = & a( \kappa(x, \xi))
| \kappa^* d \xi \wedge dx |^\half \\
& = &  a( \kappa(x, \xi)) | d \xi \wedge dx |^\half.
\ee
Hence we don't keep track of the $| d \xi \wedge dx|^\half$ except
where confusion may arise.

There is a corresponding class of pseudodifferential operators $\Psi_h^{k,m}(X, \Omega_X^\half)$ 
acting on half-densities defined by the local formula (Weyl calculus) in $\reals^n$:
\begin{eqnarray*}
\Op_h^w(a)u(x) = \frac{1}{(2 \pi h)^n} \int \int a \left( \frac{x + y}{2}, \xi; h \right) 
e^{i \langle x-y, \xi \rangle / h }u(y) dy d\xi.
\end{eqnarray*}
We will occasionally use the shorthand notations $a^w := \Op_h^w(a)$ and $A:=\Op_h^w(a)$ when 
there is no ambiguity in doing so.  We also use the notation $P \in
\Diff_h^k$ when $P$ is a semiclassical differential operator.

We have the principal symbol map
\begin{eqnarray*}
\sigma_h : \Psi_h^{k,m} \left( X, \Omega_X^\half \right) \to \s^{k,m} \left/ \s^{k, m-1} 
\left(T^*X, \Omega_{T^*X}^\half \right) \right.,
\end{eqnarray*}
which gives the left inverse of $\Op_h^w$ in the sense that 
\begin{eqnarray*}
\sigma_h \circ \Op_h^w: \s^{k,m} \to \s^{k,m}/\s^{k, m-1} 
\end{eqnarray*}
is the natural projection.  Acting on half-densities in the Weyl calculus, the principal 
symbol is actually well-defined in $\s^{k,m} / \s^{k, m-2}$, that is, up to $\O(h^2)$ in 
$h$ (see, for example \cite[Theorem D.3]{EvZw}).  

We will use the notion of wave front sets for pseudodifferential operators on manifolds.  
If $a \in \s^{k,m}(T^*X, \Omega_{T^*X}^\half)$, we define the singular support or essential support for $a$:
\begin{eqnarray*}
\esssupp_h a \subset T^*X \bigsqcup \SS^*X,
\end{eqnarray*}
where $\SS^*X = (T^*X \setminus \{0\}) / \reals_+$ is the cosphere bundle (quotient taken with 
respect to the usual multiplication in the fibers), and the union is disjoint.  $\esssupp_h a$ is defined using complements:
\begin{eqnarray*}
\lefteqn{\esssupp_h a := } \\
& = & \complement \left\{ (x, \xi) \in T^*X : \exists \epsilon >0, \,\,\, \partial_x^\alpha 
\partial_\xi^\beta a(x', \xi') = \O(h^\infty), \,\,\, d(x, x') + |\xi - \xi'| < \epsilon \right\} \\
&& \bigcup \complement \{ (x, \xi) \in T^*X \setminus 0 : \exists \epsilon > 0, \,\,\, \partial_x^\alpha 
\partial_\xi^\beta a(x', \xi') = \O (h^\infty \langle \xi \rangle^{-\infty}),  \\
&& \quad \quad \quad d(x, x') + 1 / |\xi'| + | \xi/ |\xi| - \xi' /
|\xi'| | < \epsilon \} / \reals_+.
\end{eqnarray*}
We then define the wave front set of a pseudodifferential operator $A \in \Psi_h^{k,m}( X, \Omega_X^\half )$:
\begin{eqnarray*}
\WF(A) : = \esssupp_h(a), \,\,\, \text{for} \,\,\, A = \Op_h^w(a).
\end{eqnarray*}
Finally for distributional half-densities $u \in \Ci( (0, 1]_h, \mathcal{D}'(X, \Omega_X^\half))$ 
such that there is $N_0$ so that $h^{N_0}u$ is bounded in $\mathcal{D}'(X, \Omega_X^\half)$, we can 
define the semiclassical wave front set of $u$, again by complement:
\begin{eqnarray*}
\lefteqn{\WF (u) := } \\
&= & \complement \{(x, \xi) : \exists A \in \Psi_h^{0,0}, \,\, \text{with} \,\, \sigma_h(A)(x,\xi) \neq 0, \,\,  \\ 
&& \quad \text{and} \,\,Au \in h^\infty \Ci((0,1]_h, \Ci(X, \Omega_X^\half)) \}.
\end{eqnarray*}
For $A = \Op_h^w(a)$ and $B = \Op_h^w(b)$, $a \in \s^{k,m}$, $b \in \s^{k',m'}$ we have the composition 
formula 
\begin{eqnarray}
\label{Weyl-comp}
A \circ B = \Op_h^w \left( a \# b \right),
\end{eqnarray}
where
\begin{eqnarray}
\label{a-pound-b}
\s^{k + k', m+m'} \ni a \# b (x, \xi) := \left. e^{\frac{ih}{2} \omega(D_x, D_\xi; D_y, D_\eta)} 
\left( a(x, \xi) b(y, \eta) \right) \right|_{{x = y} \atop {\xi = \eta}} ,
\end{eqnarray}
with $\omega$ the standard symplectic form.

We will need the definition of microlocal equivalence of operators.  Suppose 
$T: \Ci(X, \Omega_X^\half) \to \Ci(X, \Omega_X^\half)$ and that for any seminorm $\| \cdot \|_1$ on 
$\Ci(X, \Omega_X^\half)$ there is a second seminorm $\| \cdot \|_2$ on $\Ci(X, \Omega_X^\half)$ such that 
\begin{eqnarray*}
\| Tu\|_1 = \O(h^{-M_0})\|u \|_2
\end{eqnarray*}
for some $M_0$ fixed.  Then we say $T$ is {\it semiclassically tempered}.  We assume for the rest of 
this work that all operators satisfy this condition.  Let $U,V \subset T^*X$ be open precompact sets.  
We think of operators defined microlocally near $V \times U$ as equivalence classes of tempered operators.  
The equivalence relation is
\begin{eqnarray*}
T \sim T' \Longleftrightarrow A(T-T')B = \O(h^\infty): \mathcal{D}'\left( X, \Omega_X^\half \right) \to \Ci 
\left(X, \Omega_X^\half \right)
\end{eqnarray*}
for any $A,B \in \Psi_h^{0,0}(X, \Omega_X^\half)$ such that 
\begin{eqnarray*}
&& \WF (A) \subset \widetilde{V}, \quad \WF (B) \subset \widetilde{U}, \,\, \text{with} \,\, \widetilde{V}, 
\widetilde{U} \,\, \text{open and } \\
&& \quad \quad \overline{V} \Subset \widetilde{V} \Subset T^*X, \quad \overline{U} \Subset 
\widetilde{U} \Subset T^*X.
\end{eqnarray*}
In the course of this work, when we say $P=Q$ {\it microlocally} near $V \times U$, we mean for any $A$, $B$ as above,
\begin{eqnarray*}
APB - AQB = \O_{L^2 \to L^2}\left( h^\infty \right),
\end{eqnarray*}
or in any other norm by the assumed precompactness of $U$ and $V$.  Similarly, we say $B = T^{-1}$ on $V \times U$ if 
$BT = I$ microlocally near $V \times V$ and $TB = I$ microlocally near
$U \times U$. 

We will need the following semiclassical version of Beals's Theorem:  
Recall for operators $A$ and $B$, the notation $\ad_BA$ is defined as 
\begin{eqnarray*}
\ad_B A = \left[B, A \right].
\end{eqnarray*}
\begin{old-thm}[Beals's Theorem]
Let $A: \s \to \s'$ be a continuous linear operator.  Then $A = \Op_h^w(a)$ for a symbol $a \in \s^{0,0}$ if and only 
if for all $N \in \mathbb{N}$ and all linear symbols $l_1, \ldots l_N$, 
\begin{eqnarray*}
\ad_{\Op_h^w(l_1)} \circ \ad_{\Op_h^w(l_2)} \circ \cdots \circ \ad_{\Op_h^w(l_N)} A = \O(h^N)_{L^2 \to L^2}.
\end{eqnarray*}
\end{old-thm}

\subsection{Symbols with $2$ Parameters}

We will use the following results on symbols with two parameters.  We will only use symbol spaces with two
parameters in the context of microlocal estimates, in which case we
may assume we are working in an open subset of $\reals^{2n}$.  We define the 
following spaces of symbols with two parameters:
\begin{eqnarray*}
\lefteqn{\s^{k,m, \widetilde{m}} \left( \reals^{2n} \right) := } \\
& = &\Big\{ a \in \Ci \left( \reals^{2n} \times (0,1]^2 \right):   \\ 
&& \quad \quad  \left| \partial_x^\alpha \partial_\xi^\beta a(x, \xi; h, \tilde{h}) \right| 
\leq C_{\alpha \beta}h^{-m}\tilde{h}^{-\widetilde{m}} \langle \xi \rangle^{k - |\beta|} \Big\}.
\end{eqnarray*}
For the applications in this work, we assume $\tilde{h} >h$ and
define the scaled spaces:
\begin{eqnarray*}
\lefteqn{\s_{\delta}^{k,m, \widetilde{m}} \left(\reals^{2n} \right):= } \\
& = & \Bigg\{ a \in \Ci \left(\reals^{2n} \times (0,1]^2 \right):  \\
 && \quad \quad  \left| \partial_x^\alpha \partial_\xi^\beta a(x, \xi; h, \tilde{h}) \right| 
\leq C_{\alpha \beta}h^{-m}\tilde{h}^{-\widetilde{m}} \left( \frac{\tilde{h}}{h} \right)^{\delta(|\alpha| + |\beta|)} 
\langle \xi \rangle^{k - |\beta|} \Bigg\}.
\end{eqnarray*}
As before, we have the corresponding spaces of semiclassical pseudodifferential operators $\Psi^{k, m, \widetilde{m}}$ 
and $\Psi_{\delta}^{k,m, \widetilde{m}}$, where we will usually add a subscript of $h$ or $\tilde{h}$ to indicate which 
parameter is used in the quantization.  The relationship between $\Psi_h$ and 
$\Psi_{\tilde{h}}$ is given in the following lemma.  
\begin{lemma}
\label{U-hsc-lemma}
Let $a \in \s_{0}^{k,m,\tilde{m}}$, and set
\be
b(X, \Xi) = a\left(\csh^{\half}X, \csh^{\half} \Xi
\right) \in \s_{-\half}^{k,m,\tilde{m}}.
\ee
There is a linear operator $T_{h, \tilde{h}}$, unitary on $L^2$,  
such that
\be
\Op_{\tilde{h}}^w(b) T_{h, \tilde{h}} u = T_{h, \tilde{h}} \Op_h^w(a) u.
\ee
\end{lemma}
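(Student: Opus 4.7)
The plan is to take $T_{h,\tilde h}$ to be the unitary dilation associated to the rescaling $x = \mu X$, where $\mu = (h/\tilde h)^{1/2}$. Concretely, set
\[
(T_{h,\tilde h} u)(X) \;=\; \mu^{n/2}\, u(\mu X) \;=\; \csh^{-n/4} u\!\left(\csh^{1/2} X\right).
\]
The Jacobian factor $\mu^{n/2}$ makes $T_{h,\tilde h}$ an isometry on $L^2(\mathbb{R}^n)$, and since $\mu>0$ the map is clearly invertible with $L^2$ inverse given by the inverse dilation, so $T_{h,\tilde h}$ is unitary. The heuristic behind the choice of $\mu$ is that the scaling $\xi = \mu\eta$ on the dual side converts the oscillatory factor $e^{i\langle x-y,\xi\rangle/h}$ into $e^{i\langle X-Y,\eta\rangle/\tilde h}$, since $\mu^2/h = 1/\tilde h$, which is exactly what is needed to trade one semiclassical parameter for the other.

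The intertwining identity is then a one-line computation with the Weyl quantization formula. Starting from
\[
[\Op_h^w(a) u](\mu X) = \frac{1}{(2\pi h)^n}\iint a\!\left(\tfrac{\mu X + y}{2},\xi\right) e^{i\langle \mu X - y,\xi\rangle/h}\, u(y)\, dy\, d\xi,
\]
make the changes of variable $y = \mu Y$ and $\xi = \mu \eta$, and use $\mu^2/h = 1/\tilde h$ together with the definition $b(X,\Xi) = a(\mu X,\mu\Xi)$ to get
\[
[\Op_h^w(a) u](\mu X) = \mu^{-n/2}\,[\Op_{\tilde h}^w(b)\,T_{h,\tilde h}u](X).
\]
Multiplying by $\mu^{n/2}$ yields $T_{h,\tilde h}\Op_h^w(a) u = \Op_{\tilde h}^w(b)\,T_{h,\tilde h}u$, which is the required relation.

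It remains to check the symbol-class claim, i.e.\ that $b \in \s^{k,m,\tilde m}_{-1/2}$. Differentiating gives $\partial_X^\alpha \partial_\Xi^\beta b(X,\Xi) = \mu^{|\alpha|+|\beta|}(\partial_x^\alpha\partial_\xi^\beta a)(\mu X,\mu\Xi)$; since $a \in \s^{k,m,\tilde m}_0$ and $\mu<1$, this is bounded by $C_{\alpha\beta} h^{-m}\tilde h^{-\tilde m}\csh^{(|\alpha|+|\beta|)/2}\langle\Xi\rangle^{k-|\beta|}$, which is exactly the estimate defining $\s^{k,m,\tilde m}_{-1/2}$ (using $\langle \mu\Xi\rangle \le \langle\Xi\rangle$ to absorb the order part, or noting that in the microlocal setting only a bounded region in $\Xi$ is relevant).

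There is no real obstacle here: the content of the lemma is that Weyl calculus behaves transparently under the natural $L^2$-unitary dilations, and the only thing that needs care is keeping the powers of $\mu$ straight in the dual and density changes of variable. The main conceptual point worth emphasizing is the choice $\mu = (h/\tilde h)^{1/2}$: this is the unique dilation that simultaneously rescales position and momentum so as to map $h$-Weyl quantization of $a$ to $\tilde h$-Weyl quantization of a symbol with losses only at the scale $\csh^{1/2}$, accounting for the symbol-class degradation from $\s_0$ to $\s_{-1/2}$.
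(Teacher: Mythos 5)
Your proposal is correct and is exactly the paper's argument: the paper takes $T_{h,\tilde h}u(X)=\csh^{n/4}u\!\left(\csh^{\half}X\right)$ and simply asserts that it conjugates $a^w(x,hD_x)$ to $b^w(X,\tilde h D_X)$, while you spell out the change of variables in the Weyl formula and the symbol-class estimate that the paper leaves tacit. One small slip: since $\mu=\csh^{\half}$ one has $\mu^{n/2}=\csh^{n/4}$, not $\csh^{-n/4}$; this is inconsequential because the rest of your computation (unitarity, the substitution $u(\mu Y)=\mu^{-n/2}(T_{h,\tilde h}u)(Y)$, the final multiplication by $\mu^{n/2}$) consistently uses the correct form $\mu^{n/2}u(\mu X)$.
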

\begin{proof}
For $u \in L^2(\reals^{n})$, define $T_{h, \tilde{h}}$ by
\begin{eqnarray}
\label{U-hsc}
T_{h, \tilde{h}} u (X) := \csh^{\frac{n}{4}}u\left( \csh^{\half} X \right).
\end{eqnarray}
We see immediately that $T_{h, \tilde{h}}$ conjugates operators $a^w(x, hD_x)$ and $b^w(X, \tilde{h} D_X)$.
\end{proof}
We have the following microlocal commutator lemma.
\begin{lemma}
\label{2-param-lemma}
Suppose $a \in \s_{0}^{-\infty,0,0}$, $b \in
\s_{-\half}^{-\infty,m,\widetilde{m}}$, and $\tilde{h}>h$.  

(a) If $A = \Op_{\tilde{h}}^w (a)$ and $B = \Op_{\tilde{h}}^w(b)$,
\be
[A,B] &=& h^{-m} \tilde{h}^{-\tilde{m}} \left(\frac{\tilde{h}}{i}\Op_{\tilde{h}}^w (\{a,b\}) + \O\left(
h^{3/2} \tilde{h}^{3/2} \right)\right).
\ee

(b) More generally, for each $l>1$,
\be
 \ad_A^l B =  h^{-m} \tilde{h}^{-\tilde{m}} \O_{L^2 \to L^2} \left( h \tilde{h}^{l-1} \right) .
\ee

(c) If $a \in \s_{0}^{-\infty,0,0}$, $b \in
\s_{\half}^{-\infty,m,\widetilde{m}}$, $\tilde{h}>h$, $A= \Op_h^w(a)$,
and $B= \Op_h^w(b)$, then
\be
[A,B] =  h^{-m}\tilde{h}^{-\tilde m} \left( \frac{h}{i}\Op_{h}^w (\{a,b\}) + \O\left(
h^{3/2} \tilde{h}^{3/2} \right) \right).
\ee
\end{lemma}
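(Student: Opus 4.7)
The strategy for all three parts is to apply the Weyl composition formula \eqref{Weyl-comp}--\eqref{a-pound-b} and exploit the antisymmetry of the commutator to kill all even-order terms in the Moyal expansion. Granted this, the estimates reduce to tracking how many derivatives fall on $b$: each derivative of $b$ costs a factor $(h/\tilde h)^{1/2}$ in cases (a) and (b), and $(\tilde h/h)^{1/2}$ in case (c), while derivatives of $a\in\s_0^{-\infty,0,0}$ are cost-free.

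For (a), write $[A,B]=\Op_{\tilde h}^w(a\#b-b\#a)$. Antisymmetry in the Moyal expansion gives $a\#b-b\#a=\sum_{k\text{ odd}}\frac{2}{k!}(i\tilde h/2)^k\omega^k(ab)|_{\text{diag}}$. The $k=1$ term equals $i\tilde h\{a,b\}$, matching the stated leading symbol $\tilde h\{a,b\}/i$. For $k\geq 3$, the operator $\omega^k$ takes exactly $k$ derivatives of the $a$-factor and $k$ of the $b$-factor; by the $\s_{-1/2}$ scaling of $b$, $|\omega^k(ab)|\leq C h^{-m}\tilde h^{-\tilde m}(h/\tilde h)^{k/2}$, and any further derivatives on $\omega^k(ab)$ can be absorbed onto the $a$-factors at no extra cost. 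Hence $\omega^k(ab)\in\s_0^{-\infty,m,\tilde m}$ uniformly, and Calder\'on--Vaillancourt controls its $L^2$-norm. Multiplying by $\tilde h^k$, the $k$-th term contributes $h^{-m}\tilde h^{-\tilde m}\O((h\tilde h)^{k/2})$, and summing over $k\geq 3$ yields the claimed remainder $\O(h^{3/2}\tilde h^{3/2})$. Part (c) runs in parallel: $b\in\s_{1/2}$ means derivatives of $b$ grow by $(\tilde h/h)^{1/2}$, but the quantization parameter is $h$, so each term in $a\#_h b-b\#_h a$ carries a prefactor $h^k$, and $h^k(\tilde h/h)^{k/2}=(h\tilde h)^{k/2}$ reproduces the same error structure.

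For (b), I would induct on $l$. After the first commutator, $\sigma([A,B])$ lies in $\s_0^{-\infty,m,\tilde m}$ with a built-in factor of size $(h\tilde h)^{1/2}$; each further application of $\ad_A$ then contributes a factor $\tilde h$ and a Poisson bracketing with $a$. The main technical obstacle is the bookkeeping: some terms in the nested bracket $\ad_a^l b$ carry only a single derivative of $b$ (for instance, $\ad_a^2 b$ contains a contribution of the form $\{a,a_\xi\}b_x-\{a,a_x\}b_\xi$), so a naive iteration of (a) yields only $\O(h^{1/2}\tilde h^{l-1/2})$. Obtaining the sharper stated bound $\O(h\tilde h^{l-1})$ will require either an operator-norm improvement beyond the pointwise symbol estimate, or an additional cancellation specific to the iterated Moyal structure; this is where I expect the real work of the lemma to lie.
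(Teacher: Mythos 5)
Your arguments for parts (a) and (c) are correct and follow essentially the same route as the paper: Weyl composition, antisymmetry of the Moyal bracket to kill the even-order terms, and tracking how many derivatives land on $b$ (each costing $(h/\tilde h)^{1/2}$ in (a), $(\tilde h/h)^{1/2}$ in (c)), so that the $k=3$ term already gives the error $\O(h^{3/2}\tilde h^{3/2})$.

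For part (b) you have identified a genuine difficulty, and you should know that the paper's own proof does not resolve it either. The paper asserts that the $\tilde h$-principal symbol $-\tilde h^2\{a,\{a,b\}\}$ of $\ad_A^2 B$ lies in $\s_0^{-\infty,-1,-1}$, i.e.\ is $\O(h\tilde h)$, with the one-line justification ``since $\{a,b\}$ involves products of derivatives of both $a$ and $b$.'' That reason does not touch the obstruction you point to. Expanding,
\[
\{a,\{a,b\}\} = \bigl(\{a,\partial_\xi a\}\partial_x b - \{a,\partial_x a\}\partial_\xi b\bigr)
\;+\; \bigl((\partial_\xi a)^2\partial_x^2 b - 2\,\partial_\xi a\,\partial_x a\,\partial_{x\xi}^2 b + (\partial_x a)^2\partial_\xi^2 b\bigr),
\]
and the first parenthesis carries only one derivative of $b$, so for $a\in\s_0^{-\infty,0,0}$ and $b\in\s_{-1/2}^{-\infty,0,0}$ this is genuinely $\O((h/\tilde h)^{1/2})$, not $\O(h/\tilde h)$; hence $\tilde h^2\{a,\{a,b\}\}=\O(h^{1/2}\tilde h^{3/2})$, which exceeds the claimed $\O(h\tilde h)$ by a factor $(\tilde h/h)^{1/2}>1$. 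The subsequent sentence, ``assume $\sigma_{\tilde h}(\ad_A^l B)\in\s_0^{0,-1,1-l}$ and a calculation similar to \eqref{comm-calc-1}--\eqref{comm-calc-2} finishes the induction,'' merely repeats the assertion without supplying a reason the single-derivative terms are smaller. So the issue is not ``bookkeeping,'' as you call it, nor is it merely where ``the real work'' lies: the symbol calculus by itself gives only $\O(h^{1/2}\tilde h^{l-1/2})$, which is consistent with part (a) at $l=1$, and recovering the stated $\O(h\tilde h^{l-1})$ would require a cancellation or extra hypothesis that neither your argument nor the paper's establishes. Your more cautious accounting is the correct one; the discrepancy lies in the base case $l=2$, after which the induction (one extra $\tilde h$ per bracket) is unproblematic.
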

\begin{proof}
Without loss of generality, $m = \widetilde{m} = 0$, so for (a) we have from the Weyl
calculus:
\be
[A,B] = \frac{\tilde{h}}{i} \Op_{\tilde{h}}^w (\{a,b\}) + \tilde{h}^3 \O
\left( \sum_{|\alpha| = |\beta| = 3} \partial^\alpha a \partial^\beta b \right),
\ee
since the second order term vanishes in the Weyl expansion of the
commutator.  Note $\partial^\alpha a$
is bounded for all $\alpha$, and observe for $|\beta|=3$,
\be
\tilde{h}^3 \partial^\beta b & = & \tilde{h}^3 \O \left(h^{3/2}
\tilde{h}^{-3/2} \right).
\ee
For part (b) we again assume $m = \widetilde{m} = 0$, and we observe that for $l>1$ we no longer have the
same gain in powers of $h$ as in part (a).  This follows from the fact
that the $\tilde{h}$-principal symbol for the commutator $[A,[A,B]]$, $-i
\tilde{h} \{a, -i \tilde{h}\{a,b\} \}$, satisfies 
\ben
-i \tilde{h}\{a, -i \tilde{h}\{a,b\}\} & = & -\tilde{h}^2 \Big( \partial_\Xi a
\partial_X \left(\partial_\Xi a \partial_X b - \partial_X a
\partial_\Xi b \right) \label{comm-calc-1} \\
&& \quad - \partial_X a \partial_\Xi \left( \partial_\Xi a \partial_X b - \partial_X a
\partial_\Xi b \right) \Big) \nonumber \\
& \in & \s_{0}^{-\infty,-1 ,-1}, \label{comm-calc-2}
\een
since $\{a,b\}$ involves products of derivatives of both $a$ and $b$.

For general $l>1$, assume 
\be
\sigma_{\tilde{h}} \left( \ad_A^l B \right) \in \s_{0}^{0, -1, 1-l}
\ee
and a calculation similar to (\ref{comm-calc-1}-\ref{comm-calc-2})
finishes the induction.

Finally, part (c) follows from the proof of part (a) with $-1/2$
replaced by $1/2$ and the composition formula \eqref{a-pound-b}.
\end{proof}

\subsection{A Lemma of Bony-Chemin}
The following lemma (given more generally in \cite{BoCh}) will be used
in the proof of Theorem \ref{main-theorem-5}.  We include a sketch of the proof from
\cite[Proposition 3.7]{SjZw2} here for completeness.  It is easiest to phrase in terms of 
order functions.  
A smooth function $m \in \Ci(T^*X; \reals)$ is called an order function if it satisfies
\begin{eqnarray*}
m(x, \xi) \leq C m(y, \eta) \left\langle \dist (x-y) + |\xi - \eta| \right\rangle^N
\end{eqnarray*}
for some $N \in \mathbb{N}$.  We say $a \in \s^l(m)$ if 
\begin{eqnarray*}
\left| \partial^\alpha  a \right| \leq C_{\alpha }h^{-l} m.
\end{eqnarray*}
If $l = 0$, we write $\s(m):= \s^0(m)$.
\begin{lemma}
\label{etG-lemma}
Let $m$ be an order function, and suppose $G \in \Ci (T^*X ; \reals)$ satisfies
\begin{eqnarray}
\label{G-cond-1}
G(x, \xi) - \log \left( m(x, \xi) \right) = \O (1),
\end{eqnarray}
and 
\begin{eqnarray}
\label{G-cond-2}
\partial_x^\alpha \partial_\xi^\beta G(x,\xi) = \O(1) \,\,\, \text{for} \,\,\, (\alpha, \beta) \neq (0,0).  
\end{eqnarray}
Then for $G^w = \Op_h^w(G)$ and $|t|$ sufficiently small,
\begin{eqnarray*}
\exp (tG^w) = \Op_h^w(b_t)
\end{eqnarray*}
for $b_t \in \s(m^t)$.  Here $e^{tG^w}$ is defined as the unique
solution to the evolution equation 
\be
\left\{ \begin{array}{l} \partial_t \left( U(t) \right) - G^w
  U(t) = 0 \\
U(0) = \id. \end{array} \right.
\ee
\end{lemma}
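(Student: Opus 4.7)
The plan is to verify the conclusion via Beals's theorem in weighted form, with the key input being a Duhamel formula that converts derivatives in $t$ into controllable commutators. First I would address well-posedness: since $G$ is real-valued and $\partial^{\alpha}G = \O(1)$ for $|\alpha| \geq 1$, the symbol $G$ lies in the class $\s(\log m)$, and $G^w$ is essentially self-adjoint on $L^2$; the Cauchy problem $\partial_t U = G^w U$, $U(0) = \id$, therefore has a unique (unbounded) solution $U(t) = e^{tG^w}$ for real $t$. For $|t|$ small we can work on weighted $L^2$-spaces with weight $m^s$ to make the analysis quantitative.

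The heart of the argument is the following exact commutator identity: for any linear symbol $l$,
\begin{equation*}
[l^w, G^w] = \frac{h}{i}\{l,G\}^w,
\end{equation*}
with no remainder. This follows from the Moyal expansion \eqref{a-pound-b}, since all terms of order $\geq 2$ contain at least two derivatives on $l$ and hence vanish. By hypothesis \eqref{G-cond-2}, $\{l,G\}$ and all of its derivatives are uniformly bounded, so $\{l,G\}^w = \O(1)_{L^2 \to L^2}$. Duhamel's formula then gives
\begin{equation*}
[l^w, e^{tG^w}] = \frac{h}{i}\int_0^t e^{(t-s)G^w} \{l,G\}^w e^{sG^w}\, ds,
\end{equation*}
and iterating this identity for linear symbols $l_1,\ldots,l_N$ yields
\begin{equation*}
\ad_{l_1^w}\cdots\ad_{l_N^w} e^{tG^w} = \left(\frac{h}{i}\right)^N \sum \int \cdots \int e^{s_0 G^w} c_1^w e^{s_1 G^w} \cdots c_N^w e^{s_N G^w}\, ds,
\end{equation*}
where each $c_j$ is an iterated Poisson bracket of $G$ with some subset of the $l_k$'s, hence bounded with bounded derivatives by \eqref{G-cond-2}. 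Combined with the fact that condition \eqref{G-cond-1} implies $e^{tG^w}$ maps $L^2(m^s) \to L^2(m^{s-t})$ boundedly for small $|t|$, each such iterated commutator is $\O(h^N)$ as a map $L^2(m^t) \to L^2$, or equivalently $m^{-t}\ad_{l_1^w}\cdots\ad_{l_N^w}e^{tG^w} = \O(h^N)_{L^2 \to L^2}$. The weighted Beals theorem then produces $b_t \in \s(m^t)$ with $e^{tG^w} = \Op_h^w(b_t)$.

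\textbf{Main obstacle.} The main difficulty is managing the unboundedness of $G^w$ and propagating the weight $m^t$ rigorously through the Duhamel iteration. A priori, the intermediate operators $e^{sG^w}$ act between different weighted spaces, and one must show that the composition of boundedness properties matches what is needed to close the Beals estimate with the correct weight. The standard device is a bootstrap: first establish rough bounds on the operator norms of $e^{tG^w}$ between $L^2(m^s)$ spaces for $|t|$ small using the symbolic principal part $e^{tG} \asymp m^t$, then feed these into the Duhamel identities to upgrade to the precise $\s(m^t)$ symbol class. Making the smallness of $|t|$ precise — so that remainders in the symbolic expansion do not swamp the leading behavior — is where the condition $G - \log m = \O(1)$ enters decisively.
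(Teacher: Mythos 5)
Your proposal takes a genuinely different route from the paper. The paper does not compute commutators of $e^{tG^w}$ directly. Instead it introduces the \emph{reference quantization} $U(t) := \Op_h^w(e^{tG})$ (which is tautologically in $\Op_h^w(\s(m^t))$ since $e^{tG}\in\s(m^t)$), proves by composition calculus and \emph{unweighted} Beals that $U(t)^{-1}=\Op_h^w(b_t)$ with $b_t\in\s(m^{-t})$ (Claim \ref{U-inv-claim}), and then shows that the ``ratio'' $Q(t):=U(-t)e^{tG^w}$ solves a first-order ODE $\bigl(\partial_t+C(t)\bigr)Q(t)=0$ with $C(t)\in\Op_h^w(\s(1))$. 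A Picard/induction argument on iterated commutators shows such an ODE preserves $\Op_h^w(\s(1))$, and the conclusion follows by writing $e^{tG^w}=U(t)Q(t)$. In other words, the paper engineers things so that only the \emph{unweighted} Beals theorem on $L^2$ is ever needed.

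Your plan, by contrast, works with $e^{tG^w}$ itself: the exact identity $[l^w,G^w]=\tfrac{h}{i}\{l,G\}^w$ for linear $l$ is correct, and the Duhamel iteration producing $N$ factors of $h$ with bounded symbols $c_j\in\s(1)$ sandwiched between propagators $e^{s_jG^w}$ is a legitimate device. The trade-off is that you must then invoke a \emph{weighted} Beals theorem and control the intermediate propagators between weighted spaces $L^2(m^s)$. Your approach is more direct and avoids the somewhat artful introduction of $U(-t)$, but it transfers all the difficulty into the weighted operator-norm bootstrap you flag at the end. That step is the delicate point: the claim that $e^{tG^w}:L^2(m^s)\to L^2(m^{s-t})$ is bounded is essentially the qualitative content of the lemma itself, so establishing it a priori (even as a ``rough'' bound) risks circularity unless one argues via energy estimates, Nelson's commutator theorem, or an approximation argument, none of which you make explicit. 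The paper sidesteps exactly this issue, since its Claim \ref{U-inv-claim} follows from composition calculus alone, and its Claim on $Q(t)$ lives entirely in $\s(1)$ where Picard gives $L^2$-boundedness for free. If you close the bootstrap carefully (e.g., by proving the rough weighted bound with an $\epsilon$-loss in the exponent first, then removing it), your argument would be sound, but as written the crucial weighted estimate is assumed rather than derived.
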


\begin{proof}[Sketch of Proof]
The conditions on $G$ \eqref{G-cond-1} and \eqref{G-cond-2} are equivalent to saying $e^{tG} \in \s(m^t)$.  We will 
compare $\exp tG^w$ and $\Op_h^w (\exp t G)$.
\begin{claim}
\label{U-inv-claim}
Set $U(t):= \Op_h^w (e^{tG}): \s \to \s$.  For $|t| < \epsilon_0$, $U(t)$ is invertible and $U(t)^{-1} = \Op_h^w (b_t)$ for 
$b_t \in \s(m^{-t})$, where $\epsilon_0$ depends only on $G$.
\end{claim}
\begin{proof}[Proof of Claim]
Using the composition law, we see $U(-t) U(t) = \id + \Op_h^w(E_t)$, with $E_t = \O(t)$.  Hence $\id + \Op_h^w(E_t)$ is 
invertible and using Beals's Theorem, we get $(\id + \Op_h^w(E_t))^{-1} = \Op_h^w(c_t)$ for $c_t \in \s(1)$.  
Thus $\Op_h^w(c_t)U(-t)U(t) = \id$, so
\begin{eqnarray*}
U(t)^{-1} = \Op_h^w \left( c_t \# \exp (-tG) \right),
\end{eqnarray*}
and subsequently $b_t \in \s(m^{-t})$.
\end{proof}
Now observe that
\begin{eqnarray*}
\frac{d}{dt} U(-t) = - \Op_h^w \left( G \exp(-tG) \right),  \,\,\, \text{and} \,\,\, U(-t) G^w = \Op_h^w \left( e^{-tG} \# G \right),
\end{eqnarray*}
so that
\begin{eqnarray}
\lefteqn{\frac{d}{dt} \left( U(-t) e^{tG^w} \right) =} \label{Op(A_t)}
\\ & = & -\Op_h^w \left( G \exp(-tG) \right) e^{tG^w} + 
\Op_h^w \left( e^{-tG} \# G \right) e^{tG^w} \nonumber \\
& = & \Op_h^w (A_t) e^{tG^w}, \nonumber
\end{eqnarray}
for $A_t \in \s(m^{-t})$.  To see \eqref{Op(A_t)}, recall that by the composition law,
\begin{eqnarray*}
e^{-tG} \# G = e^{-tG} G + \left( \text{terms with }\,G \,\,\, \text{derivatives} \right).
\end{eqnarray*}
Then the first terms in \eqref{Op(A_t)} will cancel and the remaining terms will all involve at least one 
derivative of $G$, which is then bounded by \eqref{G-cond-2}.  

Set $C(t):= -\Op_h^w(A_t)U(-t)^{-1}$.  Claim \ref{U-inv-claim} implies $C(t) = \Op_h^w (c_t)$ for a family 
$c_t \in \s(1)$.  The composition law implies $c_t$ depends smoothly on $t$.  Then
\begin{eqnarray*}
\left( \frac{\partial}{\partial t} + C(t) \right) \left( U(-t) e^{tG^w} \right) = \Op_h^w (A_t) e^{tG^w} - 
\Op_h^w (A_t) e^{tG^w} = 0,
\end{eqnarray*}
so we have reduced the problem to proving the following claim.
\begin{claim}
Suppose $C(t) = \Op_h^w (c_t)$ with $c_t \in \s(1)$ depending smoothly on $t \in (-\epsilon_0, \epsilon_0)$.  If $Q(t)$ solves
\begin{eqnarray*}
\left\{ \begin{array}{c}
\left( \frac{\displaystyle \partial}{\displaystyle \partial t} + C(t) \right) Q(t) = 0, \\
Q(0) = \Op_h^w(q), \,\,\, \text{with} \,\,\, q \in \s(1),
\end{array} \right.
\end{eqnarray*}
then $Q(t) = \Op_h^w (q_t)$ with $q_t \in \s(1)$ depending smoothly on $t \in (-\epsilon_0, \epsilon_0)$.
\end{claim}
\begin{proof}[Proof of Claim]
The Picard existence theorem for ODEs implies $Q(t)$ exists and is bounded on $L^2$.  We want to use Beals's 
Theorem to show $Q(t)$ is actually a quantized family of symbols.  Let $l_1, \ldots, l_N$ be linear symbols.  
We will use induction to show that for any $N$ and any choice of the $l_j$, $\ad_{\Op_h^w (l_1)} \circ \cdots \circ 
\ad_{\Op_h^w(l_N)}Q(t) = \O(h^N)_{L^2\to L^2}$.  Since we are dealing with linear symbols, we take $h = 1$ for 
convenience.  First note
\begin{eqnarray*}
&&\frac{d}{dt} \ad_{\Op_h^w (l_1)} \circ \cdots \circ \ad_{\Op_h^w(l_N)} Q(t) +  \ad_{\Op_h^w (l_1)} \circ \cdots \circ 
\ad_{\Op_h^w(l_N)}\\
&& \quad \quad \quad \quad \quad \quad \quad \quad \quad \cdot  \left( C(t) Q(t) \right) = 0
\end{eqnarray*}
For the induction step, assume $ \ad_{\Op_h^w (l_1)} \circ \cdots \circ \ad_{\Op_h^w(l_k)}Q(t) = \O(1)$ is known 
for $k<N$ and observe
\begin{eqnarray*}
\lefteqn{ \ad_{\Op_h^w (l_1)} \circ \cdots \circ \ad_{\Op_h^w(l_N)} \left( C(t) Q(t) \right) = } \\
& = & C(t)  \ad_{\Op_h^w (l_1)} \circ \cdots \circ \ad_{\Op_h^w(l_N)} Q(t) + R(t),
\end{eqnarray*}
where $R(t)$ is a sum of terms of the form $A_k(t) \ad_{\Op_h^w (l_1)} \circ \cdots \circ \ad_{\Op_h^w(l_k)}Q(t)$ for 
each $k<N$ and $A_k(t) = \Op_h^w (a_k(t))$ with $a_k(t) \in \s(1)$.  Set $\tilde{Q}(t) =  \ad_{\Op_h^w (l_1)} \circ 
\cdots \circ \ad_{\Op_h^w(l_N)}Q(t)$, and note that $\tilde{Q}$ solves
\begin{eqnarray*}
\left\{ \begin{array}{c}
\left( \frac{\displaystyle \partial}{\displaystyle \partial t} + C(t) \right) \tilde{Q}(t) = -R(t), \\
\tilde{Q}(0) = \O (1)_{L^2 \to L^2}.
\end{array} \right.
\end{eqnarray*}
Since $R(t) = \O(1)_{L^2 \to L^2}$ by the induction hypothesis, Picard's theorem implies $\tilde{Q}(t):L^2 \to L^2$ 
as desired.
\end{proof}
\end{proof}



\section{$h$-Fourier Integral Operators}
\numberwithin{equation}{section}
\label{FIO}
In this section we review some facts about $h$-Fourier Integral
Operators ($h$-FIOs).  For this work, we are only interested in a
special class of $h$-FIOs, namely those associated to local symplectomorphisms.  
In order to motivate this, suppose $f:X \to Y$ is a diffeomorphism.  Then we write
\begin{eqnarray*}
f^*u(x) = u(f(x))= \frac{1}{(2 \pi h)^n} \int e^{i \langle f(x) -y, \xi \rangle /h} u(y) dy d\xi,
\end{eqnarray*}
and $f^*: \Ci(Y) \to \Ci(X)$ is an $h$-FIO associated to the nondegenerate phase function $\phi = \langle f(x) -y, 
\xi \rangle$.  We recall the standard notation: if $A:\Ci_c(Y) \to \mathcal{D}'(X)$ is a continuous mapping with
distributional kernel $K_A \in \mathcal{D}'(X \times Y)$, 
\begin{eqnarray*}
\WF'(A) & = & \{((x, \xi),(y, \eta)) \in (T^*X \times T^*Y) \setminus 0 :
\\
&& \quad \quad (x, y ; \xi, - \eta) \in \WF (K_A) \}.
\end{eqnarray*}
In this notation, we note 
\begin{eqnarray*}
\WF' f^* \subset  \left\{ ((x, \xi),(y, \eta)): y = f(x), \,\, \xi = \,^t D_x f \cdot \eta \right\},
\end{eqnarray*}
which is the graph of the induced symplectomorphism
\begin{eqnarray*}
\kappa (x, \xi) = (f(x), (\,^tD_x f)^{-1}(\xi)).
\end{eqnarray*}

Now let $A(t)$ be a smooth family of semiclassical pseudodifferential operators: 
$A(t) = \Op_h^w(a(t))$ with 
\begin{eqnarray*}
a(t) \in \Ci \left( [-1,1]_t; \s^{-\infty,0} \left( T^*X \right) \right),
\end{eqnarray*}
such that for each $t$, $\WF(A(t)) \Subset T^*X$.  Let $U(t): L^2(X) \to L^2(X)$ be defined by
\begin{eqnarray}
\left\{ \begin{array}{c}
hD_tU(t) + U(t)A(t) = 0, \\
U(0) = U_0 \in \Psi_h^{0,0}(X),
\end{array} \right. \label{U(t)}
\end{eqnarray}
where $D_t = -i \partial / \partial t$ as usual.  If we let $a_0(t)$ be the real-valued $h$-principal symbol of 
$A(t)$ and let $\kappa(t)$ be the family of symplectomorphisms defined by
\begin{eqnarray*}
\left\{ \begin{array}{c}
\frac{\displaystyle d}{\displaystyle dt} \kappa(t)(x, \xi) = \left( \kappa(t) \right)_* \left( H_{a_0(t)}(x, \xi) \right), \\
\kappa(0)(x, \xi) = (x, \xi),
\end{array} \right.
\end{eqnarray*}
for $(x, \xi) \in T^*X$, then $U(t)$ is a family of $h$-FIOs associated to $\kappa(t)$.  We have the following 
well-known theorem of Egorov.
\begin{old-thm}[Egorov's Theorem]
Suppose $B \in \Psi_h^{k,m}(X)$, and $U(t)$ defined as above.  Suppose further that $U_0$ in \eqref{U(t)} is 
elliptic ($\sigma_h(U_0) \geq c >0$).  Then there exists a smooth family of pseudodifferential operators $V(t)$ 
such that
\begin{eqnarray}
\left\{ \begin{array}{c}
\sigma_h \left( V(t) B U(t) \right) = \left( \kappa(t) \right)^* \sigma_h(B), \\
V(t)U(t) -I, \,\, U(t)V(t) -I \in \Psi_h^{-\infty, -\infty}(X). \label{egorov1}
\end{array} \right.
\end{eqnarray}
\end{old-thm}
\begin{proof} As $U_0$ is elliptic, there exists an approximate inverse $V_0$, such that $U_0 V_0 -I, \,\, V_0 
U_0 - I \in \Psi_h^{-\infty, -\infty}$.  Let $V(t)$ solve
\begin{eqnarray*}
\left\{ \begin{array}{c}
h D_t V(t) - A(t) V(t) = 0, \\
V(0) = V_0.
\end{array} \right.
\end{eqnarray*}
Write $B(t) = V(t)BU(t)$, so that
\begin{eqnarray*}
hD_t B(t) = A(t)V(t)BU(t) - V(t)BU(t)A(t) = [A(t), B(t)] 
\end{eqnarray*}
modulo $\Psi_h^{-\infty, -\infty}$.  But the principal symbol of $[A(t), B(t)]$ is 
\begin{eqnarray*}
\sigma_h \left( [A(t), B(t)] \right) = \frac{h}{i} \left\{ \sigma_h ( A(t)), \sigma_h (B(t)) \right\} =
 \frac{h}{i} H_{a_0(t)} \sigma_h(B(t)),
\end{eqnarray*}
so \eqref{egorov1} follows from the definition of $\kappa(t)$.
\end{proof}
Let $U:= U(1)$, and suppose the graph of $\kappa$ is denoted by $C$.  Then we introduce the standard notation
\begin{eqnarray*}
U \in I_h^0(X \times X; C'), \,\,\, \text{with}\,\,\, C' = \left\{ (x, \xi; y, -\eta) : (x, \xi) = 
\kappa(y, \eta) \right\},
\end{eqnarray*}
meaning $U$ is the $h$-FIO associated to the graph of $\kappa$.  The next few results when taken together 
will say that locally all $h$-FIOs associated to symplectic graphs are of the same form as $U(1)$.  First
 a well-known lemma.
\begin{lemma}
\label{deform-lemma}
Suppose $\kappa : \nbhd (0,0) \to \nbhd (0,0)$ is a symplectomorphism fixing $(0,0)$.  Then there exists 
a smooth family of symplectomorphisms $\kappa_t$ fixing $(0,0)$ such that $\kappa_0 = \id$ and $\kappa_1 
= \kappa$.  Further, there is a smooth family of functions $g_t$ such that 
\begin{eqnarray}
\label{kappa-deform-eq}
\frac{d}{dt} \kappa_t = (\kappa_t)_* H_{g_t}.
\end{eqnarray}
\end{lemma}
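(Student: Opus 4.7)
I would construct $\kappa_t$ in two phases — first deforming the linear part, then using a dilation trick — and then extract $g_t$ from the Poincar\'{e} lemma applied to a naturally associated closed one-form.

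\textbf{Phase 1 (linear part).} The group $\mathrm{Sp}(2n,\reals)$ is path-connected (by polar decomposition this reduces to connectedness of $U(n)$), so I choose a smooth path $L_s \in \mathrm{Sp}(2n,\reals)$ with $L_0 = I$ and $L_1 = d\kappa(0,0)$. \textbf{Phase 2 (tangent-to-identity part).} Put $\tilde\kappa = \kappa \circ d\kappa(0,0)^{-1}$, which is a symplectomorphism fixing $(0,0)$ with $d\tilde\kappa(0,0) = I$. For $s \in (0,1]$, define the rescaling $\tilde\kappa_s(z) = s^{-1}\tilde\kappa(sz)$, and set $\tilde\kappa_0 = \id$. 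Writing $\tilde\kappa(z) = z + r(z)$ with $r(z) = O(|z|^2)$ shows $\tilde\kappa_s(z) = z + s^{-1} r(sz)$ extends smoothly across $s=0$, since the quadratic vanishing of $r$ absorbs the factor of $s^{-1}$. Moreover, because the dilation $z \mapsto sz$ multiplies $\omega$ by $s^2$, the two scalings cancel and $\tilde\kappa_s^* \omega = \omega$; each $\tilde\kappa_s$ fixes the origin.

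I then concatenate these two families, using smooth reparametrizations $\phi_1: [0, \tfrac12] \to [0,1]$ and $\phi_2: [\tfrac12, 1] \to [0,1]$ that are identity near the endpoints and have all derivatives vanishing at $t = \tfrac12$, by setting
\begin{eqnarray*}
\kappa_t = \begin{cases} L_{\phi_1(t)}, & t \in [0, \tfrac12], \\ \tilde\kappa_{\phi_2(t)} \circ d\kappa(0,0), & t \in [\tfrac12, 1]. \end{cases}
\end{eqnarray*}
Both branches equal $d\kappa(0,0)$ at $t = \tfrac12$, and all $t$-derivatives vanish there, so the glued family is smooth. By construction, each $\kappa_t$ is a symplectomorphism fixing $(0,0)$ defined on some common neighborhood of the origin, with $\kappa_0 = \id$ and $\kappa_1 = \kappa$.

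For the Hamiltonian generating equation, define $X_t$ by $\frac{d}{dt}\kappa_t = X_t \circ \kappa_t$. Differentiating $\kappa_t^* \omega = \omega$ in $t$ gives $\kappa_t^*(\mathcal{L}_{X_t} \omega) = 0$, so $\mathcal{L}_{X_t} \omega = 0$. Cartan's formula together with $d\omega = 0$ yields $d(\iota_{X_t} \omega) = 0$, and on a contractible neighborhood of $(0,0)$ the Poincar\'{e} lemma produces a smooth family $f_t$ with $\iota_{X_t}\omega = df_t$, i.e., $X_t = H_{f_t}$. Setting $g_t = f_t \circ \kappa_t$ and using the identity $(\kappa_t)_* H_{g_t} = H_{g_t \circ \kappa_t^{-1}} \circ$ (valid since $\kappa_t$ is symplectic) unwinds pointwise to $\frac{d}{dt}\kappa_t(p) = d\kappa_t(p)\, H_{g_t}(p) = X_t(\kappa_t(p))$, which is the claimed equation \eqref{kappa-deform-eq}.

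The main technical points — and the only ones that need real care — are verifying that the rescaled family $\tilde\kappa_s$ is genuinely smooth at $s=0$ (handled by the quadratic vanishing of $r$ above) and that all the smooth-dependence claims, including the Poincar\'{e}-lemma construction of $f_t$, depend smoothly on $t$; these are standard and follow from choosing the Poincar\'{e} homotopy operator (e.g.\ integration along radial rays) independent of $t$.
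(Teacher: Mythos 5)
Your proof is correct and follows essentially the same route as the paper: polar decomposition for the linear part, the Alexander (dilation) trick for the tangent-to-identity remainder, concatenation via cutoffs, and Cartan's formula plus the Poincar\'e lemma for the generating Hamiltonian. The small differences — factoring off $d\kappa(0,0)$ before dilating so the rescaled family limits to the identity rather than to $d\kappa(0,0)$, and the explicit bookkeeping with $X_t = (\tfrac{d}{dt}\kappa_t)\circ\kappa_t^{-1}$ — are cosmetic refinements, not a different argument.
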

The proof of Lemma \ref{deform-lemma} is standard, but we include a
sketch here, as it will be used in the proof of Theorem
\ref{main-theorem-5}.
\begin{proof}[Sketch of Proof]
First suppose $K: \reals^{2n} \to \reals^{2n}$ is a linear symplectic
transformation.  Write the polar decomposition of $K$, $K = QP$ with
$Q$ orthogonal and $P$ positive definite.  It is standard that $K$
symplectic implies $Q$ and $P$ are both symplectic as well.  Identify
$\reals^{2n}$ with $\cx^n$ on which $Q$ is unitary.  Write $Q = \exp
-JB$ for $B$ Hermitian and $P = \exp A$ for $A$ real symmetric and $JA
+ AJ = 0$, where
\begin{eqnarray*}
J := \left( \begin{array}{cc} 0 & -I \\ I & 0 \end{array} \right)
\end{eqnarray*}
is the standard matrix of symplectic structure on $\reals^{2n}$.
  Then 
\be
K_t = \exp (-tJB) \exp(tA)
\ee
satisfies $K_0 = \id$ and
$K_1 = K$.

In the case $\kappa$ is nonlinear, set $K = \partial \kappa(0,0)$ and
choose $K_t$ such that $K_0 = \id$ and $K_\half = K$.  Then set 
\begin{eqnarray*}
\tilde{\kappa}_t(x, \xi) = \frac{1}{t} \kappa(t(x, \xi)),
\end{eqnarray*}
and note that $\tilde{\kappa}_t$ satisfies $\tilde{\kappa}_0 = K$,
$\tilde{\kappa}_1 = \kappa$.  Rescale $\tilde{\kappa}_t$ in $t$, so
that $\tilde{\kappa}_t \equiv K$ near $1/2$ and $\tilde{\kappa}_1 =
\kappa$.  Rescale $K_t$ so that $K_0 = \id$ and $K_t \equiv K$ near
$1/2$.  Then $\kappa_t$ is defined for $0 \leq t \leq 1$ by taking
$K_t$ for $0 \leq t \leq 1/2$ and $\tilde{\kappa}_t$ for $1/2 \leq t
\leq 1$.

To show $\frac{d}{dt} \kappa_t = (\kappa_t)_* H_{g_t}$, set $V_t =\frac{d}{dt}
\kappa_t$.  Cartan's formula then gives for $\omega$ the symplectic form
\begin{eqnarray*}
\mathcal{L}_{V_t} \omega = d\omega \contraction V_t + d( \omega \contraction
V_t),
\end{eqnarray*}
but $\mathcal{L}_{V_t} \omega = \frac{d}{dt}  \kappa_t^* \omega = 0$ since
$\kappa_t$ is symplectic for each $t$.  Hence $\omega \contraction V_t =
dg_t$ for some smooth function $g_t$ by the Poincar\'{e} lemma, in
other words, $V_t = (\kappa_t)_* H_{g_t}$.
\end{proof}

We have the following version of Egorov's theorem.
\begin{proposition}
\label{AF=FB}
Suppose $U$ is an open neighbourhood of $(0,0)$ and $\kappa: U \to \kappa( U)$ is a symplectomorphism fixing $(0,0)$.  
Then there is a unitary operator $F : L^2 \to L^2$ such that for all $A = \Op_h^w(a)$,
\begin{eqnarray*}
AF = FB \,\, \text{microlocally on}\,\, \kappa(U) \times U,
\end{eqnarray*}
where $B = \Op_h^w(b)$ for a Weyl symbol $b$ satisfying
\begin{eqnarray*}
b = \kappa^* a + \O(h^2).
\end{eqnarray*}
$F$ is microlocally invertible in $ U$ and $F^{-1} A F = B$ microlocally in $U \times U$.
\end{proposition}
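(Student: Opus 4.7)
The plan is to construct $F$ as the time-$1$ solution of a quantum evolution equation whose classical Hamiltonian flow realizes the deformation $\kappa_t$ of the identity to $\kappa$ produced by Lemma~\ref{deform-lemma}, then to invoke Egorov's theorem to identify the conjugated symbol.

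First I would apply Lemma~\ref{deform-lemma} to obtain a smooth family of symplectomorphisms $\kappa_t$, $0\le t\le 1$, fixing $(0,0)$, with $\kappa_0=\id$, $\kappa_1=\kappa$, and a smooth real-valued family $g_t\in\Ci$, compactly supported near $(0,0)$ after multiplying by a suitable cutoff, such that
\[
\frac{d}{dt}\kappa_t = (\kappa_t)_* H_{g_t}.
\]
Quantize to obtain $G_t=\Op_h^w(g_t)$; since $g_t$ is real, $G_t$ is self-adjoint in the Weyl calculus. I would then define $F(t):L^2(X)\to L^2(X)$ as the unique solution of the evolution equation
\[
\left\{\begin{array}{l} hD_t F(t) + F(t) G_t = 0, \\ F(0) = \id, \end{array}\right.
\]
and set $F:=F(1)$. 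Differentiating $F(t)^*F(t)$ and using the self-adjointness of $G_t$ gives $\frac{d}{dt}(F(t)^*F(t))=0$, so $F(t)$ is unitary for every $t$; in particular $F$ is unitary and its microlocal inverse on any precompact set is $F^{-1}=F(1)^*$.

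Next I would apply Egorov's theorem from the preceding discussion to $F(t)$: because $F(t)$ is an $h$-FIO associated to the Hamiltonian flow of $g_t$, which by construction is exactly $\kappa_t$, conjugation of any $A=\Op_h^w(a)$ by $F(t)$ produces a smooth family of pseudodifferential operators whose $h$-principal symbol is $\kappa_t^* a$. Taking $t=1$ yields $B(1):=F^{-1}AF=\Op_h^w(b)$ microlocally on $U\times U$, and transferring through $F$ gives $AF=FB$ microlocally on $\kappa(U)\times U$. The $\O(h^2)$ refinement for $b-\kappa^*a$ comes from the fact, recalled in \S\ref{preliminaries}, that the principal symbol in the Weyl calculus acting on half-densities is well-defined modulo $\s^{k,m-2}$: the usual Egorov transport equation for the full symbol has no subprincipal term because the Weyl expansion of $[G_t,B(t)]$ has vanishing order-$h^2$ contribution.

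The principal obstacle is bookkeeping rather than conceptual: one must verify that the $g_t$ produced by Lemma~\ref{deform-lemma} can be taken compactly supported (by cutting off away from $U$ and a neighbourhood of the orbit $\{\kappa_t(0,0)\}=\{(0,0)\}$) without affecting the flow on $U$, so that $G_t$ is a bona fide member of $\Psi_h^{-\infty,0}$ with $\WF(G_t)$ controlled, and that the resulting $F$ is tempered with the required wavefront relation $\WF'(F)\subset\{(\kappa(y,\eta),(y,\eta))\}$ microlocally over $U$. Once this is arranged, the microlocal invertibility of $F$ on $U$ and the identity $F^{-1}AF=B$ on $U\times U$ follow immediately from the unitarity and the Egorov calculation.
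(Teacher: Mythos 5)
Your construction follows the same path as the paper's: deform $\kappa$ to the identity via Lemma~\ref{deform-lemma}, quantize the generating Hamiltonian $g_t$, take $F$ to be the time-$1$ solution of the evolution equation \eqref{F-eqn-1a}, and use the vanishing of the order-$h^2$ term in the Weyl commutator $[G_t,\cdot]$ to gain the extra power of $h$. The one substantive step your sketch elides is the one the paper spends most of its proof on: showing that $F^{-1}AF$ actually is $\Op_h^w$ of a symbol in $\s^{-\infty,0}$, not merely an operator whose principal symbol is $\kappa^*a$. The paper does this by comparing $B(t):=\tilde F(t)AF(t)$ with $\tilde B(t):=\Op_h^w(\kappa_t^*a)$, integrating the resulting ODE to get $\tilde B(t)-B(t)=\O(h^2)$, iterating this to build a full asymptotic expansion $B(t)=\tilde B(t)+B_1(t)+\cdots$, and then invoking Beals's Theorem to close the argument; the statement of Egorov's theorem as recalled earlier only controls the principal symbol of $V(t)BU(t)$ and does not by itself yield $B\in\Psi_h$ with a full Weyl symbol, so this iteration is what you flag as ``bookkeeping'' and it is not optional. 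One smaller imprecision: $\frac{d}{dt}(F^*F)$ is not identically zero from self-adjointness of $G_t$ alone; one in fact gets $\frac{d}{dt}(F^*F)=\frac{i}{h}[G_t,F^*F]$, and unitarity then follows from uniqueness of solutions to this linear ODE, since $F^*F\equiv\id$ is a solution with the correct initial data. Your conclusion is right, but the reason needs the uniqueness argument (or equivalently the paper's construction of $\tilde F$ solving the reversed equation and the cancellation $F\tilde F=\tilde F F=\id$).
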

Proposition \ref{AF=FB} is a standard result, however we include a
proof since we will use it in the sequel.
\begin{proof}
For $0 \leq t \leq 1$ let $\kappa_t$ be a smooth family of symplectomorphisms satisfying $\kappa_0 = \id$, 
$\kappa_1 = \kappa$, and let $g_t$ satisfy $\frac{d}{dt} \kappa_t =
(\kappa_t)_* H_{g_t}$.  Let $G_t = \Op_h^w(g_t)$, 
and solve the following equations 
\begin{eqnarray}
\label{F-eqn-1a}
&& \left\{ \begin{array}{c}
h D_tF(t) + F(t) G(t) = 0, \,\, (0 \leq t \leq 1) \\
F(0) = I,
\end{array} \right. \\
&& \left\{ \begin{array}{c}
hD_t \tilde{F}(t) - G(t) \tilde{F}(t) = 0,\,\, (0 \leq t \leq 1) \\
\tilde{F}(0) = I.
\end{array} \right. \nonumber
\end{eqnarray}
Then $F(t), \tilde{F}(t) = \O(1) : L^2 \to L^2$ and
\begin{eqnarray*}
hD_t\left( F(t) \tilde{F}(t) \right) = -F(t)G(t) \tilde{F}(t) + F(t) G(t) \tilde{F}(t) = 0,
\end{eqnarray*}
so $F(t) \tilde{F}(t) = I$ for $0 \leq t \leq 1$.  Similarly,
$E(t) = \tilde{F} F - I$ satisfies
\ben
\label{ode-eg}
hD_t E(t)  =  G(t) \tilde{F}(t) F(t) - \tilde{F}(t) F(t) G(t)  =  [G(t), E(t) ]
\een
with $E(0) = 0$.  But equation \eqref{ode-eg} has
unique solution $E(t) \equiv 0$ for the initial condition $E(0) = 0$.
Hence $\tilde{F}(t) F(t) =I$ microlocally. 

Now set $B(t) = \tilde{F}(t) A F(t)$.  We would like to show $B(t) = \Op_h^w(b_t)$, for $b_t = 
\kappa_t^*a + \O(h^2)$.  Set $\tilde{B}(t) = \Op_h^w (\kappa_t^*a)$.  Then
\begin{eqnarray*}
hD_t \tilde{B}(t) & = & \frac{h}{i} \Op_h^w \left( \frac{d}{dt}\kappa_t^*a \right) \\
& = & \frac{h}{i} \Op_h^w \left(\{ g_t, \kappa_t^*a \}\right) \\
& = & \left[ G(t), \tilde{B}(t) \right] + E_1(t),
\end{eqnarray*}
where $E_1(t) = \Op_h^w(e_1(t))$ for $e_1(t)$ a smooth family of symbols.  
Note if we take $g_t \# (\kappa_t^* a) - (\kappa_t^* a) \# g_t$, the composition formula \eqref{a-pound-b} 
implies the $h^2$ term vanishes for the Weyl calculus since $\omega^2$ is symmetric while 
\begin{eqnarray*}
g_t(x,\xi) \kappa_t^* a(y, \eta) - \kappa_t^* a(x, \xi) g_t(y, \eta)
\end{eqnarray*}
is antisymmetric.  Thus $E_1(t) \in \Psi_h^{0,-3}$, since we are working
microlocally.  We calculate
\begin{eqnarray}
\lefteqn{hD_t \left( F(t) \tilde{B}(t) \tilde{F}(t) \right) =} \label{B-de1}\\
& = & -F(t) G(t) \tilde{B}(t) \tilde{F}(t) + F(t) 
\left( \left[ G(t), \tilde{B}(t) \right] + E_1(t) \right) \tilde{F}(t) \label{B-de2}\\ 
&& \quad + F(t)\tilde{B}(t) G(t) \tilde{F}(t) \nonumber \\
& = & F(t) E_1(t) \tilde{F}(t) \label{B-de3}\\
& = & \O(h^3).\nonumber
\end{eqnarray}
Integrating in $t$ and dividing by $h$ we get
\begin{eqnarray}
\label{B-int}
F(t) \tilde{B}(t) \tilde{F}(t) = A +  \frac{i}{h} \int_0^t F(s)E_1(s)\tilde{F}(s) ds = A + \O(h^2),
\end{eqnarray}
so that $\tilde{B}(t) - B(t) = \O(h^2)$. 

We will construct families of pseudodifferential operators $B_k(t)$ so
that for each $m$
\ben
B(t) = \tilde{B}(t) + B_1(t) + \cdots + B_m(t) + \O(h^{m+2}). \label{B-ind}
\een
Let 
\be
\tilde{e}_1(t) = (\kappa_t)^* \int_0^t (\kappa_s^{-1})^* e_1(s) ds,
\ee
and set $\tilde{E}_1(t) = \Op_h^w( \tilde{e}_1(t))$.  Observe
\be
hD_t \tilde{E}_1 = \left[ G(t), \tilde{E}_1 \right] + \frac{h}{i}\left(
E_1(t) + E_2(t)\right) ,
\ee
where $E_2(t) \in \Psi_h^{0,-4}$ by the Weyl calculus, since
$[G,\tilde{E}_1] = \O(h^4)$.  Then as in (\ref{B-de1}-\ref{B-de3})
\be
hD_t \left( F(t)\tilde{E}_1(t) \tilde{F}(t) \right) & = & - F(t)
\left[ G(t), \tilde{E}_1(t) \right] \tilde{F}(t) + F(t) hD_t\left(
\tilde{E}_1(t) \right) \tilde{F}(t) \\
& = & \frac{h}{i}\left( F(t) E_1(t) \tilde{F}(t) + F(t) E_2(t)
\tilde{F}(t)\right) .
\ee
Integrating in $t$ gives
\be
F(t) \tilde{E}_1(t) \tilde{F}(t) = \int_0^t F(s)E_1(s) \tilde{F}(s) ds
+ \int_0^t F(s)E_2(s) \tilde{F}(s) ds,
\ee
and substituting in \eqref{B-int} gives
\be
\tilde{B}(t) - B(t) & = & \frac{i}{h} \tilde{E}_1(t) - \tilde{F}(t) \left(
\frac{i}{h} \int_0^t F(s)E_2(s)\tilde{F}(s)ds \right) F(t) \\
& = & \frac{i}{h} \tilde{E}_1(t) + \O(h^3).
\ee
Setting $B_1(t) = -i \tilde{E}_1(t)/h$ and continuing inductively gives
$B_k(t)$ satisfying \eqref{B-ind}. 

Let $l$ be a linear symbol, and $L = \Op_h^w(l)$.  Then
\begin{eqnarray*}
\ad_L (\tilde{B} - B) = \left[ \tilde{B} - B, L \right] = \O(h^2).
\end{eqnarray*}
Fix $N$.  From \eqref{B-ind} we can choose $B_1, \ldots , B_N$ so that
replacing $\tilde{B}$ with $\tilde{B}+ B_1 + \cdots + B_N$, we have for $l_1, \ldots, l_N$ linear symbols, $L_k = \Op_h^w(l_k)$,
\begin{eqnarray*}
\ad_{L_1} \circ \cdots \circ \ad_{L_N} (\tilde{B} - B) = \O(h^{N+2}),
\end{eqnarray*}
so Beals's Theorem implies $B(t) = \Op_h^w(b(t))$ for $b(t) = \kappa_t^* a + \O(h^2)$.
\end{proof}

The next proposition is essentially a converse to Proposition \ref{AF=FB}.
\begin{proposition}[\cite{EvZw}, Theorem 10.7]
\label{U-FIO}
Suppose $U = \O(1): L^2 \to L^2$ and for all pseudodifferential operators $A ,B \in \Psi_h^{0,0}(X)$ 
such that $\sigma_h(B) = \kappa^* \sigma_h(A)$, $AU = UB$ microlocally near $(\rho_0, \rho_0)$, 
where $\kappa : \nbhd (\rho_0) \to \nbhd (\rho_0 )$ is a symplectomorphism fixing 
$\rho_0$.  Then $U \in I_h^0(X \times X; C')$ microlocally near $(\rho_0, \rho_0)$.
\end{proposition}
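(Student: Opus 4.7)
The plan is to reduce the statement to the case where $U$ commutes with every pseudodifferential operator microlocally, and then apply Beals's Theorem. Concretely, since $\kappa$ is a symplectomorphism fixing $\rho_0$, Proposition \ref{AF=FB} produces a unitary $h$-FIO $F$ associated to the graph of $\kappa$, microlocally invertible near $(\rho_0,\rho_0)$, with the intertwining property $AF = FB$ microlocally whenever $\sigma_h(B) = \kappa^*\sigma_h(A)$. This operator $F$ is our candidate model for $U$ up to a pseudodifferential factor.

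Set $V := F^{-1} U$, defined microlocally near $(\rho_0,\rho_0)$ using the microlocal inverse $F^{-1}$ provided by Proposition \ref{AF=FB}. For any $A \in \Psi_h^{0,0}(X)$, choose $B \in \Psi_h^{0,0}(X)$ with $\sigma_h(B) = \kappa^*\sigma_h(A)$. Then $F^{-1} A F = B$ and $A U = U B$, both microlocally near $(\rho_0,\rho_0)$, so
\begin{eqnarray*}
B V = F^{-1} A F \cdot F^{-1} U = F^{-1} A U = F^{-1} U B = V B.
\end{eqnarray*}
Since $\kappa^*$ is a bijection on symbols near $\rho_0$, $B$ ranges over all pseudodifferential operators with $\WF(B)$ close to $\rho_0$. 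Hence $V$ commutes microlocally with every element of $\Psi_h^{0,0}(X)$ near $(\rho_0, \rho_0)$.

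To apply Beals's Theorem, I would cut $V$ down by $A_1, A_2 \in \Psi_h^{0,0}(X)$ with $\WF(A_j)$ contained in a small neighbourhood of $\rho_0$ and $\sigma_h(A_j) \equiv 1$ near $\rho_0$, and study $\widetilde V := A_1 V A_2$. For any linear symbol $l$, choose $B_l = \Op_h^w(b_l) \in \Psi_h^{0,0}(X)$ whose symbol equals $l$ on $\WF(A_1) \cup \WF(A_2)$; then $[\Op_h^w(l), \widetilde V] = [B_l, \widetilde V] + \O(h^\infty) = \O(h^\infty)$ by the commuting property, and this bound is preserved under iterated commutators with further linear symbols. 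Beals's Theorem then yields $\widetilde V = \Op_h^w(v)$ for some $v \in \s^{0,0}$, i.e. $V$ is itself an element of $\Psi_h^{0,0}(X)$ microlocally near $(\rho_0,\rho_0)$.

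Finally, $U = F V$ microlocally near $(\rho_0, \rho_0)$, and composing an $h$-FIO associated to the graph $C' = \{(\kappa(y,\eta);y,-\eta)\}$ with a pseudodifferential operator preserves the class $I_h^0(X \times X; C')$ (this is a standard consequence of the composition/transport calculation appearing in the proof of Proposition \ref{AF=FB}). Hence $U \in I_h^0(X \times X; C')$ microlocally near $(\rho_0,\rho_0)$, as desired. The main technical obstacle is the microlocalized version of Beals's Theorem: one must ensure that the commutator identities obtained from general $A, B$ with $\sigma_h(B) = \kappa^*\sigma_h(A)$ translate, after introducing compactly microsupported cutoffs $A_1, A_2$, into genuine $\O(h^\infty)_{L^2 \to L^2}$ estimates for iterated commutators with linear symbols; the flexibility in choosing lower-order parts of $B$ within its principal symbol class is what makes this step go through.
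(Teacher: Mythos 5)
Your proof is correct, and it takes a modestly different (and arguably cleaner) route than the paper's. The paper's proof puts $U = U(1)$ at the end of a time-dependent evolution $hD_t U(t) - U(t)A(t) = 0$, then uses the intertwining hypothesis together with Egorov's theorem in an inductive argument to establish $\ad_{A_1}\circ\cdots\circ\ad_{A_N}U(0) = \O(h^N)$, one power of $h$ per commutator, before invoking Beals's Theorem to conclude $U(0) \in \Psi_h^{0,0}$. You instead factor out the fixed $h$-FIO $F$ from Proposition \ref{AF=FB} all at once, and the hypothesis then gives the much stronger statement that $V = F^{-1}U$ commutes with \emph{every} $\Psi$DO modulo $\O(h^\infty)$, so each iterated commutator with (suitably cut-off) linear symbols is already $\O(h^\infty)$ rather than merely $\O(h^N)$ after $N$ steps. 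This removes the need for the induction and for Egorov, and makes the application of Beals's Theorem immediate. Both arguments depend on the same microlocalization subtlety — Beals's Theorem as stated requires global linear symbols, whereas $V$ (resp. $U(0)$) is only understood near $\rho_0$ — and the paper is, if anything, terser about this than you are; your explicit cut-off argument replacing $l$ by $b_l$ agreeing with $l$ on $\WF(A_1)\cup\WF(A_2)$ is the right fix. One small caveat worth noting: you should check that $V = F^{-1}U$ is $\O(h^{-M})$ (semiclassically tempered) so that Beals applies at all; this follows from $U = \O(1)$ and $F^{-1}$ being microlocally unitary, so it is not an obstruction, but it is the kind of hypothesis one must verify before invoking the theorem.
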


\begin{proof}
Choose $\kappa_t$ a smooth family of symplectomorphisms such that $\kappa_0 = \id$, $\kappa_1 = \kappa$, 
and $\kappa_t( \rho_0) = \rho_0$.  Choose $a(t)$ a smooth family of functions satisfying $\frac{d}{dt} 
\kappa_t = (\kappa_t)_* H_{a(t)}$, and let $A(t) = \Op_h^w(a(t))$.  Let $U(t)$ be a solution to 
\begin{eqnarray*}
\left\{ \begin{array}{c} 
hD_t U(t) - U(t)A(t) = 0, \\
U(1) = U,
\end{array} \right.
\end{eqnarray*}
for $0 \leq t \leq 1$.  Next let $A$ and $B$ satisfy the assumptions of the proposition.  Since $AU = UB$, 
we can find $V(t)$ satisfying
\begin{eqnarray}
\label{V(t)-1}
\left\{ \begin{array}{c}
A U(t)V(t) = U(t)BV(t), \\
V(0) = \id.
\end{array} \right.
\end{eqnarray}
By Egorov's theorem, the right hand side of \eqref{V(t)-1} is equal to 
\begin{eqnarray*}
U(t) V(t) \left( V(t)^{-1} B V(t) \right) = U(t) V(t) A + \O(h).
\end{eqnarray*}
Setting $t = 0$, we see $[U(0), A ] = \O(h)$.  Applying the same argument to $[U(t), A]$ and another choice 
of $\tilde{A}, \tilde{B}$ satisfying the hypotheses of the proposition yields by induction,
\begin{eqnarray}
\label{beals-00}
\ad_{A_1} \circ \cdots \circ \ad_{A_N} U(0) = \O(h^N)
\end{eqnarray}
for any choice of $A_1, \ldots, A_N \in \Psi_h^{0,0}(X)$.  Since we are only interested in what $U(t)$ looks 
like microlocally, \eqref{beals-00} is sufficient to apply Beals's Theorem and conclude that $U(0) \in 
\Psi_h^{0,0}(X)$.  Thus $U(t)$ and hence $U(1) = U$ is in $I_h^0(X \times X; C')$ for the twisted graph
\begin{eqnarray*}
C' = \left\{ (x,\xi, y, -\eta): (y, \eta) = \kappa(x, \xi) \right\}.
\end{eqnarray*}
\end{proof}

The following Corollary says the particular choice of deformation to
identity does not change $F(1)$ at the expense of modifying the
initial condition.  The proof follows directly from the proof of
Proposition \ref{AF=FB} and Proposition \ref{U-FIO}.
\begin{corollary}
\label{F-cor}
Let $F \in I_h^0(X \times X; C')$ microlocally be a unitary $h$-FIO
associated to the graph of a local symplectomorphism $\kappa : U \to
\kappa(U)$, where $U$ is a neighbourhood of $(0,0)$ and $\kappa(0,0) =
(0,0)$.  Let $\kappa_t$ be a deformation to identity of $\kappa$
satisfying \eqref{kappa-deform-eq} for some Hamiltonian $g_t$.  Then
there exists $F_0 \in I_h(X \times X; C_1')$, for
\be
C_1' = \{ (x, \xi, x, - \xi ) \},
\ee
$F_0$ microlocally unitary, such that $F(t)$ satisfies \eqref{F-eqn-1a} with $F(0) =
F_0$ and $F(1) = F$ microlocally near $U$.
\end{corollary}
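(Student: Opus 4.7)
The plan is to reduce to Proposition \ref{AF=FB} by producing an explicit reference solution to the transport equation with identity initial data, and then defining $F_0$ as a ``correction factor'' that transports this reference solution into the given $F$ at time $t=1$.

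First, I would apply Proposition \ref{AF=FB} (and the setup in its proof) to the prescribed deformation $\kappa_t$ and Hamiltonian $g_t$: setting $G(t) = \Op_h^w(g_t)$, let $\widehat{F}(t)$ be the unique solution to
\begin{equation*}
hD_t \widehat{F}(t) + \widehat{F}(t) G(t) = 0, \qquad \widehat{F}(0) = I,\ \ 0 \le t \le 1.
\end{equation*}
Because $g_t$ is real, $G(t)$ is self-adjoint, and differentiating $\widehat{F}(t)\widehat{F}(t)^*$ shows that $\widehat{F}(t)$ is microlocally unitary. By the proof of Proposition \ref{AF=FB}, $\widehat{F}(1)$ is microlocally a unitary $h$-FIO in $I_h^0(X \times X; C')$ associated to $\kappa$.

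Next, I would define
\begin{equation*}
F_0 := F \cdot \widehat{F}(1)^{-1} = F \cdot \widehat{F}(1)^{*},
\end{equation*}
where the second equality uses microlocal unitarity of $\widehat{F}(1)$. Both $F$ and $\widehat{F}(1)$ are unitary $h$-FIOs associated to $\kappa$ microlocally near $U$, so their product $F \widehat{F}(1)^*$ is microlocally an $h$-FIO associated to the composition $\kappa \circ \kappa^{-1} = \mathrm{id}$; that is, $F_0 \in I_h^0(X \times X; C_1')$, which by Proposition \ref{U-FIO} means $F_0$ is microlocally a pseudodifferential operator. As a product of two microlocally unitary operators, $F_0$ is itself microlocally unitary.

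Finally, I would set $F(t) := F_0\, \widehat{F}(t)$ and verify that it satisfies the desired properties. Because $F_0$ is $t$-independent and multiplies from the left, $F(t)$ solves
\begin{equation*}
hD_t F(t) + F(t) G(t) = F_0 \bigl( hD_t \widehat{F}(t) + \widehat{F}(t) G(t) \bigr) = 0,
\end{equation*}
with $F(0) = F_0 \widehat{F}(0) = F_0$ and $F(1) = F_0 \widehat{F}(1) = F \widehat{F}(1)^{-1} \widehat{F}(1) = F$ microlocally in $U$, as required. The only real content is the microlocal unitarity and pseudodifferential character of $F_0$, which I expect to be the main (but routine) obstacle; this is handled by combining self-adjointness of $G(t)$ with Proposition \ref{U-FIO} applied to $F \widehat{F}(1)^*$.
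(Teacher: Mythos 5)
Your proposal is correct and implements exactly the route the paper sketches (``follows directly from the proof of Proposition~\ref{AF=FB} and Proposition~\ref{U-FIO}''): build the unitary reference propagator $\widehat{F}(t)$ with $\widehat{F}(0)=I$ as in the proof of Proposition~\ref{AF=FB}, peel it off $F$ to get $F_0 = F\widehat{F}(1)^{-1}$, and recover $F(t) = F_0\widehat{F}(t)$.

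One small logical wrinkle in the middle step: you first assert via the FIO composition rule that $F_0 \in I_h^0(X\times X; C_1')$ and then say ``which by Proposition~\ref{U-FIO} means $F_0$ is a pseudodifferential operator'' --- but belonging to $I_h^0(X\times X; C_1')$ already \emph{is} the statement that $F_0$ is (microlocally) a pseudodifferential operator, while Proposition~\ref{U-FIO} is the converse implication. The cleaner use of Proposition~\ref{U-FIO}, which avoids appealing to a composition rule the paper never states, is to verify its hypothesis directly: for any $A \in \Psi_h^{0,0}$, Egorov's theorem applied to $F$ gives $F^{-1}AF = B_1$ with $\sigma_h(B_1) = \kappa^*\sigma_h(A)$, and Egorov applied to $\widehat{F}(1)^{-1}$ (the FIO for $\kappa^{-1}$) gives $\widehat{F}(1)B_1\widehat{F}(1)^{-1} = B_2$ with $\sigma_h(B_2) = (\kappa^{-1})^*\kappa^*\sigma_h(A) = \sigma_h(A)$; thus $AF_0 = F_0 B_2$ microlocally, and Proposition~\ref{U-FIO} with $\kappa = \mathrm{id}$ yields $F_0 \in I_h^0(X\times X; C_1')$. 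The rest of your argument (unitarity of $\widehat{F}(t)$ from self-adjointness of $G(t)$, unitarity of $F_0$ as a product of unitaries, and the $t$-independence of $F_0$ giving the evolution equation for $F(t)$) is exactly right.
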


We will make use of the following well known proposition (see, for
example Theorem 10.17, \cite{EvZw}).
\begin{proposition}
\label{hDx-prop}
Let $P \in \Psi_h^{k,0}(X)$ be a semiclassical operator of real principal type ($p = \sigma_h(P)$ is real 
and independent of $h$), and assume $dp \neq 0 $ whenever $p=0$.  Then for any $\rho_0 \in \{ p^{-1}(0) \}$, 
there exists a symplectomorphism $\kappa : T^*X \to T^* \reals^n$ defined from a neighbourhood of $\rho_0$ 
to a neighbourhood of $(0,0)$ and an $h$-FIO $T$ associated to its
graph such that 

(i) $\kappa^* \xi_1 = p$, 

(ii) $TP = h D_{x_1} T$ microlocally near $(\rho_0; (0,0))$, 

(iii) $T^{-1}$ exists microlocally near $((0,0); \rho_0)$.
\end{proposition}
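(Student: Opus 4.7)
The plan is to build the operator $T$ in two stages: first quantize a classical normal form, then remove the resulting lower order perturbation by a pseudodifferential conjugation.

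\smallskip

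\noindent\textbf{Step 1: the classical normal form.} I would first construct the symplectomorphism $\kappa$ by a Darboux-type argument. Since $p$ is real with $dp(\rho_0)\neq 0$ and $p(\rho_0)=0$, the Hamiltonian vector field $H_p$ is non-vanishing near $\rho_0$. Solve $H_p q = 1$ locally with initial data $q=0$ on a hypersurface transverse to $H_p$; then $\{q,p\}=1$, so the pair $(q,p)$ can serve as the first coordinate pair of a symplectic chart. Extend to a full symplectic coordinate system $(x_1,\ldots,x_n,\xi_1,\ldots,\xi_n)$ on a neighborhood of $\rho_0$ by choosing $2n-2$ additional functions constant along $H_p$ and $H_q$ and Poisson-commuting pairwise, using the standard symplectic extension lemma. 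This gives $\kappa:\mathrm{neigh}(\rho_0)\to\mathrm{neigh}(0,0)$ with $\kappa^*\xi_1=p$, proving (i).

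\smallskip

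\noindent\textbf{Step 2: quantizing $\kappa$.} Next, apply Proposition \ref{AF=FB} to $\kappa^{-1}$ to obtain a microlocally unitary $h$-FIO $F$ associated to the graph of $\kappa$ such that for any $A=\mathrm{Op}_h^w(a)$,
\[
F^{-1}AF=\mathrm{Op}_h^w(\kappa^*a)+\mathcal{O}(h^2)
\]
microlocally near $\rho_0$. Applied with $A=P$, this yields
\[
F^{-1}PF=hD_{x_1}+hR, \qquad R\in \Psi_h^{0,0},
\]
microlocally near $(0,0)$, since the principal symbol is exactly $\xi_1$ and all lower order corrections carry an $h$.

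\smallskip

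\noindent\textbf{Step 3: removing the remainder.} The main technical point is to absorb the $hR$ error. I would seek an elliptic $B\in \Psi_h^{0,0}$ with symbol $b=b_0+hb_1+\cdots$ such that
\[
B^{-1}(hD_{x_1}+hR)B=hD_{x_1} \quad\text{microlocally near }(0,0).
\]
Expanding gives the operator identity $[hD_{x_1},B]+hRB=0$, whose principal symbol is the transport equation
\[
\frac{1}{i}\partial_{x_1}b_0+r_0 b_0=0,
\]
where $r_0$ is the principal symbol of $R$. This ODE in $x_1$ is solved by $b_0=\exp\left(i\int_0^{x_1}r_0(s,x',\xi)\,ds\right)$, which is elliptic. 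The sub-principal and lower order equations are of the same form with forcing terms determined by the previous $b_j$, and are solved identically by integration along $\partial_{x_1}$; a Borel summation produces $b\sim\sum h^j b_j$. Setting $T:=B^{-1}F^{-1}$ and verifying that $B$ is microlocally invertible (since $b_0\neq 0$) gives $TP=hD_{x_1}T$ microlocally, which is (ii). The microlocal inverse $T^{-1}=FB$ from (iii) then follows directly from the microlocal invertibility of both factors.

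\smallskip

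\noindent\textbf{Expected obstacle.} The symplectic construction in Step 1 and the quantization in Step 2 are essentially automatic given the tools already assembled. The only subtle point is that the transport equations in Step 3 live on a neighborhood of $(0,0)$ shrinking with each iteration; one must choose a precompact neighborhood once and for all, truncate $R$ with a cutoff supported slightly larger, and then verify that the resulting Borel-summed $B$ indeed conjugates $hD_{x_1}+hR$ to $hD_{x_1}$ modulo $\mathcal{O}(h^\infty)$ on a fixed microlocal neighborhood of $\rho_0$. This is routine but is where the majority of the bookkeeping sits.
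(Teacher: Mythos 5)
Your argument is correct and is precisely the standard route — Darboux/Hamilton–Jacobi classical normal form, quantization via Egorov (Proposition \ref{AF=FB}) to get $hD_{x_1}+hR$, then elliptic pseudodifferential conjugation by solving transport equations in $x_1$ — which is exactly what the paper defers to by citing \cite{EvZw} rather than reproducing the proof. Two small remarks that do not affect the structure: the sign in your leading solution should be $b_0=\exp\bigl(-i\int_0^{x_1}r_0(s,x',\xi)\,ds\bigr)$ (from $\tfrac{1}{i}\partial_{x_1}b_0+r_0b_0=0$), and your concern about shrinking neighborhoods in the induction is unfounded, since once a fixed precompact $U\ni(0,0)$ is chosen every $b_j$ is obtained by integrating an inhomogeneous linear ODE in $x_1$ over the same $U$, after which Borel summation produces $b\sim\sum h^jb_j$ there.
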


The next proposition is a refinement of Proposition \ref{AF=FB} in the
case of symbols with two parameters.

\begin{proposition}
\label{2-param-egorov}
Suppose $U$ is an open neighbourhood of $(0,0)$ and $\kappa: U \to \kappa( U)$
is a symplectomorphism fixing $(0,0)$ and 
\be
a \in \s_{\half}^{-\infty, 0,0}.
\ee
Then there is a unitary operator $F : L^2 \to L^2$ such that for $A = \Op_h^w(a)$,
\begin{eqnarray*}
AF = FB \,\, \text{microlocally on}\,\, \kappa (U) \times U,
\end{eqnarray*}
where $B = \Op_h^w(b)$ for a Weyl symbol $b \in \s_{\half}^{-\infty, 0,0} $ satisfying
\begin{eqnarray*}
b = \kappa^* a + \O(h^{1/2} \tilde{h}^{3/2}).
\end{eqnarray*}
$F$ is microlocally invertible in $ U$ and $F^{-1} A F = B$ microlocally in $U \times U$.
\end{proposition}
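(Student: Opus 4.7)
The plan is to follow the proof of Proposition \ref{AF=FB} almost verbatim, substituting the sharper two-parameter commutator estimate of Lemma \ref{2-param-lemma}(c) in place of the one-parameter Weyl-calculus remainder. First I would deform $\kappa$ to the identity via Lemma \ref{deform-lemma}, producing a smooth family $\kappa_t$ and Hamiltonians $g_t$ (in the ordinary symbol class) with $\frac{d}{dt}\kappa_t = (\kappa_t)_* H_{g_t}$. Writing $G(t) = \Op_h^w(g_t)$, I would solve the transport equations \eqref{F-eqn-1a} to produce $F(t)$, $\tilde F(t)$, both microlocally unitary with $\tilde F(t)F(t) = I$ microlocally, and set $F = F(1)$.

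Next I would form $B(t) := \tilde F(t) A F(t)$ and compare it with $\tilde B(t) := \Op_h^w(\kappa_t^* a)$, where $\kappa_t^* a \in \s_{1/2}^{-\infty,0,0}$ since symplectic pullback preserves the scaled class. The crucial input is Lemma \ref{2-param-lemma}(c), which gives
\be
[G(t), \tilde B(t)] = \frac{h}{i} \Op_h^w\bigl(\{g_t, \kappa_t^* a\}\bigr) + E_1(t), \qquad E_1(t) = \O(h^{3/2} \tilde h^{3/2}).
\ee
On the other hand the Hamiltonian-flow formula gives $hD_t \tilde B(t) = \frac{h}{i} \Op_h^w(\{g_t, \kappa_t^* a\})$ exactly, so $hD_t \tilde B(t) = [G(t), \tilde B(t)] - E_1(t)$. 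The telescoping computation \eqref{B-de1}--\eqref{B-de3} then yields $hD_t \bigl(F(t) \tilde B(t) \tilde F(t)\bigr) = -F(t) E_1(t) \tilde F(t) = \O(h^{3/2} \tilde h^{3/2})$, and integrating in $t$ and dividing by $h$ produces
\be
F(t) \tilde B(t) \tilde F(t) = A + \O(h^{1/2} \tilde h^{3/2}).
\ee
Conjugating by $\tilde F(t)$ on the left and $F(t)$ on the right (both microlocally unitary) gives $B(t) - \tilde B(t) = \O(h^{1/2} \tilde h^{3/2})$ at the operator level, which is the desired symbol-level agreement at $t = 1$ once $B(1)$ is identified as a Weyl quantization.

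The remaining step is a two-parameter version of the Beals argument at the end of the proof of Proposition \ref{AF=FB}. Testing $B(t) - \tilde B(t)$ against iterated commutators $\ad_{L_1} \circ \cdots \circ \ad_{L_N}$ with linear symbols and reapplying Lemma \ref{2-param-lemma}(c) inductively, one propagates the bound $\O(h^{1/2} \tilde h^{3/2})$ through each nesting while absorbing the per-derivative loss $(\tilde h / h)^{1/2}$ into the $\s_{1/2}$ order function. Beals's theorem then identifies $B(1) = \Op_h^w(b)$ for some $b \in \s_{1/2}^{-\infty, 0, 0}$ with $b = \kappa^* a + \O(h^{1/2}\tilde h^{3/2})$. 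Microlocal invertibility and $F^{-1} A F = B$ follow from microlocal unitarity of $F$ as in the proof of Proposition \ref{AF=FB}.

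The main obstacle is the last paragraph: verifying that the scaled Beals characterization of $\s_{1/2}^{-\infty,0,0}$ can be carried out so that each iterated commutator test retains the $h^{1/2} \tilde h^{3/2}$ remainder after the per-derivative $(\tilde h /h)^{1/2}$ losses are absorbed. The exact powers $h^{3/2} \tilde h^{3/2}$ appearing in Lemma \ref{2-param-lemma}(c) are precisely what make this bookkeeping close; no delicate new commutator identity is needed, just careful tracking of the scalings.
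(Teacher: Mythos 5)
Your proposal is correct and is exactly the argument the paper intends: the paper's proof is the single remark that Proposition \ref{2-param-egorov} follows from the proof of Proposition \ref{AF=FB} with the commutators estimated via Lemma \ref{2-param-lemma}, which is precisely your scheme, including the $h^{-1}$ loss from integrating the $\O(h^{3/2}\tilde h^{3/2})$ remainder to obtain $b = \kappa^* a + \O(h^{1/2}\tilde h^{3/2})$. Your fleshed-out version, including the Beals-type bookkeeping in the scaled class $\s_{1/2}^{-\infty,0,0}$, supplies details the paper leaves implicit but does not depart from its route.
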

The proof of Proposition \ref{2-param-egorov} follows from the Proof
of Proposition \ref{AF=FB}, using Lemma \ref{2-param-lemma} to
estimate the commutators.


\section[Manifolds with Boundary]{Manifolds with Boundary and Propagation of Singularities}
\label{manifold-boundary-1}
\numberwithin{equation}{section}
In this section, $X$ is a smooth, compact, $n$-dimensional manifold
with boundary.  We assume $P \in \Diff_{h,db}^{2,0}$ is a second order differential
operator whose principal symbol $p$ is a quadratic form in $\xi$ and $\partial X$ is
noncharacteristic with respect to $p$.  We adopt a microlocal viewpoint in which $\partial X$
is identified locally with a noncharacteristic hypersurface $Y \subset \reals^n$.  Our local model for $X$ near $Y$ is
$X = {\reals^n}$ with $Y = \{x \in \reals^n : x_1 = 0\}$.  We study the boundary value problem 
\begin{eqnarray}
\label{BVP}
\left\{ \begin{array}{l} (P-z)u = f \,\, \text{in} \,\, X, \\
u = 0 \,\, \text{on} \,\, Y,
\end{array}
\right. 
\end{eqnarray}
in a neighbourhood of a closed bicharacteristic for the flow of $H_p$
reflecting transversally off $Y$, and for energies $z$ near $0$.  Our
final goal is to describe
propagation of singularities at the boundary.  First we will prove factorization lemmas and energy estimates near $Y$, and
then prove the main result of this section, which is that the
microlocal propagator of $P-z$ can be extended in a meaningful way
through the reflections at the boundary.  The Main Theorem has the
following analogue in the case $\gamma$ reflects transversally off
$\partial X$.

\begin{main-theorem'}
Suppose $P(h) \in \Diff_h^2(X)$ and $\partial X$ is noncharacteristic
with respect to the principal symbol of $P(h)$.  Assume $\gamma$ makes
only transversal reflections with $\partial X$.  Let $A \in \Psi_{h,db}^{0,0}(X)$ be a pseudodifferential operator whose
principal symbol is $1$ near $\gamma$ and $0$ away from $\gamma$.  There exist constants $h_0>0$ and $C>0$ such that 
\be
\| u \| \leq C \frac{ \sqrt{\log ( 1/h )}} h \| P ( h ) u \| + C
\sqrt{ \log (1/h ) } \| ( I - A ) u \|
\ee
uniformly in $0<h<h_0$, where the norms are $L^2$ norms on $X$.  In particular, if $u(h)$
satisfies 
\be
\left\{ \begin{array}{l} P(h) u(h) = \O(h^\infty); \\
\|u(h)\|_{L^2(X)} = 1, \end{array} \right.
\ee
\be
\left\| (I - A) u \right\|_{L^2(X)} \geq \frac{1}{C} \log
\left(\left(1/h\right)\right)^{-\half} \,, \ \ 0 < h < h_0 .
\ee
\end{main-theorem'}

\subsection{Normally Differential Operators}
In the case $X$ is a smooth manifold
with boundary, we define pseudodifferential operators which are
differential in the normal direction at the boundary
microlocally.  For a microlocal definition, it suffices to assume $X =
\{ x_1 \geq 0 \}$ and $\partial X = \{ x_1 = 0\} $.  Then the algebra
of pseudodifferential operators which are normally differential at the boundary
is defined by the following:
\be
\Psi_{h,db}^{k,m}(X, \Omega_X^\half) & = & \Big\{ A(x, hD_x) \in
\Psi_h^{k,m} : \\
&& \quad \quad A(x, hD_x) = \sum_{j=0}^k A_j(x, hD_{x'})(hD_{x_1})^j \Big\}.
\ee

Suppose $\phi \in \Ci(X, \Omega^\half_X)$, and $x_0 \in \partial X$.  Using local
coordinates at the boundary, we write $x_0 = (0, x_0') \in \{ x_1 \geq
0 \}$.  Then $\phi \in \Ci(X, \Omega^\half_X)$ means there is a smooth extension $\tilde
\phi$ to an open neighbourhood of $x_0 \in \reals^n$.  For a
distribution $u \in \mathcal{D}'(X, \Omega^\half_X)$, we extend the
notion of $\WF (u)$ to a neighbourhood of the boundary.  We say
$(x_0, \xi_0) = (0, x_0', \xi_0)$ is not in $\WF (u)$ if there is a product neighbourhood $(x_0, \xi_0) \in U \times V \subset \reals^{2n}$ and a 
normally differential operator $A \in \Psi^{0,0}_{h, db}(U, \Omega_{U}^\half)$ such that
$\sigma_h(A)(x_0, \xi_0) \neq 0$ and
\be
Au \in h^\infty \Ci ( (0, 1]_h ; \Ci ( U, \Omega_{U}^\half)).
\ee
Observe if $u$ is smooth,
\be
(\WF u)|_{\partial X} \subset \WF (u|_{\partial X}) \sqcup
(\supp ( u|_{\partial X}) \times N^*(\partial X)).
\ee
Similarly, using our identification of the
$h$-wavefront set of a pseudodifferential operator as the essential
support of its symbol, $A \in \Psi_{h,db}^{0,0}$ with $\sigma_h(A)
\neq 0$ at $(0,x_0', \xi_0)$ implies the $\xi_1$
direction is always contained in the $h$-wavefront set of $A$.  

We are
going to be interested in symbols which are compactly supported in
$T^*X$, so we will need a notion of microlocal equivalence near the
boundary which allows us to consider operators which are both
normally differential {\it and} compactly supported in phase space.
For this we return to our local coordinates at the boundary.  Let $x_0
\in \partial X$, $x_0 = (0, x_0')$, and let $U\times V$ be a product
neighbourhood of $(x_0, 0)$ in $\reals^{2n}$ such that $V$ is of the
form $V = [-\epsilon_0, \epsilon_0]_{\xi_1} \times V_{\xi'}$.  By using the rescaling
\be
(x_1, \xi_1) \mapsto ( x_1 / \lambda, \lambda \xi_1),
\ee
the ellipticity of $P$ outside a compact set implies $p \geq C^{-1}$
in 
\be
(\complement [- \epsilon_0, \epsilon_0]) \times V_{\xi'}.
\ee
Choose $\psi \in \Ci( \reals)$ satisfying
\ben
&& \psi(t) \equiv 1 \text{ for } |t| \leq
\epsilon_0, \label{psi-assump-100} \\
&& \psi(t) \equiv 0 \text{ for } |t| \geq 2 \epsilon_0. \label{psi-assump-101}
\een  
We say two semiclassically tempered operators $T$ and $T'$
are microlocally equivalent near $(U \times V )^2$ if for all $A, A'
\in \Psi_{h,db}^{0,0}$ satisfying
\be
\text{proj}_{(x, \xi')} ( \WF A ) \text{ is sufficiently close to } U
\times V_{\xi'}, 
\ee
and similarly for $A'$,
\be
\psi(P(h))A(T - T') \psi(P(h))A' = \O(h^\infty): \mathcal{D}'(X) \to
\Ci (X).
\ee
In particular, if $A \in \Psi_{h.db}^{0,0}$, we say $A$ is microlocally
equivalent to
\be
\psi(P(h)) A
\ee
and we will use this identification freely throughout.

\subsection{Propagation of Singularities}
This section is basically a semiclassical adaptation of some of the
propagation of singularities results at the boundary presented in
\cite[Chap. 23]{hormander3}.  According to \cite[App. C.5]{hormander3}, under the noncharacteristic
assumption we can find local symplectic coordinates near $Y$ so that $Y  = \{x_1= 0\}$ and (possibly after a sign change)
\begin{eqnarray}
\label{p-factorization}
p(x, \xi) = \xi_1^2 - r(x, \xi'), \,\,\, \xi' = (\xi_2, \ldots, \xi_n).
\end{eqnarray}
We define the hyperbolic set $H\subset T^*Y$:
\begin{eqnarray*}
H := \{(x', \xi'): r(0, x', \xi') >0 \},
\end{eqnarray*}
on which the characteristic equation has two roots $\{x_1 = 0, \xi_1 = \pm r(x, \xi')^\half\}$.  Thus the Hamiltonian vector field of $p$,
\begin{eqnarray*}
H_p = 2 \xi_1 \partial_{x_1} - \partial_{\xi'}r \partial_{x'} + \partial_{x}r \partial_\xi
\end{eqnarray*}
points from $Y$ into $\{x_1 >0\}$ or $\{x_1 <0\}$, respectively, depending on which root of $r$ we choose.
We call the corresponding bicharacteristic rays {\it outgoing} and
{\it incoming} and write $\gamma_+$ and $\gamma_-$ respectively (see
Figure \ref{fig:fig8}).  We have the following factorization of the
operator $P-z$ in our microlocal coordinates.
\begin{figure}
\centerline{
\input{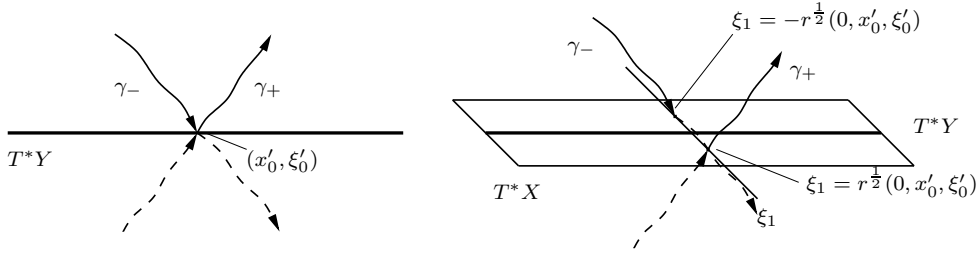}}
\caption{\label{fig:fig8} The incoming and outgoing bicharacteristics.
  Observe the {\it reflection} of $\gamma$ is continuous in a neighbourhood
  of $H$ in $T^*Y$ but not in $T^*X$, while the {\it transmission} of
  $\gamma$ is continuous in both.}
\end{figure}
\begin{lemma}
\label{p-fact-lemma}
There is a factorization of $P-z$ near $H$:
\begin{eqnarray*}
P-z = (hD_1 - A_-(x, hD'))(hD_1 - A_+(x, hD')) 
\end{eqnarray*}
with $A_\pm\in \Psi^{1,0}_{h,db}$ having principal symbol $\pm r^\half$.
\end{lemma}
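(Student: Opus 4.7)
The plan is to construct $A_+$ first, via a Riccati-type equation, and then define $A_-$ algebraically. Since $P \in \Diff_{h,db}^{2,0}$ has principal symbol $\xi_1^2 - r(x,\xi')$, I can write
\[
P = (hD_1)^2 + B_1(x, hD')\cdot(hD_1) + B_0(x, hD'),
\]
where $B_1$, $B_0$ are tangential pseudodifferential operators, $B_1$ has principal symbol $0$, and $B_0$ has principal symbol $-r(x,\xi')$. Expanding $(hD_1 - A_-)(hD_1 - A_+)$ and using that $A_+$ commutes with $hD_1$ up to $[hD_1,A_+] = (h/i)\,\partial_{x_1}A_+$, the desired factorization reduces to the two operator equations
\[
A_- + A_+ = -B_1, \qquad A_-A_+ + (h/i)\,\partial_{x_1}A_+ = B_0 - z,
\]
and, eliminating $A_-$, to the single Riccati equation
\[
A_+^2 + B_1 A_+ + (h/i)\,\partial_{x_1} A_+ + B_0 - z = 0.
\]

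I would then solve this microlocally on $H$ by a formal symbol expansion $a_+ \sim \sum_{j\geq 0} h^j a_{+,j}$, computing $a_+ \# a_+$ and $b_1 \# a_+$ with the Weyl composition formula \eqref{a-pound-b}. The leading equation is $a_{+,0}^2 = r + z$, which admits two smooth branches on $H$ since $r > 0$ and $z$ is small; I pick $a_{+,0} = (r+z)^{1/2}$, so that at $z = 0$ the principal symbol is exactly $r^{1/2}$. At each order $j\geq 1$ one obtains a linear equation of the shape
\[
2 a_{+,0}\, a_{+,j} = F_j\bigl(a_{+,0},\dots,a_{+,j-1};\, b_0, b_1\bigr),
\]
where $F_j$ is an explicit polynomial in the previously constructed symbols and their derivatives, solvable by division since $a_{+,0}$ is bounded away from zero on $H$. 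A Borel summation then produces a genuine symbol $a_+ \in \s^{1,0}$ with this asymptotic expansion (localized near $H$ by a suitable cutoff). Setting $A_+ := \Op_h^w(a_+)$ and $A_- := -B_1 - A_+$, one verifies directly that the two operator equations above, and hence the factorization, hold modulo $\O(h^\infty)$ microlocally near $H$. Since neither $A_+$ nor $A_-$ involves $hD_1$, both lie automatically in $\Psi_{h,db}^{1,0}$.

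The only real obstacle is that the construction of $a_{+,0}$ requires $r+z > 0$, which is precisely what defines $H$ and explains the local nature of the factorization: outside $H$ the square root branch is no longer smooth, and the hyperbolic-to-glancing transition invalidates the Riccati iteration. Everything else reduces to a standard symbol induction together with an application of Beals's theorem to confirm the iteratively constructed objects define genuine pseudodifferential operators.
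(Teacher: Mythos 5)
Your proof is correct and reaches the same factorization, but it organizes the induction genuinely differently from the paper. The paper follows H\"ormander's Lemma 23.2.8: it corrects \emph{both} factors in parallel, at the $j$-th stage choosing tangential symbols $a_\pm^j = \O(h^j)$ so that the new error $R_{j+1}$ gains one more power of $h$, with the principal-symbol computation forcing $a_+^j = -a_-^j$. You instead write $P$ in the normal form $(hD_1)^2 + B_1(x,hD')\,hD_1 + B_0(x,hD')$, fix $A_- = -B_1 - A_+$ algebraically from the outset, and solve a single operator Riccati equation for $A_+$ by symbol expansion plus Borel summation (the same Borel step the paper invokes at the end). Your route is cleaner in one respect: it makes explicit how the subprincipal first-order coefficient $B_1$, which has vanishing principal symbol in these coordinates but need not vanish identically, is absorbed into $A_-$, whereas the paper's iteration leaves that implicit in the asserted $\xi_1$-independence of the errors $R_j$. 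One small slip to flag: from the expansion $(hD_1 - A_-)(hD_1 - A_+) = (hD_1)^2 - (A_+ + A_-)\,hD_1 - \tfrac{h}{i}\partial_{x_1}A_+ + A_-A_+$, the constant-term equation should read $A_-A_+ - \tfrac{h}{i}\partial_{x_1}A_+ = B_0 - z$ (minus, not plus); the sign error cancels against a compensating one in your elimination, so the Riccati equation you wrote, $A_+^2 + B_1A_+ + \tfrac{h}{i}\partial_{x_1}A_+ + B_0 - z = 0$, is nonetheless the correct one. Your remarks that $A_\pm$ involve no $hD_1$ and hence lie in $\Psi_{h,db}^{1,0}$, and that the positivity $r>0$ defining $H$ is precisely what lets you take a smooth square root $a_{+,0}$, both match the paper.
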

\begin{remark}
We remark $r(x, \xi')$ and $A_\pm(x, hD')$ implicitly depend on the
energy $z$, although we don't explicitly note this dependence where no
ambiguity can arise.
\end{remark}
\begin{proof}
We follow the proof for the $h$ independent version of the lemma found in
\cite[Lemma 23.2.8]{hormander3}.  Using the coordinates above, the principal symbol of $P-z$ is \eqref{p-factorization}.  Set $A_\pm^1 = \Op( \pm r^\half )$ so that 
\begin{eqnarray*}
P-z - (hD_1 - A_-^1) (hD_1 - A_+^1) = R_1(x,hD) \,\,\, \text{microlocally}, 
\end{eqnarray*}
where $\sigma_h R_1 = \O(h)$ is independent of $\xi_1$.

Suppose now we have $A_\pm^j$ with principal symbols $\pm
r^\half$ such that
\begin{eqnarray*}
P -z- (hD_1 - A_-^j) (hD_1 - A_+^j) = R_j(x,hD) \,\,\, \text{microlocally}, 
\end{eqnarray*}
where $\sigma_h  R_j = \O(h^j)$ is independent of $\xi_1$.
Choose $a^j_-(x, \xi') = \O(h^{j})$ satisfying 
\begin{eqnarray*}
\sigma_h R_j(x, \xi') +2 a_-^j(x, \xi') r^\half(x, \xi') = 0,
\end{eqnarray*}
which we can do since $r^\half >0$ near $H$.  We will similarly add
$a_+^j(x, hD')$ to $A_+^j$, where $a_+^j = \O(h^j)$ is determined by the
following calculation:
\be
\lefteqn{P-z - (hD_1 - A_-^j - a_-^j(x, hD'))(hD_1 - A_+^j - a_+^j(x,
  hD')) =} \\
& = & R_j(x, hD) - a_-^j(x, hD')(hD_1 - A_+^j) - (hD_1 -
A_-^j)(a_+^j(x, hD')) \\
&& + a_-^j(x,hD') a_+^j(x,hD') \,\,\, \text{microlocally}.
\ee
On the level of principal symbol, this yields the requirement that
\be
\lefteqn{\sigma_h R_j(x, \xi') - a_-^j(x, \xi')( \xi_1 - r^\half(x,
  \xi')) - (\xi_1 + r^\half(x, \xi'))a_+^j(x, \xi') = } \\
& = & -a_-^j(x, \xi') ( \xi_1 + r^\half(x, \xi')) - (\xi_1 +
r^\half(x, \xi'))a_+^j(x, \xi') \\
& = & 0,
\ee
which gives $a_+^j(x, \xi') = -a_-^j(x, \xi')$.
By induction and Borel's Lemma the argument is complete.
\end{proof}

We have also a microlocal factorization $P-z = (hD_1 -
\tilde{A}_+)(hD_1 - \tilde{A}_-)$, where the principal symbols of
$\tilde{A}_\pm$ are $\pm r^\half$ as in the lemma.  Suppose the
$\gamma_\pm$ intersect $T^*Y$ at $(x_0', \xi_0')$.  On
$\gamma_-$ we have $\xi_1 = -r^\half$, so $(hD_1 - \tilde{A}_+)$ is
elliptic near $\gamma_-$.  Then to solve \eqref{BVP}, we need only solve $(hD_1 - \tilde{A}_-)u = (hD_1 - \tilde{A}_+)^{-1} f = \tilde{f}$.  
\begin{lemma}
\label{bdy-energy-est}
Suppose $u$ solves the following Cauchy problem in $\reals^n_+$:
\begin{eqnarray}
\label{BVP2}
\left\{ \begin{array}{l}
(hD_1 - \tilde{A}_-) u = \tilde{f}, \,\,\, x_1 >0\\
\left. u \right|_{x_1=0} = \phi(x').
\end{array} \right.
\end{eqnarray}
Then 
\begin{eqnarray}
\label{apriori-1}
\lefteqn{ \sup_{0 \leq y \leq T_0} \| u(y, \cdot ) \|_{L^2_{x'}(\reals^{n-1} \times \{x_1 = y\})} \leq } \\ 
& \leq & C \| \phi \|_{L_{x'}^2( \reals^{n-1} \times \{ x_1 = 0\})} + \frac{C_{T_0}}{h} \| \tilde{f} \|_{L^1([0, T_0], L^2(\reals^{n-1}))} . \nonumber
\end{eqnarray}
\end{lemma}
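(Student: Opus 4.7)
The plan is to treat \eqref{BVP2} as a semiclassical evolution equation in the ``time'' variable $x_1$ and derive an energy estimate of Gronwall type. Rewriting $(hD_1 - \tilde{A}_-)u = \tilde{f}$ as
\[
h\partial_{x_1} u = i\tilde{A}_- u + i\tilde{f},
\]
I would first differentiate the squared norm $g(x_1)^2 := \|u(x_1,\cdot)\|^2_{L^2_{x'}}$ and compute
\[
h\frac{d}{dx_1} g(x_1)^2 = \langle i\tilde{A}_- u + i\tilde{f}, u\rangle + \langle u, i\tilde{A}_- u + i\tilde{f}\rangle = i\langle (\tilde{A}_- - \tilde{A}_-^*) u, u\rangle + 2\Re(i\langle \tilde{f}, u\rangle).
\]
Because the principal symbol $-r^{1/2}$ of $\tilde{A}_-$ is real, $\tilde{A}_- - \tilde{A}_-^* = \O(h)_{L^2 \to L^2}$ (at least microlocally, after a suitable cutoff discussed below), so the first term is bounded by $Ch g(x_1)^2$ and the second by $2 g(x_1) \|\tilde{f}(x_1, \cdot)\|_{L^2_{x'}}$.

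Dividing by $2h g(x_1)$ (after a standard mollification argument to deal with the possible zeros of $g$, or by working with $g^2 + \epsilon$ and letting $\epsilon \to 0$), I obtain the differential inequality
\[
g'(x_1) \leq \frac{C}{2} g(x_1) + \frac{1}{h} \|\tilde{f}(x_1, \cdot)\|_{L^2_{x'}},
\]
with initial data $g(0) = \|\phi\|_{L^2_{x'}}$. Gronwall's inequality then gives
\[
g(x_1) \leq e^{Cx_1/2} \|\phi\|_{L^2_{x'}} + \frac{1}{h}\int_0^{x_1} e^{C(x_1 - s)/2} \|\tilde{f}(s, \cdot)\|_{L^2_{x'}} ds,
\]
and taking the supremum over $x_1 \in [0, T_0]$ yields \eqref{apriori-1}.

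The main technical subtlety is that $\tilde{A}_-$ was only constructed microlocally near the hyperbolic set $H$ via Lemma~\ref{p-fact-lemma}, while the energy estimate is phrased globally in $x'$. The fix is to replace $\tilde{A}_-$ by a globally defined, genuinely (essentially) self-adjoint operator $B \in \Psi_{h,db}^{1,0}$ that agrees with $\tilde{A}_-$ microlocally on the region where $u$, $\phi$, and $\tilde{f}$ have wavefront set, and such that $B - B^* = \O(h)_{L^2 \to L^2}$. This is possible because $-r^{1/2}$ is real and smooth where $r > 0$, and one can cut off or flatten $r$ outside a neighborhood of $H$ before taking the square root; the difference between $(hD_1 - \tilde{A}_-)u$ and $(hD_1 - B)u$ is then $\O(h^\infty)$ microlocally and absorbed harmlessly into the right-hand side. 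With this reduction, the Gronwall argument above is the entire proof; no further commutator analysis is needed.
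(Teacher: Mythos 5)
Your proof is correct and follows essentially the same route as the paper: an energy identity for $\partial_{x_1}\|u(x_1,\cdot)\|^2_{L^2_{x'}}$, using that the skew part of $\tilde{A}_-$ is negligible (the paper treats $\tilde{A}_-$ as exactly self-adjoint since its Weyl symbol is real; you allow $\tilde{A}_- - \tilde{A}_-^* = \O(h)$, which is slightly weaker but just as good), followed by Gronwall. One small advantage of your version: by dividing out $g$ and applying Gronwall to $g$ rather than to $g^2$, you obtain the $L^1_{x_1}L^2_{x'}$ norm of $\tilde f$ on the right-hand side directly, exactly as stated in \eqref{apriori-1}; the paper's displayed chain, which applies the quadratic Cauchy inequality before Gronwall, would literally produce an $L^2$-in-$x_1$ norm of $\tilde f$ and implicitly relies on the same simplification you make explicit.
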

\begin{proof}
Consider
\begin{eqnarray*}
\frac{1}{2} \partial_{y} \| u(y, \cdot) \|_{L^2(\reals^{n-1} \times \{x_1=y\})}^2 & = & \langle \partial_{y} u, u \rangle_{x'} \\
& = & - \frac{ \Im }{h} \langle hD_1 u, u \rangle_{x'} \\
& \leq & \frac{1}{h} \|u(y, \cdot) \|_{L^2_{x'}} \| \tilde{f}(y, \cdot) \|_{L^2_{x'}} \\
& \leq & \frac{1}{4h^2} \|\tilde{f}(y, \cdot) \|_{L^2_{x'}}^2 + \|u(y, \cdot) \|_{L^2_{x'}}^2
\end{eqnarray*}
which by Gronwall's inequality gives the lemma.
\end{proof}
Recall the semiclassical Sobolev norms $\| \cdot \|_{H_h^k}$ are given
by 
\begin{eqnarray*}
\| u \|_{H_h^k(V)} = \left( \sum_{|\alpha| \leq k} \int_V |(hD)^\alpha u|^2 dx \right)^\half.
\end{eqnarray*}
We observe that since $P$ is elliptic outside a compact set and $(hD_1
- \tilde{A}_+)$ is elliptic, replacing $\tilde{f}$ with $f$ and conjugating $\tilde{A}_-$ above with the invertible
operators $(C+P)^{s/2}$ for sufficiently large $C>0$, we can estimate the $L^2$ norm of $v =
(C+P)^{s/2}u$, and we get the Sobolev estimate
\begin{eqnarray*}
\lefteqn{ \sup_{0 \leq y \leq T_0} \| u(y, \cdot ) \|_{(H_h^s)_{x'}(\reals^{n-1} \times \{x_1 = y\})} \leq } \\ 
& \leq & C \| \phi \|_{(H_h^s)_{x'}( \reals^{n-1} \times \{ x_1 = 0\})} + \frac{C_{T_0}}{h} \| f \|_{L^1([0, T_0], H_h^s(\reals^{n-1} ))} .
\end{eqnarray*}
We are interested in proving the existence of a microlocal solution
propagator, hence we assume the wavefront set of $f$ is contained in a
compact set $K$ in a neighbourhood of $\gamma_-$ near $Y$.  We assume as well
that $K$ is contained in a single coordinate chart $U$ on which the
assumptions of Proposition \ref{hDx-prop} hold.  Suppose $K \subset \{ T_1 < x_1 < T_2 \}$ and $U \subset \{ T_1' < x_1 < T_2'\}$.
\begin{proposition}
\label{ml-solution-1}
There are exactly two microlocal solutions $u_i$, $i = 1,2$ to
$(hD_1 - \tilde{A}_-)u= \tilde{f}$ microlocally near $\gamma_-$ satisfying
\begin{eqnarray}
\label{u_1}
u_1 & = & 0 \,\,\, \text{microlocally for} \,\,\, x_1 \leq T_1, \\
u_2 & = & 0 \,\,\, \text{microlocally for} \,\,\, x_1 \geq T_2. \label{u_2}
\end{eqnarray}
\end{proposition}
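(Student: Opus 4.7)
The plan is to reduce the equation $(hD_1 - \tilde{A}_-)u = \tilde{f}$ to the model equation $hD_{x_1} v = g$ using Proposition \ref{hDx-prop}, solve the model by integration along the $x_1$-axis from the two possible ``ends'' of the interval, and then use the energy estimate of Lemma \ref{bdy-energy-est} to get both existence and uniqueness.

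First I would apply Proposition \ref{hDx-prop} to the operator $hD_1 - \tilde{A}_-$. Its principal symbol $\xi_1 - r^{1/2}(x,\xi')$ is real, independent of $h$, and has nonvanishing differential on its zero set (near $\gamma_-$, where $\xi_1 = -r^{1/2}$), so it is of real principal type. Hence there is an $h$-FIO $T$, microlocally invertible near $\gamma_-$, and a symplectomorphism $\kappa$ carrying a neighborhood of $\gamma_-$ to a neighborhood of $(0,0) \in T^*\reals^n$, such that
\[
T(hD_1 - \tilde{A}_-) = hD_{x_1}\, T \quad \text{microlocally near } \gamma_-.
\]
Choosing the straightening carefully, we may also arrange that the $x_1$-coordinate is preserved (both are Hamilton parameters for the same Hamilton vector field modulo reparametrization), so that the half-spaces $\{x_1 \le T_1\}$ and $\{x_1 \ge T_2\}$ correspond on the model side to analogous sets containing no point of $\kappa(K)$.

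Next I would solve the model equation $hD_{x_1} v = g$, where $g = T\tilde{f}$ is microlocalized in $\kappa(K) \subset \{T_1 < x_1 < T_2\}$. The two natural solutions are
\[
v_1(x_1, x') = \frac{i}{h} \int_{T_1'}^{x_1} g(s, x')\, ds, \qquad
v_2(x_1, x') = -\frac{i}{h} \int_{x_1}^{T_2'}  g(s, x')\, ds,
\]
so that $v_1 \equiv 0$ for $x_1 \le T_1'$ and $v_2 \equiv 0$ for $x_1 \ge T_2'$. Setting $u_i = T^{-1} v_i$ and reinterpreting via $T^{-1}$ gives microlocal solutions to $(hD_1 - \tilde{A}_-)u_i = \tilde{f}$, with the support properties \eqref{u_1} and \eqref{u_2} in view of the coordinate choice and the fact that $T$ is a microlocal inverse on the relevant set.

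For uniqueness, suppose $u$ and $u'$ are two microlocal solutions both vanishing for $x_1 \le T_1$. Their difference satisfies $(hD_1 - \tilde{A}_-)(u - u') = 0$ microlocally near $\gamma_-$ with zero Cauchy data on $x_1 = T_1$; applying Lemma \ref{bdy-energy-est} with $\tilde{f} = 0$ and $\phi = 0$ forces $u - u' = 0$ microlocally throughout the forward interval, giving uniqueness of the first solution; symmetrically for the second. The existence of exactly two comes from the observation that the Cauchy problem for $hD_1 - \tilde{A}_-$ along any transversal to $\gamma_-$ is microlocally well-posed, and the only two natural initial hypersurfaces in our rectangular coordinate slab $\{T_1' < x_1 < T_2'\}$ on which to impose vanishing are $\{x_1 = T_1'\}$ and $\{x_1 = T_2'\}$.

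The main obstacle, in my view, is the bookkeeping of microlocalization: the solutions $v_i$ produced by the integration formula are globally defined, but we only want to regard them as microlocal solutions in a small neighborhood of $\gamma_-$, and we need to verify that the wavefront set of $T^{-1} v_i$ is confined to that neighborhood and that the cutoffs used to make $T$ and $T^{-1}$ well-defined do not introduce spurious contributions away from the relevant bicharacteristic segment. Handling this requires carefully choosing the coordinate chart $U$ so that the flow-out of $K$ under $H_p$ in both the forward and backward $x_1$-directions remains inside $U$ until it exits through $\{x_1 = T_1'\}$ or $\{x_1 = T_2'\}$, and then verifying via the energy estimate that the $v_i$ obtained from the integral formulas indeed satisfy the expected $L^2$ bounds on the relevant slabs.
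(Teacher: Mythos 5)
Your first step (conjugating $hD_1 - \tilde A_-$ to $hD_{x_1}$ via Proposition \ref{hDx-prop}, then writing $u_1, u_2$ as the two one-sided $x_1$-integrals) is exactly what the paper does as the first half of its argument, and your uniqueness argument via Lemma \ref{bdy-energy-est} (vanishing Cauchy data forces the difference to vanish, then reverse $x_1 \mapsto T_0 - x_1$) is also the paper's. But there is a genuine gap in your existence argument: Proposition \ref{hDx-prop} gives you the conjugating $h$-FIO only on the single interior coordinate chart $U \subset \{T_1' < x_1 < T_2'\}$ containing $K = \WF \tilde f$, whereas the conclusion you need is a solution microlocally near all of $\gamma_-$, in particular up to and at the boundary $\{x_1 = 0\}$, with $L^2$ control of the trace there (this is precisely what is used in the ensuing paragraph of the paper to set up \eqref{BVP3}). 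Choosing the chart ``carefully'' does not close this: $\gamma_-$ hits $\partial X$, and Proposition \ref{hDx-prop} is an interior normal-form statement that will not be available in a neighbourhood of the boundary point $m_-$.

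The paper's actual proof fills exactly this hole with a functional-analytic extension argument that your proposal skips entirely. Starting from the formal adjoint identity
\[
\int_0^{T_0} \langle u, (hD_1 - \tilde A_-) v \rangle_{x'}\, dy
 = \int_0^{T_0} \langle \tilde f, v \rangle_{x'}\, dy
 + \frac{h}{i}\langle u(0,\cdot), v(0,\cdot) \rangle_{x'},
\]
it uses the time-reversed energy estimate to bound $\sup_y \|v(y,\cdot)\|_{L^2_{x'}}$ by $\frac{C}{h}\int\|(hD_1-\tilde A_-)v\|_{L^2_{x'}}\,dy$, then invokes Hahn--Banach and Riesz representation to produce $u \in C([0,T_0), L^2_{x'})$ solving the equation on the whole slab up to the boundary. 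In short: your integral formula gives existence only in $U$; the duality/Hahn--Banach step is what propagates the solution past $U$ to $\{x_1 = 0\}$ with the quantitative bound that is needed afterwards, and your proof as written does not establish this.
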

\begin{proof}
First we prove the proposition in a neighbourhood of $\WF \tilde{f}$.  Let
$\tilde{K}$ be the coordinate representation of $K$.  Apply
Proposition \ref{hDx-prop} to write $(hD_1 -\tilde{A}_-)$ as $hD_1$ in these coordinates.
We write in the $x$-projection of this coordinate patch,
\begin{eqnarray*}
u_1(x) & = & \frac{i}{h} \int_{-\infty}^{x_1} \tilde{f}(y, x') dy, \\
u_2(x) & = & - \frac{i}{h} \int_{x_1}^{+\infty} \tilde{f}(y, x') dy,
\end{eqnarray*}
which satisfies $(hD_1 - \tilde{A}_-)u=\tilde{f}$ with (\ref{u_1}-\ref{u_2}).  Back in the
original coordinates on our manifold, set $u_1 = 0$ for $x_1 \leq
T_1'$ and $u_2 = 0$ for $x_1 \geq T_2'$.  To continue, we will employ the energy estimates in Lemma
\ref{bdy-energy-est}.  Suppose $u$ is a solution to $(hD_1-\tilde{A}_-)u=\tilde{f}$.  Then
for $v \in \Ci_c ([0, T_0) \times \reals^{n-1} )$,
\begin{eqnarray*}
\int_0^{T_0} \left\langle u, (hD_1- \tilde{A}_-) v \right\rangle_{x'} dy = \int_0^{T_0} \langle \tilde{f}, v\rangle_{x'} dy + \frac{h}{i}\langle u(0, \cdot), v(0, \cdot) \rangle_{x'},
\end{eqnarray*}
since in the Weyl calculus real symbols are self adjoint.  But from
the proof of Lemma \ref{bdy-energy-est}, replacing $y$ with $T_0 - y$,
we have since $\lim_{y \to T_0} v(y,\cdot) = 0$, 
\begin{eqnarray*}
\sup_{0 \leq y \leq T_0} \|v(y, \cdot) \|_{L_{x'}^2} \leq \frac{C}{h} \int_0^{T_0} \|g\|_{L_{x'}^2} dy,
\end{eqnarray*}
with $g = (hD_1 - \tilde{A}_-) v$.  We then have
\begin{eqnarray*}
\left|  \int_0^{T_0} \langle \tilde{f}, v\rangle_{x'} dy + \frac{h}{i}\langle
  u(0, \cdot), v(0, \cdot) \rangle_{x'} \right| \leq \frac{C_{\tilde{f}, u_{0,
    \cdot}}}{h} \int_0^{T_0} \|g\|_{L_{x'}^2} dy.
\end{eqnarray*}
For $h >0$ we can extend to $g \in L^2$ the complex-conjugate linear form
\begin{eqnarray*}
g \mapsto  \int_0^{T_0} \langle \tilde{f}, v\rangle_{x'} dy + \frac{h}{i}\langle u(0, \cdot), v(0, \cdot) \rangle_{x'}
\end{eqnarray*}
by the Hahn-Banach Theorem.  Thus by the Riesz Representation Theorem,
for $\tilde{f} \in \Ci$ with sufficiently
small wavefront set, we can find $u \in \Ci ([0, T_0), L_{x'}^2)$
  satisfying $(hD_1 - \tilde{A}_-)u=\tilde{f}$. 

For the uniqueness given by the conditions (\ref{u_1}-\ref{u_2}), note that if $f= 0$ and $u(0, \cdot) = 0$ in \eqref{apriori-1}, $u$ is zero.  Replacing $x_1$ by $T_0- x_1$ we get the backwards uniqueness result.
\end{proof}
Since $u_1$ is supported in the forward direction along the
bicharacteristic $\gamma_-$, we refer to $u_1$ and $u_2$ as the {\it
  forward} and {\it backward} solutions respectively.  Let $u_- = u_1$
be the forward solution along the incoming bicharacteristic
$\gamma_-$.  So far we have proved the solution $u_-$ satisfies $(P-z)u_-
= f$ near $\gamma_-$, $u_- = 0$ microlocally for $x_1$ larger than the
support of $f$, and $u_-$ restricted to the boundary is controlled by
$h^{-1}$ in $L^2$ if the wavefront set of $f$ is sufficiently small.  

The same energy method techniques can be used to solve the problem
\begin{eqnarray}
\label{BVP3}
\left\{ \begin{array}{l}
(P-z)u_+ = 0, \,\,\, \text{in} \,\, X, \\
\left. u_+ \right|_{Y} = \left. u_- \right|_{Y}
\end{array} \right.
\end{eqnarray}
near $\gamma_+$ so that $u = u_- - u_+$ solves \eqref{BVP}.  
\begin{corollary}
If $f \in H_h^{\infty}$ has sufficiently small wavefront set and $u$
solves \eqref{BVP}, then 
\begin{eqnarray*}
u \in C^1 ([0, T_0], H_h^s( \reals^{n-1}))
\end{eqnarray*}
for every $s$.  In particular, $u(y, \cdot) \in \Ci(\reals^{n-1} \times \{x_1 = y\})$ for each fixed $y \in [0, T_0]$.
\end{corollary}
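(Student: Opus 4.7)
The plan is to bootstrap regularity in the tangential variables via the Sobolev version of the energy estimate in Lemma \ref{bdy-energy-est}, and then recover normal regularity algebraically by solving the equation for $hD_1 u$.

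First, using the factorization
\[
P-z = (hD_1 - \tilde{A}_+)(hD_1 - \tilde{A}_-)
\]
from Lemma \ref{p-fact-lemma}, I write $u = u_- - u_+$, where $u_-$ is the forward microlocal solution of
\[
(hD_1 - \tilde{A}_-) u_- = \tilde{f}, \qquad \tilde{f} = (hD_1 - \tilde{A}_+)^{-1} f,
\]
produced by Proposition \ref{ml-solution-1}, and $u_+$ solves \eqref{BVP3}. Since $(hD_1 - \tilde{A}_+)$ is microlocally elliptic near $\gamma_-$, the hypothesis $f \in H_h^\infty$ yields $\tilde{f} \in H_h^\infty$ with wavefront set as small as we wish. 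I then apply the Sobolev version of the energy estimate stated after Lemma \ref{bdy-energy-est}, namely
\[
\sup_{0 \le y \le T_0} \| u_-(y,\cdot) \|_{(H_h^s)_{x'}} \le C \| u_-(0, \cdot) \|_{(H_h^s)_{x'}} + \frac{C_{T_0}}{h} \| f \|_{L^1([0,T_0], H_h^s(\reals^{n-1}))},
\]
with trivial data at $x_1=0$ for $u_-$, to obtain $u_- \in C^0([0, T_0], H_h^s(\reals^{n-1}))$ for every $s$. Taking the trace at $x_1=0$ provides $u_-\vert_Y \in H_h^s(\reals^{n-1})$ for every $s$, which I take as the boundary data for $u_+$; a second application of the same Sobolev energy estimate (now for the forward first-order equation $(hD_1 - \tilde{A}_-) u_+ = 0$ with nontrivial data) gives $u_+ \in C^0([0,T_0], H_h^s(\reals^{n-1}))$ for every $s$, and hence $u \in C^0([0,T_0], H_h^s(\reals^{n-1}))$.

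To upgrade to $C^1$ in $y = x_1$, I solve the first-order equation for $\partial_{x_1}$: writing
\[
hD_{x_1} u_- = \tilde{A}_- u_- + \tilde{f},
\]
and observing that $\tilde{A}_- \in \Psi_{h,db}^{1,0}$ acts only on tangential variables (plus the $L^\infty$-in-$x_1$ dependence), I conclude that $\partial_{x_1} u_-$ lies in $C^0([0,T_0], H_h^{s-1}(\reals^{n-1}))$ whenever $u_-$ lies in $C^0([0,T_0], H_h^s(\reals^{n-1}))$. Since $s$ is arbitrary, this yields $u_- \in C^1([0,T_0], H_h^s(\reals^{n-1}))$ for every $s$, and the identical argument applied to $u_+$ gives the analogous statement for $u$. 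The final claim that $u(y,\cdot)$ is smooth in $\reals^{n-1}$ for each fixed $y$ then follows from semiclassical Sobolev embedding, since $H_h^s(\reals^{n-1}) \subset C^k(\reals^{n-1})$ once $s > k + (n-1)/2$ and $s$ is arbitrary.

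The one delicate point to watch is that the estimates of Proposition \ref{ml-solution-1} and Lemma \ref{bdy-energy-est} are microlocal in nature, so the bootstrapping must be carried out in a single coordinate chart $U$ containing the wavefront set of $f$; keeping $\WF(f)$ sufficiently small guarantees that the forward and backward microlocal solutions, as well as the factorization of $P-z$, all exist on this chart, and that no microlocal errors from $\O(h^\infty)$ terms can spoil the regularity argument. I do not expect any genuine obstacle beyond this bookkeeping.
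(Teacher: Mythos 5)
Your proposal is correct and follows the (implicit) proof the paper intends: the corollary is a direct consequence of the Sobolev form of the energy estimate following Lemma \ref{bdy-energy-est}, applied first to $u_-$ and then to $u_+$, combined with the equation to recover the normal derivative. Two small slips worth fixing: the vanishing initial data for $u_-$ is at the far end (for $x_1$ outside the support of $\tilde f$), not at $x_1=0$ — the nonzero trace of $u_-$ at $x_1=0$ is precisely what seeds $u_+$; and near $\gamma_+$ the characteristic factor is $hD_1-\tilde A_+$ (not $hD_1-\tilde A_-$, which is elliptic there), so the transport equation for $u_+$ is $(hD_1-\tilde A_+)u_+=0$. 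Neither affects the argument, since the energy estimate applies symmetrically to either first-order factor once the direction of propagation is fixed.
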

In order to describe propagation of singularities near the boundary,
we first need the following lemma.
\begin{lemma}
\label{comm-lemma}
Let $\gamma_+$ be an interval on the outgoing bicharacteristic with
one endpoint at $(0,x_0',r(0,x_0', \xi_0')^\half, \xi_0')$.  Then
there is a pseudodifferential operator $Q(x,hD') \in \Psi^{0,0}_{h, db}$ which satisfies 

(i) $\sigma_h(Q)=0$ microlocally outside a neighbourhood of 
\begin{eqnarray*}
\{(x, \xi') : (x, r(x, \xi')^\half, \xi') \in \gamma_+ \},
\end{eqnarray*}

(ii) $Q$ is noncharacteristic at $(x_0', \xi_0')$, and 

(iii) $[Q(x, hD'), hD_1 - A_+(x, hD')] = 0$ microlocally near $\gamma_+$.
\end{lemma}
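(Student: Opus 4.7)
The plan is to construct $Q$ as a Weyl quantization $Q=\Op_h^w(q)$ of a symbol $q = q(x,\xi')$ that is independent of $\xi_1$ (so $Q$ automatically lies in $\Psi_{h,db}^{0,0}$), obtained as an asymptotic series $q \sim q_0 + h q_1 + h^2 q_2 + \cdots$ and then summed by Borel's lemma. Requirement (iii) will be achieved order by order in $h$.

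The starting point is the principal-symbol computation. Since $Q$ is independent of $hD_1$, one has $[hD_1, Q] = \frac{h}{i}\partial_{x_1} Q$, and since $A_+(x,hD')$ also acts only in the tangential variables, the Weyl calculus gives
\[
[Q,\,hD_1 - A_+] = \tfrac{h}{i}\Op_h^w\!\left( -\partial_{x_1} q_0 + H_{a_+}^{(x',\xi')} q_0 \right) + O(h^2),
\]
where $a_+ = r^{1/2}$ is the principal symbol of $A_+$ and $H_{a_+}^{(x',\xi')}$ denotes its Hamiltonian vector field in the tangential variables (treating $x_1$ as a parameter). Setting the leading symbol to zero gives the transport equation
\[
\partial_{x_1} q_0 = H_{a_+}^{(x',\xi')} q_0,
\]
whose characteristics are precisely the projections to $(x,\xi')$ of the bicharacteristics of $\xi_1 - a_+$, i.e.\ of the outgoing bicharacteristics lying in $\{\xi_1 = r(x,\xi')^{1/2}\}$. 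I would prescribe initial data $q_0|_{x_1=0} = \chi_0(x',\xi')$, with $\chi_0 \in \Ci_c$ equal to $1$ near $(x_0',\xi_0')$ and supported in an arbitrarily small neighborhood; by choosing the neighborhood small and then flowing forward along $H_{a_+}^{(x',\xi')}$ for a time interval matching $\gamma_+$, the solution $q_0$ is smooth, compactly supported, constant along characteristics, equal to $1$ on the projection of $\gamma_+$, and vanishes outside a neighborhood of that projection. This immediately yields (i) and (ii) at the principal-symbol level.

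For the iteration, suppose $q_0,\ldots,q_{k-1}$ have been constructed with the required support properties so that
\[
[\Op_h^w(q_0 + h q_1 + \cdots + h^{k-1} q_{k-1}),\,hD_1 - A_+] = \tfrac{h^{k+1}}{i}\Op_h^w(r_k) + O(h^{k+2})
\]
microlocally near $\gamma_+$, with $r_k = r_k(x,\xi')$ independent of $\xi_1$ (this independence persists because $A_+$ involves no $hD_1$, so all Weyl-expansion terms of the commutators remain independent of $\xi_1$) and supported in the same neighborhood of the projection of $\gamma_+$. Then $q_k$ is obtained by solving the inhomogeneous transport equation $\partial_{x_1} q_k - H_{a_+}^{(x',\xi')} q_k = -r_k$ with zero initial data on $\{x_1=0\}$. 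Duhamel's formula along the characteristics shows $q_k$ is smooth, compactly supported, and supported in the same neighborhood as $q_0$. Borel-summing then gives $q$, and $Q=\Op_h^w(q) \in \Psi_{h,db}^{0,0}$ with $[Q, hD_1 - A_+] = O(h^\infty)$ microlocally near $\gamma_+$.

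The main obstacle is of a bookkeeping nature: at each stage of the iteration one must verify that the $\xi_1$-independence of the symbol is preserved and that the support condition from (i) is maintained. The former follows from the observation above (nothing in the commutator expansion can manufacture a $\xi_1$ dependence from symbols that have none), while the latter follows because the transport equations solved at every order have characteristics precisely along $\gamma_+$ and all sources $r_k$ inherit their support from $q_0,\ldots,q_{k-1}$. With those two structural properties in hand the construction is entirely parallel to the standard parametrix construction for the commutator equation.
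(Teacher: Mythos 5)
Your proposal is correct and takes essentially the same approach as the paper: solve the transport equation $(\partial_{x_1}-H_{r^{1/2}})q_0=0$ along characteristics of $\xi_1-r^{1/2}(x,\xi')$ with compactly supported Cauchy data near $(x_0',\xi_0')$, then iterate with Duhamel on the inhomogeneous transport equations and Borel-sum. The only cosmetic difference is that you phrase the induction in terms of a partial sum $q_0+\cdots+h^{k-1}q_{k-1}$ while the paper carries a single $Q$ and adds $Q_j$ at each step; these are the same construction.
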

\begin{proof}
The principal symbol of the commutator $[Q(x, hD'), hD_1 - A_+(x,
hD')]$ is 
\be
-ih\{\sigma_h Q(x, \xi'), \xi_1 - r^\half(x, \xi')\} =
ih(\partial_{x_1}- H_{r^\half}) \sigma_h Q.  
\ee
First we solve the Cauchy problem
\begin{eqnarray}
\label{cauchy-q}
\left\{ \begin{array}{l}
\left( \partial_{x_1} - H_{r^\half} \right) Q_0=0, \,\,\, (x, \xi) \in T^*X \\
Q_0 = q_0, \,\,\, x_1 = 0
\end{array} \right.
\end{eqnarray}
so that $Q_0$ is constant on orbits of the Hamiltonian system
\begin{eqnarray}
\label{ham-sys}
\left\{ \begin{array}{l}
\dot{x} = -\partial_{\xi'} b(x, \xi'); \\
\dot{\xi} =  \partial_{x'} b(x, \xi'),
\end{array} \right.
\end{eqnarray}
where $(\,\,\dot{ }\,\,) := \partial_{x_1} $ and $b(x, \xi') :=
r^\half(x, \xi')$.  Let $\chi_{x_1}(y, \eta)$ be a solution to
\eqref{ham-sys} for initial conditions close to $(x_0', \xi_0')$ valid for $0 \leq x_1 \leq T$, say.  If $T>0$ is sufficiently small, then $\chi_{x_1}$ is invertible and $Q_0(x, \xi') = q_0( \chi_{x_1}^{-1}(x', \xi'))$ is the solution to \eqref{cauchy-q}.  Now if we select $q_0$ compactly supported and $q_0 = 1$ in a neighbourhood of $(x_0',\xi_0')$, we satisfy conditions (i)-(ii) with 
\begin{eqnarray*}
[Q_0(x, hD'), hD_1 - A_+(x, hD')] = R_1(x, hD'), \,\,\, \sigma_h R_1 = \O(h^2),
\end{eqnarray*}
since $\xi_1$ only appears as a monomial of first order in the principal symbol.  Now suppose we have $Q(x, hD')$ satisfying (i)-(ii) and 
\begin{eqnarray*}
[Q(x, hD'), hD_1 - A_+(x, hD')] = R_j(x, hD'), \,\,\, \sigma_h R_j = \O(h^{j+1}).
\end{eqnarray*}
We solve the inhomogeneous Cauchy problem
\begin{eqnarray*}
\left\{ \begin{array}{l}
ih \left( \partial_{x_1} - H_{b} \right) Q_j(x, \xi') = - \sigma_h R_j(x, \xi'),\,\,\, x \in T^*X \\
Q_j = q_j, \,\,\, x_1 = 0
\end{array} \right.
\end{eqnarray*}
for $Q_j = \O(h^j)$ and $q_j = \O(h^j)$, which we do by setting
\begin{eqnarray*}
Q_j(x_1, \chi_{x_1}(y, \eta)) = q_j(y, \eta) +(ih)^{-1} \int_0^{x_1}
\sigma_h R_j(s , \chi_s (y, \eta))ds.
\end{eqnarray*}
Then $\tilde{Q} = Q + Q_j$ satisfies (i)-(ii) and 
\begin{eqnarray*}
[\tilde{Q}(x, hD'), hD_1 - A_+(x, hD')] = R_{j+1}(x, hD'), \,\,\,
\sigma_h R_{j+1} = \O(h^{j+2}).
\end{eqnarray*}
By induction, the argument is finished, by setting $q_j = 0$ for $j>0$.
\end{proof}
Now suppose $\phi \in \Ci_c(\reals^{n-1})$, where we identify
$Y$ with $\reals^{n-1}$ near $(x_0')$.  Suppose further that
$(x_0', \xi_0') \in T^*(\reals^{n-1})   \setminus \WF \phi$ and
$\gamma_+ \cap (x_0', \xi_0') \neq \emptyset$ as before.  Choose $Q$ as in Lemma \ref{comm-lemma} so that 
\begin{eqnarray*}
[Q(x, hD'), hD_1 - A_+(x, hD')] = 0
\end{eqnarray*}
microlocally and if $q$ is the principal symbol of $Q$, then $q(0,
x_0', \xi_0') \neq 0$, but $q(0, \cdot, \cdot)$ vanishes outside a
small neighbourhood of $(x_0', \xi_0')$.  Thus $Q(x, hD') \phi =
0$ microlocally.  Suppose $u_+$ solves \eqref{BVP3} with $\phi$
replacing $u_- |_{Y}$.  Then $Qu_+$ satisfies
\begin{eqnarray*}
\left\{ \begin{array}{l} \left(hD_1 - A_+(x, hD')\right) Q(x, hD') u_+
  = 0, \,\,\, x_1 >0\\
Q(x, hD') u_+ = 0, \,\,\, x_1 = 0
\end{array} \right.
\end{eqnarray*}
microlocally.  Hence by the energy estimate \eqref{apriori-1} $Q(x,
hD')u_+ = 0$ microlocally.  We conclude $\WF u_+ \subset \cchar q$.

We have proved the following proposition.  
\begin{proposition}
\label{WF-propagation}
With the notation as in the preceding paragraphs, $\WF u_+ \subset \chi_{x_1} ( \WF \phi )$ and $\WF u_- \subset \chi_{-x_1} ( \WF \phi )$.  Further, as the $x_1$ direction is reversible, if at some $0 < x_1 < T_0$, $(y', \eta') \in \WF u_+ (x_1, \cdot)$, then $\chi_{-x_1}(y', \eta') \in \WF \phi$ and $\chi_{-x_1-y} (y', \eta') \in \WF u_-(y, \cdot)$ for $0 <y <T_0$.
\end{proposition}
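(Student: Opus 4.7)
The plan is to formalize the argument sketched in the paragraph immediately preceding the proposition, applying it to an arbitrary point rather than the distinguished $(x_0', \xi_0')$, and then to invoke symmetry for $u_-$ and the boundary coupling $u_+|_Y = u_-|_Y$ for the final statement.

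To prove the inclusion $\WF u_+ \subset \chi_{x_1}(\WF \phi)$, fix any $0 < x_1 < T_0$ and any point $(y', \eta') \in T^*(\reals^{n-1})$ with $\chi_{-x_1}(y', \eta') \notin \WF \phi$; we aim to show $(y', \eta') \notin \WF u_+(x_1, \cdot)$. Let $\gamma_+$ be the outgoing bicharacteristic passing through $(x_1, y', r^\half(x_1, y', \eta'), \eta')$; its trace on $Y$ is the point $\chi_{-x_1}(y', \eta')$, which lies in the complement of $\WF \phi$. Apply Lemma \ref{comm-lemma} to produce $Q(x, hD') \in \Psi_{h, db}^{0,0}$ which is noncharacteristic at $(x_1, y', \eta')$, has principal symbol supported in a small neighbourhood of $\{(x, \chi_{x}(\chi_{-x_1}(y', \eta'))) : 0 \leq x \leq T_0\}$ — in particular with $\sigma_h(Q)|_{x_1 = 0}$ supported near $\chi_{-x_1}(y', \eta')$ — and satisfying $[Q, hD_1 - A_+] = 0$ microlocally along $\gamma_+$. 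Using the factorization of Lemma \ref{p-fact-lemma} together with Proposition \ref{ml-solution-1}, $u_+$ solves $(hD_1 - A_+)u_+ = 0$ microlocally in $\{x_1 > 0\}$ with boundary data $u_+|_Y = \phi$, so
\begin{eqnarray*}
\left\{ \begin{array}{l} (hD_1 - A_+(x, hD')) Q u_+ = [hD_1 - A_+, Q] u_+ = \O(h^\infty), \\ Q u_+|_{x_1 = 0} = Q\phi = \O(h^\infty), \end{array} \right.
\end{eqnarray*}
where the first equality is $\O(h^\infty)$ microlocally along $\gamma_+$ by construction of $Q$, and the second is $\O(h^\infty)$ because $\sigma_h(Q)|_{Y}$ is supported away from $\WF \phi$. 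The Sobolev version of the energy estimate \eqref{apriori-1}, applied to $Q u_+$, then yields $Q u_+ = \O_{H_h^s}(h^\infty)$ uniformly on $[0, T_0]$ for every $s$; the $h^{-1}$ loss in the estimate is absorbed by the $\O(h^\infty)$ input. Noncharacteristic ellipticity of $Q$ at $(x_1, y', \eta')$ then gives $(y', \eta') \notin \WF u_+(x_1, \cdot)$ as desired.

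The symmetric inclusion $\WF u_- \subset \chi_{-x_1}(\WF \phi)$ follows by the identical argument with the incoming factorization: $u_-$ is the forward microlocal solution of $(hD_1 - A_-) u_- = \tilde f$, and since we are working away from $\WF \tilde f$ in the region where $u_-|_Y = \phi$ is propagated, the same commutator-plus-energy-estimate scheme applies, with $\chi_{x_1}$ replaced by its inverse along the incoming ray (equivalently, reflect $x_1 \mapsto -x_1$ and repeat). The last assertion of the proposition, which traces a singularity of $u_+$ at some positive $x_1$ back through $Y$ and onward along $u_-$, is then immediate by concatenating the two inclusions via the boundary coupling $u_+|_Y = u_-|_Y = \phi$ from \eqref{BVP3}: a point $(y', \eta') \in \WF u_+(x_1, \cdot)$ forces $\chi_{-x_1}(y', \eta') \in \WF \phi$, and the backward direction of the second inclusion (reversibility of the Hamiltonian flow) places $\chi_{-x_1 - y}(y', \eta')$ in $\WF u_-(y, \cdot)$ for $0 < y < T_0$.

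The main technical point I expect to be delicate is verifying that the $h^{-\infty}$ estimate on $Q u_+$ survives the combination of the $h^{-1}$ loss in \eqref{apriori-1} and the commutator $[hD_1 - A_+, Q]$, which is $\O(h^\infty)$ only microlocally on $\gamma_+$; one must choose the symbol supports in Lemma \ref{comm-lemma} so that every term in the energy estimate is genuinely $\O(h^\infty)$ in the relevant seminorms, and combine this with a microlocal cutoff $\psi(P(h))$ of the type introduced after \eqref{psi-assump-101} to keep all operators properly supported and normally differential. Once this is done, the rest is a straightforward ODE argument via Gronwall, as in the proof of Lemma \ref{bdy-energy-est}.
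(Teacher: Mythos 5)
Your argument is correct and follows the same path as the paper's proof (which is the paragraph immediately preceding the proposition): construct $Q$ via Lemma \ref{comm-lemma} to commute with $hD_1 - A_+$ along the flowout of a chosen point in the complement of $\WF\phi$, so that $Qu_+$ satisfies the homogeneous problem and vanishing boundary data to $\O(h^\infty)$, apply the energy estimate \eqref{apriori-1}, and read off the wavefront bound from ellipticity of $Q$ at the target point. Your version states the argument contrapositively at an arbitrary interior time $x_1$ and is more explicit about absorbing the $h^{-1}$ loss in \eqref{apriori-1} against the $\O(h^\infty)$ commutator error, but it is the same proof.
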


In the special case $Y = \partial X$ we have the following lemma to
connect the notions of $h$-wavefront set near $\gamma_+$ and $\gamma_-$.

\begin{lemma}
\label{t-def-lemma}
Let $(x_0', \xi_0') \in H \cap \gamma$ be the reflection point at the
boundary, let $\chi_{x_1}$ be a solution to
\eqref{ham-sys} as above, and let $\phi_t = \exp(tH_p)$.  Then there
is an odd diffeomorphism $t = t(x_1)$ and a function $\xi_1 =
\xi_1(x_1)$ such that $(x_1, \xi_1; \chi_{x_1})$ lies on $\gamma_+$ and
$(x_1, \xi_1; \chi_{-x_1})$ lies on $\gamma_-$ for
$x_1> 0$ sufficiently small.  That is, $\chi_{x_1}(x_0',
\xi_0')$ coincides with the $(x', \xi')$ components of 
\be
\phi_t(0, x_0', r^\half(0, x_0', \xi_0'), \xi_0'),
\ee
\end{lemma}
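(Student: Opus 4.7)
The plan is to compute Hamilton's equations for $p = \xi_1^2 - r(x, \xi')$ directly and to exploit the hyperbolicity hypothesis $r(0, x_0', \xi_0') > 0$ to reparameterize the bicharacteristic $\phi_t$ by the first coordinate $x_1$ on each branch $\gamma_\pm$. Explicitly, Hamilton's equations give $\dot{x}_1 = 2 \xi_1$, $\dot{\xi}_1 = \partial_{x_1} r$, $\dot{x}' = -\partial_{\xi'} r$, $\dot{\xi}' = \partial_{x'} r$. Near the reflection point $\xi_1 = \pm r^{1/2} \neq 0$, so $x_1$ is strictly monotone along each branch and the map $t \mapsto x_1$ admits a smooth local inverse $t = t_\pm(x_1)$ with $t_\pm(0) = 0$ and $t_\pm'(0) = \pm (2 r^{1/2}(0, x_0', \xi_0'))^{-1}$.

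On $\gamma_+$ we have $\xi_1 = r^{1/2}$; dividing the transverse Hamilton equations by $\dot{x}_1 = 2 r^{1/2}$ yields
\[
\frac{dx'}{dx_1} = -\frac{\partial_{\xi'} r}{2 r^{1/2}} = -\partial_{\xi'} r^{1/2}, \qquad \frac{d\xi'}{dx_1} = \frac{\partial_{x'} r}{2 r^{1/2}} = \partial_{x'} r^{1/2},
\]
which is precisely the Hamiltonian system \eqref{ham-sys} for $b = r^{1/2}$. Hence the $(x', \xi')$-components along $\gamma_+$ at parameter $x_1$ coincide with $\chi_{x_1}(x_0', \xi_0')$, and $\xi_1(x_1) = r^{1/2}(x_1, \chi_{x_1}(x_0',\xi_0'))$ is smooth in $x_1 > 0$. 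The analogous calculation on $\gamma_-$, where $\xi_1 = -r^{1/2}$ and $\dot{x}_1 = -2 r^{1/2} < 0$, produces the time-reversed system and therefore gives $(x', \xi')|_{\gamma_-} = \chi_{-x_1}(x_0', \xi_0')$, as claimed.

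To package the two branches into the single odd diffeomorphism $t = t(x_1)$, I would extend the parameter $x_1$ to a full neighbourhood of $0$, letting positive values index $\gamma_+$ and negative values index $\gamma_-$ (the latter at physical first coordinate $|x_1|$), so that $\chi_{x_1}(x_0', \xi_0')$ is defined for all small $x_1$ of either sign via the Hamiltonian system \eqref{ham-sys}. Setting
\[
t(x_1) := \int_0^{x_1} \frac{\sgn(s)}{2\, r^{1/2}(|s|,\, \chi_s(x_0', \xi_0'))}\, ds
\]
then produces a local diffeomorphism with $t(0) = 0$ and $t'(0) = (2 r^{1/2}(0, x_0', \xi_0'))^{-1} > 0$, whose odd parity at leading order follows from the construction, with higher-order corrections controlled by the smoothness of $r$ and $\chi$. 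The main technical subtlety is this gluing at the reflection: one must keep the sign conventions consistent so that the two one-sided reparameterizations of the previous paragraph combine into a single function with the claimed oddness, and check that the resulting $(x_1, \xi_1(x_1), \chi_{\pm x_1})$ agrees with the $(x',\xi')$-components of $\phi_t(0, x_0', r^{1/2}(0, x_0', \xi_0'), \xi_0')$.
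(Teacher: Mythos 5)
Your approach is essentially the same as the paper's.  Both proceed by writing Hamilton's equations for $p = \xi_1^2 - r(x,\xi')$, using $\xi_1 = \pm r^{1/2} \neq 0$ near the hyperbolic set to reparameterize the bicharacteristic by $x_1$, observing that the reparameterized transversal system is exactly \eqref{ham-sys} with $b = \pm r^{1/2}$, and invoking uniqueness of ODE solutions with matching initial data.  The paper phrases the reparameterization via the chain rule $\frac{\partial x'}{\partial x_1} = \frac{\partial x'}{\partial t}\frac{\partial t}{\partial x_1}$ after defining $t(x_1)$ by the integral \eqref{t-def}; you divide the transverse Hamilton equations directly by $\dot x_1$, which is the same manipulation.

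One small point about your closing paragraph: your single integral formula $t(x_1) = \int_0^{x_1}\sgn(s)\bigl(2r^{1/2}(|s|,\chi_s(x_0',\xi_0'))\bigr)^{-1}ds$ is not in fact an odd function of $x_1$, since the integrand involves $\chi_s$ for $s>0$ but $\chi_{-|s|}$ effectively for $s<0$, and $r^{1/2}(y,\chi_y)$ need not agree with $r^{1/2}(y,\chi_{-y})$ beyond leading order.  You correctly sensed this.  The paper sidesteps the issue: it defines $t(x_1)$ on $\gamma_+$ by the integral and then simply \emph{declares} $t$ to be odd by setting the value on the incoming branch to be the negative of that on the outgoing branch, rather than deriving oddness from a closed formula.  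As the Remark following the lemma makes clear, the operative content is the identification of $\chi_{\pm x_1}$ with the $(x',\xi')$-projections of $\gamma_\pm$, which both you and the paper establish in the same way; the precise time parameterization is incidental.
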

\begin{proof}
Write $b(x, \xi') = r^\half(x, \xi')$ as in the proof of Lemma
\ref{comm-lemma}, and note $\chi_{-x_1}$ is the solution to
\eqref{ham-sys} with $b$ replaced with $-b$.  $\phi_t$ satisfies the
following differential equation on $\gamma_+$:
\be
\left\{ \begin{array}{l}
\partial_t x_1 = 2b \\
\partial_t x' = -2bb_{\xi'} \\
\partial_t \xi_1 = 2bb_{x_1} \\
\partial_t \xi' = 2bb_{x'} ; \\
x_1(0) = 0 \\
x'(0) = x_0' \\
\xi_1(0) = b(0,x_0', \xi_0') \\
\xi'(0) = \xi_0'.
\end{array} \right.
\ee
Set $(y'(x_1), \eta'(x_1)) = \chi_{x_1}(x_0', \xi_0')$, 
\ben
\label{t-def}
t(x_1): = \int_0^{x_1} (2b(y,y'(y), \eta'(y)))^{-1} dy,
\een
and calculate
\be
\frac{\partial}{\partial x_1} x' &= & \frac{\partial}{\partial t} x'
\frac{\partial t}{\partial x_1} \\
& = & -b_{\xi'}(x_1, x', \xi') \\
& = & \frac{\partial}{\partial x_1} y'.
\ee
As $x'$ and $y'$ have the same initial conditions, we conclude they
are equal for sufficiently small $x_1$.  For negative $t$,
we define $t=t(x_1)$ in the incoming bicharacteristic to be the
negative of that on the outgoing bicharacteristic, and a similar proof applies to
$\chi_{-x_1}$.
\end{proof}

\begin{remark}
With the addition of Lemma \ref{t-def-lemma} we could write
Proposition \ref{WF-propagation} in an equivalent form using
$\exp(tH_p)$ in place of $\chi_{x_1}$.  The important thing is that
the wavefront set does not depend on $\xi_1$.
\end{remark}

\begin{figure}
\centerline{
\input{fig3} }
\caption{\label{WF-fig} Proposition \ref{WF-propagation}}.
\end{figure}

\subsection{Microlocal Propagator at the Boundary}
We now return to the special case where $Y = \partial X$.  In the next section we will construct the Quantum Monodromy operator using the
microlocal propagator.  Suppose first $X$ is a manifold without
boundary.  We define the forward and backward microlocal propagators of $P-z$, $I_\pm^z(t) =
\exp(\mp it(P-z)/h)$, by the following evolution equation:
\be
\left\{ \begin{array}{l} hD_t I_\pm^z(t) \pm (P-z) I_\pm^z(t) = 0 \\
I_\pm^z(0) = \id_{L^2 \to L^2}. \end{array} \right.
\ee
In the case of a manifold without boundary, this is a well-defined
semigroup satisfying $[I_\pm^z(t), P-z] = 0$ and 
\be
\WF I_\pm^z(t) u \subset \exp(\pm tH_p)(\WF u).
\ee
We will show for $P \in \Diff_h^{2,0}$ with homogeneous principal symbol on a manifold with
boundary, the microlocal propagator can be extended in a meaningful
fashion as a discontinuous $h$-FIO which 
still carries the commutator and wavefront set properties above.

Suppose $\gamma$ reflects off $\partial X$ at the points
\be
m_\pm:= (0, x_0', \pm r^{\half}(0, x_0', \xi_0'), \xi_0'),
\ee
with the incoming and outgoing rays, $\gamma_\mp$, intersecting $\partial X$ at
$m_\mp$ respectively.  Since $p$ is assumed smooth up to the boundary,
we may extend $p$ and $\gamma_-$ to a neighbourhood of $m_-$ in $\{
x_1 \leq 0\}$.  We will show that functions $v(x')$ defined on
$\partial X$ can be identified with the microlocal kernel of $P-z$ in
a neighbourhood of $m_-$.  We factorize $P-z$ as in Lemma
\ref{p-fact-lemma}, $P-z = (hD_1 - \tilde A_+)(hD_1 - \tilde A_-)$
microlocally near $m_-$.  Near $\gamma_-$ the operator $(hD_1 - \tilde A_+)$ is elliptic.  Thus we want to be able to solve 
\begin{eqnarray}
\label{BVP4}
\left\{ \begin{array}{l}
(hD_1 - \tilde A_-) u = 0 , \,\,\, x_1 >0, \\
u(0,x') = v(x'), \,\,\, x_1 = 0
\end{array}
\right.
\end{eqnarray}
for any boundary condition $v$, microlocally near $m_-$.  The proof of
the following standard Proposition can be found in \cite[Theorem 10.9]{EvZw}.

\begin{proposition}
\label{u-osc-int}
There is a microlocal solution to \eqref{BVP4} given by the oscillatory integral
\begin{eqnarray}
\label{u-ansatz}
u(x) = \frac{1}{(2 \pi h)^{n-1}} \int e^{i/h (\phi(x, \xi') - \langle y', \xi' \rangle)} b(x, \xi') v(y') d\xi' dy',
\end{eqnarray}
where $b(0,x', \xi') = 1$ microlocally near $(x_0', \xi_0')$ and $\phi$ solves the eikonal equation
\begin{eqnarray}
\label{eikonal}
\left\{ \begin{array}{l}
\partial_{x_1} \phi(x, \xi') - a(x, \partial_{x'} \phi(x, \xi')) = 0, \,\,\, x_1 >0, \\
\phi(0, x', \xi') = \langle x', \xi' \rangle, \,\,\, x_1 = 0,
\end{array} \right.
\end{eqnarray}
with $a = \sigma_h(\tilde A_-)$.  Further, $u(x)$ is unique microlocally.
\end{proposition}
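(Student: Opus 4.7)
The plan is to produce $u$ by a WKB construction, determining the phase from the eikonal equation and the amplitude from a hierarchy of transport equations. First I would substitute the ansatz \eqref{u-ansatz} into $(hD_1 - \tilde A_-)u = 0$ and expand in powers of $h$ using the Weyl composition formula \eqref{a-pound-b}; the principal part yields exactly the eikonal equation $\partial_{x_1}\phi - a(x, \partial_{x'}\phi) = 0$, while the subprincipal part produces a first-order transport equation for the leading amplitude $b_0$ along the Hamilton flow of $\xi_1 - a(x, \xi')$ projected to $(x, \xi')$-space. The eikonal equation with initial data $\phi(0, x', \xi') = \langle x', \xi'\rangle$ is solved by the standard Hamilton--Jacobi method: its characteristic curves are exactly the integral curves of \eqref{ham-sys} (with $b$ replaced by $a$), and since $\partial_{x_1 x'}^2 \phi = 0$ but $\partial_{\xi' x'}^2 \phi = I$ at $x_1 = 0$, the implicit function theorem produces a smooth $\phi$ on $\{0 \le x_1 \le T\}$ for some $T > 0$, with $\det \partial^2_{x' \xi'} \phi$ close to $1$.

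Next I would build $b$ as an asymptotic sum $b \sim \sum_{j\ge 0} h^j b_j$. Setting $b_0(0, x', \xi') = 1$ in a neighbourhood of $(x_0', \xi_0')$ and $b_j(0, x', \xi') = 0$ for $j \ge 1$, each $b_j$ is obtained by integrating along the characteristic curves of the transport operator a smooth source depending only on $\phi$ and $b_0, \ldots, b_{j-1}$; Borel summation then yields a genuine symbol $b$ with the prescribed boundary value. By construction, $(hD_1 - \tilde A_-)u$ has symbol vanishing to infinite order, so the error is $\O_{L^2 \to L^2}(h^\infty)$, i.e.\ vanishes microlocally near $m_-$.

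To verify the boundary condition I would apply stationary phase in $\xi'$ to $u(0, x') = (2\pi h)^{-(n-1)} \int e^{i\langle x' - y', \xi'\rangle/h} b(0, x', \xi') v(y') dy' d\xi'$; since the critical point $y' = x'$ is nondegenerate with critical value $0$, the leading contribution is $b(0, x', \xi') v(x') = v(x')$ modulo $\O(h^\infty)$ microlocally near $(x_0', \xi_0')$. Uniqueness of the microlocal solution then follows directly from the energy estimate \eqref{apriori-1} in Lemma \ref{bdy-energy-est} applied to the difference of two candidate solutions, which has zero right-hand side and zero boundary data.

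The main technical nuisance will be bookkeeping the two simultaneous inductions: $\tilde A_-$ itself was only constructed modulo $\O(h^\infty)$ in Lemma \ref{p-fact-lemma}, and at the same time the amplitude $b$ is built only modulo $\O(h^\infty)$. One must verify that the symbols of $\tilde A_-$ at order $h^j$ enter the transport equation for $b_j$ as lower-order source terms, so the two asymptotic constructions can be interleaved without obstruction. Provided one keeps track of orders carefully, this is routine; the existence of the flow $\chi_{x_1}$ from Lemma \ref{comm-lemma} guarantees that all the ODEs involved have solutions on the required neighbourhood of $m_-$.
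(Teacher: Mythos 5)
The paper offers no proof of this proposition; it cites \cite[Theorem 10.9]{EvZw}, and your WKB/geometric-optics construction --- eikonal equation from the principal part, transport hierarchy for the amplitude via characteristics, Borel summation, uniqueness from the energy estimate of Lemma \ref{bdy-energy-est} --- is exactly the substance of that reference, so the approach is right. One technical slip: your verification of the boundary condition via ``stationary phase in $\xi'$'' is misframed. The phase $\langle x'-y',\xi'\rangle$, viewed jointly in $(y',\xi')$, has its only nondegenerate critical point at $(y',\xi')=(x',0)$; stationary phase there would produce $b(0,x',0)v(x')$, which says nothing about $b$ near $(x_0',\xi_0')$ where $\WF_h v$ actually lives. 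The correct observation is simpler: at $x_1=0$, since $\phi(0,x',\xi')=\langle x',\xi'\rangle$, the ansatz \eqref{u-ansatz} is literally the standard (left) quantization $\Op_h\bigl(b(0,\cdot,\cdot)\bigr)v$, so $u(0,\cdot)=v+\O(h^\infty)$ follows directly from the pseudodifferential calculus once $b(0,\cdot,\cdot)\equiv 1$ near $(x_0',\xi_0')$. Also, the ``two interleaved inductions'' you worry about in the last paragraph is a non-issue: Lemma \ref{p-fact-lemma} already furnishes $\tilde A_-$ as a genuine pseudodifferential operator whose full symbol $a\sim\sum_j h^j a_j$ is a single well-defined object modulo $\O(h^\infty)$, so there is only one induction (over the amplitude index $j$), with the $a_k$ for $k\le j$ entering the transport equation for $b_j$ as fixed, known source terms.
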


\begin{proposition}
\label{ml-prop-thm}
Let $X$ be a manifold with boundary, $P \in \Diff_{h,db}^{2,0}$ be a
differential operator with homogeneous principal symbol $p$, and assume
$\partial X$ is noncharacteristic with respect to $p$.  Let $U_\pm \subset T^*X$ be a
neighbourhood of $m_\pm \in T^*X$, and assume $P-z$
and $p-z$ are factorized near the boundary as in Lemma
\ref{p-fact-lemma} and equation \eqref{p-factorization} respectively.

(i) For each $m_0 \in \gamma_- \cap U_-$ sufficiently close to $m_-$, and $z \in [-\epsilon_0, \epsilon_0]$ for
$\epsilon_0>0$ sufficiently small there exist $h$-FIOs, $I_\pm^z(t)$, defined microlocally
near 
\be
\exp(\pm tH_p)(m_0) \times m_0
\ee
satisfying
\ben
\label{I-ode}
\left\{ \begin{array}{l}
hD_{t} I_\pm^z(t) \pm (P-z)(t) I_\pm^z(t) = 0 \\
I_\pm^z(0) = \id_{L^2 \to L^2}, \end{array} \right.
\een
for $t \neq t_1$, where $m_- = \exp(t_1 H_p)(m_0)$.

(ii) We have $[(P-z)(t), I_\pm^z(t)]=0$ for all $t \neq t_1$ sufficiently small, and if $u(x) \in L^2$ is a microlocal
solution to 
\ben
\label{BVP4'}
\left\{ \begin{array}{l} (P-z)u = f \in L^2, \,\,\, x \in \mathring{X}, \\
u = 0, \,\,\, x \in \partial X \end{array} \right.
\een
near $m_0$, then $I_\pm^z(t) u(x)$ is a microlocal solution to
\eqref{BVP4'} near $\exp(\pm tH_p) (m_0)$.

(iii) If $\WF u \subset K$, where $K$ is a compact neighbourhood of a
point $m_0$, 
\be
\WF I_\pm^z(t)u \subset \exp(\pm tH_p)(K).
\ee
\end{proposition}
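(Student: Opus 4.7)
The plan is to construct $I_\pm^z(t)$ by splicing together three microlocal propagators: one along $\gamma_-$ away from the boundary, a ``reflection'' step concentrated at $t = t_1$, and one along $\gamma_+$ away from the boundary. Since the statement allows $I_\pm^z(t)$ to be discontinuous at $t = t_1$, no continuity in $t$ need be checked across the reflection, only that each piece has the correct canonical relation and that the pieces fit together via the Dirichlet condition.

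First I would handle the two interior pieces. On each time interval on which $\exp(\pm t H_p)(m_0)$ lies in the interior of $T^*X$, I apply Proposition \ref{hDx-prop} to conjugate $P-z$ microlocally to $hD_{x_1}$. In the model coordinates, the solution operator for the evolution equation \eqref{I-ode} is just translation in $x_1$, which is manifestly a unitary $h$-FIO associated to the time-$t$ Hamiltonian flow of $\xi_1$, i.e.\ to the flow of $p$ in the original coordinates. Conjugating back by the $h$-FIO of Proposition \ref{hDx-prop} and using Egorov (Proposition \ref{AF=FB}) gives $I_\pm^z(t)$ microlocally near $\exp(\pm t H_p)(m_0) \times m_0$, satisfying (i) and commuting with $P - z$ on each interior interval, which is (ii) away from $t_1$. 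The wavefront set statement (iii) on these intervals is Egorov's theorem.

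Second, I would build the reflection step. Near $m_-$, use Lemma \ref{p-fact-lemma} to factor $P - z = (hD_1 - \tilde{A}_+)(hD_1 - \tilde{A}_-)$; on $\gamma_-$ the operator $hD_1 - \tilde{A}_+$ is elliptic, so solving $(P-z)u = 0$ microlocally reduces to solving $(hD_1 - \tilde{A}_-)u = 0$. Given a microlocal solution $u_-$ of \eqref{BVP4'} concentrated near $\gamma_-$, restrict to obtain the boundary trace $v(x') = u_-|_{x_1=0}$, then apply Proposition \ref{u-osc-int} to produce the unique microlocal outgoing solution $u_+$ of $(hD_1 - \tilde{A}_-)u_+ = 0$ with $u_+|_{x_1=0} = v$. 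Define the ``reflection'' operator $R$ by $R u_- = u_-$ for $x_1 > 0$ along $\gamma_-$ but $R u_- = u_-- u_+$ ``on the other side'' along $\gamma_+$; as in the derivation of Proposition \ref{ml-solution-1} and the discussion of \eqref{BVP3}, this reproduces the correct solution of the Dirichlet problem. The canonical relation of $R$ is precisely the reflection map identified in Lemma \ref{t-def-lemma}, which expresses $\exp(tH_p)$ across the reflection in terms of the tangential flow $\chi_{x_1}$. Then I take $I_+^z(t)$ for $t > t_1$ to be the composition of the interior propagator along $\gamma_+$ (starting from $m_+$) with $R$ applied to the $\gamma_-$-propagator run up to the boundary, and symmetrically for $I_-^z(t)$.

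Finally I would verify (ii) and (iii) in a neighbourhood of $t_1$. The commutator $[P-z, I_\pm^z(t)] = 0$ on each interior interval is from Egorov; the reflection step $R$ commutes with $P - z$ because its image is a microlocal kernel of $P-z$ by construction. For wavefront sets, combine the Egorov statement on the two interior intervals with Proposition \ref{WF-propagation}, which gives $\WF u_+ \subset \chi_{x_1}(\WF v)$ matching $\exp((t - t_1)H_p)$ under the identification of Lemma \ref{t-def-lemma}; together these yield $\WF I_\pm^z(t) u \subset \exp(\pm t H_p)(\WF u)$. The main obstacle is checking that the spliced operator really is an $h$-FIO rather than merely a bounded operator with the correct canonical relation: this requires that the boundary trace and the oscillatory integral representation in Proposition \ref{u-osc-int} compose cleanly with the interior propagators, which follows from the standard transversality of $\partial X$ to $\gamma$ and the ellipticity of $hD_1 - \tilde{A}_+$ near $\gamma_-$.
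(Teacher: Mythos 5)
Your proposal follows the same route as the paper: build the interior pieces by conjugating $P-z$ to $hD_{x_1}$ (Proposition \ref{hDx-prop} plus Egorov), use the factorization of Lemma \ref{p-fact-lemma} together with the oscillatory-integral construction of Proposition \ref{u-osc-int} to produce the reflected branch with matching boundary trace, splice the pieces into a propagator that is allowed to be discontinuous at $t=t_1$, and read off (ii) and (iii) from Proposition \ref{WF-propagation} and Lemma \ref{t-def-lemma}. The one place you leave a real gap is the closing step of (i). You correctly flag that the delicate point is showing the spliced operator is a genuine $h$-FIO and not merely a bounded operator with the right canonical relation, but then wave at ``transversality and ellipticity,'' which is not itself an argument: composing $h$-FIOs with a restriction to the boundary and the boundary-value parametrix of Proposition \ref{u-osc-int} would require a clean-intersection computation you never carry out. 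The paper sidesteps this entirely: it verifies the Egorov-type intertwining $A\,I_+^z(t)\,u = I_+^z(t)\,B\,u$ microlocally for any pseudodifferential $A$ with $m_\pm \notin \WF(A)$ and $B = \Op_h^w\!\left(\exp(tH_p)^*\sigma_h(A)\right)$, and then invokes Proposition \ref{U-FIO} (the Beals-type characterization of $h$-FIOs through commutation) to conclude that $I_\pm^z(t)$ is an $h$-FIO associated to the graph of $\exp(\pm tH_p)$. If you replace your hand-wave with that commutation argument, the proposal is complete.
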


\begin{proof}
Fix $m_0$.  According to Proposition \ref{hDx-prop}, $P-z$ may be conjugated to
$hD_{x_1}$ in a neighbourhood of $m_0$.  Then we use the proof of
Proposition \ref{ml-solution-1} to find a solution $u_{-,1}$ to
$(P-z)u=f$ near $m_0$.
Use the microlocal forward propagator defined for a neighbourhood of
$\gamma_-$ extended to a neighbourhood of $m_-$ to define $u_{-,1}$
along $\gamma_-$.  That is, $I_+^z(t) u_{-,1}$ satisfies
\be
(P-z) I_+^z(t) u_{-,1} = f
\ee
microlocally near $\exp(t H_p)(m_0)$, $0 \leq t \leq t_1$.  Let $v_-(x') = I(t_1) u_{-,1}
|_{\partial X}$, and use Proposition \ref{u-osc-int} to find a
function $u_{-,2}$ satisfying
\be
\left\{ \begin{array}{l} (P-z)u_{-,2} = 0 \\ u_{-,2} |_{\partial X} =
  v_-(x') \end{array} \right.
\ee
microlocally near $m_-$.  Let
\be
u_- = u_{-,1} - I_+^z(-t_1) u_{-,2},
\ee
so that $I_+^z(t) u_-$ satisfies \eqref{BVP4'} microlocally near
$\exp( t H_p)(m_0)$, $0 \leq t < t_1$.

Fix $m_2 = \exp( t_2H_p)(m_0) \in \gamma_+$ sufficiently close to
$m_+$ that we can similarly construct $I_+^z(t_2-t ) u_+$ satisfying
\eqref{BVP4'} microlocally near $\exp (-t H_p)(m_2)$ for $0 \leq t
< t_2 - t_1$.  We extend $I_+^z(t)$ to be discontinuous at $t_1$, so
that if $u$ solves \eqref{BVP4'} microlocally near $m_0$, 
\be
I_+^z(t_1) u = u_- + u_+
\ee
with 
\be
\WF u_\pm \subset U_\pm \cap \{ x_1 \geq 0 \}.
\ee

We need to verify this extension of $I_+^z(t)$ satisfies (i), (ii),
and (iii).  For $0 \leq t < t_1$ this is clear because $I_+^z(t)$ is
the usual semigroup.  At $t_1$, we have
\be
(P-z) I_+^z(t_1) u & = & (P -z) (I_+^z(t_1)u_- + I_+^z(t_2 - t_1) u_+)
\\
& = & f_- + f_+,
\ee
with $f_\pm = f$ microlocally near $m_\pm$.  Thus (ii) and (iii) are
clear.

For (i), let $A =
\Op_h^w(a)$ be a symbol defined microlocally in a neighbourhood of
$\exp(tH_p)(m_0)$.  Assume $m_\pm \notin \WF A$, and let $B =
\Op_h^w(\exp(tH_p)^* a)$.  Then 
\be
AI_+^z(t) u = I_+^z(t) B u
\ee
microlocally, and by Proposition \ref{U-FIO}, $I^z(t)$ satisfies
\ref{I-ode}.  

The proof for $I_-^z(t)$ is similar.
\end{proof}

\begin{corollary}
\label{ml-prop-cor}
Let $X$, $P$, and $p$ be as in Proposition \ref{ml-prop-thm}.  Suppose $\gamma(t)$ is
a periodic orbit for $\exp(tH_p)$ of period $T$ which has a finite
number of transversal reflections off $\partial X$.  Then for any
$m \in \gamma(t)$, $m \cap \partial X = \emptyset$, there exist $h$-FIOs,
$I_\pm^z(t)$, defined microlocally near $\exp(tH_p)(m)\times m$ for $0 \leq |t| \leq
T$ satisfying

(i)
\be
\left\{ \begin{array}{l}
hD_t I_\pm^z(t) \pm (P-z) I_\pm^z(t) = 0 \\
I_\pm^z(0) = \id_{L^2 \to L^2} \end{array} \right.
\ee
for almost every $t$.

(ii) $[P-z, I_\pm^z(t)]=0$, and if $u(x) \in L^2$ satisfies $(P-z)u =f \in
L^2$ microlocally near m, 
then $I_\pm^z(t) u(x)$ satisfies
\be
(P-z) I_\pm^z(t) u(x) = f(x)
\ee
microlocally near $\exp(\pm tH_p) (m)$.

(iii) If $\WF u \subset K$, where $K$ is a compact neighbourhood of a
point $m$, 
\be
\WF I_\pm^z(t) u \subset \exp(\pm tH_p)(K).
\ee
\end{corollary}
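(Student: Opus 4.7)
The plan is to iterate Proposition \ref{ml-prop-thm}, stitching together the standard interior microlocal propagator between reflections and the boundary propagator constructed there to traverse each reflection. Since $\gamma$ has only finitely many transversal boundary reflections in one period, let $0 < t_1 < t_2 < \cdots < t_N < T$ denote the reflection times, with $m_k = \gamma(t_k) \in \partial X$, and take $m = \gamma(0)$ in the interior. On each open interval $(t_{k-1}, t_k)$ (with $t_0 = 0$, $t_{N+1} = T$) the orbit stays off the boundary, so the usual semigroup $\exp(\mp is(P-z)/h)$ furnishes $h$-FIOs satisfying all three properties microlocally along $\gamma$. At each reflection time $t_k$, Proposition \ref{ml-prop-thm} provides a discontinuous $h$-FIO $R_k$, defined microlocally near $m_k^+ \times m_k^-$ with $m_k^\pm = \gamma(t_k \pm \delta)$ for small $\delta>0$, that pushes a microlocal solution through the reflection.

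I would then define inductively
\[
I_+^z(t) := e^{-i(t-t_k)(P-z)/h} \circ R_k \circ I_+^z(t_k), \quad t_k \leq t < t_{k+1},
\]
with initial segment $I_+^z(t) = e^{-it(P-z)/h}$ for $0 \leq t < t_1$, and analogously for $I_-^z$ run backwards in time. Property (i) then follows from part (i) of Proposition \ref{ml-prop-thm} near each $t_k$ combined with the semigroup property on the intervening intervals, giving the evolution equation for $t \notin \{t_1, \ldots, t_N\}$ (a finite, hence measure-zero, set). Property (ii) follows because each $R_k$ intertwines $P-z$ microlocally and carries microlocal solutions of \eqref{BVP4'} to microlocal solutions, while the interior semigroups commute with $P-z$ outright; composition preserves both statements. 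Property (iii) follows since each factor transports $h$-wavefront sets along $\exp(\pm tH_p)$, where passage through each reflection is interpreted via Lemma \ref{t-def-lemma} and Proposition \ref{WF-propagation}.

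The main obstacle, which is essentially bookkeeping, is ensuring that at each step the wavefront set of the propagated object entering $R_k$ lies inside the neighborhood where Proposition \ref{ml-prop-thm} applies, and that its output lands inside the neighborhood where the next interior semigroup takes over. This is arranged by shrinking the initial microlocal neighborhood of $m$ uniformly, using compactness of $\gamma([0,T])$ and finiteness of the reflection set to make a single choice that works at every $t_k$. Independence of the construction from the auxiliary cut-offs used in Proposition \ref{ml-prop-thm}, modulo $\O(h^\infty)$, is the standard microlocal uniqueness already encoded in Proposition \ref{ml-solution-1}, so the composite $I_\pm^z(t)$ is a well-defined $h$-FIO microlocally near $\exp(\pm tH_p)(m) \times m$ with the desired properties.
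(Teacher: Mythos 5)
Your proposal is correct and follows essentially the same route as the paper, which simply invokes Proposition \ref{ml-prop-thm} at each reflection and ODE uniqueness to glue; you have merely spelled out the iteration (interior semigroup between reflection times, the discontinuous jump from Proposition \ref{ml-prop-thm} at each $t_k$) and the wavefront-set bookkeeping needed to keep the construction microlocally well-defined. The only cosmetic difference is your packaging of the reflection as a separate operator $R_k$ rather than as the built-in jump in the family $I_\pm^z(t)$ already produced by Proposition \ref{ml-prop-thm}, but these are the same construction.
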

\begin{proof}
This follows immediately from Proposition \ref{ml-prop-thm} and uniqueness
of solutions to ordinary differential equations.
\end{proof}


\section{Quantum Monodromy Construction}
\label{mono}
\numberwithin{equation}{section}

In this section, we construct the {\it Quantum Monodromy} operator
\be
M(z): L^2( \reals^{n-1}) \to L^2( \reals^{n-1}) 
\ee
and prove some
basic properties.  Here we follow \cite{SjZw} and the somewhat
simplified presentation in \cite{SjZw3}.  It is
classical (see, for example, \cite{AbMa}) that the assumtions on $p$
imply there exists $\epsilon_0 >0$ such that for $-2 \epsilon_0 \leq E
\leq 2 \epsilon_0$ there is a closed semi-hyperbolic orbit in the level set
$\{p = E\}$.  Let
$z \in [- \epsilon_0, \epsilon_0] \subset \reals$.  Then $p - z$ is
the principal symbol of $P-z$, and $p-z$ admits a closed semi-hyperbolic
orbit in the level set $\{ p - z = 0  \}$, say, $\gamma(z)$ of period
$T(z)$.  

We work microlocally in a neighbourhood of 
\be
\Gamma : = \bigcup_{- \epsilon_0 \leq z \leq \epsilon_0} \gamma(z) \subset T^*X.  
\ee

Fix $m_0(z) \in \gamma(z)$, $m_0(z) \cap \partial X = \emptyset$, depending smoothly on $z$, and set $m_1(z)
= \exp (\half T(z) H_p)(m_0(z))$.  By perturbing $m_0(z)$ and
shrinking $\epsilon_0>0$ if necessary, we may assume $m_1(z) \cap \partial X = \emptyset$ as well.  Assume we are working with a fixed
$z$.  Define
\be
\km &= &\{ u \in L^2( \neigh (m_0(z))) : (P-z) u = 0 \\
&& \quad \quad \quad \quad \text{ microlocally near } m_0(z) \},
\ee
where ``$\neigh(m_0(z))$'' refers to a small arbitrary fixed neighbourhood
of $m_0(z)$ which is allowed to change from line to line.  Similarly we have
\be
\kmm &=&  \{ u \in L^2( \neigh (m_1(z))) : (P-z) u = 0 \\
&& \quad \quad \quad \quad \text{ microlocally near } m_1(z) \}.
\ee

We define the forward and backward microlocal propagators $I_\pm^z$ as in
Corollary \ref{ml-prop-cor}.  Then
\be
I_+^z(t) : \km \to \ker_{\exp(tH_p)(m_0(z))}(P-z),
\ee
and since
\be
\exp(T(z)H_p)(m_0(z)) = m_0(z)
\ee
we define the {\it Absolute Quantum Monodromy} operator 
\be
\MM(z): \km \to \km
\ee
by 
\ben
\label{MM-def}
\MM(z):= I_+(T(z)).
\een

It is convenient to introduce an inner product structure on
$\km$ (see \cite{HeSj}).  For this, let $\chi \in
\Ci(T^*X)$ be a microlocal cutoff supported near $\gamma(z)$
satisfying the following properties (see Figure \ref{fig:fig9}):
\ben
&& \chi \equiv 1 \text{ on } \exp (t H_p)(m_0(z)) \text{ for } 0 \leq t
\leq \half T(0) \label{chi-1}\\
&& \chi \equiv 0 \text{ on } \exp (t H_p)(m_0(z)) \text{ for }
\half T(0) + \delta \leq t \leq T(0) - \delta, \,\, \delta > 0. \label{chi-2}
\een
Let $[P, \chi]_+$ denote the part of the commutator supported near
$m_0(z)$ where we use $\chi$ to denote both the function and the
quantization whenever unambiguous, and for $u, v \in \km$, define the
{\it Quantum Flux product} as  
\be
\lll u, v \rqf: = \lll \frac{i}{h} [P, \chi]_+ u, v \rrr_{L^2( \neigh (m_0(z)))}.
\ee

According to Proposition \ref{hDx-prop}, there is a neighbourhood of
$m_0(z)$ and an $h$-Fourier integral operator $F$ defined microlocally near $m_0(0)$ such that
$F(P-z)F^{-1} = hD_{x_1}$ on $L^2(\widetilde{V})$, where 
$\widetilde{V} \subset \reals^{n}$ is an open neighbourhood of $0\in \reals^n$.  Then $\km$ can be
identified with $L^2(V)$, where $V \subset \reals^{n-1}$ is an open
neighbourhood of $0 \in \reals^{n-1}$.  Let 
\be
K(z): L^2(V) {\longleftrightarrow} \km
\ee
be the identification, and define the adjoint $K(z)^*$ with respect to
the $L^2$ inner product on $\km$.
Note
\be
K(z)^* : \km \longleftrightarrow  L^2(V)
\ee
is an identification as well.  The following two lemmas are from
\cite{SjZw}.
\begin{lemma}
\label{k-id}
The operator  
\be
U:= K(z)^* \frac{i}{h} [P, \chi]_+ K(z) : L^2(V) \to L^2(V)
\ee
is positive definite.  Setting $\widetilde{K}(z) = K(z) U^\half$, we have 
\be
\widetilde{K}(z)^* \frac{i}{h} [P, \chi]_+ \widetilde{K}(z) = \id : L^2(V) \to L^2(V).
\ee
\end{lemma}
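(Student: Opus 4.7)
The plan is to reduce the problem to an explicit model via Proposition \ref{hDx-prop}, compute the flux operator directly, and then invoke functional calculus.

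\textbf{Reduction to the model.} By Proposition \ref{hDx-prop}, there is a microlocally unitary $h$-FIO $F$ defined near $m_0(z)$ such that $F(P-z)F^{-1} = hD_{x_1}$. Under this conjugation, $\km$ is identified with the microlocal kernel of $hD_{x_1}$ near $(0,0)\in T^*\reals^n$, namely distributions $u(x_1,x')$ that are independent of $x_1$ microlocally. The map $K(z)$ becomes the constant extension $v(x')\mapsto v(x_1,x')=v(x')$, and $\chi$ is conjugated to a function $\tilde\chi(x_1,x')$ which, along the model orbit $\{x'=0\}$, equals $1$ for $x_1\in[0,T/2]$, vanishes for $x_1\in[T/2+\delta, T-\delta]$, and returns to $1$ near $x_1\equiv T\equiv 0$ modulo the period. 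The subscript $+$ in $[P,\chi]_+$ selects the piece supported near $m_0$, which in the model is the region where $\tilde\chi$ rises from $0$ to $1$, i.e.\ where $\partial_{x_1}\tilde\chi\geq 0$.

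\textbf{Computing $U$.} In the model, $\frac{i}{h}[hD_{x_1},\tilde\chi]=\partial_{x_1}\tilde\chi$ as a multiplication operator, and the $+$ part extracts the nonnegative function $(\partial_{x_1}\tilde\chi)_+(x_1,x')$. Pairing against constant-in-$x_1$ functions $v,w\in L^2(V)$ gives
\be
\lll Uv,w\rrr_{L^2(V)} = \iint (\partial_{x_1}\tilde\chi)_+(x_1,x')v(x')\overline{w(x')}\,dx_1\,dx' = \int \psi(x')v(x')\overline{w(x')}\,dx',
\ee
where $\psi(x'):=\int (\partial_{x_1}\tilde\chi)_+(x_1,x')\,dx_1$. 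By the fundamental theorem of calculus, for $x'$ in a small neighbourhood of $0$ the integrand in $x_1$ is compactly supported and $\tilde\chi$ rises from $0$ to $1$ on the support, so $\psi(x')\equiv 1$ there. Thus $U$ is multiplication by $\psi$, with $\psi\equiv 1$ microlocally near $(0,0)\in T^*V$; in particular $U$ is positive definite on the relevant microlocal space and microlocally equal to the identity.

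\textbf{The square root and conclusion.} Since $U$ is positive and microlocally the identity, functional calculus produces a positive self-adjoint square root $U^\half$, which is itself a pseudodifferential operator microlocally equal to the identity. Setting $\widetilde K(z)=K(z)U^\half$ and using self-adjointness of $U^\half$ yields
\be
\widetilde K(z)^*\,\tfrac{i}{h}[P,\chi]_+\,\widetilde K(z) = U^\half\,U\,U^\half,
\ee
which reduces to $\id$ microlocally because $U=\id$ on the microlocal subspace where $\km$ lives. (Equivalently, one may normalize with $U^{-\half}$, which is legitimate in the same microlocal sense.)

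\textbf{Main obstacle.} The principal subtlety is conceptual rather than computational: elements of $\km$ are defined only modulo microlocal equivalence, so $U$ must be shown to be well defined mod $\O(h^\infty)$ and independent of the choice of cutoff $\chi$ satisfying \eqref{chi-1}--\eqref{chi-2}. This invariance is the quantum analogue of conservation of flux along $\gamma(z)$: shifting the transition region of $\chi$ along the orbit alters $\frac{i}{h}[P,\chi]_+$ by a microlocal commutator with $P-z$, which annihilates $\km$, a fact traceable to $[P-z,I_\pm^z(t)]=0$ from Corollary \ref{ml-prop-cor}. Verifying this invariance together with the symbolic calculus underlying the definition of $U^\half$ constitutes the main technical content; the positivity itself is essentially automatic once the model computation is in hand.
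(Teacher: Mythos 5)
Your reduction to the model via Proposition \ref{hDx-prop} and the observation that $\frac{i}{h}[hD_{x_1},\chi]_+$ quantizes a nonnegative symbol is exactly the paper's route, and the positivity part of your argument is sound. However, two steps overreach. First, $\chi$ is a function on $T^*X$ whose conjugate under $F$ depends on $(x_1,x',\xi')$, not just $(x_1,x')$; thus $U$ is a pseudodifferential operator on $L^2(V)$, not the multiplication operator you describe, and the ``fundamental theorem of calculus'' computation is a principal-symbol statement only. Second, even at the symbol level, \eqref{chi-1}--\eqref{chi-2} constrain $\chi$ only \emph{on} the orbit, so $\int(\partial_{x_1}\tilde\chi)_+\,dx_1 = 1$ exactly at $(x',\xi')=(0,0)$ but merely $\approx 1$ nearby; combined with the $\O(h)$ corrections from the Weyl calculus, your claim that $U=\id$ microlocally (rather than $U\geq C^{-1}$ and $U=\id+\O(\epsilon)+\O(h)$) is not justified, and with it falls the identity $U^\half U U^\half = \id$. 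The parenthetical alternative you mention -- normalize with $U^{-\half}$ so that $U^{-\half}UU^{-\half}=\id$ holds identically -- is in fact the correct conclusion; positivity of $U$ is precisely what makes $U^{\pm\half}$ well defined, and the rest follows with no need to pin down $U$ exactly. (The paper's displayed formula also appears to carry a sign slip in the exponent, so you were right to flag it.) Finally, your closing paragraph on independence of the Quantum Flux product from the choice of $\chi$ is a true and important fact, but it is the content of the \emph{next} lemma in the paper, not a hidden prerequisite for this one: the well-definedness of $U$ for a \emph{fixed} $\chi$ is already clear.
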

\begin{proof}
Using Proposition \ref{hDx-prop}, we write
\be
\lll K(z)^* \frac{i}{h} [P, \chi]_+ K(z)v,v \rrr_{L^2(V)} & =&  \lll \partial_{x_1}
\chi K(z) v, K(z)v \rrr_{L^2(\neigh (m_0(z)))} \\
&  \geq &C^{-1} \|v\|^2.
\ee
\end{proof}
\begin{remark}
In light of Lemma \ref{k-id}, we replace $K(z)$ with $\widetilde{K}(z)$ and
write 
\be
K(z)^{-1} = K(z)^* \frac{i}{h}[P, \chi]_+.
\ee
\end{remark}
\begin{lemma}
The Quantum Flux product $\lll \cdot, \cdot \rqf$ does not depend on
the choice of $\chi$ satisfying (\ref{chi-1}-\ref{chi-2}).  In
addition, $\MM(z)$ is unitary on $\km$ with respect to this product.
\end{lemma}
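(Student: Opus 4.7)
The plan is to reduce both claims to a model computation by applying Proposition \ref{hDx-prop} to straighten $P-z$. In the conjugated picture we may assume microlocally near $m_0(z)$ that $P - z = hD_{x_1}$, so that $\km$ is identified with functions in $L^2(V)$ depending only on the transverse variable $x'$, and the orbit near $m_0(z)$ becomes the $x_1$-axis. In these coordinates the conditions \eqref{chi-1}-\eqref{chi-2} force $\chi$ to transition from $0$ (on the incoming segment, $x_1 < 0$) to $1$ (on the outgoing segment, $x_1 > 0$) across $m_0(z)$.

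For the independence of $\chi$, the principal symbol of $\tfrac{i}{h}[hD_{x_1}, \chi]_+$ is $\partial_{x_1}\chi$ localized near $m_0(z)$; for $u, v$ depending only on $x'$ I would compute
\[
\lll \tfrac{i}{h}[hD_{x_1}, \chi]_+ u, v \rrr_{L^2} = \int_V u(x')\,\overline{v(x')}\, dx' \cdot \int \partial_{x_1}\chi(x_1, x')\, dx_1 = \lll u, v \rrr_{L^2(V)},
\]
since the inner integral equals the jump of $\chi$ across $m_0(z)$, which is $1 - 0 = 1$. Lower-order symbol corrections produce terms of the form $[P, \cdot]$ whose pairing with $u, v \in \km$ vanishes to all orders using $(P-z)u = O(h^\infty)$ microlocally. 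Thus the quantum flux product agrees with the $L^2(V)$-pairing and is manifestly independent of the particular choice of $\chi$.

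For unitarity, I would exploit that $\MM(z) = I_+(T(z))$ is an $h$-FIO associated to $\kappa = \exp(T(z) H_p)$, which fixes the closed orbit pointwise. By Proposition \ref{AF=FB} it is microlocally unitary on $L^2$, and by Corollary \ref{ml-prop-cor} it commutes with $P-z$ and preserves $\km$. Then
\[
\lll \MM u, \MM v \rqf = \lll \tfrac{i}{h}\MM^{-1}[P,\chi]_+ \MM u, v \rrr_{L^2} = \lll \tfrac{i}{h}[P, \tilde\chi]_+ u, v \rrr_{L^2},
\]
where $\tilde\chi := \MM^{-1}\chi\MM$ has principal symbol $\chi \circ \kappa$ by Egorov; since $\kappa$ fixes the orbit pointwise, $\tilde\chi$ again satisfies \eqref{chi-1}-\eqref{chi-2}, so the independence just proved identifies the right-hand side with $\lll u, v \rqf$. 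The main technical point to verify is that conjugation by $\MM$ preserves the ``$+$'' localization of the commutator; this holds because $\kappa$ fixes $m_0(z)$ and therefore maps any small neighborhood of $m_0(z)$ to another such neighborhood, so the wavefront-set decomposition of $[P,\chi]$ into its parts near $m_0(z)$ and near $m_1(z)$ is respected. The lower-order corrections arising in the symbol expansion of $\tilde\chi$ still satisfy the required conditions on the orbit modulo commutator terms that are absorbed by the independence argument, completing the proof.
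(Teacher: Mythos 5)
Your unitarity argument is essentially the paper's: conjugate $\frac{i}{h}[P,\chi]_+$ by $\MM = I_+^z(T(z))$, use commutation with $P-z$ and Egorov to obtain $\frac{i}{h}[P,\widetilde\chi]_+$ with $\widetilde\chi=\exp(TH_p)^*\chi$, observe that $\exp(TH_p)$ fixes $\gamma$ so that $\widetilde\chi$ still satisfies (\ref{chi-1}--\ref{chi-2}) and the ``$+$'' localization is preserved, then invoke independence. That part is fine.

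The independence argument takes a genuinely different route, but there is a real gap. You compute the flux product in the straightened coordinates of Proposition~\ref{hDx-prop} and claim equality with the $L^2(V)$-pairing, dismissing the corrections with the remark that ``lower-order symbol corrections produce terms of the form $[P,\cdot]$ whose pairing with $u,v\in\km$ vanishes to all orders.'' That justification is inconsistent with your own leading-order computation. The quantity $\frac{i}{h}[P,\chi]_+$ is itself the part of a commutator $[P,\cdot]$ localized near $m_0(z)$, and your computation shows it pairs to $\lll u,v\rrr_{L^2(V)}\neq 0$ against kernel elements. It is only the \emph{full} commutator $\frac{i}{h}[P,\chi]$ that pairs to $O(h^\infty)$ (by moving $P-z$ onto $K(z)v$ using self-adjointness and $z$ real); the ``$+$''-localized piece does not, and this localization carries all of the content of the Quantum Flux construction. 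Topologically, any admissible cutoff $\chi$ must make two transitions around the closed orbit (one near $m_0$, one near $m_1$), so the commutator always has two components and the localized piece cannot be written globally as a full commutator. So the corrections cannot be dismissed by that argument. A secondary concern: $\chi$ is a microlocal cutoff in $T^*X$, not merely a function of $x$, and after conjugation by the $h$-FIO $T$ its Weyl symbol is $\kappa^*\chi + O(h^2)$ and may depend on $\xi$; the factorized formula $\int u\bar v\,dx'\cdot\int\partial_{x_1}\chi\,dx_1$ needs more than a principal-symbol argument.

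The paper's proof sidesteps both issues with one observation: for two admissible cutoffs $\chi,\widetilde\chi$ that agree near $m_1(z)$, the difference $\widetilde\chi-\chi$ is supported only near $m_0(z)$, hence $[P,\widetilde\chi-\chi]_+=[P,\widetilde\chi-\chi]$ is a \emph{full} commutator and the integration-by-parts identity
$$\lll \tfrac{i}{h}[P,\widetilde\chi-\chi]K(z)u,K(z)v\rrr = \lll \tfrac{i}{h}(\widetilde\chi-\chi)K(z)u,(P-z)K(z)v\rrr = O(h^\infty)$$
applies. No topological obstruction arises because $\widetilde\chi-\chi$ transitions from $0$ back to $0$ rather than from $0$ to $1$. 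You should use this difference argument, or something equivalent to it, in place of the claim about the corrections; as written, your independence proof does not close.
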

\begin{proof}
Suppose $u,v \in L^2(V)$ and suppose $\widetilde{\chi}$ is another function satisfying
(\ref{chi-1}-\ref{chi-2}) which agrees with $\chi$ near $m_1(z)$.
Then $[P, \widetilde{\chi} - \chi]_+ = [P, \widetilde{\chi} - \chi]$,
$(P-z)K(z)u = 0$, and $K(z)^*(P-z) = ((P-z)K(z))^*$ imply
\be
\lll \frac{i}{h} [P, \widetilde{\chi} - \chi]_+ K(z)u, K(z)v \rrr = \lll
\frac{i}{h} ( \widetilde{\chi} - \chi ) K(z)u, (P-z)K(z)v \rrr = 0.
\ee
To see $\MM(z)$ is unitary, observe for $\tilde{u} \in \km$,
\be
\lefteqn{\lll \frac{i}{h} [P, \chi]_+ I_+^z(T(z)) \tilde{u}, I_+^z(T(z))
\tilde{u} \rrr =} \\
& = & \lll \frac{i}{h} \left[ P,  I_-^z(T(z)) \chi I_+^z(T(z)) \right]_+
\tilde{u} , \tilde{u}
\rrr \\
& = & \lll \frac{i}{h} \left[ P, \widetilde{\chi} \right]_+ \tilde{u},
\tilde{u} \rrr,
\ee
where $\widetilde{\chi} = \exp(TH_p)^*\chi$ satisfies
(\ref{chi-1}-\ref{chi-2}).
\end{proof}

Next we restrict our attention to $L^2(V)$ by defining the {\it Quantum
Monodromy operator} $M(z): L^2(V) \to L^2(V)$ by 
\be
M(z) = K(z)^{-1} \MM(z) K(z).
\ee
\begin{lemma}
$M(z): L^2(V) \to L^2(V)$ is unitary, and $M(z)$ is the quantization
  of the Poincar{\'e} map $S$.
\end{lemma}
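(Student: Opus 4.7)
The plan is to verify the two claims separately, both of which essentially unpack what has already been set up. For unitarity of $M(z)$ on $L^2(V)$, the key observation is that $K(z)$, after the rescaling by $U^{1/2}$ in Lemma \ref{k-id}, is precisely an isometry between the standard inner product on $L^2(V)$ and the Quantum Flux product on $\km$. Indeed, the identity $K(z)^* \tfrac{i}{h}[P,\chi]_+ K(z) = \id$ rewrites as
\be
\lll K(z) u, K(z) v \rqf = \lll u, v \rrr_{L^2(V)}, \quad u,v \in L^2(V).
\ee
Combined with the unitarity of $\MM(z)$ with respect to $\lll \cdot, \cdot \rqf$ from the preceding lemma, one computes directly
\be
\lll M(z) u, M(z) v \rrr_{L^2(V)} = \lll \MM(z) K(z) u, \MM(z) K(z) v \rqf = \lll K(z) u, K(z) v \rqf = \lll u, v \rrr_{L^2(V)},
\ee
which gives the first claim.

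For the second claim, I would identify $M(z)$ as an $h$-FIO whose canonical relation is the graph of the Poincar\'e map $S$. By Corollary \ref{ml-prop-cor}, $\MM(z) = I_+^z(T(z))$ is an $h$-FIO associated microlocally to the graph of $\exp(T(z) H_p)$. By Proposition \ref{hDx-prop}, the identification $K(z)$ is (up to the elliptic pseudodifferential rescaling by $U^{1/2}$, which preserves the underlying canonical relation) an $h$-FIO associated to a symplectomorphism $\kappa$ from a neighbourhood of $m_0(z)$ in $T^*X$ onto a neighbourhood of $(0,0) \in T^*\reals^n$ satisfying $\kappa^* \xi_1 = p - z$. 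The restriction of $\kappa$ to $\{p = z\}$ descends to a symplectic identification of a Poincar\'e section $N$ through $m_0(z)$ with $T_0^* \reals^{n-1}$. By Propositions \ref{AF=FB} and \ref{U-FIO}, the conjugation
\be
M(z) = K(z)^{-1} \MM(z) K(z)
\ee
is then itself an $h$-FIO on $L^2(V)$ whose underlying symplectomorphism is
\be
\kappa|_N \circ \exp(T(z) H_p)|_N \circ (\kappa|_N)^{-1} = S,
\ee
since this is the definition of the Poincar\'e map read off in the coordinates of the section.

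The main obstacle will be the bookkeeping to confirm that the various canonical relations indeed compose to give $S$ rather than some nearby map. Specifically, one must verify that the Hamiltonian flow $\exp(T(z) H_p)$ preserves the energy shell $\{p = z\}$ and thereby descends to a well-defined symplectic map on the Poincar\'e section (this uses the classical fact that $S$ is the symplectic quotient of $\exp(T(z) H_p)$ on $\{p=z\}$ by the Hamiltonian $\reals$-action), and that the pseudodifferential factor $U^{1/2}$ arising from the rescaling in Lemma \ref{k-id} contributes only a nonzero scalar amplitude and does not alter the canonical relation. Both points are routine given the Egorov-type statements already developed in Section \ref{FIO}.
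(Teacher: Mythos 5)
For the unitarity claim your argument coincides with the paper's: both reduce to the observation that Lemma \ref{k-id} makes $K(z)$ an isometry from $L^2(V)$ onto $(\km,\lll\cdot,\cdot\rqf)$, and then invoke the $\MM(z)$-unitarity established in the preceding lemma. The computations are identical modulo notation.

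For the second claim your route is genuinely different from the paper's. You try to identify $M(z)$ as an $h$-FIO by composing canonical relations: $K(z)$, then the propagator $\MM(z)=I^z_+(T(z))$, then $K(z)^{-1}$. The paper instead verifies the Egorov-type intertwining relation $AM(z)=M(z)B$ for $\sigma_h(B)=S^*\sigma_h(A)$ and then invokes Proposition \ref{U-FIO}, which characterizes graph $h$-FIOs by exactly this commutation property. Both approaches can be made to work, but the paper's is considerably cleaner for two reasons that your sketch glosses over. First, $K(z):L^2(V)\to\km\subset L^2(X)$ is not a graph FIO between spaces of the same dimension; after Proposition \ref{hDx-prop} reduces $P-z$ to $hD_{x_1}$, $K(z)$ becomes a Poisson-type operator $v(x')\mapsto v(x')$ with a non-graph Lagrangian canonical relation in $T^*X\times T^*V$, and $K(z)^{-1}$ is a restriction-type operator. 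Composing these with the graph relation of $\MM(z)$ requires a clean-intersection argument that you flag as ``bookkeeping'' but do not actually carry out. Second, $\MM(z)$ propagates for the \emph{fixed} time $T(z)$, whereas the Poincar\'e map $S$ is the first-return map with point-dependent return time; the two coincide on the section only after symplectic reduction, and it is precisely the fact that elements of $\km$ are invariant under $\exp(tH_p)$ that makes the fixed-time flow give $S$. You do gesture at this with the remark about symplectic quotient, which is the right picture, but the paper's intertwining calculation handles both subtleties automatically: inserting $I^z_-(T(z))I^z_+(T(z))=\id$ and applying Egorov turns $B$ into $\Op_h^w((\exp TH_p)^*\sigma_h(B))$ on $X$, which when sandwiched between $K(z)^{-1}$ and $K(z)$ acts as $\Op_h^w(\sigma_h(A))$ on $V$ because $K(z)v$ is a microlocal solution of $(P-z)u=0$. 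So your approach captures the correct geometric intuition, but the paper's verification-plus-\ref{U-FIO} route replaces the delicate canonical-relation composition by a direct symbol computation.
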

\begin{proof}
Let $u \in L^2(V)$.  We calculate:
\be
\lefteqn{ \lll M(z)u, M(z)u \rrr_{L^2(V)}=} \\
 &=& \lll K(z)^{-1} \MM(z) K(z)u, K(z)^{-1}\MM(z)K(z)u
\rrr_{L^2(V)} \\
& = & \lll (K(z)^*)^{-1}K(z)^{-1} \MM(z) K(z) u, \MM(z) K(z) u \rrr_{L^2( \neigh(m_0(z)))} \\
& = & \lll \frac{i}{h} [P, \chi]_+ \MM(z) K(z) u, \MM(z) K(z) u
\rrr_{L^2( \neigh(m_0(z)))} \\
& = & \lll K(z) u, K(z) u \rrr_{L^2( \neigh(m_0(z)))} \\
& = &\lll u, u \rrr_{L^2(V)}.
\ee
In order to prove $M(z)$ is the quantization of the Poincar{\'e} map, we
will use Proposition \ref{U-FIO}.  We need to prove for
pseudodifferential operators $A,B \in \psi_h^{0,0}(V)$ such that
$\sigma_h(B) = S^* \sigma_h(A)$, we have $AM(z) = M(z)B$.  Without loss of
generality, we write $x' = (x_2, \ldots, x_n) \in V$ for the
variables in $V$ and $x = (x_1, x') \in \neigh ( \gamma(z))$ for the
variables in $X$ near $\gamma$.  Then for $v \in L^2(V)
\cap \Ci (V)$
\be
\lefteqn{ M(z) B(x', hD_{x'} )v(x')  = } \\
& = &  K(z)^{-1} \MM(z) K(z) B(x', hD_{x'}) v(x') \\
& = & K(z)^{-1} I_+^z(T(z)) B(x', hD_{x'}) I_-^z(T(z))  I_+^z(T(z)) 
K(z) v(x') \\
& = & K(z)^{-1} \Op \left( \left( \exp (TH_p) \right)^* \sigma_h(B)
\right)(x, hD_{x}) I_+^z(T(z)) K(z) v(x') \\
& = & A(x', hD_{x'}) M(z) v(x').
\ee

\end{proof}


\section{The Grushin Problem}
\label{grushin}

\numberwithin{equation}{section}

\subsection{Motivation of the Grushin Problem}
In this section we follow \cite{SjZw} and show how the Quantum
Monodromy operator arises naturally in the context of a {\it Grushin
  Problem} near $\gamma$.  This is a generalization of the linear
algebra Grushin problem: Suppose 
\be
A & : & H \to H, \\
B&:& H_- \to H, \\
C & : & H \to H_+,\,\, \text{and} \\
D &: & H_- \to H_+
\ee
are matrices acting on finite dimensional Hilbert spaces $H$, $H_-$, and $H_+$, and
\be
\left( \begin{array}{cc}
 \alpha & \beta \\ \sigma & \delta \end{array} \right) = \left(
\begin{array}{cc} A & B \\ C & D \end{array} \right)^{-1},
\ee
where
\be
\alpha & : & H \to H, \\
\beta & : & H_+ \to H, \\
\sigma & : & H \to H_-, \,\, \text{and} \\
\delta & : & H_+ \to H_-.
\ee
Then $A$ is invertible if and only if $\delta$ is invertible, in which
case
\be
A^{-1} = \alpha - \beta \delta^{-1} \sigma.
\ee

It appears counterintuitive at first that understanding the
invertibility of a larger matrix might be easier than understanding
the invertibility of a submatrix.  However, when the entries are
operators instead of matrices, the situation may change.  In the next
section we will see that introducing a matrix of operators will allow
us to understand microlocal invertibility of $P-z$ near a periodic
orbit by constructing a parametrix in the {\it
  double cover} of a neighbourhood of the orbit.  

\subsection{The Grushin Problem Reduction}
Throughout this section, we suppress the dependence on $z$ whenever unambiguous 
for ease in exposition.  We will build operators $R_+= R_+(z): \DD'(X) \to \DD'(V)$ and $R_-= R_-(z): \DD'(V)
\to \DD'(X)$ such that 
\be
\PP:= \left( \begin{array}{cc} \frac{i}{h} (P-z) & R_- \\ R_+ & 0
\end{array} \right) : \DD'(X) \times \DD'(V) \to \DD'(X) \times
\DD'(V)
\ee
has microlocal inverse 
\ben
\label{EE}
\EE = \left( \begin{array}{cc} E & E_+ \\ E_- & E_{-+} \end{array}
\right)
\een
near $\gamma \times (0,0)$, where $E$, $E_+$, and $E_-$ will be defined later, and 
\be
E_{-+} = I - M(z).
\ee
The following construction of the solution to the Grushin problem is
from \cite{SjZw}, with the addition here that we allow $\gamma(z)$ to
reflect transversally off the boundary of $\partial X$.  Recall $\chi
\in \Ci_c( T^*X)$ satisfies (\ref{chi-1}-\ref{chi-2}), and begin by setting 
\be
R_+ = K^* \frac{i}{h} \left[ P, \chi \right]_+.
\ee
Then if $u$ satisfies $(P-z)u = 0$ microlocally near $m_0(z)$, $R_+ u$ is
the microlocal Cauchy data.  That is, for $v \in L^2(V)$, $u = Kv$ is
a solution to the microlocal Cauchy problem
\ben
\label{Cauchy-1}
\left\{ \begin{array}{c}
(P-z)u = 0, \\ R_+ u = v \end{array} \right. 
\een
near $\gamma \times (0,0)$.  To construct a global solution, let
$K_f(t):= I_+(t) K$ and $K_b(t):= I_-(t) K$ be the forward and
backward (respectively) Cauchy problem solution operators.  Note for
$t \sim T/2$ we have
\ben
K_f(t) & = & I_+(t) K  \nonumber \\
& = & I_-(t) K K^{-1} \MM(z) K \nonumber \\
& = & K_b(t) M(z), \label{Kb-M}
\een
so microlocally near $m_1 \times (0,0)$ we have $K_f = K_bM(z)$.  Now
for $\Omega$ a neighbourhood of $\gamma$, we can solve
\eqref{Cauchy-1} in $\Omega \setminus \neigh(m_1)$.  To do this, set
\ben
\label{Eplus-def}
E_+ v = \chi K_fv + (1 - \chi) K_b v,
\een
so $E_+ v$ satisfies
\be
i) && E_+ v = K v \,\, \text{in a neighbourhood of }m_0(z) \\
ii) && R_+ E_+ = \id \,\, \text{microlocally near }(0,0) \times (0,0)
\in (T^*V)^2.
\ee
With $[\cdot, \cdot ]_-$ denoting the part of the commutator supported
near $m_1(z)$, we calculate:
\be
(P-z)E_+ v =  [P, \chi]_- K_f v - [P, \chi ]_- K_b v, 
\ee
since $K_f = K_b$ microlocally near $ m_0(z) \times (0,0)$ and $(P-z)
I_\pm(t)K v = 0$ microlocally near $ \exp
(tH_p)(m_0(z)) \times (0,0)$.  According to \eqref{Kb-M}, we can then write
\be
(P-z)E_+ v = [P, \chi]_- K_b (M(z) - \id) v.
\ee
For $v \in L^2(V)$, we set
\be
u& = & E_+ v, \\
u_-& = & E_{-+} v, \,\, \text{and} \\
R_- & = & \frac{i}{h} [P, \chi]_- K_b.
\ee
We have solved the following problem microlocally in $(\Omega
\setminus \neigh(m_1(z)))^2$ (see Figure \ref{fig:fig10}):
\ben
\label{g-2'}
\left\{ \begin{array}{r}
\frac{i}{h} (P-z) u + R_- u_- = 0 \\ R_+ u = v \end{array} \right. .
\een
\begin{figure}
\centerline{
\input{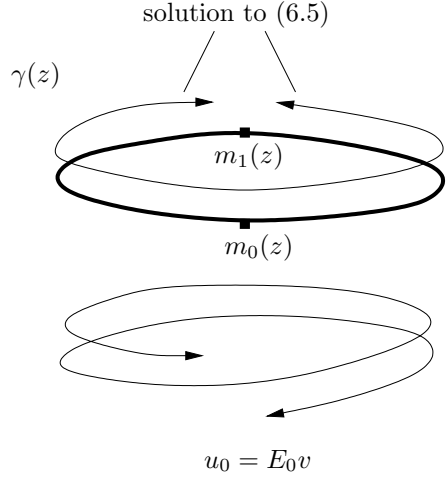}}
\caption{\label{fig:fig10} Microlocal solution to \eqref{g-2'} and
  construction of global solution to \eqref{global-sol}.}
\end{figure}

Thus if $\PP^{-1}$ exists, it is necessarily given by \eqref{EE},
where $E$ and $E_-$ have yet to be defined.  

For $\epsilon>0$ let 
\be
\left( \Omega \times_\epsilon \Omega \right)_\pm := \left\{ \left( \bigcup_{m \in \Omega} (
\exp \pm t H_p ) m, m \right) \bigcap \Omega \times \Omega : - \epsilon <
t < T - 2 \epsilon \right\}.
\ee
We will define $L_f$ and $L_b$, the forward and backward fundamental
solutions (respectively) of $i(P-z)/h$, which will be defined
microlocally on $\left( \Omega \times_\epsilon \Omega \right)_\pm$ respectively.
By Proposition \ref{hDx-prop}, we can conjugate $i (P-z)/h$ to
$\partial_{x_1}$ microlocally near the point $m_0(z)^2 \in (T^*X)^2$.
Then the local fundamental solutions $L_f^0$ and $L_b^0$ are given by 
\be
L_f^0 v(x)& = &\int_{-\infty}^{x_1} v(y, x') dy, \text{ and}\\
L_b^0 v(x) & = & - \int_{x_1}^{\infty} v(y,x') dy,
\ee
while $L_f$ and $L_b$ are now given microlocally near $ \exp (\pm t
H_p) m_0(z) \times m_0(z)$, respectively, by 
\be
L_f & = & I_+^z(t) L_f^0 \text{ and} \\
L_b & = & I_-^z(t) L_b^0.
\ee
It is convenient to introduce two new microlocal cutoffs $\chi_f$ and
$\chi_b$ satisfying (\ref{chi-1}-\ref{chi-2}) and in addition, 
\be
&& \chi \equiv 1 \text{ on } \supp \chi_f \cap W_+,  \\
&& \chi_b \equiv 1 \text{ on } \supp \chi \cap W_+,
\ee
where $W_+$ is a neighbourhood of $m_0(z)$ containing the support of
$[P, \chi]_+$ (see Figure \ref{fig:fig9}).  
\begin{figure}
\centerline{
\input{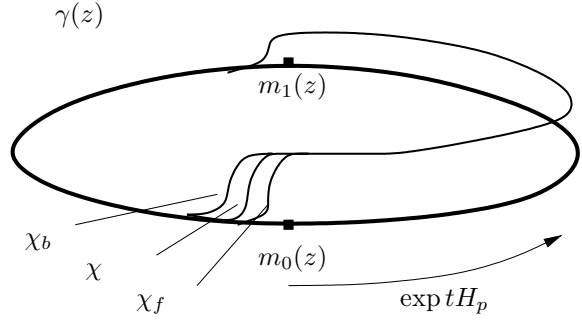}}
\caption{\label{fig:fig9} The cutoffs $\chi_b$, $\chi$, and $\chi_f$.}
\end{figure}
For $v \in L^2(\Omega)$, set
\be
\tilde{u} = L_f (I - \chi) v,
\ee
and observe $(P-z) \tilde{u} = 0$ past the support of $(I - \chi)$ in the
direction of the $H_p$ flow.  In particular, $(P-z) \tilde{u} = 0$ on
$\supp \chi_f$.  Then past $\supp(I - \chi)$ in the direction of the
$H_p$ flow, 
\be
\tilde{u} & =&  K K^* \frac{i}{h} \left[ P, \chi_f \right]_+ \tilde{u}
\\
& = & KK^* \frac{i}{h} \left[ P, \chi_f \right]_+ L_f ( I - \chi) v.
\ee
Let $\widetilde{I}_+(t)$ be the extension of $I_+(t)$ to $T \leq t
\leq 2T - \epsilon$, and let $\widetilde{K}_f = \widetilde{I}_+ K$.
Let $\widetilde{\Omega}$ denote the double covering space of
$\Omega$.  Then in $\widetilde{\Omega}$,
\be
\tilde{u} = \widetilde{K}_f K^* \frac{i}{h} \left[ P, \chi_f
  \right]_+ L_f (I - \chi) v.
\ee
We define $\hat{u} = L_b \chi v$ and $\widetilde{K}_b =
\widetilde{I}_- K$ similar to $\widetilde{K}_f$ so that 
\be
\hat{u} = \widetilde{K}_b K^* \frac{i}{h} \left[ P, \chi_b \right]_+
L_b \chi v.
\ee
We think of $\tilde{u}$ and $\hat{u}$ as being double-valued on $\Omega$
and write $L_{ff}v$ and $L_{bb}v$ to denote the second branches
respectively in a neighbourhood of $m_1(z)$.  Let $W_-$ denote a
neighbourhood of $m_1(z)$, and define (see Figure \ref{fig:fig10})
\be
u_0 = E_0 v := \left\{ \begin{array}{l}
L_b \chi v + L_f (I - \chi) v \,\,\, \text{outside } W_-, \\
L_b \chi v + (I - \chi) L_{bb} \chi v + L_f (I - \chi) v + \chi L_{ff}
(I - \chi) v \,\,\, \text{in } W_- \end{array} \right. .
\ee
Now we apply $i (P-z)/h$ to $E_0 v$ in $W_-$:
\be
\frac{i}{h} (P-z) E_0 v & = & v - \frac{i}{h} \left[ P, \chi \right]_-
L_{bb} \chi v + \frac{i}{h} \left[ P,\chi \right]_- L_{ff} (I - \chi)
v \\
& = & v - \frac{i}{h} \left[ P, \chi \right]_- K_b K^* \frac{i}{h}
\left[ P, \chi \right]_+ L_b \chi v \\
&& \quad \quad \quad + \frac{i}{h} \left[ P ,\chi \right]_- K_f K^*
\frac{i}{h} \left[ P, \chi \right]_+ L_f ( I - \chi ) v \\
& = & v - \frac{i}{h} \left[ P, \chi \right]_- K_b \Big( K^*
\frac{i}{h} \left[ P, \chi \right]_+ L_b \chi v \\
&& \quad \quad \quad - M(z) K^* \frac{i}{h} \left[ P, \chi \right]_+
L_f (I - \chi) v \Big),
\ee
where we have used $K_f = K_b M(z)$ in $W_-$ and dropped the tilde and
hat notation when thinking of second branches.  We have solved the
following problem:
\ben
\label{global-sol}
\frac{i}{h} (P-z) E_0 v + R_- E_{0,-} v = v,
\een
with 
\be
R_- = \frac{i}{h} \left[ P, \chi \right]_- K_b
\ee
as above, and
\be
E_{0,-}v := K^* \frac{i}{h} \left[ P, \chi \right]_+ L_b \chi v - M(z) K^*
\frac{i}{h} \left[ P, \chi \right]_+ (I - \chi) v.
\ee
Recalling the structure of $\EE$ and $\PP$, we calculate
\be
\PP \EE = \left( \begin{array}{ll} \frac{i}{h} (P-z)E + R_- E_- &
  \frac{i}{h}(P-z)E_+ + R_- E_{- +} \\ R_+ E & R_+ E_+ \end{array}
\right),
\ee
so that if $\EE$ is to be a microlocal right inverse of $\PP$ near $\gamma
\times (0,0)$, we require
\ben
 \frac{i}{h} (P-z)E + R_- E_- & = & \id : L^2( \Omega) \to L^2(\Omega), \label{g-1}\\
\frac{i}{h}(P-z)E_+ + R_- E_{- +} & = & 0 : L^2(V) \to L^2(\Omega), \label{g-2} \\
R_+ E & = & 0 : L^2(\Omega) \to L^2(V) \text{ and}, \label{g-3}\\
R_+ E_+ & = & \id : L^2(V) \to L^2(V) \label{g-4}
\een
microlocally.  Note \eqref{g-2} and \eqref{g-4} are satisfied according to
\eqref{g-2'}.  Owing to \eqref{g-4}, if we write $E = (I - E_+ R_+)
\tilde{E}$ for some $\tilde{E}$, then 
\be
R_+ E = R_+(I - E_+ R_+)\tilde{E} = (I - R_+ E_+) R_+\tilde{E} = 0,
\ee
and comparing with \eqref{g-1} we see $\tilde{E} = E_0$, 
\be
E = E_0 + E_+ R_+ E_0,
\ee
and 
\be
E_- = E_{0,-} - E_{- +} R_+ E_0.
\ee
Thus $\EE$ is a right inverse.  To see it is also a left inverse,
observe 
\be
R_+^* &= &\frac{i}{h} \left[ P, \chi \right]_+ K, \text{ and} \\
R_-^* & = & K_b^* \frac{i}{h} \left[ P, \chi \right]_-,
\ee
together with
\be
K_b^* \frac{i}{h} \left[ P, \chi \right]_- K_b = - \id 
\ee
implies
\be
K_b^* \frac{i}{h} \left[ P, (I - \chi) \right]_- K_b = \id.
\ee
In other words, after exchanging $\chi$ with $1 - \chi$, $W_+$ with
$W_-$, and $K$ with $K_b$, $R_+^*$ has the same form as $R_-$ and
$R_-^*$ has the same form as $R_+$.  Thus
\be
\PP^* = \left( \begin{array}{cc}\frac{i}{h} (P-z) & R_+^* \\ R_-^* & 0
\end{array} \right)
\ee
has the same form as $\PP$ and hence has a microlocal right inverse,
say 
\be
\mathcal{F}^* := \left( \begin{array}{cc} F & F_+ \\ F_- & F_{-+}
\end{array} \right)^*
\ee
Then $\PP^* \mathcal{F}^* = \id$ implies $\mathcal{F} \PP = \id$, so
\be
\mathcal{F} = \mathcal{F} \PP \EE = \EE
\ee
implies $\mathcal{F} = \EE$.


As every operator used in the preceding construction depends holomorphically 
on $z \in [-\epsilon_0, \epsilon_0] + i ( -c_0 h, c_0 h)$, we have
proved 
the following Proposition, which is from \cite{SjZw}:
\begin{proposition}
\label{SjZw-prop}
With $\PP$ and $\EE$ as above and $z \in [-\epsilon_0, \epsilon_0] + i
( -c_0 h, c_0 h)$, 
$\EE$ is a microlocal inverse for 
\be
\PP: L^2(\Omega) \times L^2(V) \to  L^2(\Omega) \times L^2(V)
\ee
 near $\gamma \subset T^*X$, and in addition, 
\be
\left\| \EE \right\|_{ L^2(\Omega) \times L^2(V) \to  L^2(\Omega) \times L^2(V) } \leq C.
\ee
\end{proposition}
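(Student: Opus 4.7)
The plan is to verify that the operators $E$, $E_+$, $E_-$, and $E_{-+} = I - M(z)$ constructed above really do assemble into a bounded microlocal two-sided inverse of $\PP$, and to extract the uniform $\O(1)$ bound. The preceding discussion has already carried out almost all of the symbolic bookkeeping; the proposition is essentially a capstone, so the proof amounts to checking that all the pieces fit together modulo $\O(h^\infty)$ and that each piece is $\O(1)$ on $L^2$.

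First I would verify the right inverse property \eqref{g-1}--\eqref{g-4}. Identities \eqref{g-2} and \eqref{g-4} have already been verified from the construction of $E_+$ in \eqref{Eplus-def} together with the microlocal relation \eqref{g-2'}. For \eqref{g-3}, the ansatz $E = (I - E_+ R_+) E_0$ combined with $R_+ E_+ = \id$ microlocally gives $R_+ E = (R_+ - R_+ E_+ R_+) E_0 = 0$. Substituting into \eqref{g-1} and using \eqref{global-sol} then reduces the claim to an identity that has already been established, modulo checking that the microlocal cutoffs $\chi, \chi_f, \chi_b$ cooperate correctly in the neighbourhoods $W_\pm$ of $m_0(z)$ and $m_1(z)$. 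For the left inverse property I would invoke the adjoint symmetry noted in the text: the identities $R_+^* = \frac{i}{h}[P,\chi]_+ K$ and $R_-^* = K_b^* \frac{i}{h}[P,\chi]_-$, combined with $K_b^* \frac{i}{h}[P,1-\chi]_- K_b = \id$ (the analogue of Lemma \ref{k-id} for the backward Cauchy data), show that $\PP^*$ fits the same Grushin template after swapping $\chi \leftrightarrow 1-\chi$ and $W_+ \leftrightarrow W_-$. Hence $\PP^*$ has a microlocal right inverse $\mathcal{F}^*$; then $\mathcal{F} = \mathcal{F} \PP \EE = \EE$ shows $\EE$ is a microlocal left inverse as well.

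For the norm bound I would inspect each ingredient: the identification $K$ (replaced by $\widetilde{K}(z)$) is $L^2$-bounded by Lemma \ref{k-id}; the microlocal propagators $I_\pm^z(t)$ are unitary $h$-FIOs on their microlocal domains by Proposition \ref{ml-prop-thm}; the Monodromy operator $M(z)$ is unitary by construction; the commutators $[P,\chi]_\pm$ are of order $h$, which cancels the factor $1/h$ in $R_\pm$; and the fundamental solutions $L_f$, $L_b$ (and their second branches $L_{ff}$, $L_{bb}$ in the double cover $\widetilde{\Omega}$) are $\O(1)$ on microlocally cut-off $L^2$ because after the normal form of Proposition \ref{hDx-prop} they reduce to one-dimensional integration against $v(y,x')$ over a bounded interval. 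Composing these bounds yields $E_0, E_{0,-} = \O(1)$, whence $E, E_+, E_-, E_{-+} = \O(1)$. Holomorphic dependence on $z \in [-\epsilon_0,\epsilon_0] + i(-c_0 h, c_0 h)$ is inherited from each factor, and the constants can be chosen uniformly in this strip by compactness of $[-\epsilon_0,\epsilon_0]$ and continuity of $\gamma(z)$, $m_0(z)$, $T(z)$.

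The main obstacle is the microlocal bookkeeping on the double cover $\widetilde{\Omega}$ near $W_-$: one must keep careful track of which branch of the multi-valued solutions $\tilde{u}$ and $\hat u$ is meant at each step, and verify that replacing the formal identities (which hold on the level of principal symbols) by genuine microlocal identities mod $\O(h^\infty)$ costs only a negligible $\O(h^\infty)$ error that may be absorbed into the equivalence relation. Once this is done, the conclusion of the proposition follows directly from the estimates assembled in the preceding paragraph.
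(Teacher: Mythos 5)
Your proposal follows essentially the same route as the paper: the right-inverse identities \eqref{g-1}--\eqref{g-4} are checked from the explicit construction of $E_0$, $E_+$, $E_{-+}=I-M(z)$ via the ansatz $E=(I-E_+R_+)E_0$, the left inverse is obtained by the same adjoint symmetry $\chi \leftrightarrow 1-\chi$, $W_+\leftrightarrow W_-$ giving $\mathcal{F}=\mathcal{F}\PP\EE=\EE$, and the $\O(1)$ bound and holomorphy in $z$ come from the boundedness and holomorphic dependence of each factor, exactly as the paper asserts. This is correct and matches the paper's argument.
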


\subsection{Comparing $P-z$ to $M(z)$}
As a consequence of Proposition \ref{SjZw-prop} and motivated by the
linear algebra Grushin problem, the following two theorems show
quantitatively that $P-z$ is invertible if and only if $I-M(z)$ is
invertible. 
\begin{theorem}
\label{main-theorem-4}
Let $M(z): L^2(V) \to L^2(V)$ be the Quantum Monodromy operator, $(P-z)$
and $R_+$ as above.  
Suppose $A \in \Psi_h^{0,0}(T^*X)$ is a microlocal cutoff with
wavefront set 
sufficiently close to $\gamma \subset T^*X$ and $B \in \Psi_h^{0,0}
(T^*V)$ is 
a microlocal cutoff with wavefront set sufficiently close to $(0,0)
\in T^*V$.  
Then there exists $\epsilon_0 >0$, $c_0>0$, and $h_0 >0$ such that,
with 
$z \in [ - \epsilon_0, \epsilon_0] + i(-c_0h, c_0h)$ and $0 < h < h_0$,
\ben
\lefteqn{ \|(P-z)u \|_{L^2(X)} \geq } \nonumber \\  && \geq C^{-1}h \left( \left\| B ( I - M(z)) R_+ u 
\right\|_{L^2(V)} - \left\| (I - A) u \right\|_{L^2(X)} \right) \label{inj-1a} \\
&& \quad \quad - \O( h^\infty) \| u \|_{L^2(X)}. \nonumber
\een
Further,
\ben
\lefteqn{  \| A u \|_{L^2(X)} \leq } \nonumber \\ && \leq C \left(  \left\| R_+ u \right\|_{L^2(V)} + 
h^{-1} \left\| (P-z) u \right\|_{L^2(X)} +  \left\| (I-A) u
\right\|_{L^2(X)}\right) \label{inj-2a} \\
&& \quad \quad +\O( h^\infty) \| u \|_{L^2(X)}. \nonumber
\een
\end{theorem}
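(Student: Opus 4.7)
The plan is to extract both estimates directly from the microlocal inverse identity $\EE \PP = I$ supplied by Proposition \ref{SjZw-prop}. Reading off the $(1,1)$ and $(2,1)$ blocks of $\EE \PP$ gives, microlocally near $\gamma \times (0,0)$,
\begin{eqnarray*}
E \cdot \tfrac{i}{h}(P-z) + E_+ R_+ &=& I, \\
E_- \cdot \tfrac{i}{h}(P-z) + E_{-+} R_+ &=& 0,
\end{eqnarray*}
and by construction $E_{-+} = I - M(z)$. The microlocal character means that after sandwiching with cutoffs whose wavefront sets lie sufficiently close to $\gamma \times (0,0)$, the errors are $\O(h^\infty)$ in operator norm.

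For a general $u \in L^2(X)$, I split $u = Au + (I-A)u$. The piece $Au$ has wavefront set in $\WF (A)$, which by hypothesis is close to $\gamma$, so applying the microlocal identities to $Au$ produces an $\O(h^\infty)\|u\|$ error. The complementary piece $(I-A)u$ is handled by the uniform $L^2$-boundedness of $\EE$ from Proposition \ref{SjZw-prop}, which contributes a term of size $C\|(I-A)u\|$. Assembling these observations yields, for every $u \in L^2(X)$,
\begin{eqnarray*}
u &=& E \cdot \tfrac{i}{h}(P-z) u + E_+ R_+ u + r_1, \\
0 &=& E_- \cdot \tfrac{i}{h}(P-z) u + E_{-+} R_+ u + r_2,
\end{eqnarray*}
with $\|r_1\|_{L^2(X)} + \|r_2\|_{L^2(V)} \leq C \|(I-A)u\|_{L^2(X)} + \O(h^\infty)\|u\|_{L^2(X)}$.

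To obtain \eqref{inj-2a}, I would apply $A$ to the first identity and take $L^2$ norms, using the uniform boundedness of $AE$ and $AE_+$ to convert the $h^{-1}(P-z)u$ and $R_+u$ terms into the desired right-hand side. For \eqref{inj-1a}, I would apply $B$ to the second identity, substitute $E_{-+} = I - M(z)$, and estimate $\|BE_- \cdot \tfrac{i}{h}(P-z)u\| \leq C h^{-1}\|(P-z)u\|$ using boundedness of $BE_-$; moving the remainder $B r_2$ to the right and multiplying through by $h$ then produces \eqref{inj-1a}.

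The main obstacle is conceptual rather than computational: one must faithfully translate the statement ``$\EE \PP = I$ microlocally near $\gamma \times (0,0)$'' into a genuine norm estimate with explicit $\|(I-A)u\|$ and $\O(h^\infty)\|u\|$ remainders, keeping track of the wavefront sets of $A$ and $B$ so that all compositions of the form $AE$, $AE_+$, $BE_-$, $BE_{-+}$ are truly $\O(1): L^2 \to L^2$ as $h \to 0$. Once this microlocal bookkeeping is set up, both estimates follow by unpacking the appropriate block entry of the Grushin identity and collecting terms.
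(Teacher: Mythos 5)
Your proposal is correct and follows essentially the same route as the paper: both read off the $(1,1)$ and $(2,1)$ blocks of the microlocal identity $\EE\PP=\id$ from Proposition \ref{SjZw-prop}, sandwich with the cutoffs (the paper phrases this as replacing $E,E_+$ by $AE,AE_+$ and $E_-,E_{-+}$ by $BE_-,BE_{-+}$), split $u=Au+(I-A)u$ to absorb the microlocal errors into $\O(h^\infty)\|u\|+C\|(I-A)u\|$, and use the uniform $L^2$ bound on $\EE$ together with $E_{-+}=I-M(z)$ to obtain \eqref{inj-1a} and \eqref{inj-2a}.
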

\begin{proof}
That $\EE$ is a microlocal left inverse for $\PP$ means in particular 
that for $A$ and $B$ as in the statement of the theorem,
\ben
\label{inj-1}
\frac{i}{h} E_- (P-z) + E_{-+} R_+  = l_+ + \O(h^\infty)_{L^2(X) \to
  L^2(V)},
\een
where 
\be
B l_+ A = \O(h^\infty)_{L^2(X) \to L^2(V)}.
\ee
Since \eqref{inj-1a} is only concerned with injectivity, we note that 
by replacing $E_-$ and $E_{-+}$ with $\tilde{E}_- = BE_-$ and 
$\tilde{E}_{-+} = BE_{-+}$ respectively in \eqref{inj-1} doesn't 
change the fact that $\EE$ is a microlocal {\it left} inverse.  Thus
\ben
\label{tilde-Es}
\frac{i}{h} \tilde{E}_- (P-z) + \tilde{E}_{-+} R_+  = \tilde{l}_+ +
\O(h^\infty)_{L^2(X) \to L^2(V)},
\een
with
\be
\tilde{l}_+ A := B l_+ A =  \O(h^\infty)_{L^2(X) \to L^2(V)},
\ee
and for $u \in L^2(X)$,
\be
\tilde{E}_- (P-z) u + \frac{h}{i} \tilde{E}_{-+} R_+ u = \frac{h}{i} 
\tilde{l}_+ (I - A) u + \O(h^\infty)\| u \|_{L^2(X)},
\ee
hence \eqref{inj-1a}.

For \eqref{inj-2a}, we note $\EE \PP = \id$ microlocally gives also 
\ben
\label{inj-2}
\frac{i}{h} E(P-z) + E_+ R_+  = \id_{L^2(X)) 
\to L^2(X)} + l,
\een
where
\be
AlA = \O(h^\infty)_{L^2(X) \to L^2(X)}.
\ee
Similar to \eqref{tilde-Es}, we replace $E$ and $E_+$ with $\tilde{E} = AE$ and
$\tilde{E}_+ = AE_+$ without changing that $\EE$ is a microlocal
left inverse of $\PP$, and from \eqref{inj-2}, we get for $u \in L^2(X)$
\be
\frac{i}{h} \tilde{E} (P-z) u + \tilde{E}_+ R_+ u = A u + \tilde{l} u
+ \O(h^\infty)\| u \|_{L^2(X)},
\ee
Using $\tilde{l} A u := A l A u = \O(h^\infty) u$, we get
\be
C \left( \|( P-z) u \|_{L^2(X)} + h \| R_+ u \|_{L^2(V)} \right) \geq
h \| A u \|_{L^2(X)} - h \| (I - A) u \|_{L^2(X)},
\ee
which is \eqref{inj-2a}.
\end{proof}

Using that $\EE$ is a microlocal right inverse for $\PP$ we obtain the
following theorem, which completes the correspondence between $I -
M(z)$ and $P-z$.

\begin{theorem}
\label{ml-equiv-thm}
Suppose $A \in \Psi^{0,0}(X)$, $B \in \Psi^{0,0}(V)$ satisfy
\be
&& A \equiv 1 \text{ microlocally near } \gamma, \\
&& A \equiv 0 \text{ microlocally away from } \gamma, \\
&& B \equiv 1 \text{ microlocally near } (0,0), \\
&& B \equiv 0 \text{ microlocally away from } (0,0).
\ee
Suppose $u \in L^2(X)$ satisfies
\be
Au = u + \O(h^\infty) \| u \|_{L^2(X)},
\ee
and $v \in L^2(V)$ satisfies
\be
Bv = v + \O(h^\infty) \| v \|_{L^2(V) }.
\ee
Then we have
\ben
\label{ml-equiv-1}
A \frac{i}{h} (P-z) E u + A R_- E_- u & = & u + \O(h^\infty) \| u
\|_{L^2(X)}, \, \text{ and} \\
\label{ml-equiv-2}
A \frac{i}{h} (P-z) E_+ v + A R_- (I - M(z)) v & = & \O(h^\infty) \| v
\|_{L^2(V)}.
\een
\end{theorem}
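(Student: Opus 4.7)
The plan is to read off both identities directly from Proposition \ref{SjZw-prop}, which asserts that $\EE$ is a two-sided microlocal inverse of $\PP$ near $\gamma \times (0,0) \subset T^*X \times T^*V$. In particular, expanding the matrix product
\[
\PP\EE = \begin{pmatrix} \frac{i}{h}(P-z) & R_- \\ R_+ & 0 \end{pmatrix} \begin{pmatrix} E & E_+ \\ E_- & E_{-+} \end{pmatrix} = \begin{pmatrix} \frac{i}{h}(P-z)E + R_- E_- & \frac{i}{h}(P-z)E_+ + R_- E_{-+} \\ R_+ E & R_+ E_+ \end{pmatrix}
\]
and comparing to the identity microlocally near $\gamma \times (0,0)$ yields the two operator identities
\begin{align}
\frac{i}{h}(P-z)E + R_- E_- &= \id_{L^2(X)} + r_1, \label{pf1}\\
\frac{i}{h}(P-z)E_+ + R_-(I - M(z)) &= r_2, \label{pf2}
\end{align}
where I have used $E_{-+} = I - M(z)$ from \S\ref{grushin}, and where the remainders $r_1$ and $r_2$ are negligible in the microlocal sense: for any pseudodifferential cutoffs $A_0 \in \Psi_h^{0,0}(X)$, $B_0 \in \Psi_h^{0,0}(V)$ with wavefront sets sufficiently close to $\gamma$ and $(0,0)$ respectively, we have $A_0 r_1 A_0 = \O(h^\infty)_{L^2(X) \to L^2(X)}$ and $A_0 r_2 B_0 = \O(h^\infty)_{L^2(V) \to L^2(X)}$.

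For \eqref{ml-equiv-1}, I would left-multiply \eqref{pf1} by $A$ and apply the resulting operator to $u$, obtaining
\[
A\frac{i}{h}(P-z)Eu + A R_- E_- u = Au + A r_1 u.
\]
Since $Au = u + \O(h^\infty)\|u\|$ by hypothesis, writing $A r_1 u = A r_1 A u + A r_1 (I - A) u$ and using both the microlocal negligibility of $A r_1 A$ and the fact that $(I-A)u = \O(h^\infty)\|u\|$ shows $A r_1 u = \O(h^\infty)\|u\|_{L^2(X)}$, which gives \eqref{ml-equiv-1}. For \eqref{ml-equiv-2}, the same procedure applied to \eqref{pf2} and evaluated at $v$ gives
\[
A\frac{i}{h}(P-z) E_+ v + A R_-(I-M(z))v = A r_2 v = A r_2 B v + A r_2 (I-B) v,
\]
and both terms on the right are $\O(h^\infty)\|v\|_{L^2(V)}$ by the microlocal negligibility of $A r_2 B$ together with the hypothesis $Bv = v + \O(h^\infty)\|v\|$.

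There is no real obstacle here: the content is entirely in the construction of $\EE$ carried out in the preceding subsection, together with the boundedness statement in Proposition \ref{SjZw-prop}. The only point requiring any care is the standard observation that a microlocal identity $\PP \EE = \id$ (valid near $\gamma \times (0,0)$) becomes a genuine operator identity with $\O(h^\infty)$ remainders once one sandwiches by cutoffs whose wavefront sets sit inside the relevant neighbourhood, which is exactly what the hypotheses $Au = u + \O(h^\infty)\|u\|$ and $Bv = v + \O(h^\infty)\|v\|$ provide.
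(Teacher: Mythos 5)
Your proof is correct and follows essentially the same route as the paper: expand $\PP\EE = \id$ microlocally into the four component identities, read off the first column, and remove the remainders by sandwiching with the cutoffs $A$ and $B$ using the hypotheses $Au = u + \O(h^\infty)\|u\|$ and $Bv = v + \O(h^\infty)\|v\|$. The only differences are notational (your $r_1, r_2$ versus the paper's $r, r_-$).
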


\begin{remark}
The utility of \eqref{ml-equiv-2} is that in \S \ref{quasi},
where $\gamma$ will be assumed elliptic instead of semi-hyperbolic, we construct $v \in
L^2(V)$ concentrated near $(0,0)$ satisfying
\be
(I - M(z)) v = \O(h^N), \,\,\,\, \forall N
\ee
then $u := E_+ v$ satisfies
\be
(P-z) u = \O(h^{N+1}) \| u \|_{L^2(X)}
\ee
microlocally near $\gamma$.  This provides essentially a converse
to our Main Theorem.
\end{remark}

\begin{proof}[Proof of Theorem \ref{ml-equiv-thm}]
From Proposition \ref{SjZw-prop}, if we multiply $\mathcal{P}$ by
$\mathcal{E}$ on the right, we get
\ben
 \frac{i}{h} (P-z)  E + R_- E_- & = & \id_{L^2(X) \to L^2(X)} + r, \label{ml-equiv-1a}\\ 
\frac{i}{h} (P-z) E_+ + R_- (I-M(z)) & = & r_-, \label{ml-equiv-1b}\\
 R_+ E & = & r_+, \nonumber \\
 R_+ E_+ & = & \id_{L^2(V) \to L^2(V)} + r_{-+}, \nonumber
\een
where
\be
 ArA & = & \O(h^\infty)_{L^2(X) \to L^2(X)}, \\ 
 A r_- B & = & \O(h^\infty)_{L^2(V) \to L^2(X)}, \\
B r_+ A & = & \O(h^\infty)_{L^2(X) \to ^2(V)}, \text{ and}\\
 B r_{-+} B & = & \O(h^\infty)_{L^2(V) \to L^2(V)}.
\ee
Hence (\ref{ml-equiv-1a}-\ref{ml-equiv-1b}) imply for any $u \in
L^2(X)$, $v \in L^2(V)$,
\be
 A \frac{i}{h} (P-z) E u + A R_- E_- u = A u + A r (I-A) u + \O(h^\infty) \| u
\|_{L^2(X)} 
\ee
and
\be
A \frac{i}{h} (P-z) E_+ v + A R_- (I - M(z)) v = A r_- (I-B) v + \O(h^\infty) \| v
\|_{L^2(V)},
\ee
which is (\ref{ml-equiv-1}-\ref{ml-equiv-2}).
\end{proof}


\section{The Model Case}
\label{model-chapter}
\numberwithin{equation}{section}
In this section we indicate how Theorem \ref{main-theorem-4} can be
used to estimate $P-z$ in the model case.  Let $\dim X = 2$, and assume $t$ parametrizes
$\gamma = \gamma(0)$, and $\tau$ is the dual variable to $t$.  Then
our model for $p$ near $\gamma$ is the symbol 
\be
p = \tau + \lambda x \xi,
\ee
with $\lambda >0$.  We have 
\be
H_p = \partial_t + \lambda ( x \partial_x - \xi \partial_\xi ),
\ee
and the Poincar{\'e} map $S: \reals^2 \to \reals^2$ is given by
\be
S = \left( \begin{array}{cc} e^\lambda & 0 \\ 0 & e^{- \lambda}
\end{array} \right).
\ee
We want a deformation of the identity into $S$, that is a smooth
family of symplectomorphisms $\kappa_t$ such that $\kappa_0 = \id$ and
$\kappa_1 = S$.  This is clear in the model case:
\be
\kappa_t = \exp t\left( \begin{array}{cc} \lambda & 0 \\ 0 & - \lambda
\end{array} \right).
\ee
According to Lemma \ref{deform-lemma}, we can find a time-dependent
effective 
Hamiltonian $q_t= q_t(x, \xi)$ such that
\be
\frac{d}{dt} \kappa_t = (\kappa_t)_* H_{q_t}.
\ee
In the model case, this is again clear: $q_t = \lambda x \xi$,
independent of $t$.  

We know in general if $M(z)$ is the Quantum Monodromy operator it is an
$h$-FIO associated to the graph of $S$, which means our model is $M(z) = M^z(1)$ for
$M^z(t)$ a family of $h$-FIOs satisfying
\be
\left\{ \begin{array}{l} hD_t M^z(t) + Q(t) M^z(t) = 0, \\ M^z(0) = \id
\end{array} \right.
\ee
where $Q(t) = \Op (q_t)$ for the effective Hamiltonian $q_t$ as above.  In the
model case, $q$ does not depend on $t$ or $z$, so with $Q = \Op(q)$, $M^z(t)$ is just the
semigroup
\be
M(t) = \exp \left( -\frac{i}{h}tQ \right).
\ee

The basic idea is $M(t)$ is unitary, but $e^{-G^w}M(t) e^{G^w}$ is not
for $G$ with real principal symbol (if it exists).  Further, in the
model case, if $G$ is independent of $t$, 
\be
e^{-G^w}M(t) e^{G^w} = \exp \left( -\frac{i}{h} t e^{-G^w} Q e^{G^w}
\right),
\ee
and it will suffice to show $e^{-G^w} Q e^{G^w}$ has an imaginary part
of fixed size comparable to $h$.

We conjugate $M(t)$ to $M_1(t) = T_{h, \tilde{h}}M(t) T_{h,
  \tilde{h}}^{-1}$ where $T_{h, \tilde{h}}$ is the operator on
$L^2(\reals)$ as in \eqref{U-hsc}, and observe $M_1(t)$ satisfies the
evolution equation
\ben
hD_t M_1(t) & = & - T_{h, \tilde{h}} Q M(t) T_{h, \tilde{h}}^{-1}
\label{M-ode-1} \\
& = & - T_{h, \tilde{h}} Q T_{h, \tilde{h}}^{-1} T_{h, \tilde{h}} M(t) T_{h,
  \tilde{h}}^{-1} \label{M-ode-2} \\
& = & - Q_1 M_1(t), \label{M-ode-3}
\een
where 
\be
Q_1 = T_{h, \tilde{h}} Q T_{h, \tilde{h}}^{-1} \in \Psi_{-\half}^{-\infty,0,0}
\ee
microlocally.  We write $q_1(X, \Xi) = \sigma_{\tilde{h}}( Q_1)$, where
\be
q_1(X, \Xi) = \lambda \csh X \Xi + \O(h^2 + \tilde{h}^2),
\ee
as in Lemma \ref{U-hsc-lemma}.

Now we define the {\it escape function} 
\be
G(X, \Xi) = \half \log \left( \frac{1 + X^2}{1 + \Xi^2} \right),
\ee
and according to Lemma \ref{etG-lemma}, we can form the family of
operators 
\be
e^{s G^w},
\ee
where $G^w$ is the Weyl quantization of $G$ in the $\tilde{h}$ calculus
and $|s|$ is sufficiently small.  Let 
\be
\widetilde{M}(t) = e^{-sG^w} M_1(t) e^{sG^w},
\ee
whence
\be
hD_t \widetilde{M}(t) = - \widetilde{Q} \widetilde{M}(t)
\ee
for
\be
\widetilde{Q} = e^{-sG^w} Q_1 e^{sG^w}
\ee
by a similar argument to (\ref{M-ode-1}-\ref{M-ode-3}).  We write
\be
\widetilde{Q} = \exp( -s \ad_{G^w}) Q_1,
\ee
with
\be
\ad_{G^w}^k Q_1 = \O_{L^2 \to L^2}\left(h \tilde{h}^{k-1} \right),
\ee
and
\be
\left[ Q_1, G^w \right] = -i \tilde{h} \Op_{\tilde{h}}^w \left( H_{q_1} G
\right) + \O(h^{3/2}\tilde{h}^{3/2}).
\ee
We have
\be
H_{q_1} G & = & \lambda \csh\left( \frac{X^2}{1+X^2} + \frac{\Xi^2}{1+ \Xi^2}
\right) \\
& =: & \lambda \csh A,
\ee
so that 
\be
\widetilde{Q} = Q_1 - ish\Op_{\tilde{h}}^w \left( A\right) + s E_1^w + s^2 E_2^w,
\ee
with $E_1=\O(h^{3/2} \tilde{h}^{3/2})$ and $E_2 = \O(h \tilde{h})$.
Since $A$ is roughly the harmonic oscillator (see Lemma \ref{harm-osc}), 
\be
\langle \Op_{\tilde{h}}^w \left( A\right)U, U \rangle \geq
\frac{\tilde{h}}{C} \|U \|^2
\ee
independently of $h$, so that
\ben
\label{Q-tilde-est}
\Im \lll \widetilde{Q} U, U \rrr \leq - \frac{h \tilde{h}}{C} \| U \|^2.
\een
Thus with $\tilde{h}>0$ small but fixed,
\be
\widetilde{M}(1) = \exp \left( - \frac{i}{h} (\Re \widetilde{Q} + i
\Im \widetilde{Q}) \right)
\ee
and by \eqref{Q-tilde-est},
\ben
\label{M-tilde-est}
\left\| \widetilde{M}(1) \right\|_{L^2(\reals) \to L^2(\reals)} \leq r
<1.
\een
For $u \in L^2(\reals)$ and $U = T_{h, \tilde{h}} u$, we have by
\eqref{Q-tilde-est} and \eqref{M-tilde-est}
\be
\Re \lll (I - \widetilde{M}(1) ) U, U \rrr \geq C^{-1}\|U\|^2
\ee
for some $0<C<\infty$.  Define the operator $K^w$ by 
\ben
\label{Kw-def}
e^{sK^w} = T_{h, \tilde{h}}^{-1} e^{sG^w} T_{h, \tilde{h}}.
\een
We have shown that 
\be
\Re \lll e^{-sK^w} (I - M) e^{sK^w} u, u \rrr \geq C^{-1} \|u\|^2.
\ee
Since $\|\exp(\pm sK^w)\|_{L^2 \to L^2} = \O(h^{-N})$ for some $N$, we
have 
\be
\Re \lll (I - M) u, u \rrr \geq Ch^N \|u\|^2.
\ee


\section{The Linearization}
\label{linearization}
\numberwithin{equation}{section}
\subsection{Symplectic Linear Algebra and Matrix Logarithms}
In this section, we will show how to reduce the case of a general
Poincar{\'e} map with a fixed point to studying the quadratic Birkhoff
normal forms.  We assume as in the introduction that the
eigenvalues of modulus one obey the nonresonance assumption \eqref{non-res}.

We begin by tackling the problem
of negative real eigenvalues and eigenvalues of modulus $1$ of the linearized Poincar{\'e} map.  Let
$S: W_1 \to W_2$, $W_1, W_2 \subset \reals^{2n-2}$, be a local
symplectic map, $S(0,0) = (0,0)$, which we have identified with its coordinate
representation.  As in the proof of Lemma \ref{deform-lemma}, we
consider the polar decomposition of $dS(0,0)$:
\be
dS(0,0) = \exp (-J F) \exp (B),
\ee
with $F$ and $B$ real valued and $\exp(B)$ positive definite and
symplectic.  Specifically, $\exp(-JF)$ describes the action due to the
eigenvalues of modulus $1$ as well as the rotation inherent in the
negative real eigenvalues.  We
consider first $A = \exp(B)$.  We denote by $\{ \mu_j\}$ the
eigenvalues of $A$ and by $\{ \tilde{\mu}_j\}$ the eigenvalues of
$dS(0,0)$.  Let $\mu$ be an eigenvalue of $A$.
Then $A$ symplectic implies if $\mu>1$ is real $\mu^{-1}$ is also an
eigenvalue, and if $\mu$ is complex, $|\mu| >1$, $\mu^{-1}$, $\overline{\mu}$, and $\overline{\mu}^{-1}$
are also eigenvalues.  If $\tilde\mu$, $|\tilde\mu| = 1$ is an
eigenvalue of $dS(0,0)$, then $\overline{\tilde\mu} =
\mu^{-1}$ is also an eigenvalue $dS(0,0)$, but we will see neither of
these contributes to $A$.  If $\tilde{E}_\mu$
is the generalized complex eigenspace of $\mu$, then we can put $A$ into
complex Jordan form over $\tilde{E}_\mu$.  To keep the change of variables symplectic,
we observe that $\tilde{E}_{\mu^{-1}}$ is the dual eigenspace to $\tilde{E}_\mu$, so 
if 
\be
A_\mu:= \left. A \right|_{\tilde{E}_\mu} = \left( \begin{array}{ccccc} \mu & 1 & 0 &
  \multicolumn{2}{c}\dotfill \\
0 & \mu & 1 & 0 & \ldots \\
\vdots & \ddots & \ddots  & \multicolumn{2}{c} \dotfill \\
0 & \ldots & 0 & \mu & 1 \\
0 & \multicolumn{3}{c}\dotfill & \mu \end{array} \right),
\ee
then symplectically completing this basis in $\tilde{E}_\mu \oplus
\tilde{E}_{\mu^{-1}}$ gives 
\be
\left.A \right|_{\tilde{E}_{\mu^{-1}}} = \left( A_\mu^T \right)^{-1}.
\ee
As $A_\mu = \mu I + N_\mu$ with $N_\mu$ nilpotent, by expanding
$\left( A_\mu^T \right)^{-1}$ as a power series, we see 
\be
A_{\mu^{-1}}:= \left.A \right|_{\tilde{E}_{\mu^{-1}}} = \mu^{-1}I +
N_{\mu^{-1}} 
\ee
with $N_{\mu^{-1}}$ nilpotent.  We
choose a branch of logarithm so that
\be
\lambda(\mu) = \log (\mu)
\ee
satisfies
\ben
\lambda(\mu^{-1}) & = &- \lambda (\mu), \text{ and} \label{lambda-1}\\
\lambda(\overline{\mu}) & = & \overline{\lambda(\mu)}, \label{lambda-2}
\een
and observe for $N$ nilpotent,
\be
\log(I + N) = N - \frac{N^2}{2} + \frac{N^3}{3} + \ldots
\ee
is a finite series.  Then we can define
\be
\log(\mu I + N_\mu) = \lambda(\mu) + N_\lambda,
\ee
with $N_\lambda$ nilpotent.  

We apply this technique to each generalized eigenspace of $A$ to
obtain a complex matrix
\be
\widetilde{B} : = \log A.
\ee
We see $\widetilde{B}$ is block diagonal with diagonal elements of the
form $\lambda I + N_\lambda$ with $N_\lambda$ nilpotent.  We know $|\mu|
>1$ real gives $\Re \lambda(\mu) >0$.  For $\lambda$ satisfying $\Re
\lambda >0$, let $E_\lambda$ denote the generalized complex eigenspace
of $\lambda$ with respect to $\widetilde{B}$, and let
$\tilde{E}_{\tilde\mu}$ denote the generalized complex eigenspace of $\tilde\mu$
with respect to $dS(0,0)$.  There are $4$ cases to consider.

{\bf Case 1: $\tilde\mu >1$ is real, and an eigenvalue of $dS(0,0)$.}  Then $E_\lambda \oplus E_{-\lambda}$
is a real symplectic space which is equal to $\tilde{E}_{\tilde\mu} \oplus \tilde{E}_{\tilde\mu^{-1}}$.  If we put $\widetilde{B}$ into Jordan
form over $E_\lambda$, 
\be
\widetilde{B}_\lambda:= \left. \widetilde{B} \right|_{E_\lambda} = \left( \begin{array}{ccccc} \lambda & 1 & 0 &
  \multicolumn{2}{c}\dotfill \\
0 & \lambda & 1 & 0 & \ldots \\
\vdots & \ddots & \ddots  & \multicolumn{2}{c} \dotfill \\
0 & \ldots & 0 & \lambda & 1 \\
0 & \multicolumn{3}{c}\dotfill & \lambda \end{array} \right),
\ee
completing the basis symplectically over $E_\lambda \oplus
E_{-\lambda}$ gives
\be
\left.\widetilde{B}\right|_{E_{-\lambda}} = -
\left(\widetilde{B}_{\lambda}\right)^{T}.
\ee
As $\mu= \tilde\mu$ was an eigenvalue of $A$, 
\be
\left. \exp (-JF) \right|_{\tilde{E}_{\tilde\mu} \oplus \tilde{E}_{\tilde\mu^{-1}}} = \id.
\ee

{\bf Case 2: $\tilde\mu$ is complex, $|\tilde\mu|>1$, and $\tilde\mu$ is an eigenvalue
  of $dS(0,0)$.}  Then $E_\lambda \oplus
E_{-\lambda} \oplus E_{\bar{\lambda}} \oplus E_{-\bar{\lambda}}$ is
the complexification of a real symplectic vector space which is equal
  to $\tilde{E}_{\tilde\mu} \oplus \tilde{E}_{\tilde\mu^{-1}} \oplus
  \tilde{E}_{\overline{\tilde\mu}} \oplus \tilde{E}_{\overline{\tilde\mu}^{-1}}$.  Changing
variables as in \cite{Ch} \S $6$, we see 
\be
\left. \widetilde{B} \right|_{E_\lambda \oplus
E_{-\lambda} \oplus E_{\bar{\lambda}} \oplus E_{-\bar{\lambda}}} =
\left( \begin{array}{cc} \widetilde{B}_\lambda & 0 \\ 0 & - \left(\widetilde{B}_\lambda\right)^T
\end{array} \right),
\ee
where
\be
\widetilde{B}_\lambda = \left( \begin{array}{ccccc} \Lambda & I & 0 &
  \multicolumn{2}{c}\dotfill \\
0 & \Lambda & I & 0 & \ldots \\
\vdots & \ddots & \ddots  & \multicolumn{2}{c} \dotfill \\
0 & \ldots & 0 & \Lambda & I \\
0 & \multicolumn{3}{c}\dotfill & \Lambda \end{array} \right),
\ee
with $I$ the $2 \times 2$ identity matrix and 
\be
\Lambda = \left( \begin{array}{cc}
\Re \lambda & - \Im \lambda \\
\Im \lambda & \Re \lambda
\end{array} \right).
\ee
Further,
\be
\left. \exp (-JF) \right|_{\tilde{E}_{\tilde\mu} \oplus \tilde{E}_{\tilde\mu^{-1}} \oplus \tilde{E}_{\overline{\tilde\mu}}
  \oplus \tilde{E}_{\overline{\tilde\mu}^{-1}}} = \id.
\ee

{\bf Case 3: $\mu>1$ is real, and $\tilde\mu = -\mu$ is an eigenvalue of
  $dS(0,0)$.}  Then $E_\lambda \oplus E_{-\lambda}$ is a real
  symplectic vector space, equal to $\tilde{E}_{\tilde\mu} \oplus
  \tilde{E}_{\tilde\mu^{-1}}$ and $\widetilde{B}$ is handled as in Case
  $1$, with the important difference: 
\be
\left. \exp( -J F) \right|_{\tilde{E}_{\tilde\mu} \oplus \tilde{E}_{\tilde\mu^{-1}}} = -\id.
\ee

{\bf Case 4: $|\tilde\mu| = 1$, $\Im \tilde\mu >0$ is an eigenvalue of $dS(0,0)$.}  Then
$E_\lambda \oplus E_{ -\lambda}$ is a complex symplectic vector space
which is the complexification of a real symplectic vector space which
is equal to $\tilde{E}_{\tilde\mu} \oplus \tilde{E}_{\tilde{\mu}^{-1}}$.
Since we have assumed in particular that $\tilde\mu$ occurs with multiplicity $1$, so does
$\lambda$.  Write $\lambda = i \alpha$, $\alpha >0$, in which case we
observe
\be
\left. F \right|_{E_{i\alpha} \oplus E_{-i\alpha}} =
\left(\begin{array}{cc} \alpha  & 0 \\ 0 &  \alpha \end{array}
\right)
\ee
is diagonal since $\tilde\mu$ is distinct.  

We have proved the following proposition, which we record in detail to
fix our notation.
\begin{proposition}
\label{normal-prop-5}
Let $S:W_1 \to W_2$, $W_1, W_2 \subset \reals^{2n-2}$ be a local
symplectic map, $S(0,0) = (0,0)$, and
let $n_{hc}$ be the number of Jordan blocks of complex eigenvalues $\mu$ of
$dS(0,0)$ satisfying $|\mu| > 1$, $\Re \mu > 1$, and $\Im \mu >0$; $n_{hr+}$ be the
number of Jordan blocks of real positive eigenvalues $\mu$ of $dS(0,0)$
satisfying $\mu >1$; $n_{hr-}$ be the number of Jordan blocks of negative
real eigenvalues $- \mu$ of $dS(0,0)$ satisfying $-\mu < -1$; and
$n_e$ be the number of eigenvalues $\mu$ of modulus $1$ satisfying
$\Im \mu >0$.  For 
\ben
&& j \in ( 1, \ldots, n_{hc}; 2n_{hc}+1, \ldots,
2n_{hc} + n_{hr+}; \label{j-1} \\
&& \quad \quad \quad \quad 2n_{hc} + n_{hr+} + 1, \ldots, 2n_{hc} +
n_{hr+} + n_{hr-}; \label{j-2a} \\ && \quad \quad \quad \quad 2n_{hc} +
n_{hr+} + n_{hr-} + 1, \ldots ,  2n_{hc} +
n_{hr+} + n_{hr-} + n_e ) \label{j-2}
\een
let $k_j$ denote the multiplicity of $\mu_j$ so that
\be
\lefteqn{2n-2 = } \\
&& 4 \left(\sum_{j=1}^{n_{hc}} k_j\right) + 2 \left(\sum_{j=2n_{hc}+1}^{2n_{hc} + n_{hr+}}
k_j \right) + 2\left( \sum_{2n_{hc} + n_{hr+} + 1}^{2n_{hc} + n_{hr+}
  + n_{hr-}} k_j \right) \\
&& \quad \quad \quad + 2 \left( \sum_{2n_{hc} + n_{hr+}
  + n_{hr-}+1}^{2n_{hc} + n_{hr+}
  + n_{hr-} + n_e} k_j \right).
\ee
Choose $\lambda_j
(\mu_j) = \log \mu_j$ 
satisfying (\ref{lambda-1}-\ref{lambda-2}) for $j$ in the range
\eqref{j-1} and \eqref{j-2}, and for $j$ in the range \eqref{j-2a}, choose $\lambda_j(
\mu_j ) =\log (- \mu_j)$ satisfying (\ref{lambda-1}-\ref{lambda-2}).  Then there are real
matrices $B$ and $F$ satisfying $\,^\omega B = -B$, $F^* = F$, and a symplectic choice of
coordinates such that 
\be
dS(0,0) = \exp(-JF)\exp (B),
\ee
and $B$ is of the form
\be
B = \diag  \left( B_j; -B_j^T \right),
\ee
for $j$ in the range (\ref{j-1}-\ref{j-2}).  For $j \in ( 1, \ldots n_{hc})$, $B_j$ is the $2k_j \times 2k_j$
matrix 
\ben
\label{A-2}
B_j = \left( \begin{array}{ccccc} \Lambda_j & I & 0 &
  \multicolumn{2}{c}\dotfill \\
0 & \Lambda_j & I & 0 & \ldots \\
\vdots & \ddots & \ddots  & \multicolumn{2}{c} \dotfill \\
0 & \ldots & 0 & \Lambda_j & I \\
0 & \multicolumn{3}{c}\dotfill & \Lambda_j \end{array} \right),
\een
with $I$ the $2 \times 2$ identity matrix and 
\be
\Lambda_j = \left( \begin{array}{cc}
\Re \lambda_j & - \Im \lambda_j \\
\Im \lambda_j & \Re \lambda_j
\end{array} \right).
\ee
For $j \in (2n_{hc}+1, \ldots
2n_{hc} + n_{hr+} + n_{hr-})$, $B_j$ is the $k_j \times k_j$ matrix
\ben
\label{A-1}
B_j = \left( \begin{array}{ccccc} \lambda_j & 1 & 0 &
  \multicolumn{2}{c}\dotfill \\
0 & \lambda_j & 1 & 0 & \ldots \\
\vdots & \ddots & \ddots  & \multicolumn{2}{c} \dotfill \\
0 & \ldots & 0 & \lambda_j & 1 \\
0 & \multicolumn{3}{c}\dotfill & \lambda_j \end{array} \right),
\een
and for $j \in (2n_{hc} + n_{hr+} + n_{hr-} + 1, \ldots, 2n_{hc} +
n_{hr+} + n_{hr-} + n_e)$, $B_j$ is the $1 \times 1$ matrix $0$.
Here 
\be
F= \diag \left( F_j; F_j \right),
\ee
for $j$ in the range (\ref{j-1}-\ref{j-2}), where for $j \in (1, \ldots, 2n_{hc})$, $F$ is the $2k_j \times 2k_j$
zero matrix, for $j \in ( 2n_{hc}+1, \ldots, 2n_{hc}+
n_{hr+})$,
$F_j$ is the $k_j \times k_j$ zero matrix, for $j \in (2n_{hc} + n_{hr+}+1, \ldots
2n_{hc} + n_{hr+} + n_{hr-})$, 
\be
F_j = \pi I,
\ee
where $I$ is the $k_j \times k_j$ identity matrix, and for $j \in
(2n_{hc} + n_{hr+} + n_{hr-}+1, \ldots, 2n_{hc} + n_{hr+} + n_{hr-} +
n_e)$, $F_j = \Im \lambda_j$.
\end{proposition}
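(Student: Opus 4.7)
The plan is to obtain the factorization from the polar decomposition of $dS(0,0)$ (as was used already in the proof of Lemma~\ref{deform-lemma}), and then to put the resulting Lie-algebra elements into the listed block normal forms one eigenvalue pair at a time. Identify $\reals^{2n-2}$ with $\cx^{n-1}$ and write $dS(0,0) = QP$ with $Q$ unitary and $P$ positive definite. Because $dS(0,0)$ is symplectic, both $Q$ and $P$ are symplectic, so one may write $Q = \exp(-JF)$ with $F$ Hermitian satisfying $JF+FJ=0$ (hence $F^*=F$ in the chosen real basis) and $P = \exp(\widetilde B)$ with $\widetilde B$ real and $\,^\omega \widetilde B = -\widetilde B$. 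This reduces the statement to: (i) put $\widetilde B$ into the prescribed block form on the generalized eigenspaces of $A:=\exp(\widetilde B)$ by symplectically adapted bases, and (ii) verify that the rotational factor $\exp(-JF)$ accounts exactly for the discrepancies coming from negative real and modulus-one eigenvalues of $dS(0,0)$.

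For (i), I would argue eigenspace by eigenspace. Symplecticity of $A$ forces eigenvalues to come in pairs $\{\mu,\mu^{-1}\}$ (and in quadruples $\{\mu,\bar\mu,\mu^{-1},\bar\mu^{-1}\}$ for non-real $\mu$), with $\tilde E_{\mu^{-1}}$ the symplectic dual of $\tilde E_\mu$. Putting $A|_{\tilde E_\mu}$ into complex Jordan form and completing this basis symplectically over $\tilde E_\mu \oplus \tilde E_{\mu^{-1}}$ yields $A|_{\tilde E_{\mu^{-1}}} = (A_\mu^T)^{-1}$. Using the branches chosen in \eqref{lambda-1}--\eqref{lambda-2} together with the fact that $\log(I+N)$ is a finite series on nilpotent $N$ gives $\widetilde B|_{\tilde E_\mu} = \lambda I + N_\lambda$; expanding $(A_\mu^T)^{-1}$ then yields $\widetilde B|_{\tilde E_{\mu^{-1}}} = -\widetilde B|_{\tilde E_\mu}^T$, which is the off-diagonal symmetry in the stated block form. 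For a complex conjugate pair of eigenspaces $E_\lambda \oplus E_{\bar\lambda}$, a standard realification (see \cite{Ch}, \S 6) turns the two Jordan blocks into a single real block with $2\times 2$ rotation pieces $\Lambda_j$ in \eqref{A-2}; for positive real $\mu>1$ one gets directly the block \eqref{A-1}.

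For (ii), each of the four cases in the preceding discussion must be matched with the corresponding summand of $F$. In Cases 1 and 2 (real $\mu>1$ or complex $|\mu|>1$), the rotational factor is trivial, so $F_j=0$ on the corresponding $k_j\times k_j$ or $2k_j\times 2k_j$ block. In Case 3 (negative real), one uses $\lambda_j=\log(-\tilde\mu_j)$ so that $\exp(\widetilde B)$ supplies $|\tilde\mu_j|$, while $\exp(-JF)$ must supply the minus sign; this forces $F_j=\pi I$ on that block. In Case 4 (modulus one) the nonresonance hypothesis \eqref{non-res} ensures that the eigenvalues $e^{\pm i\alpha_j}$ are simple, so each such eigenvalue contributes a $1\times 1$ block to $F$ with entry $\Im\lambda_j = \alpha_j$ and no contribution to $\widetilde B$.

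The main obstacle will be the bookkeeping in Case~3: a single symplectic pair $\{-\mu,-\mu^{-1}\}$ must be split coherently between the rotational factor (carrying the $\pi$-rotation) and the hyperbolic factor (carrying $\log|\mu|$), and this splitting has to be realized in the same basis that makes the global polar decomposition $\exp(-JF)\exp(B)$ work. A secondary but routine point is to verify that after realifying conjugate complex Jordan pairs as in Case~2, the symplectic duality $B_j \leftrightarrow -B_j^T$ is preserved with $\Lambda_j$ the $2\times 2$ complex-structure block; this is a direct computation but it is the step most prone to sign errors, and it is exactly where the chosen branch conventions \eqref{lambda-1}--\eqref{lambda-2} on the logarithm are used.
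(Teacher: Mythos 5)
Your proposal is correct and follows essentially the same route as the paper: polar decomposition $dS(0,0)=\exp(-JF)\exp(B)$ as in Lemma \ref{deform-lemma}, followed by an eigenvalue-by-eigenvalue analysis (Jordan form on $\tilde E_\mu$, symplectic completion giving $(A_\mu^T)^{-1}$ on the dual space, finite logarithm series on the nilpotent part with the branch choices (\ref{lambda-1}--\ref{lambda-2}), realification of conjugate pairs, and the four cases assigning $F_j=0$, $F_j=\pi I$, or $F_j=\Im\lambda_j$ exactly as in the paper). One minor slip: the generator $F$ of the unitary factor commutes with $J$ (it is the generator of the positive-definite factor that anticommutes with $J$), but this does not affect your argument since only $F^*=F$ and symplecticity of both factors are used.
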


Following the proof of Lemma \ref{deform-lemma}, we set
\be
K_t^1 = \exp(-tJF) \text{ and } K_t = \exp (t B ),
\ee
which we observe is the same as 
\be
K_t^1 = \exp(t H_{q^1}) \text{ and } K_t = \exp(t H_q )
\ee
for 
\ben
\lefteqn{q(x,\xi) = } \nonumber \\
& = &  \sum_{j=1}^{n_{hc}} \sum_{l=1}^{k_j}\left( \Re \lambda_j \left( x_{2l-1} \xi_{2l-1} + x_{2l} \xi_{2l} \right) 
- \Im \lambda_j \left( x_{2l-1} \xi_{2l} - x_{2l}\xi_{2l-1} \right)
\right)  \label{quad-1} \\
&& \quad \quad + \sum_{j=1}^{n_{hc}} \sum_{l=1}^{k_j-1} \left( x_{2l+1}\xi_{2l-1} + x_{2l+2}
\xi_{2l} \right) \label{quad-2}\\
 & & \quad \quad +  \sum_{j = 2n_{hc}+1}^{2n_{hc} + n_{hr+} + n_{hr-}} \left( \sum_{l=1}^{k_j}
\lambda_j x_l \xi_l  + \sum_{l=1}^{k_j - 1} x_{l+1}\xi_l \right), \label{quad-3}
\een
and
\ben
\lefteqn{ q^1(x, \xi) =} \nonumber \\
&&  \sum_{j= 2n_{hc} + n_{hr+} + 1}^{ 2n_{hc} + n_{hr+} + n_{hr-}}
\frac{ \pi}{2} (x_j^2 + \xi_j^2) + \sum_{j= 2n_{hc} + n_{hr+} +
  n_{hr-}+1}^{2n_{hc} + n_{hr+} + n_{hr-}+ n_e} \frac{\Im \lambda_j}{2}
( x_j^2 + \xi_j^2). \label{q1-ell}
\een

\subsection{Geometry of the Poincar\'e Section}
The previous section motivates the next proposition.  First we need
the following lemma, which follows from the more general \cite[Lemma 4.2]{Ch}.  Recall under the assumption that
$S$ is hyperbolic, the stable and unstable manifolds $\Lambda_\mp \subset N$ for $S$ are
$n-1$-dimensional locally embedded transversal Lagrangian submanifolds
(see \cite[Theorem 6.2.3]{kaha}).

\begin{lemma}
\label{zero-cov}
Let $S: W_1 \to W_2$, $W_1, W_2 \subset \reals^{2n-2}$, $S(0,0) = (0,0)$, be a local
hyperbolic symplectic map with unstable/stable manifolds $\Lambda_\pm$.  Then there exists a local symplectic coordinate system $(x, \xi)$ near $\gamma$ such that 
$\Lambda_+ = \{  \xi = 0 \}$ and $\Lambda_- = \{  x = 0 \}$.   
\end{lemma}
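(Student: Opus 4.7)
The plan is to put the two transversal Lagrangians into canonical position via a linear symplectic normalization, followed by two successive nonlinear straightening symplectomorphisms.

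First I would observe that at the linearized level $T_0 \Lambda_+$ and $T_0 \Lambda_-$ are complementary Lagrangian subspaces of the symplectic vector space $T_{(0,0)} \reals^{2n-2}$, and the pairing $v \mapsto \omega(v, \cdot)|_{T_0 \Lambda_+}$ identifies $T_0 \Lambda_-$ with $(T_0 \Lambda_+)^*$ (injectivity follows from $(T_0 \Lambda_+)^\omega = T_0 \Lambda_+$ together with transversality). Picking any basis of $T_0 \Lambda_+$ and the dual basis of $T_0 \Lambda_-$ produces a symplectic basis, and hence a linear symplectic change of coordinates in which $T_0 \Lambda_+ = \{\xi = 0\}$ and $T_0 \Lambda_- = \{x = 0\}$.

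After this linear step, $\Lambda_+$ is a Lagrangian graph over the $x$-plane near the origin, so it can be written $\Lambda_+ = \{\xi = dS_+(x)\}$ for some smooth $S_+$ vanishing to second order at $0$. The symplectomorphism $\Psi_1 : (x, \xi) \mapsto (x, \xi - dS_+(x))$, generated by $\Phi_1(x, \eta) = x \cdot \eta + S_+(x)$, then maps $\Lambda_+$ onto $\{\xi = 0\}$, fixes the origin, and linearizes there to the identity. The image of $\Lambda_-$ under $\Psi_1$ will remain Lagrangian and transversal to $\{\xi = 0\}$, with tangent space $\{x = 0\}$ at the origin, so by the implicit function theorem it can be written $\{x = dT_-(\xi)\}$ for some smooth $T_-$ vanishing to second order. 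A second symplectomorphism $\Psi_2 : (x, \xi) \mapsto (x - dT_-(\xi), \xi)$ then preserves $\{\xi = 0\}$ while carrying this image of $\Lambda_-$ onto $\{x = 0\}$, and the composition $\Psi_2 \circ \Psi_1$ provides the desired symplectic coordinates.

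The hard part will really be bookkeeping: at each nonlinear step I must verify that the Lagrangian being straightened is a graph in the correct projection, so that its generating function exists and is smooth near the origin. This reduces to transversality of the relevant tangent spaces, which is ensured by the hypothesis and preserved by $\Psi_1$ because its derivative at $(0,0)$ is the identity. Symplecticity of $\Psi_2$ follows directly from $d(x - dT_-(\xi)) \wedge d\xi = dx \wedge d\xi$, using symmetry of the Hessian $d^2 T_-$.
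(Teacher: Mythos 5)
Your argument is correct. The paper does not prove this lemma directly; it simply cites \cite[Lemma 4.2]{Ch} (a more general statement about normal forms near a hyperbolic orbit), so there is no in-paper proof to compare against, but what you give is the standard argument and almost certainly the one behind the citation: linear normalization of the transversal Lagrangian pair $T_0\Lambda_\pm$ using the duality pairing $\omega(\cdot,\cdot)$, followed by two generating-function shears that successively straighten the two Lagrangian graphs. You correctly observe that hyperbolicity enters only through the fact (recorded in the paper just above the lemma, citing \cite[Theorem 6.2.3]{kaha}) that $\Lambda_\pm$ are transversal Lagrangian submanifolds, and your proof applies to any such pair. One small point worth tightening: you say $S_+$ vanishes ``to second order,'' but the assertion that $d\Psi_1(0,0)=\id$ actually requires $d^2 S_+(0)=0$, i.e.\ vanishing to third order. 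This is indeed true after your linear step (since $T_0\Lambda_+=\{\xi=0\}$ forces $d\phi(0)=d^2S_+(0)=0$), but as stated the reason given doesn't quite justify the conclusion; alternatively, one can drop the ``identity'' claim and note directly that any shear $(x,\xi)\mapsto(x,\xi-Ax)$ with $A$ symmetric preserves $\{x=0\}$, so transversality of $\Psi_1(\Lambda_-)$ to $\{\xi=0\}$ is automatic.
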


For the following proposition, we assume there are no negative real
eigenvalues and no eigenvalues of modulus $1$ to the linearized
Poincar\'e map.  Later we will modify the general Poincar\'e map to be
of this form.  This follows from the proof of \cite[Proposition 4.3]{Ch}.

\begin{proposition}
\label{normal-prop-6}
Let $S : W_1 \to W_2$, $W_1, W_2 \subset \reals^{2n-2}$, be a local
hyperbolic symplectic map, $S(0,0) = (0,0)$, and assume $dS(0,0)$ has
no negative real eigenvalues.  There is a smooth
family of local symplectomorphisms $\kappa_t$, a smooth, 
real-valued matrix function $B_t(x,\xi)$, and a symplectic choice of
coordinates in which
\ben
&& \text{(i)} \,\, \kappa_0 = \id, \,\,\, \kappa_1(x,\xi) =
S(x,\xi); \nonumber \\
&& \text{(ii)} \,\, \frac{d}{dt} \kappa_t = \left( \kappa_t \right)_*
H_{q_t}, \label{quad-th-0}
\een
where 
\ben
\label{quad-th-1}
q_t(x,\xi) =  \lll B_t(x, \xi)x,\xi \rrr .
\een
Here
\ben
\label{quad-th-2}
\lll B_t(0,0)x, \xi \rrr = q(x, \xi),
\een
for $q(x, \xi)$ of the form (\ref{quad-1}-\ref{quad-3}).
\end{proposition}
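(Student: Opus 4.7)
My plan is to construct $\kappa_t$ explicitly as the composition of the Hamiltonian flow generated by $q$ with a rescaled nonlinear correction, then identify its generator. By the hyperbolicity of $S$ and the absence of negative real eigenvalues, the factor $F$ in Proposition \ref{normal-prop-5} vanishes, so $dS(0,0) = \exp(B)$ with $B = \diag(B_j, -B_j^T)$ in Jordan block form. Invoking Lemma \ref{zero-cov}, I choose symplectic coordinates so that $\Lambda_+ = \{\xi = 0\}$ and $\Lambda_- = \{x = 0\}$; the residual linear freedom preserves the splitting $\reals^{n-1}_x \oplus \reals^{n-1}_\xi$, which is enough to simultaneously bring $dS(0,0)$ into Jordan form. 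In these coordinates both $S$ and $dS(0,0)$ preserve the two Lagrangians.

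Writing $S = dS(0,0)\circ\sigma$ with $\sigma := dS(0,0)^{-1}\circ S$, the map $\sigma$ fixes the origin, satisfies $d\sigma(0,0) = \id$, and preserves $\Lambda_\pm$. Define the rescaled family
$$\sigma_t(x,\xi) := t^{-1}\sigma(tx, t\xi), \quad 0 < t \le 1,$$
extended continuously to $\sigma_0 = \id$ using $\sigma - \id = O(|\cdot|^2)$. The $\sigma_t$ are smooth, symplectic, preserve $\{x = 0\}$ and $\{\xi = 0\}$, and satisfy $d\sigma_t(0,0) = \id$ for all $t$. Set
$$\kappa_t := \exp(tH_q)\circ\sigma_t;$$
each factor preserves both Lagrangians, so does $\kappa_t$, and one checks $\kappa_0 = \id$ and $\kappa_1 = \exp(H_q)\circ\sigma = dS(0,0)\circ\sigma = S$, giving (i).

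For (ii), Lemma \ref{deform-lemma} applied to $\sigma_t$ produces $h_t$ with $\frac{d\sigma_t}{dt} = (\sigma_t)_* H_{h_t}$, and a chain-rule computation on $\kappa_t = \exp(tH_q)\circ\sigma_t$ yields $\frac{d\kappa_t}{dt} = (\kappa_t)_* H_{q_t}$ with $q_t = q\circ\kappa_t + h_t$, up to an additive constant that may be absorbed. Since $\kappa_t$ preserves $\{x = 0\}$ and $\{\xi = 0\}$, normalizing constants makes $q_t$ vanish on both; Hadamard's lemma applied twice then gives $q_t(x,\xi) = \langle B_t(x,\xi)\, x, \xi\rangle$ for a smooth matrix-valued $B_t$, establishing (\ref{quad-th-1}). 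For (\ref{quad-th-2}), $d\kappa_t(0,0) = \exp(tB)$, and because the upper-left block of $B$ commutes with its exponential, the quadratic part of $q\circ\kappa_t$ at the origin equals $q\circ\exp(tB) = q$; the condition $d\sigma_t(0,0) = \id$ for all $t$ forces the Hessian of $h_t$ at the origin to vanish, so $h_t = O(|\cdot|^3)$. Combining, the quadratic part of $q_t$ at the origin is precisely $q$, which is (\ref{quad-th-2}). The main technical hurdle is arranging the simultaneous coordinate setup (reconciling Lemma \ref{zero-cov} with the Jordan form of Proposition \ref{normal-prop-5}) and verifying smoothness of $\sigma_t$ at $t = 0$; both reduce to standard rescaling arguments but deserve careful bookkeeping.
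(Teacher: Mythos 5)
Your proposal is correct, and the route — straighten $\Lambda_\pm$ via Lemma \ref{zero-cov}, put $dS(0,0)=\exp B$ into the form of Proposition \ref{normal-prop-5} by a further linear symplectic change preserving the two Lagrangians, factor $S=dS(0,0)\circ\sigma$, rescale $\sigma_t=t^{-1}\sigma(t\,\cdot)$, and compose $\kappa_t=\exp(tH_q)\circ\sigma_t$ — is the natural one given the lemmas the paper sets up. Two small remarks worth recording. The equality $q\circ\exp(tB)=q$ is most robustly seen as the first-integral property $\{q,q\}=0$; your commutation remark also works since, writing $q=\langle B_0x,\xi\rangle$, the flow $\exp(tH_q)$ is $(x,\xi)\mapsto(e^{tB_0}x,e^{-tB_0^{T}}\xi)$ and $B_0^{T}$ commutes with its own exponential. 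And it is the \emph{composition} $\exp(tH_q)\circ\sigma_t$, rather than the time-concatenation used in the proof of Lemma \ref{deform-lemma}, that makes \eqref{quad-th-2} hold uniformly in $t$: concatenating the linear and nonlinear legs would scale the quadratic part of $q_t$ by a reparametrization derivative on the linear leg.
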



\section{The Proof of Theorem \ref{main-theorem-5}}
\label{theorem-proof-chapter}

\numberwithin{equation}{section}
\subsection{Motivation}
We recall from Theorem \ref{main-theorem-4} that if $u \in L^2(X)$ has wavefront
set sufficiently close to $\gamma$ and $B \in \Psi^{0,0}(V)$ is a
microlocal cutoff near $(0,0)$, we have for $z \in [-\epsilon_0 ,
  \epsilon_0] + i(-c_0h, c_0h)$, 
\be
\left\| (P-z) u \right\|_{L^2(X)} \geq C^{-1} h \left\|B(I - M(z)) R_+
u \right\|_{L^2(V)}.
\ee
Hence we want to show $M(z)$ has spectrum away from $1$.  This is the
content of the following Theorem, which we state in its general
form for reference.

\begin{theorem}
\label{FIO-def-thm}
Let $\tV \subset \reals^{2m}$ be an open neighbourhood of $(0,0)$, and
assume 
$\kappa_z: \neigh (\tV) \to \kappa_z(\neigh (\tV))$, $\kappa_z(0,0) = (0,0)$, $z \in (-
\delta, \delta)$, $\delta>0$ is a smooth
family of symplectomorphisms such that $d
\kappa_z(0,0)$ is semi-hyperbolic and the nonresonance condition \eqref{non-res} holds for $d \kappa_z(0,0)$.  Let $M(z)$ be the microlocally unitary
$h$-FIO which quantizes $\kappa_z$ as in Proposition \ref{AF=FB}.
Then for $z \in (-\delta', \delta')$, $\delta'>0$ sufficiently small
and 
$s \in \reals$ sufficiently close to $0$, there exist self-adjoint, semiclassically
tempered operators $\exp(\pm s K^w)$ so that 
for $v \in L^2(\reals^m)$ with $h$-wavefront set sufficiently close to
$(0,0)$,
\ben
\left\| e^{-sK^w}M(z)e^{sK^w} v \right\|_{L^2}
\leq \frac{1}{R} \| v \|_{L^2}. \label{W-M}
\een
\end{theorem}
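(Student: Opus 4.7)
The plan is to generalize the model-case argument of Section \ref{model-chapter}: write $M(z)$ as the time-$1$ map of an ODE, conjugate by an exponentiated escape function in an appropriate two-parameter calculus, and extract a uniformly negative imaginary part from the generator. First, by Corollary \ref{F-cor}, I can choose a smooth family of deformations $\kappa_{t,z}$ with $\kappa_{0,z} = \id$, $\kappa_{1,z} = \kappa_z$, and effective Hamiltonians $q_{t,z}$ so that $M(z) = M^z(1)$ with $hD_t M^z(t) + Q(t,z)M^z(t) = 0$, $Q(t,z) = \Op_h^w(q_{t,z})$. Using Propositions \ref{normal-prop-5} and \ref{normal-prop-6} (extended to include the elliptic Jordan blocks, which by the nonresonance assumption \eqref{non-res} can be written in the form \eqref{q1-ell}), I pick symplectic coordinates in which the quadratic part of $q_{t,z}$ at $(0,0)$ is the sum of the hyperbolic normal form $q$ of \eqref{quad-1}--\eqref{quad-3} and an elliptic piece $q^1$ of \eqref{q1-ell}.

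Next, I construct an escape function $G = G_{\hyp} + G_{\el}$ adapted to this block decomposition. For each hyperbolic pair of variables $(x_j,\xi_j)$ (including Jordan-block contributions and, via the Cases $2$--$3$ discussion following Proposition \ref{normal-prop-5}, the negative-real and complex-hyperbolic blocks), I take
\[
G_{\hyp} = \tfrac{1}{2}\sum_j c_j \log \frac{1 + X_j^2}{1 + \Xi_j^2},
\]
chosen so that $H_{q_{\hyp}} G_{\hyp}$ is a positive multiple of $A_{\hyp} = \sum_j \bigl(\tfrac{X_j^2}{1+X_j^2}+\tfrac{\Xi_j^2}{1+\Xi_j^2}\bigr)$. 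For the elliptic directions, the Hamiltonian flow of $q^1$ preserves each harmonic oscillator action $I_k = \tfrac{1}{2}(x_k^2+\xi_k^2)$, so no Poisson-bracket contribution can come from a pure function of position and momentum; here I would instead use the nonresonance hypothesis \eqref{non-res} and a Birkhoff normal form (constructed via iterated application of Proposition \ref{2-param-egorov}) to conjugate $M(z)$ into the form $\exp\bigl(-\tfrac{i}{h}\sum_k \alpha_k(z) I_k^w\bigr)$ modulo $\O(h^\infty)$ on each Hermite eigenspace, which on each Hermite block has operator norm distance to $I$ bounded below independent of $h$.

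Now rescale in the hyperbolic variables by $T_{h,\tilde h}$ of Lemma \ref{U-hsc-lemma} to pass from the $h$-calculus to the $\tilde h$-calculus with $h < \tilde h$ small but fixed, producing a conjugated generator $Q_1 \in \Psi_{-1/2}^{-\infty,0,0}$. By Lemma \ref{etG-lemma}, $e^{sG^w}$ makes sense for small $s$, and Lemma \ref{2-param-lemma}(a),(b) yields
\[
e^{-sG^w}Q_1 e^{sG^w} = Q_1 - is\tilde h\,\Op_{\tilde h}^w(H_{q_1}G) + \O\bigl(h^{3/2}\tilde h^{3/2}\bigr) + s^2\,\O(h\tilde h).
\]
The leading perturbation $-is\tilde h\,\Op_{\tilde h}^w(H_{q_1}G) = -is\tilde h\,\Op_{\tilde h}^w(\text{multiple of }A_{\hyp})$ is nonnegative and, by a harmonic-oscillator-type lower bound on $A_{\hyp}$ (as in the model case), bounded below by $c\tilde h$ times the identity on the hyperbolic factor; pulling back through $T_{h,\tilde h}$ and defining $K^w$ by \eqref{Kw-def}, this translates into $\Im\langle (e^{-sK^w}\widetilde Q e^{sK^w}) u, u\rangle \leq -c h\tilde h\|u\|^2$ in the hyperbolic directions. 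Exponentiating gives $\|e^{-sK^w}M^{\hyp}(z)e^{sK^w}\|_{L^2\to L^2}\le 1-c\tilde h < 1$. Combining with the elliptic contribution (bounded strictly below $1$ on each Hermite block by the nonresonance assumption) yields \eqref{W-M} with $R > 1$.

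The main obstacle will be the interaction between the hyperbolic and elliptic blocks: the escape-function argument produces contraction purely in the hyperbolic directions, while the elliptic block contributes only through the discrete spectrum of the quantized harmonic oscillators and so requires the full Birkhoff normal form reduction together with \eqref{non-res} to avoid near-resonances as $z$ varies. Care is needed so that the errors $\O(h^{3/2}\tilde h^{3/2})$ in Lemma \ref{2-param-lemma}(a) do not swamp the main $\O(h\tilde h)$ imaginary part, which forces $\tilde h$ to be chosen small but $h$-independent, and the remaining restriction on $z$ follows from the smooth dependence of all symbols and escape functions on $z$.
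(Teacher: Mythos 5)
Your hyperbolic-block argument follows the paper's model case closely, but the way you treat the elliptic block contains a genuine gap, in two respects. First, the claim that the Birkhoff-reduced elliptic factor $\exp\bigl(-\tfrac{i}{h}\sum_k \alpha_k(z) I_k^w\bigr)$ has operator-norm distance to the identity bounded below independently of $h$ is false: on the Hermite state $v_\beta$ this factor acts by the phase $\exp\bigl(-i\sum_k \alpha_k(z)(2\beta_k+1)\bigr)$, and the nonresonance condition \eqref{non-res} does nothing to keep $\sum_k \alpha_k(z)(2\beta_k+1)$ uniformly away from $2\pi\ZZ$ as $\beta$ ranges over the microlocal neighbourhood (if anything, independence over $\pi\ZZ$ makes these phases equidistribute); indeed \S \ref{quasi} constructs quasimodes of $I-M(z)$ in precisely this elliptic situation, so no such $h$-independent gap can exist. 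Fortunately, no such gap is needed for \eqref{W-M}: the conjugated elliptic factor is unitary, hence contributes norm exactly $1$, and the entire contraction must come from the hyperbolic weight. This is exactly how the paper arranges things: the quantized escape-function term is bounded below by $\tilde{h}/C$ on \emph{all} of $L^2(\reals^{n-1})$ (Lemma \ref{harm-osc}, a harmonic-oscillator-type bound using only the hyperbolic variables), so the elliptic directions never need to contribute, and your appeal to ``avoiding near-resonances'' is both unnecessary and unavailable.

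Second, and more seriously, you offer no mechanism for the hyperbolic--elliptic coupling, which you yourself flag as ``the main obstacle.'' Proposition \ref{normal-prop-6} applies only to hyperbolic maps with no negative real eigenvalues, so it cannot simply be ``extended'' to produce a deformation whose effective Hamiltonian is $q+q^1$ plus harmless terms: the nonlinear part $\lll B_{z,t}(x,\xi)x,\xi\rrr$ mixes hyperbolic and elliptic variables, and the resulting cross terms in $H_{q_{z,t}}G_{\text{hyp}}$ are of the same size as the main term and not sign-definite, so your commutator estimate does not close with a $t$-independent, purely hyperbolic weight. The paper's proof of Theorem \ref{FIO-def-thm} is built around two devices absent from your proposal: (a) the Step 1 reparametrized deformation, in which the elliptic factor $E$ and an artificial hyperbolic factor $K_{ah}$ are given $t$-supports disjoint from the genuinely hyperbolic deformation, with the weight switched on by a cutoff $\chi(t)$ only on the interval where $\kappa_{z,t}\equiv E$ (this produces the extra term $\chi'(t)e^{-s\chi G^w}[W^{z,1},G^w]e^{s\chi G^w}$, which must then be shown to have negligible imaginary part using \eqref{kappa-1a}); and (b) the complex escape function $G=G_1+iG_2$ of \eqref{G-esc-def}, whose imaginary elliptic part is introduced precisely to control the hyperbolic--elliptic interaction in $\Re H_{q}G$. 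Without these, or a genuine substitute (a Birkhoff normal form decoupling the blocks uniformly on an $h$-independent neighbourhood is not what \cite{ISZ} provides), the proposal does not yield \eqref{W-M}.
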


From \S \ref{mono}, we know $M(z)$ is an $h$-FIO associated to the
graph of $S(z)$, where $S(z)$ is the Poincar{\'e} map for $\gamma_z$,
the periodic orbit in the energy level $z$.  Suppose for the moment
that $S(z)$ satisfies the hypotheses of Proposition
\ref{normal-prop-6}, and let $q_{z,t}$ be
$q_t$ as in the conclusion of the Proposition, where now $q_{z,t}$ varies
over energy levels $z$ near $0$.  Setting $Q_{z,t} =
\Op_h^w (q_{z,t})$, from Corollary \ref{F-cor} there exists $M_{z,0} \in
\Psi_h^{0,0}$ microlocally unitary so that $M(z) = M^z(1)$ for $M^z(t)$ a
family of operators satisfying the
evolution equations
\be
hD_t M^z + M^zQ_{z,t} & =&  0, \,\,\, 0 \leq t \leq 1,  \\
M^z(0) & = & M_{z,0}. 
\ee

In order to prove Theorem \ref{FIO-def-thm}, we observe if $W(z):L^2(V) \to L^2(V)$ is the microlocal inverse for $M(z)$, we
have also $W_{z,0} \in \Psi_h^{0,0}$ microlocally unitary so that $W(z) = W^z(1)$ for $W^z(t)$ satisfying the following evolution equation:
\ben
hD_tW^z - Q_{z,t} W^z & = & 0, \,\,\, 0 \leq t \leq 1, \label{w-ode-1} \\
W^z(0) & = & W_{z,0}. \label{w-ode-2}
\een
The rest of this section is devoted to proving there exist
semiclassically tempered operators $\exp(\pm s K^w)$ as in the
statement of the Theorem so that 
\ben
\label{W-est-1001}
\left\| e^{-sK^w}W(z)e^{sK^w} v \right\|_{L^2(V)} \geq R \| v \|_{L^2(V)}
\een
for some $R>1$.  Then 
\be
 \|v\|_{L^2(V)}& = & \left\| e^{-sK^w}W(z)e^{sK^w}e^{-sK^w}
 M(z)e^{sK^w} v \right\|_{L^2(V)} \\
& \geq & R \left\|
e^{-sK^w}M(z)e^{sK^w} v \right\|_{L^2(V)},
\ee
which gives the Theorem once we prove \eqref{W-est-1001}.

In order to get Theorem \ref{main-theorem-5} from Theorem \ref{FIO-def-thm}, we
observe by \eqref{W-M} we have also
\be
\left\| \left( I - e^{-sK^w} M(z) e^{sK^w} \right) v \right\|
\geq C^{-1} \| v \|.
\ee
Thus
\be
\Re \lll e^{-sK^w} (I - M) e^{sK^w} v, v \rrr & = & \| v \|^2 - \Re
\lll e^{-sK^w} M(z) e^{sK^w} v, v \rrr \\
& \geq & C^{-1} \|v\|_{L^2(V)}^2.
\ee
Since $\|\exp(\pm sK^w)\|_{L^2 \to L^2} = \O(h^{-N})$ for some $N$, we
have 
\be
\Re \lll (I - M(z)) v, v \rrr \geq Ch^N \|v\|^2.
\ee

Now let $u \in L^2(X)$ have wavefront set close to $\gamma$.  Set $v =
R_+ u$ so that $\WF v$ is close to $(0,0)$, and observe with $B$ as in Theorem \ref{main-theorem-4} and $b
= \sigma_h(B)$, $1-b$ has support away from $(0,0) \in T^* \reals^{n-1}$.  Then 
\be
\Op_h^w( 1-b ) M(z) v = M(z) \Op_h^w ( S(z)^*(1-b)) v = \O(h^\infty),
\ee
so that if $\WF u$ is sufficiently small,
\be
B(I - M(z)) R_+ u = (I-M(z))R_+ u
\ee
microlocally, and \eqref{inj-2a} gives the theorem.

\qed

Our biggest tool so far is the normal form deformation in Proposition
\ref{normal-prop-6}, however we cannot immediately apply it to $S(z)$
satisfying the assumptions of the introduction.  To get by this we will transform
$S(z)$ into a hyperbolic map satisfying the assumptions of Proposition
\ref{normal-prop-6} and then later deal with the errors which
come up when transforming back. 

The proof of Theorem \ref{FIO-def-thm} will proceed in $4$ basic steps.  First, we deform the
effective Hamiltonian into a sum of two Hamiltonians with disjoint support in $t$, one
hyperbolic and one elliptic.  The summed Hamiltonian will be called
$q_{z,t}$.   
We then modify the evolution equation defining $W^z$ to an equation 
involving a conjugated version of $W^z$, $\widetilde{W}(t)$.
This evolution equation will be given in terms of a conjugated quantization of $q_{z,t}$,
$\widetilde{Q}_{z,t}$, that we will then need to estimate from below.
This step is a variation on the classical idea of a ``positive
commutator''.  That is, $\Op_h^w(q_{z,t})$ is self-adjoint, but if we
conjugate it with an operator of the form $e^{G^w}$, we get
$\Op_h^w(q_{z,t})$ plus a lower order {\it skew}-adjoint commutator.
The principal symbol of the commutator $[G, \Op(q_{z,t})]$ is $hi
H_{q_{z,t}}G$.  The linear part of $H_{q_{z,t}}$ is block diagonal in
the hyperbolic and elliptic variables, but the nonlinear part
potentially forces interaction between the hyperbolic and elliptic
variables.  Hence we will be forced to introduce a complex weight $G$
to gain some orthogonality between the hyperbolic and elliptic
variables.  This is accomplished in Step 3.  
Finally we will estimate $M$, whose inverse is related to 
$\widetilde{W}$ by conjugation.

\subsection{Step 1: Deform $q_{z,t}$}

We construct a rescaled deformation of identity into $S(z)$ in which
the elliptic part of the effective Hamiltonian has
disjoint support in $t$ from the support of the non-elliptic part.

We will be using four cutoff functions,
\be
\psi_1(t), \psi_2(t), \psi(t),\text{ and }\chi(t): [0,1] \to [0,1]
\ee
satisfying the following
properties (see Figure \ref{fig:fig7}):
\be
&& \, \,\text{  (i) } \psi_1(0) = \psi_2(0) = \psi(0) = \chi(0) = 0,
\,\,\, \psi_1(1) = \psi_2(1) = 
\psi(1) = \chi(1) = 1; \\
&& \, \text{ (ii) } \psi_1', \,\, \psi', \text{ and } \chi' \text{ are
  all
  non-negative,} \\
&& \text{ (iii) } \supp \psi_1' \subset [0,1/4], \,\, \supp \chi'
\subset [1/4, 1/2], \\ 
&& \quad  \supp \psi' \subset [1/2,3/4], \text{ and } \supp \psi'
\subset [3/4,1]. \\
\ee
\begin{figure}
\centerline{
\input{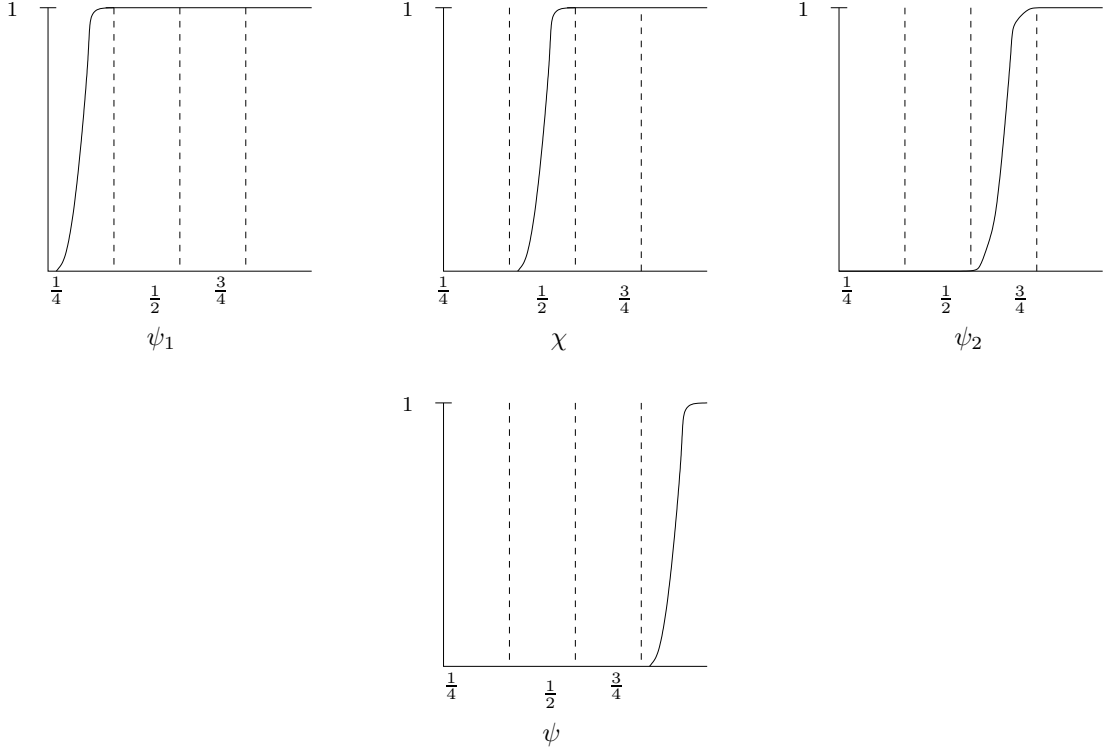}}
\caption{\label{fig:fig7} The cutoff functions $\psi_1$, $\psi_2$, $\psi$, and $\chi$.}
\end{figure} 

Motivated by Lemma \ref{ODE-deform-lemma} and Proposition
\ref{normal-prop-6}, we construct a family of symplectomorphisms, ${\kappa}_{z,t}$, satisfying ${\kappa}_{z,0} = \id$ and
${\kappa}_{z,1} = S(z)$, but the elliptic part has disjoint
support in $t$ from the hyperbolic part.  That is, let $F$ be given as in Proposition
\ref{normal-prop-5}, and let 
\be
E = \exp ( - J F), \,\,\, K^1_t = \exp ( - t J F),
\ee
so that $K_0 = \id$, $K_1 = E$, and 
\be
\frac{d}{dt} K^1_t = (K^1_t)_* H_{q^1},
\ee
where $q^1$ is given by \eqref{q1-ell}.  Let $\widetilde{K}^1_t$ be
defined by
\be
\widetilde{K}^1_t = K^1_{\psi_1(t)},
\ee
so that $\widetilde{K}^1_0 = \id$, $\widetilde{K}^1_1 = E$, and the chain rule then gives
\be
\frac{d}{dt} \widetilde{K}^1_t & = &  \psi_1'(t) \frac{d}{d \tau} K^1_\tau
|_{ \tau = \psi_1(t)} \\
& = & \psi_1'(t) (K^1_\tau)_* H_{q^1} |_{ \tau = \psi_1(t)} \\
& = & (\widetilde{K}^1_t)_* H_{\psi_1'(t) q^1}.
\ee

We introduce an ``artificial hyperbolic'' transformation which will
temporarily replace the elliptic part by setting
\be
q_{ah} = \sum_{j=2h_{hc} + n_{hr+} + n_{hr-} + 1}^{2h_{hc} + n_{hr+} +
  n_{hr-} + n_e} 2 x_j \xi_j,
\ee
defining $K_{ah} = \exp (H_{q_{ah}})$, and
\be
\widetilde{S}(z) = K_{ah}^{-1} \circ E^{-1} \circ S(z) , 
\ee 
so that $ \widetilde{S}(z)$ satisfies the assumptions of Proposition
\ref{normal-prop-6} near $z=0$.
From Proposition \ref{normal-prop-6}, there is a family $\kappa^1_{z,t}$
satisfying $\kappa^1_{z,0} = \id$, $\kappa^1_{z,1} = \widetilde{S}(z)$, and
\be
\frac{d}{dt} \kappa^1_{z,t} = (\kappa^1_{z,t})_* H_{\tilde{q}_{z,t}}, 
\ee
where now 
\be
\tilde{q}_{z,t} = \lll B_{z,t}(x, \xi)x,\xi \rrr
\ee
for $B_{z,t}$ satisfying \eqref{quad-th-2}.  Let 
\be
\tkappa_{z,t} = \kappa^1_{z,\psi(t)},
\ee
so that $\tkappa_{z,0} = \id$, $\tkappa_{z,1} = \widetilde{S}(z)$, and
\be
\frac{d}{dt} \tkappa_{z,t} & = & \psi'(t) \frac{d}{d \tau} \kappa^1_{z,\tau}
|_{\tau = \psi(t)} \\
& = & \psi'(t) ( \kappa^1_{z,\tau})_* H_{\tilde{q}_{z,\tau}} |_{\tau = \psi(t)} \\
 & = & ( \tkappa_{z,t} )_* H_{ \psi'(t) \tilde{q}_{z, \psi(t)}}.
\ee
Let $K_t^2 = \exp(t H_{q_{ah}})$ and $\widetilde{K}^2_t =
K^2_{\psi_2(t)}$, so that $\widetilde{K}^2_0 = \id$,
$\widetilde{K}^2_1 = K_{ah}$, and
\be
\frac{d}{dt} \widetilde{K}^2_t = (\widetilde{K}^2_t)_* H_{\psi_2'(t)
  q_{ah}}.
\ee
Finally, let 
\be
{\kappa}_{z,t} = \widetilde{K}^1_t \circ \widetilde{K}^2_t \circ
\tkappa_{z,t} .
\ee
Unraveling the definitions,
we have ${\kappa}_{z,t}$ satisfying
\be
&& \text{(i)  } {\kappa}_{z,0} = \id, \,\,\, {\kappa}_{z,1} 
= S(z); \\
&& \text{(ii) } {\kappa}_{z,t} = \left\{ \begin{array}{l}
    \widetilde{K}^1_t, \,\,\, 0 \leq t \leq 1/4 ; \\
E, \,\,\, 1/4 \leq t \leq 1/2 ; \\
E \circ \widetilde{K}^2_t, \,\,\, 1/2 \leq t \leq 3/4; \\
E \circ K_{ah} \circ \tkappa_{z,t}.
\end{array} \right.
\ee 
If we compose a smooth function $a$ with ${\kappa}_{z,t}$, using Lemma
\ref{H-push-lemma}, we have 
\be
\frac{d}{dt} {\kappa}_{z,t}^* a  & = & \left\{ \begin{array}{l}
\frac{d}{dt} a( \widetilde{K}^1_t ), \,\,\, 0 \leq t \leq 1/4; \\
\frac{d}{dt} a(E), \,\,\, 1/4 \leq t \leq 1/2; \\
\frac{d}{dt} a(E) \circ \widetilde{K}^2_t, \,\,\, 1/2 \leq t \leq 3/4;
\\
\frac{d}{dt} a(EK_{ah}) \circ \tkappa_{z,t}, \,\,\, 3/4 \leq t \leq
1; \end{array} \right. \\
& = & \left\{ \begin{array}{l}
(H_{\psi_1'(t) q^1} a ) \circ \widetilde{K}^1_t, \,\,\, 0 \leq t \leq
1/4; \\
0 , \,\,\, 1/4 \leq 1/2; \\
\left( H_{(E^{-1})^* \psi_2'(t) q_{ah}} a \right) \circ E \circ
\widetilde{K}^2_t, \,\,\, 1/2 \leq t \leq 3/4; \\
\left( H_{(K_{ah}^{-1})^* (E^{-1})^* \psi'(t) \tilde{q}_{z, \psi(t)}} a
\right) \circ E \circ K_{ah} \circ \tkappa_{z,t}, \,\,\, 3/4 \leq t \leq
1. \end{array} \right.
\ee
Summing up and using the support properties of $\psi$, $\psi_1$,
and $\psi_2$, we have
\be
\frac{d}{dt} \kappa_{z,t} = (\kappa_{z,t})_* H_{\tilde{q}_{z,t}^2},\ee
where
\ben
\label{q-tilde-2}
\tilde{q}_{z,t}^2 = (E^{-1} K_{ah}^{-1})^*\psi'(t)\tilde{q}_{z,\psi(t)} +
(E^{-1})^*\psi_1'(t) q^1 + \psi_2'(t) q_{ah}.
\een

We record for later use that since $\psi = \psi' = 0$ and $\psi_2 = \psi_2'=0$ on the support of
$\chi'$, we have for $t \in \supp \chi'$,
\ben
\label{kappa-1a}
{\kappa}_{z,t} = E = \left( \begin{array}{ccc} \id_{h+} & 0  & 0 \\ 0 &
    -\id_{h-}  & 0 \\ 0 & 0 & \tilde{E}  \end{array} \right),
\een
where $\id_{h+}$ is identity in $x_j$ and $\xi_j$ for 
\be
1 \leq j \leq 2n_{hc} + n_{hr+},
\ee
$\id_{h-}$ is the identity in $x_j$ and $\xi_j$ for 
\be
2n_{hc} + n_{hr+} + 1 \leq j \leq 2n_{hc} + n_{hr+} + n_{hr-},
\ee
and $\tilde{E}$ is an elliptic symplectic transformation in the
variables $x_j$ and $\xi_j$ for
\be
2n_{hc} + n_{hr+} + n_{hr-} + 1 \leq j \leq 2n_{hc} + n_{hr+} +
n_{hr-} + n_e.
\ee

\subsection{Step 2: Conjugation of Evolution Equations}

For Step 2, we introduce the following notation.  By $(X_\hyp, \Xi_\hyp)$ and
$(X_\el, \Xi_\el)$ we mean the symplectic variables in the subspace
associated to the hyperbolic and elliptic parts of $dS(0,0)$
respectively.  In our notation,
\be
X_\hyp = (X_1, \ldots, X_{n-n_e-1}), \,\,\, \Xi_\hyp = ( \Xi_1,
\ldots, \Xi_{n-n_e-1}),
\ee
and
\be
X_\el = (X_{n-n_e}, \ldots, X_{n-1}), \,\,\, \Xi_\el = (\Xi_{n-n_e},
\ldots, \Xi_{n-1}).
\ee
For a vector $Y \in \reals^{n-1}$, we define also 
\be
|Y|_\hyp^2 &= & \sum_{j=1}^{n-n_e-1} Y_j^2 \text{ and} \\
|Y|_\el^2 & = & \sum_{j=n-n_e}^{n-1} Y_j^2,
\ee
where as usual $n_e = n-1 - 2n_{hc} - n_{hr+} - n_{hr-}$.  
If 
\be
\lll B \cdot, \cdot \rrr: \reals^{n-1} \times \reals^{n-1} \to \cx 
\ee
is a bilinear form, we will also use
the notation
\be
\lll B Y, (Z_\hyp, iZ_{\el}) \rrr = \sum_{j=1}^{n-1}
\sum_{k=1}^{n-n_e-1} B^{jk} Y_j Z_k + i \sum_{j=1}^{n-1}
\sum_{k=n-n_e}^{n-1} B^{jk} Y_j Z_k.
\ee

Let $W(z) = M(z)^{-1}$ as above and let $Q_{z,t} = \Op_h^w( \tilde{q}_{z,t}^2)$ for
$\tilde{q}_{z,t}^2$ in the form \eqref{q-tilde-2}.
Applying Corollary \ref{F-cor}, there is $W^z (t)$ and $W_{z,0}$
unitary 
satisfying (\ref{w-ode-1}-\ref{w-ode-2}) with this choice of $Q_{z,t}$
so that $W^z(1) = W(z)$.  As in \S \ref{model-chapter}, but with
$W^z$ instead of $M^z$, if we
conjugate $W^z(t)$ satisfying (\ref{w-ode-1}-\ref{w-ode-2}) in a way
which is independent of $t$, we get a new 
equation with a conjugated $Q_{z,t}$.  That is, with $T_{h,
  \tilde{h}}$ defined 
in \eqref{U-hsc}, let
\be
W^{z,1}(t) = T_{h, \tilde{h}}W^z(t) T_{h,
  \tilde{h}}^{-1}
\ee
and observe $W^{z,1}(t)$ satisfies
\be
hD_t W^{z,1} - Q_{z,t}^1 W^{z,1} &=& 0, \,\,\, 0 \leq t \leq 1\\
W^{z,1}(0) & = & T_{h, \tilde{h}}W_{z,0} T_{h,
  \tilde{h}}^{-1}
\ee
for $Q_{z,t}^1 = T_{h, \tilde{h}} Q_{z,t} T_{h, \tilde{h}}^{-1}$.

We define the escape function $G$ in the new coordinates by
\ben
\label{G-esc-def}
G(X,\Xi) & = & \half \log \left( \frac{1 + |X|_\hyp^2}{1 + |\Xi|_\hyp^2}
\right) + i \half (|X_\el|^2 - | \Xi_\el|^2 ) \\
& =:& G_1 + i G_2. \nonumber
\een
Here we have added an imaginary term to the definition of $G$.
Observe 
\be
\exp (i \Op_{\tilde{h}}^w (G_2))
\ee
is unitary.  As mentioned in the introduction to this section, this is
used to control the nonlinear interactions between the hyperbolic and
elliptic variables in a Poisson bracket later in the proof.

The real part of $G$, $G_1$, satisfies
\begin{eqnarray*}
\left| \partial_X^\alpha \partial_\Xi^\beta G_1(X, \Xi) \right| \leq C_{\alpha \beta} \langle X \rangle^{-|\alpha|}\langle 
\Xi \rangle^{-|\beta|}, \,\,\,\, \,\,\, \mathrm{for} \,\,\, (\alpha, \beta) \neq (0,0),
\end{eqnarray*}
and since $\langle X \rangle^2 \langle \Xi \rangle^{-2} $ is an order
function, $\Re G$ satisfies the assumptions of Lemma \ref{etG-lemma}.
Thus we can construct the operators $e^{\pm s \chi(t)
  G^w}$, where $G^w$ is the $\tilde{h}$-Weyl quantization of $G$, and doing so we may define
\ben
\label{tilde-W-def}
\widetilde{W}(t) = e^{-s\chi G^w} W^{z,1}(t) e^{s\chi G^w}.
\een
Similar to \S \ref{model-chapter}, $\widetilde{W}$ satisfies the
evolution equation
\ben
hD_t \widetilde{W} - \widetilde{Q}_{z,t} \widetilde{W} & =& 
\frac{h}{i}s\chi'(t) e^{-s\chi G^w}\left[W^{z,1}, G^w \right] e^{s \chi G^w}, \,\,\, 0 \leq
t \leq 1 \label{w-ode-2-a}\\
\widetilde{W}(0) &=& e^{-s\chi(0) G^w} T_{h, \tilde{h}}W_{z,0} T_{h,
  \tilde{h}}^{-1} e^{s\chi(0) G^w}, \label{w-ode-2-b}
\een
where
\be
\widetilde{Q}_{z,t} = e^{-s\chi G^w}Q_{z,t}^1   e^{s\chi G^w}.
\ee
The definition of $W^{z,1}$ together with Proposition
\ref{2-param-egorov} means 
\be
\chi'(t)\left[  W^{z,1}, G^w \right]  & = & \chi'(t) T_{h, \tilde{h}}
\left[ W^z, T_{h, \tilde{h}}^{-1} G^w T_{h, \tilde{h}} \right] T_{h,
  \tilde{h}}^{-1} \\
& = & \chi'(t) T_{h, \tilde{h}} \Op_h^w \left( 
{\kappa}_{z,t}^* \widetilde{G} - \widetilde{G} + \O(h^{1/2} \tilde{h}^{3/2})
\right) W^z T_{h, \tilde{h}}^{-1}  ,
\ee
where
\be
\widetilde{G}(x, \xi) = G\left( (\tilde{h} / h )^\half (x, \xi)
\right) \in \s_{\half}^{-\infty,
  0,0} \text{ microlocally}.
\ee  
From \eqref{kappa-1a} and the definition of $G$, 
\be
\Re {\kappa}_{z,t}^* \widetilde{G} = \Re \widetilde{G}
\ee
on $\supp \chi'$.
Hence, using Lemma \ref{2-param-lemma} and Proposition
\ref{2-param-egorov}, there is a symbol $e_t \in \s_{0}^{-\infty,
  -1/2, -3/2}$ such that
\be
\Im \lefteqn{ \frac{h}{i}s\chi'(t) e^{-s\chi G^w}\left[W^{z,1}, G^w \right] e^{s \chi
  G^w} =} \\ & = & \Im \frac{h}{i} s\chi'(t) \left( \Op_{\tilde{h}} (e_t) +
  \frac{\tilde{h}}{i} s \chi'(t) G^w \Op_{\tilde{h}} (\{e_t, G \} )  +
\O(h^{1/2}\tilde{h}^{7/2})\right) \\
& = & \O ( h^{3/2}\tilde{h}^{3/2}).
\ee

\subsection{Step 3: Estimation of $\widetilde{Q}_{z,t}$}

We want to gain some knowledge of $\widetilde{Q}_{z,t}$.  For that we use
the techniques from the proof of Theorem 1 in \cite{Ch} together with
the necessary modifications discussed in the introduction.  We summarize
the content of this Step in the following Lemma:

\begin{lemma}
For $\widetilde{Q}_{z,t}$ as defined above, we have the estimate
\ben
-\Im \langle \widetilde{Q}_{z,t} u, u \rangle  \geq
\psi'(t)  \frac{h \tilde{h}}{C} \| u \|^2, \label{hsc-est}
\een
for any $u \in L^2( \reals^{n-1})$.
\end{lemma}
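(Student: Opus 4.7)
The plan is to analyze $\widetilde{Q}_{z,t}$ region-by-region in $t$, exploiting the disjoint-support structure of the cutoffs $\psi_1', \chi', \psi_2', \psi'$ built into the symbol $\tilde{q}_{z,t}^2$ in \eqref{q-tilde-2}. First I would expand
\[
\widetilde{Q}_{z,t} = e^{-s\chi G^w} Q^1_{z,t} e^{s\chi G^w} = Q^1_{z,t} - s\chi(t)[G^w, Q^1_{z,t}] + \sum_{k\geq 2}\frac{(-s\chi)^k}{k!}\mathrm{ad}^k_{G^w} Q^1_{z,t},
\]
and use Lemma \ref{2-param-lemma}(a)-(b) to show that the leading $\tilde{h}$-principal symbol of the commutator $[G^w, Q^1_{z,t}]$ is $-i\tilde{h}\{G, q^1_{z,t}\}$, while all $k\geq 2$ terms are of size $O(h\tilde{h}^{k-1})$. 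Splitting $G = G_1 + iG_2$ with $G_1, G_2$ and $q^1_{z,t}$ all real-valued, the leading-order imaginary part of $\widetilde{Q}_{z,t}$ equals $\mathrm{Op}_{\tilde h}^w(s\chi(t)\tilde{h}\{G_1, q^1_{z,t}\})$ modulo $O(h^{3/2}\tilde{h}^{3/2})$.

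Now I would analyze the four $t$-regions: (i) On $\supp\psi_1' \subset [0,1/4]$, $\chi(t)=0$, so $\widetilde{Q}_{z,t} = Q^1_{z,t}$, whose symbol $(E^{-1})^*\psi_1'(t) q^1$ is real. Hence $\Im\widetilde{Q}_{z,t} = 0$, which matches $\psi'(t) = 0$. (ii) On $\supp\chi' \subset (1/4, 1/2)$, every $\psi_j'$ and $\psi'$ vanishes, so $\tilde{q}_{z,t}^2 = 0$ and $\widetilde{Q}_{z,t} = 0$. (iii) On $\supp\psi_2' \subset [1/2,3/4]$, $\chi = 1$ and $\tilde{q}_{z,t}^2 = \psi_2'(t)q_{ah}$ with $q_{ah}$ supported in the elliptic variables; direct computation gives $\{G_1,q_{ah}\}=0$ (since $G_1$ lives in hyperbolic variables only) and $\{G_2,q_{ah}\} = -2(|X_\el|^2+|\Xi_\el|^2)$, so $\{G, q^1_{z,t}\}$ is purely imaginary, contributing to the real part of $\widetilde{Q}$ but not the imaginary part. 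This is precisely where the inclusion of $iG_2$ in \eqref{G-esc-def} is essential — without it the artificial-hyperbolic interval would generate an uncontrolled indefinite imaginary contribution.

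The main work is (iv), the region $\supp\psi' \subset [3/4,1]$, where $\chi = 1$ and $\tilde{q}_{z,t}^2 = (E^{-1}K_{ah}^{-1})^*\psi'(t)\tilde{q}_{z,\psi(t)}$ with $\tilde{q}_{z,\psi(t)}(x,\xi) = \langle B_{z,\psi(t)}(x,\xi) x,\xi\rangle$ and $B_{z,\psi(t)}(0,0)$ equal to the purely hyperbolic quadratic normal form $q$ of (\ref{quad-1}-\ref{quad-3}). After the $T_{h,\tilde h}$-rescaling, the $\tilde h$-principal symbol $q^1_{z,t}$ is, to leading order, $\psi'(t)\hsc\, q(X_h, \Xi_h)$, a function of the hyperbolic variables alone. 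Computing $\{G_1, q\}$ block by block, each block of the normal form contributes a term of the form $-\lambda_j\bigl(|X_j|^2/(1+|X_j|^2) + |\Xi_j|^2/(1+|\Xi_j|^2)\bigr)$ (with Jordan-block contributions of the same sign, as in \cite{Ch}), so $\{G_1, q\} = -A$ for a non-negative symbol $A$ bounded below by a harmonic-oscillator-type quantity in the hyperbolic variables. Consequently the leading symbol of $\Im\widetilde{Q}_{z,t}$ is $-s\chi\psi'(t) h\, A$, and Lemma \ref{harm-osc} (harmonic oscillator lower bound, cited in the model case) gives $\langle \mathrm{Op}_{\tilde h}(A) u, u\rangle \geq (\tilde h/C)\|u\|^2$, whence $-\Im\langle\widetilde{Q}_{z,t} u, u\rangle \geq (s/C)\psi'(t) h\tilde h\|u\|^2$.

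The hard part will be controlling the cross-terms in Region (iv) coming from (a) the nonlinear part of $B_{z,\psi(t)}(x,\xi) - B_{z,\psi(t)}(0,0)$, and (b) the pullback $(E^{-1}K_{ah}^{-1})^*$, both of which can mix hyperbolic and elliptic coordinates and thereby produce contributions to $\{G_1, q^1_{z,t}\}$ and $\{G_2, q^1_{z,t}\}$ that are not sign-definite. These cross-terms are, however, of lower order after the $\hsc$-rescaling (the quadratic part is dominant at scale $\hsc^{1/2}$), so they may be absorbed into the $O(h^{3/2}\tilde h^{3/2})$ error provided $\tilde h$ is chosen small but fixed; the role of $G_2$ is to provide sufficient orthogonality so that cross brackets produce real rather than imaginary contributions at leading order. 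Higher-order terms $\mathrm{ad}^k_{G^w} Q^1_{z,t}$ for $k\geq 2$, of size $O(h\tilde h)$, and the sub-principal Weyl correction $O(h^{3/2}\tilde h^{3/2})$ from Lemma \ref{2-param-lemma}, are absorbed by choosing $s$ small and $\tilde h$ small but fixed so that $sh\tilde h/C$ dominates them. This completes the proof of \eqref{hsc-est}.
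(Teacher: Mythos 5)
Your overall strategy matches the paper's Step 3: expand $\widetilde{Q}_{z,t}$ by $\mathrm{ad}_{G^w}$, identify the leading imaginary contribution as $s\chi(t)\tilde{h}\,\Op_{\tilde{h}}^w(\{G_1,q^1_{z,t}\})$, exploit the disjoint $t$-supports of the cutoffs, and close with the harmonic oscillator bound of Lemma \ref{harm-osc}. The region-by-region framing is sound, the observation that $\{G_1,q_{ah}\}=0$ and that the $\psi_2'$ interval contributes only to the real part of $\widetilde{Q}$ is correct, and the treatment of the $O(s^2h\tilde{h})$ errors by shrinking $s$ is the same move the paper makes (the per-block denominator $1+|X_j|^2$ you wrote should be the coupled $1+|MX|^2$ of Proposition \ref{non-distinct-theorem}, and your sign on $\{G_2,q_{ah}\}$ is off, but neither affects the argument).

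The genuine gap is in your handling of the nonlinear cross-terms in region (iv). You assert they can be "absorbed into the $\O(h^{3/2}\tilde{h}^{3/2})$ error" because they are "lower order after the $\hsc$-rescaling." This is not what happens: the Taylor remainder from $B_{z,\psi(t)}(X,\Xi) - B_{z,\psi(t)}(0,0)$ scales like $\csh^{3/2}\,\O\bigl(|\Xi|\,|X|\,|X_\hyp|/(1+|X_\hyp|^2)\bigr)$, i.e. it carries exactly one extra factor of $\csh^{1/2}$ relative to the $\csh$ main term, and since we work on $|(X,\Xi)|\leq\hsc^{1/2}\epsilon$ this gives a \emph{relative} error of size $\O(\epsilon)$ — a multiplicative correction $1+\O(\epsilon)$ on the main term, controlled by shrinking the microlocal neighbourhood, not an additive $\O(h^{3/2}\tilde{h}^{3/2})$ operator error. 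More seriously, this absorption only works because the magnitude of the complex main term $\bigl\langle B(0,0)X,(\tfrac{X_\hyp}{1+|X_\hyp|^2},iX_\el)\bigr\rangle$ is bounded below by $C^{-1}\bigl(\tfrac{|X_\hyp|^2}{1+|X_\hyp|^2}+|X_\el|^2\bigr)$, which controls the cross-terms even when $|X_\el|\gg|X_\hyp|$. Your description of $G_2$'s role as making cross-brackets "real rather than imaginary at leading order" misses this: to leading order in $s$ the $iG_2$ piece cancels out of $\Im\widetilde{Q}_{z,t}$ entirely (only $G_1$ survives the anti-symmetrization $\widetilde{Q}-\widetilde{Q}^*$), and its actual purpose is to upgrade the lower bound on the main-term \emph{magnitude} from a purely hyperbolic quantity to one involving $|X_\el|^2$ as well, without which the Taylor remainder cannot be compared to the main term on the full region $|(X,\Xi)|\leq\hsc^{1/2}\epsilon$. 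This is precisely the content of the paper's displays \eqref{hpg-err1a}–\eqref{error-est-1001}, and it is the part your proposal does not supply.
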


The idea is
that the conjugated $\widetilde{Q}_{z,t}$ is $Q_{z,t}^1$ to leading order, which
is self-adjoint, and the second order term is roughly the quantization of 
\be
\frac{\th}{i} H_{q} G
\ee
for a quadratic form $q$.  The following Proposition is from
\cite[Theorem 4]{Ch}
and says that for the quadratic forms in which we are interested we can make $H_q G$ into a positive definite
quadratic form, and there are linear symplectic coordinates in which $H_qG$
is almost the harmonic oscillator $\sum_j x_j^2 + \xi_j^2$.

\begin{proposition}
\label{non-distinct-theorem}
Suppose $q \in \Ci (\reals^{2m})$ is quadratic of the form
\begin{eqnarray}
\lefteqn{ q(x,\xi)  = } \nonumber \\ 
&&  =\sum_{j=1}^{n_{hc}} \sum_{l=1}^{k_j}\left( \Re \lambda_j \left( x_{2l-1} \xi_{2l-1} + x_{2l} \xi_{2l} \right) 
- \Im \lambda_j \left( x_{2l-1} \xi_{2l} - x_{2l}\xi_{2l-1} \right)
\right)  \label{p-1a}\\
&& \quad \quad + \sum_{j=1}^{n_{hc}} \sum_{l=1}^{k_j-1} \left( x_{2l+1}\xi_{2l-1} + x_{2l+2}
\xi_{2l} \right) \label{p-1b}\\
 & & \quad \quad +  \sum_{j = 2n_{hc}+1}^{2n_{hc} + n_{hr}} \left( \sum_{l=1}^{k_j}
\lambda_j x_l \xi_l  + \sum_{l=1}^{k_j - 1} x_{l+1}\xi_l \right), \label{p-1c}
\end{eqnarray}
and 
\begin{eqnarray*}
G(x, \xi) = \half \left( \log (1 + |x|^2) - \log(1 + |\xi|^2 ) \right).
\end{eqnarray*}
Then there exist $m \times m$ positive definite matrices $M$
and $M'$, positive real numbers $0 < r_1 \leq r_2, \leq \cdots \leq
r_{m}< \infty$, and linear symplectic coordinates $(x, \xi)$ such that
\begin{eqnarray}
\label{non-distinct-theorem-statement}
H_q(G) = \frac{\sum_{j=1}^m r_j^{-2} x_j^2 }{1 + |M x|^2} +
\frac{\sum_{j=1}^m r_j^{-2} \xi_j^2 }{1 + |M' \xi|^2}.
\end{eqnarray} 
\end{proposition}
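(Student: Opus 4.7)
The plan is to prove this block-by-block. First I would observe that $q$ decomposes as a direct sum over Jordan blocks---one block for each complex or real hyperbolic eigenvalue of $dS(0,0)$---and each block involves a disjoint set of symplectic variables. Because $G$ splits as a sum of a function of $x$ alone and a function of $\xi$ alone, the numerator quadratic forms in the computation of $H_q G$ also split along these blocks; only the denominators $1+|x|^2$ and $1+|\xi|^2$ couple across blocks, and this coupling is precisely what the matrices $M$ and $M'$ in the final statement are meant to record. Thus it suffices to perform the analysis block by block and then assemble.

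For each block I would compute $H_{q_j} G$ directly. A key simplification occurs for complex blocks: the antisymmetric rotational pieces $\Im \lambda_j (x_{2l-1}\xi_{2l} - x_{2l}\xi_{2l-1})$ annihilate $G$, since their Hamilton vector fields generate simultaneous rotations in the $(x_{2l-1}, x_{2l})$ and $(\xi_{2l-1},\xi_{2l})$ planes, under which $|x|^2$ and $|\xi|^2$ are invariant. Hence only the $\Re\lambda_j$ diagonal part and the Jordan off-diagonal shifts contribute, and a direct calculation shows that the $x$-side and $\xi$-side numerator quadratic forms have identical tridiagonal structure with $\Re\lambda_j$ on the diagonal. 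This structural matching is the foundation that will allow a single symplectic transformation to diagonalize both sides simultaneously with a common set of eigenvalues $r_j^{-2}$.

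The reduction within each block then proceeds by combining two kinds of linear symplectic maps: a diagonal rescaling $x_l = \alpha_l y_l$, $\xi_l = \alpha_l^{-1}\eta_l$ that rebalances the Jordan coefficients at the cost of replacing $|x|^2$ by $|My|^2$ with $M = \diag(\alpha_l)$, and an orthogonal rotation $(y,\eta) \mapsto (Uy, U\eta)$ (automatically symplectic) that diagonalizes the common numerator matrix. The main obstacle I anticipate is that after rescaling the $x$- and $\xi$-side numerator matrices are congruent through $\diag(\alpha_l)$ and its inverse respectively to the original tridiagonal matrix, so they are no longer literally equal, and a single orthogonal $U$ need not diagonalize both. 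Overcoming this hinges on performing the orthogonal diagonalization of the common tridiagonal numerator first in the unrescaled symplectic basis, where the two sides are genuinely equal matrices, and only afterward introducing the rescaling---which then merely produces the two distinct weighting matrices $M$ and $M'$ while preserving the diagonal form of both numerators with identical coefficients $r_j^{-2}$. Once each block is in this form, the full statement follows by direct sum over blocks, with $\{r_j\}$ the concatenated lists of block eigenvalues and $M$, $M'$ the block-diagonal assemblies.
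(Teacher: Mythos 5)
The paper itself does not prove this proposition (it is quoted from \cite[Theorem 4]{Ch}), so your sketch has to stand on its own, and several of its ingredients are right: every term of $q$ is bilinear in $(x,\xi)$, so writing $q=\lll Bx,\xi\rrr$ one gets $H_qG=\frac{\lll B_sx,x\rrr}{1+|x|^2}+\frac{\lll B_s\xi,\xi\rrr}{1+|\xi|^2}$ with the \emph{same} symmetric part $B_s=\frac{1}{2}(B+B^T)$ in both numerators; the $\Im\lambda_j$ rotation terms are antisymmetric and indeed annihilate $G$; and the problem is block diagonal. But the real content of the proposition is the positivity $r_j^{-2}>0$, and that is exactly where your order of operations fails. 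For a Jordan block the unrescaled $B_s$ is tridiagonal with $\Re\lambda_j$ on the diagonal and $1/2$ off it, with eigenvalues $\Re\lambda_j+\cos\left(l\pi/(k_j+1)\right)$, $l=1,\dots,k_j$; these are negative whenever $\Re\lambda_j$ is small and $k_j\geq 2$. So diagonalizing ``first in the unrescaled basis'' produces coefficients that need not be positive, contradicting the conclusion and destroying the harmonic-oscillator lower bound (Lemma \ref{harm-osc}) that this proposition is designed to feed. You never address positivity at all; the rescaling is not bookkeeping for $M,M'$, it is the mechanism that creates positive definiteness and must come \emph{before} the diagonalization.

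Your claimed commutation of the two steps is also false on its own terms. If, after rotating, you pass to $x_l=\alpha_ly_l$, $\xi_l=\alpha_l^{-1}\eta_l$ and recompute the bracket with the escape function in the new coordinates, the matrix of $q$ becomes $\diag(\alpha)^{-1}B'\diag(\alpha)$ and its symmetric part reacquires off-diagonal terms from the antisymmetric part of $B'$ (which does not commute with $\diag(\alpha)$), so the numerators are no longer diagonal; if instead you merely substitute the rescaling into the already-diagonalized expression, the coefficients become $d_j\alpha_j^{2}$ on the $x$ side and $d_j\alpha_j^{-2}$ on the $\xi$ side, no longer equal as the statement requires. Moreover, the ``obstacle'' that drove you to reorder is spurious: it comes from pulling the old numerators back under the rescaling rather than recomputing $H_qG$ with $G$ defined in the rescaled coordinates (which is how the result is used, cf.\ \eqref{G-esc-def}). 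Done in the correct order, the symplectic rescaling $x_l\mapsto\epsilon^l x_l$, $\xi_l\mapsto\epsilon^{-l}\xi_l$ shrinks the nilpotent part, both numerators are again one and the same symmetric matrix, now positive definite for $\epsilon$ small because $\Re\lambda_j>0$, and a single orthogonal symplectic rotation $(x,\xi)\mapsto(Ux,U\xi)$ diagonalizes both simultaneously with common positive eigenvalues $r_j^{-2}$, the matrices $M,M'$ recording the rescaling and rotation in the denominators. In short: rescale first, then rotate; your reversal loses both positivity and the diagonal structure.
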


Let $U$ be 
a neighbourhood of $(0,0)$, $U \subset T^* \reals^{n-1}$, and assume 
\begin{eqnarray*}
U \subset U_{\epsilon/2} := \left\{ ( x, \xi):| (x, \xi)| < \frac{\epsilon}{2} \right\}
\end{eqnarray*}
for $\epsilon >0$.  We assume throughout that we are working
microlocally in $U_\epsilon$.  With $\tilde{h}$ small (fixed later 
in the proof), we have done the following rescaling:
\begin{eqnarray}
\label{rescaling1}
X : = \hsc^\half x,  \, \Xi := \hsc^\half \xi 
\end{eqnarray}
We assume for the remainder of the proof that $|(X, \Xi)| \leq \hsc^\half \epsilon$.  We used the unitary operator 
$T_{h, \tilde{h}}$ defined in \eqref{U-hsc} to introduce the second
parameter into $Q_{z,t}$ to get
\begin{eqnarray*}
Q_{z,t}^1 = T_{h, \tilde{h}} Q_{z,t} T_{h, \tilde{h}}^{-1}
\end{eqnarray*}
as above.  On the support of $\chi(t)$, after a linear symplectic
change of variables, we write 
\be
Q_{z,t}^2 = T_{h, \tilde{h}} \Op_h (\psi'(t)\tilde{q}_{z,\psi(t)} +
(K_{ah}E)^* \psi_2'(t) q_{ah}) T_{h,
  \tilde{h}}^{-1},
\ee
where $\tilde{q}_{z,t} = \lll B_{z,t} x, \xi \rrr$ defined in Step 1.
The principal symbol of $Q_{z,t}^2$ on $\supp \chi'$ is 
\begin{eqnarray}
\label{q1-expression}
\lefteqn{ q_{z,t}^2( X, \Xi )} \\ & = &  q_{z,t}^3(X, \Xi) +
q_{ah}((h/\th)^\half(X,\Xi)) \nonumber \\
& = & \psi'(t)\left\langle B_{z,\psi(t)}\left(
\csh^{\half}( X, \Xi) \right) \csh^\half X, \csh^\half \Xi
\right\rangle \nonumber \\
&& \quad + \psi_2'(t) \sum_{j=2h_{hc} + n_{hr+} + n_{hr-} + 1}^{2h_{hc} + n_{hr+} +
  n_{hr-} + n_e} (h/ \th) 2 X_j \Xi_j, \nonumber
\end{eqnarray}
and $q_{z,t}^2 \in \s_{-\half}^{-\infty,0,0}$ microlocally.  We have 
\begin{eqnarray}
\label{q1-est}
\left|\partial_{X, \Xi}^\alpha q_{z,t}^2\right| \leq C_\alpha\csh^{|\alpha|/2}
\end{eqnarray}
for $(X, \Xi) \in U_{\hsc^\half \epsilon}$ by Lemma
\ref{U-hsc-lemma}. 

Now $\Re \{ G, q_{ah} ((h/\th)^{1/2}(X, \Xi)) \} =0$, so to find the
real part of $H_{q_{z,t}^2} G$, we need only calculate
$\Re H_{q_{z,t}^3} G$.  
For $|(X, \Xi)| \leq \hsc^{\half} \epsilon$ we have with $G$
as above in \eqref{G-esc-def}
\begin{eqnarray}
\lefteqn{ H_{q_{z,t}^3} G(X, \Xi) = } \nonumber \\
& = & \csh \psi'(t) \left[ \left\langle B_{z,\psi(t)} X, \frac{\partial}{\partial X} \right\rangle
  - \left\langle B_{z,\psi(t)} \frac{\partial}{\partial \Xi}, \Xi \right\rangle
  \right]  G(X, \Xi) \label{hqg-main-term} \\
& &  + \csh^{\frac{3}{2}} \psi'(t) \left[ \sum_{j = 1}^{n-1} \left\langle \frac{\partial}{\partial \Xi_j}B_{z,\psi(t)}( \cdot,\cdot ) 
X, \Xi \right\rangle \frac{\partial}{\partial X_j}G(X, \Xi)\right] \label{hqg-err1} \\
& &  - \csh^{\frac{3}{2}} \psi'(t) \left[ \sum_{j = 1}^{n-1} \left\langle \frac{\partial}{\partial X_j} B_{z,\psi(t)}( \cdot,\cdot ) X, 
\Xi \right\rangle \frac{\partial}{\partial \Xi_j}  G(X, \Xi )\right]. \label{hqg-err2}
\end{eqnarray}

Now owing to 
Lemma \ref{2-param-lemma} and
\eqref{q1-est} we have microlocally to leading order in $h$:
\begin{eqnarray*}
\Re \ad_{G^w}^k \left( Q_{z,t}^2
 \right) = \O_{L^2 \to L^2}\left( h \tilde{h}^{k-1} \right),
\end{eqnarray*}
and in particular,
\begin{eqnarray}
\label{GQ-comm}
i \Im \left[ Q_{z,t}^2, G^w \right]  =  -i \tilde{h} \Re \Op_{\tilde{h}}^w \left( H_{q_{z,t}^2} G\right) 
+ \O (h^{3/2} \tilde{h}^{3/2}).
\end{eqnarray}

Estimating the real part of the errors (\ref{hqg-err1}-\ref{hqg-err2}), we get
\begin{eqnarray}
\Re \lefteqn{ \csh^{\frac{3}{2}}\left[ \sum_{j = 1}^{n-1} \left\langle
  \frac{\partial}{\partial \Xi_j}B_{z,\psi(t)}( \cdot, \cdot ) X, 
\Xi \right\rangle \frac{\partial}{\partial X_j}G(X, \Xi)\right] } \nonumber \\
&& 
=\csh^{\frac{3}{2}}\frac{1}{1 + |X_\hyp|^2}\O(|\Xi||X||X_\hyp|), \label{hpg-err1a}
\end{eqnarray}
and analogously for \eqref{hqg-err2}.  At $(0,0)$, $B_{z,\psi(t)}$
is positive definite and block diagonal of the form \eqref{quad-th-2}, so we compute:
\begin{eqnarray*}
\left| \left\langle B_{\psi(t)}(0,0) X, \left(\frac{X_\hyp}{1 + |X_\hyp|^2},
    iX_\el \right) \right\rangle \right| & \geq &
C^{-1} \left( \frac{ |X_\hyp|^2}{1 + |X_\hyp|^2} + |X_\el|^2 \right) \\
& = & C^{-1} \frac{|X|^2 + |X_\hyp|^2 |X_\el|^2}{1+|X_\hyp|^2}.
\end{eqnarray*}
Hence
\ben
\label{error-est-1001} \lefteqn{ \Re \frac{1}{1 + |X_\hyp|^2}\O(|\Xi||X||X_\hyp|)  \frac{\left\langle B_{\psi(t)}(0,0) X, \left(\frac{X_\hyp}{1 + |X_\hyp|^2},
    iX_\el \right) \right\rangle}{ \left| \left\langle B_{\psi(t)}(0,0) X, \left(\frac{X_\hyp}{1 + |X_\hyp|^2},
    iX_\el \right) \right\rangle \right|} } \\
&& = \Re \left\langle B_{\psi(t)}(0,0) X, \left(\frac{X_\hyp}{1 + |X_\hyp|^2},
    iX_\el \right) \right\rangle \O(|\Xi|), \nonumber
\een
and analogously for \eqref{hqg-err2}.  Now we expand $B_{z,\psi(t)}$ in a Taylor approximation about $(0,0)$ to get
\begin{eqnarray*}
\Re \lefteqn{H_{q_{z,t}^3}G   = } \\
& = &  \Re \csh \psi'(t) \Bigg[  \left\langle B_{z,\psi(t)}(0,0) X,
  \left(\frac{X_\hyp}{1+|X|_\hyp^2} , i X_\el
  \right) \right\rangle \\
&& \quad \quad \quad + \Re \csh^{\half} \O\left(\frac{|X||X_\hyp|}{1+|X_\hyp|^2}|(X, \Xi)|\right)\Bigg]  \\ 
& & + \Re \csh \psi'(t) \Bigg[ \left\langle B_{z,\psi(t)}(0,0) \Xi,
  \left(\frac{\Xi_\hyp}{1+|\Xi|_\hyp^2} , i \Xi_\el
  \right) \right\rangle \\
&& \quad \quad \quad + \Re \csh^{\half} \O \left( \frac{|\Xi||\Xi_\hyp|}{1+|\Xi_\hyp|^2} |(X, \Xi)|\right)\Bigg] , 
\end{eqnarray*}
which, from \eqref{error-est-1001}, is


\be
\Re \lefteqn{H_{q_{z,t}^3}G } \\
& = &  \Re \csh \psi'(t) \left\langle B_{z,\psi(t)}(0,0) X,
  \left(\frac{X_\hyp}{1+|X|_\hyp^2} , i X_\el
  \right) \right\rangle \\
&& \quad \quad \quad \cdot \left( 1 + \csh^{\frac{1}{2}}
\O ( |\Xi|) \right)\\ 
& & \quad +\Re \csh \psi'(t) \left\langle B_{z,\psi(t)}(0,0) \Xi,
  \left(\frac{\Xi_\hyp}{1+|\Xi|_\hyp^2} , i \Xi_\el
  \right) \right\rangle \\
&& \quad \quad \quad \cdot \left( 1 +\csh^{\frac{1}{2}} 
\O ( |X|)\right). 
\ee

Now since $B_{z,\psi(t)}(0,0)$ is block 
diagonal of the form \eqref{quad-th-2}, Proposition
\ref{non-distinct-theorem} yields a linear symplectomorphism $\kappa_1$ such that 
\begin{eqnarray*}
\Re \lefteqn{ \kappa_1^* (H_{q_{z,t}^2}(G)) = } \\
& = &  \csh \psi'(t) \Bigg[ \frac{\sum_{j=1}^{n-n_e-1}
    r_j^{-2} X_j^2 }{1 + |M X|^2}  \left( 1 + \csh^{\frac{1}{2}}
\O ( |\Xi|) \right) \\
&& +
\frac{\sum_{j=1}^{n-n_e-1} r_j^{-2} \Xi_j^2 }{1 + |M' \Xi|^2} \left( 1 +\csh^{\frac{1}{2}} 
\O ( |X|)\right)
\Bigg] ,
\end{eqnarray*}
where $M$ and $M'$ are nonsingular.
Thus, since $\chi(t) \psi_1'(t) = 0$,
\begin{eqnarray}
\lefteqn{ \Im \widetilde{Q}_{z,t}} \nonumber\\
 & = & \Im s \chi(t) [ Q_{z,t}^2, G^w ] + s\chi(t)E_1^w   +
s^2\chi(t)^2 E_2^w \nonumber \\
& = &  - sh\chi(t) (A_1( 1 + E_0) + A_2(1 + E_0'))^w  \nonumber \\
&& \quad + s\chi(t)E_1^w   +
s^2\chi(t)^2 E_2^w , \label{Qth2}
\end{eqnarray}
with $E_0, E_0' = \O( \epsilon)$, $E_1=\O
(h^{3/2} \tilde{h}^{3/2})$, $E_2 = \O(h \tilde{h})$, and 
$(A_1 + A_2)^w =: A^w = \Op_{\tilde{h}}^w(A)$ for 
\begin{eqnarray}
A(X, \Xi) = \psi'(t) (\kappa_1^{-1})^* \left(\frac{\sum_{j=1}^{n-n_e-1} r_j^{-2} X_j^2 }{1 + |M X|^2} +
\frac{\sum_{j=1}^{n-n_e-1} r_j^{-2} \Xi_j^2 }{1 + |M' \Xi|^2} \right).
\end{eqnarray}

From Proposition \ref{AF=FB} there is a unitary $h$-FIO $F_1$ quantizing
$\kappa_1^{-1}$ so that 
\be
\tilde{A} : = F_1 \Op_{\tilde{h}}^w(A) F_1^{-1} = \Op_{\tilde{h}}^w(
\kappa_1^* A ) + \O( \tilde{h}^2).
\ee

We claim that for $\tilde{h}$ sufficiently small and $\tilde{v}$ smooth, 
\begin{eqnarray*}
\langle \tilde{A}^w\tilde{v}, \tilde{v} \rangle \geq \frac{\tilde{h}}{C} \|\tilde{v} \|^2
\end{eqnarray*}
for some constant $C>0$, which is essentially the lower bound for the harmonic oscillator 
$\tilde{h}^2 D_X^2 + X^2$.  It suffices to prove this
inequality for individual $j$, which is the content of Lemma
\ref{harm-osc}.  As $F_1$ is unitary, setting $\tilde{v} = F_1
\tilde{u}$ for $\tilde{u}$ smooth gives
\ben
\langle A^w\tilde{u}, \tilde{u} \rangle & \geq & \frac{\tilde{h}}{C}
\|\tilde{u} \|^2  - \O( \tilde{h}^2) \| \tilde{u} \|^2 \nonumber \\
& \geq &   \frac{\tilde{h}}{C'}
\|\tilde{u} \|^2, \label{A-lower-bound} 
\een
for $\tilde{h}>0$ sufficiently small.

Now fix $\tilde{h}>0$ and $|s|>0$ sufficiently small so that the
 estimate \eqref{A-lower-bound} holds and the errors $E_1$ and $E_2$ satisfy
\begin{eqnarray*}
\|shA^w \tilde{u} \|_{L^2} \gg \| s E_1^w \tilde{u} \|_{L^2} + \|s^2
E_2^w \tilde{u} \|_{L^2},
\end{eqnarray*}
and fix $\epsilon>0$ sufficiently small that the errors $|E_0|, |E_0|' \ll 1$,
independent of $h>0$. 

For $\tilde{u}$ a smooth function with wavefront set contained in $U$,
we now have
\begin{eqnarray*}
-\Im \langle \widetilde{Q}_{z,t} \tilde{u}, \tilde{u} \rangle & \geq &
\psi'(t) \chi(t) \frac{h \tilde{h}}{C} \| \tilde{u} \|^2 \\
& = & \psi'(t)  \frac{h \tilde{h}}{C} \| \tilde{u} \|^2, 
\end{eqnarray*}
since $\chi(t) \equiv 1$ on the support of $\psi_1'(t)$.  This is
\eqref{hsc-est}, the
crucial estimate needed for Step 4.

If $\tilde{q}_{z,t}$ is not in the form \eqref{quad-th-1}, by Proposition \ref{normal-prop-6} there is a symplectomorphism $\kappa_2$ so that $\kappa_2^*\tilde{q}_{z,t}$ 
is of the form \eqref{quad-th-1}.  Using Proposition \ref{AF=FB} to quantize $\kappa_2$ as an $h$-FIO
$F_2$, we get
\begin{eqnarray*}
\Op_h^w \left( \kappa^* \tilde{q}_{z,t} + E_1 \right) = F^{-1} \widetilde{Q}_{z,t} F ,
\end{eqnarray*}
where $E_1 = \O(h^2)$ is the error arising from Proposition
\ref{AF=FB}.  We may then 
use the previous argument for $\kappa_2^*q_{z,t}$ getting an additional
error of $\O(h^2)$ from Proposition \ref{AF=FB} in 
\eqref{hsc-est}.

The following Lemma comes from \cite[Lemma 5.1]{Ch}.
\begin{lemma}
\label{harm-osc}
Let
\begin{eqnarray*}
a_0(y, \eta):= \frac{y_j^2}{ \langle y \rangle^2} + \frac{\eta_j^2}{ \langle \eta \rangle^2},
\end{eqnarray*}
for $(y, \eta) \in \reals^{2n-2}$, and $\langle y \rangle = ( 1 +
|y|^2)^{1/2}$, and let 
\begin{eqnarray*}
a_1(y, \eta) := \frac{y_{2j}^2 + y_{2j-1}^2}{\langle y \rangle^2} +
\frac{\eta_{2j}^2 + \eta_{2j-1}^2}{\langle \eta \rangle^2}.
\end{eqnarray*}
Then $a_i$, $i=0,1$ satisfies
\begin{eqnarray}
\label{ai-ineq}
\langle \Op_{\tilde{h}}^w(a_i)\tilde{u}, \tilde{u} \rangle \geq \frac{\tilde{h}}{C} \|\tilde{u} \|^2
\end{eqnarray}
for $\tilde{h}>0$ sufficiently small and a constant $0<C<\infty$.
\end{lemma}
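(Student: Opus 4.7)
The estimate for $a_1$ reduces immediately to the estimate for $a_0$: since $a_1 = a_0^{(2j-1)} + a_0^{(2j)}$, where the superscript denotes the index used in place of $j$ in the definition of $a_0$, and each summand is non-negative, the inequality for $a_0$ (applied twice) implies the inequality for $a_1$ with constant $C/2$. Thus I focus on $a_0$.

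The structural fact driving the lower bound is that $a_0$ is a smooth, non-negative, bounded symbol vanishing precisely on the codimension-two symplectic subspace $\Sigma := \{y_j = \eta_j = 0\}$ with non-degenerate transverse quadratic approximation $y_j^2/\langle y'\rangle^2 + \eta_j^2/\langle\eta'\rangle^2$. This is the harmonic-oscillator vanishing pattern, and the $\tilde h$-lower bound in \eqref{ai-ineq} corresponds to the ground state energy of the $\tilde h$-harmonic oscillator in the transverse direction.

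The plan is to write $a_0 = b_1^2 + b_2^2$ with $b_1 = y_j/\langle y\rangle$ and $b_2 = \eta_j/\langle\eta\rangle$, both real symbols in $S(1)$. By the Weyl composition formula, $(b_k^w)^2 = \Op_{\tilde h}^w(b_k^2) + O(\tilde h^2)$, so it suffices to bound $(b_1^w)^2 + (b_2^w)^2$ below by a constant multiple of $\tilde h$. Using the identity
\begin{eqnarray*}
(b_1^w)^2 + (b_2^w)^2 = (b_1^w + ib_2^w)^*(b_1^w + ib_2^w) - i[b_1^w, b_2^w]
\end{eqnarray*}
and non-negativity of the first term, one reduces to estimating $-i[b_1^w, b_2^w] = \tilde h\, \Op_{\tilde h}^w(-\{b_1,b_2\}) + O(\tilde h^3)$, where a direct computation gives
\begin{eqnarray*}
-\{b_1,b_2\}(y,\eta) = \frac{\langle y'\rangle^2\langle\eta'\rangle^2 + y_j\eta_j\,(y'\cdot\eta')}{\langle y\rangle^3\langle\eta\rangle^3}.
\end{eqnarray*}
At the origin this equals $1$, and near $\Sigma$ it is bounded below by $c/(\langle y'\rangle\langle\eta'\rangle) > 0$.

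The main obstacle is that $-\{b_1,b_2\}$ is not globally non-negative: in the regime where $|y|,|\eta|$ are large and $y_j\eta_j(y'\cdot\eta')$ is sufficiently negative, the numerator can become negative, so the commutator trick alone does not produce a sign-definite bound. The resolution is a phase-space partition into two regions. In the ``good'' region, where $|y_j\eta_j(y'\cdot\eta')| \leq \tfrac12 \langle y'\rangle^2\langle\eta'\rangle^2$, the Poisson bracket satisfies $-\{b_1,b_2\} \geq \tfrac12\langle y'\rangle^2\langle\eta'\rangle^2/(\langle y\rangle^3\langle\eta\rangle^3)$, and the commutator estimate combined with sharp G\aa rding in the transverse variables yields the $\tilde h$ bound after a further IMS localization. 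In the ``bad'' region, the inequality $|y_j\eta_j||y'\cdot\eta'| > \tfrac12\langle y'\rangle^2\langle\eta'\rangle^2$ forces $|y_j|\gtrsim \langle y'\rangle$ and $|\eta_j| \gtrsim \langle\eta'\rangle$, so $\langle y\rangle \sim |y_j|$ and $\langle\eta\rangle \sim |\eta_j|$, and consequently $a_0 \geq c_0 > 0$ uniformly; an application of G\aa rding to $(1-\chi)a_0(1-\chi)$ with a suitable smooth cutoff $\chi$ yields a constant (hence $\gg \tilde h$) lower bound on that piece. Gluing via an IMS-type partition of unity, and absorbing the $O(\tilde h^2)$ localization errors into the main term for $\tilde h$ sufficiently small, gives \eqref{ai-ineq}.
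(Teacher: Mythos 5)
Your reduction of $a_1$ to $a_0$, the identity $\Op_{\tilde h}^w(a_0)=(b_1^w)^2+(b_2^w)^2+\O(\tilde h^2)$, and your bracket formula for $-\{b_1,b_2\}$ are all correct, but the proof does not close at the ``good region'' step, and this is a genuine gap rather than a technicality. On the good region your only lower bound is $-\{b_1,b_2\}\geq\tfrac12\langle y'\rangle^2\langle\eta'\rangle^2\langle y\rangle^{-3}\langle\eta\rangle^{-3}$, which on the zero set $\Sigma=\{y_j=\eta_j=0\}$ equals $\tfrac12(\langle y'\rangle\langle\eta'\rangle)^{-1}$ and tends to $0$ along the non-compact set $\Sigma$. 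Sharp G\aa rding applied to $\tilde h\,\Op_{\tilde h}^w(-\{b_1,b_2\})$, with or without an IMS partition (and note that the inequality $(b_1^w)^2+(b_2^w)^2\geq-i[b_1^w,b_2^w]$ is global, so localizing it already costs cutoff-dependent $\O(\tilde h)$ errors that must be absorbed by a uniform main term), can only yield $\tilde h$ times the infimum of the bracket over the region, and that infimum is $0$; the asserted ``$\tilde h$ bound after a further IMS localization'' therefore does not follow. Moreover no repair within your framework is possible, because for $n-1\geq2$ the inequality \eqref{ai-ineq} fails for general $\tilde u\in L^2(\reals^{n-1})$: $\Op_{\tilde h}^w(a_0)$ is exactly multiplication by $y_j^2\langle y\rangle^{-2}$ plus the Fourier multiplier $\eta_j^2\langle\eta\rangle^{-2}$, and taking $\tilde u=v(y_j)w(y')$ with $v$ the $\tilde h$-harmonic-oscillator ground state and $w$ a coherent state of width $\tilde h^{1/2}$ centered at $y'=Re$, $\eta'=Re$ gives
\be
\lll \Op_{\tilde h}^w(a_0)\tilde u,\tilde u\rrr \leq C'\,\frac{\tilde h}{R^2},
\ee
which beats $\tilde h/C$ once $R^2>CC'$. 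So any complete argument must use an ingredient your proof does not have: either $n-1=1$, or an a priori confinement of the microsupport of $\tilde u$ to a fixed bounded set (there your scheme, run with cutoffs supported in a fixed ball on which $-\{b_1,b_2\}\geq c>0$ and where outside the ball one has ellipticity of $a_0$ in the $(y_j,\eta_j)$ directions only after such confinement, does close), or the structure of the full sum over $j$ making the symbol elliptic outside a compact set.

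Two smaller points. In the bad region, $|y_j\eta_j(y'\cdot\eta')|>\tfrac12\langle y'\rangle^2\langle\eta'\rangle^2$ forces only the product $|y_j||\eta_j|\gtrsim\langle y'\rangle\langle\eta'\rangle$, not each factor separately as you claim; still, at least one of $|y_j|/\langle y'\rangle$, $|\eta_j|/\langle\eta'\rangle$ must exceed $2^{-1/2}$, which already gives $a_0\geq\tfrac13$ there, so that part survives. As for comparison with the paper: no proof is given in the text at all, the lemma being quoted from \cite[Lemma 5.1]{Ch} with the remark that it is ``essentially the lower bound for the harmonic oscillator''; the point your attempt runs into, namely the non-compactness of $\{a_0=0\}$ and the decay of the transverse Hessian (equivalently of $-\{b_1,b_2\}$) along it when $n-1\geq2$, is precisely what a correct statement and proof must confront, e.g.\ by building in the microlocal restriction under which the estimate is actually applied.
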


\subsection{Step 4: Estimation of $\widetilde{W}$}

Let $v \in L^2(V)$ with
wavefront set sufficiently close to $(0,0)$, and set $\tilde{v} =
T_{h, \tilde{h}} v$.  Now $\widetilde{W}(t)$ is no longer unitary, so we calculate
\be
\partial_t \lll \widetilde{W}(t) \tilde{v}, \widetilde{W}(t) \tilde{v} \rrr & = & 2 \lll
\partial_t \widetilde{W}(t) \tilde{v}, \widetilde{W}(t) \tilde{v} \rrr \\
& = & \frac{2i}{h} \lll \left( \widetilde{Q}_{z,t}  + \O ( h^{3/2} \tilde{h}^{3/2}) 
 \right)
\widetilde{W}(t) \tilde{v}, \widetilde{W}(t) \tilde{v}
\rrr \\
& = & -\frac{2}{h} \lll \left( \Im \widetilde{Q}_{z,t} + \O ( h^{3/2} \tilde{h}^{3/2})
\right) \widetilde{W}(t) \tilde{v}, \widetilde{W}(t) \tilde{v} \rrr \\ 
& \geq & C^{-1} \left( \psi'(t)\tilde{h} - \O ( h^{1/2} \tilde{h}^{3/2}) \right) \lll
\widetilde{W} (t)\tilde{v}, \widetilde{W}(t) \tilde{v} \rrr.
\ee 
Thus there is a positive constant $C$ such that 
\be
\partial_t \left( \lll \widetilde{W}(t) \tilde{v} , \widetilde{W}(t) \tilde{v} \rrr
e^{-(\psi(t)\tilde{h} - \O ( h^{1/2} \tilde{h}^{3/2}) )/C} \right) \geq 0,
\ee
so
\be
\left\| \widetilde{W}(t) \tilde{v} \right\|^2 \geq e^{\psi(t)(\tilde{h} - \O ( h^{1/2} \tilde{h}^{3/2}) )/C} \| \widetilde{W}(0) \tilde{v} \|^2
\ee
and since $\psi(1) = 1$, shrinking $\tilde{h}>0$ if
necessary, we have for $0<h\leq h_0$ sufficiently small,
\be
\left\| \widetilde{W}(1) \tilde{v} \right\| \geq R \| \widetilde{W}(0)
\tilde{v} \|, \,\,\, R>1 \text{ independent of } 0 < h \leq h_0.
\ee
Now
\be
\widetilde{W}(0) & = & e^{-s \chi(0) G^w} T_{h, \tilde{h}} W^z(0) T_{h,
  \tilde{h}}^{-1} e^{ s \chi(0) G^w} \\
& = &  T_{h, \tilde{h}} W^z(0) T_{h,
  \tilde{h}}^{-1} 
\ee
is unitary, so
\ben
\label{t-W-est}
 \left\| \widetilde{W}(1) \tilde{v} \right\| \geq R \| \tilde{v} \|,
\een
independent of $0<h \leq h_0$.

As in \S \ref{model-chapter}, let the operators $K^w$ be defined by 
\be
e^{sK^w} = T_{h, \tilde{h}}^{-1} e^{s\chi(1)G^w} T_{h, \tilde{h}} = T_{h, \tilde{h}}^{-1} e^{sG^w} T_{h, \tilde{h}},
\ee
so that 
\be
\widetilde{W}(1) = e^{-sK^w} M(z)^{-1} e^{sK^w},
\ee
and Theorem \ref{FIO-def-thm} is proved.
\qed

\begin{remark}
The error arising at the end of the proof of Theorem 
\ref{FIO-def-thm} from the use of Theorem \ref{AF=FB} is of
order $\O(h^2)$ and hence negligible compared to our lower bound of
$h$ for $A$.  However, the estimate of $A$ is used for the imaginary
part of $\widetilde{Q}_{z,t}$, and the error in Theorem \ref{AF=FB}
is real, so $\O(h)$ would
have been sufficient.  This means the analysis above does not strictly
depend on using the Weyl calculus.
\end{remark}

\begin{remark}
It is interesting to note that the estimate (\ref{main-theorem-5-est}) depends only on the real parts of the eigenvalues 
$\lambda_j$ above.  Unraveling the definitions, the eigenvalues $\lambda_j$ are logarithms of the eigenvalues of the 
linearized Poincar\'{e} map $dS(0)$ from above.  Then (\ref{main-theorem-5-est}) depends only on the modulis of the 
eigenvalues of $dS(0)$ which lie off the unit circle.  We interpret
this as a quantum analogue of the fact that $dS(0,0)$ is semi-hyperbolic. 
\end{remark}


\section{Proof of Theorem \ref{main-theorem-2} and the Main Theorem}
\label{main-theorem-proof-chapter}
\numberwithin{equation}{section}

\subsection{Proof of Theorem \ref{main-theorem-2}}
In this section we show how to use Theorem \ref{main-theorem-5} with a few other results to deduce 
Theorem \ref{main-theorem-2}.  

\begin{proposition}
\label{Q(z)u-prop}
Suppose $\psi_0 \in \s^{0,0}(T^*X) \cap \Ci_c(T^*X)$ is a microlocal
cutoff function to a sufficiently small neighbourhood 
of $\gamma \subset \{p^{-1}(0) \}$.  For $Q(z) = P(h) - z - iCha^w$ as above with $z \in [-1,1] + i(-c_0h, \infty)$, 
$c_0 >0$ and $C>0$ sufficiently large, we have
\begin{eqnarray}
\label{Q(z)u-prop-est}
Q(z)u = f \Longrightarrow \left\| (1 - \psi_0 )^w u \right\| \leq C h^{-1} \|f \| .
\end{eqnarray}
\end{proposition}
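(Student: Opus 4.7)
The plan is a positive-commutator / escape-function argument that exploits both the absorption $-iCha^w$ in $Q(z)$ and the non-trapping nature of the flow of $H_p$ on $\{p=\Re z\}$ away from $\gamma$.

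As a warm-up, pair $Q(z)u=f$ with $u$ and take imaginary parts. Essential self-adjointness of $P$ (modulo $\O(h)$) gives
\begin{equation*}
Ch\langle a^w u, u\rangle = -\Im z\,\|u\|^2 - \Im\langle f, u\rangle + \O(h)\|u\|^2,
\end{equation*}
and for $\Im z > -c_0 h$ with $c_0 \ll C$ this together with the sharp G{\aa}rding inequality (applied to $a\ge 0$) yields an $L^2$ bound on $u$ microlocalized to $\{a\ge 1/2\}$ in terms of $\|f\|\|u\|$. A standard semiclassical elliptic parametrix, treating $iCha^w$ as a lower-order perturbation, controls $u$ microlocalized to the classical elliptic region $\{|p - \Re z|\ge \text{const}\}$ (together with the high-frequency region) by $Ch^{-1}\|f\| + \O(h^\infty)\|u\|$.

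The main step is a positive-commutator argument with a bounded escape function $g \in \Ci(T^*X)$ satisfying
\begin{equation*}
H_p g \ \ge\ c_1(1-\psi_0)^2 - C_1(\chi_\el + a) \quad\text{on a neighbourhood of }p^{-1}(0),
\end{equation*}
where $\chi_\el$ is a cutoff to the elliptic region. Because $\gamma$ is the only closed orbit in $p^{-1}(\text{neigh}(0))$, every bicharacteristic through $\supp(1-\psi_0)\cap p^{-1}(\text{neigh}(0))$ exits that set in uniformly bounded time, entering either $\{a\ge 1/2\}$ or the elliptic region; so $g$ can be built as the time-integral of $(1-\psi_0)^2$ along the backward Hamiltonian flow. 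Setting $G^w=\Op_h^w(g)$ and computing $\tfrac{2}{h}\Im\langle G^w f,u\rangle$ gives, up to $\O(h)\|u\|^2$,
\begin{equation*}
\langle(H_p g)^w u, u\rangle - 2\,\Im z\,\langle G^w u, u\rangle - 2C\Re\langle G^w a^w u, u\rangle.
\end{equation*}
Sharp G{\aa}rding on $(H_p g)^w$, together with the warm-up bounds to absorb the $\chi_\el$, $a^w$, and $\Im z$ contributions, then gives
\begin{equation*}
\|(1-\psi_0)^w u\|^2 \ \le\ Ch^{-2}\|f\|^2 + \O(h)\|u\|^2.
\end{equation*}
The residual $\O(h)\|u\|^2$ is absorbed into the left-hand side for $h$ small, after arranging $g$ to vanish on a slightly smaller neighbourhood of $\gamma$ than $\{\psi_0 = 1\}$, so that the contribution of $\psi_0^w u$ to $\langle G^w u, u\rangle$ is negligible.

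The main obstacle is the escape-function construction: one needs a quantitative non-trapping statement for $H_p$ outside a small neighbourhood of $\gamma$, together with a careful arrangement of transition regions between $\psi_0$, $a$, and $\chi_\el$, so that the defect $C_1(\chi_\el + a)$ in the bound on $H_p g$ is exactly what the warm-up estimate and the elliptic parametrix can absorb. Once $g$ is in hand, the rest is standard sharp G{\aa}rding and Cauchy--Schwarz manipulation.
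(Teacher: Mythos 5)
Your escape-function step is the genuine gap. You write that ``$\gamma$ is the only closed orbit in $p^{-1}(\neigh(0))$, [so] every bicharacteristic through $\supp(1-\psi_0)\cap p^{-1}(\neigh(0))$ exits that set in uniformly bounded time,'' but no such assumption appears anywhere in the paper: $X$ is an arbitrary compact manifold, $\gamma$ is just one semi-hyperbolic closed orbit, and there may be arbitrary additional trapping away from $\gamma$. The whole reason one inserts the complex absorbing potential $-iCh a^w$ with $a \equiv 1$ away from $\gamma$ is precisely to sidestep any dynamical input outside a neighbourhood of $\gamma$; building a global escape function $g$ with $H_p g \geq c_1(1-\psi_0)^2 - C_1(\chi_{\el}+a)$ would require exactly the non-trapping structure the absorption is designed to replace. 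Your argument also ends with $\|(1-\psi_0)^w u\|^2 \leq Ch^{-2}\|f\|^2 + \O(h)\|u\|^2$, and the $\O(h)\|u\|^2$ term cannot be absorbed into the left side, because there is no a priori control of $\|u\|$ by $\|(1-\psi_0)^w u\|$: near $\gamma$ the operator $P-z$ can almost annihilate $u$, which is the entire reason the monodromy analysis is needed in later sections, so $\|u\|$ may be much larger than $\|(1-\psi_0)^w u\|$.

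The proof the paper intends is much shorter and avoids both issues: arrange $\psi_0\equiv 1$ on a neighbourhood of $\gamma$ containing $\{a<1/2\}$, so that $a\geq 1/2$ on $\supp(1-\psi_0)$; pick $C$ so large that $(Ca-c_0)(1-\psi_0)\geq\tfrac{c_0}{2}(1-\psi_0)$ pointwise, whence by sharp G{\aa}rding $(Ca-c_0)^w(1-\psi_0)^w\geq\tfrac{c_0}{2}((1-\psi_0)^w)^*(1-\psi_0)^w$ modulo $\O(h)$. Then pair $Q(z)u=f$ directly against $(1-\psi_0)^w u$ (not against $u$, not against $G^w u$), take imaginary parts, and use $\Im z>-c_0 h$: the absorption term dominates, giving $\tfrac{c_0 h}{2}\|(1-\psi_0)^w u\|^2\leq\|f\|\,\|(1-\psi_0)^w u\|$, hence the claim. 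In this pairing the residual commutator $[(1-\psi_0)^w,P]$ is microlocalized to $\supp\nabla\psi_0\subset\{a\geq 1/2\}$ and can itself be absorbed, so no global $\O(h)\|u\|^2$ remainder is produced. No escape function, no non-trapping hypothesis, and no bootstrap on $\|u\|$ are needed. Your ``warm-up'' pairing against $u$ is a step in the right direction, but it yields a bound with $\|u\|$ on the right; pairing against $(1-\psi_0)^w u$ instead closes the estimate cleanly.
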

For this proposition and the proof, we use the convenient shorthand notation: for a symbol $b$, $b^w := \Op_h^w(b)$.
\begin{remark}
Note that Proposition \ref{Q(z)u-prop} is the best possible situation.  It says roughly that away from $\gamma$, 
$Q^{-1}$ is bounded by $Ch^{-1}$.  Thus the global statement in Theorem \ref{main-theorem-2} represents a loss of 
$\sqrt{\log (1/h)}$.
\end{remark}
\begin{proof}
Choose $c_0 >0$ from Theorem \ref{main-theorem-5}, and assume $\supp
\psi_0 \cap \supp a = \emptyset$,
Choose $C>0$ sufficiently large so that  
\begin{eqnarray*}
(Ca-c_0)^w (1 - \psi_0)^w \geq 
c_0 (1 - \psi_0)^w /2 \geq
c_0 ((1 - \psi_0)^w)^*(1-\psi_0)^w/2.
\end{eqnarray*}
Then we calculate
\begin{eqnarray*}
\frac{1}{2} c_0 h \int_X \left| (1 - \psi_0)^w u \right|^2 dx & \leq & h \int_X \left( Ca^w + h^{-1} \Im z \right) 
u \overline{ (1- \psi_0)^w u } dx \\
& = & -\Im \int_X Q(z) u \overline{ (1-\psi_0)^w u} dx \\
& = & -\Im \int_X f \overline{ (1-\psi_0)^w u} dx \\
& \leq & \|f\|  \left\| (1 - \psi_0)^w u \right\|  \\
& \leq & (4\epsilon h)^{-1} \|f \|^2 + \epsilon h \left\| (1 -\psi_0)^w u \right\|^2
\end{eqnarray*}
for any $\epsilon>0$ fixed.  Taking $\epsilon \ll c_0$ yields \eqref{Q(z)u-prop-est}.
\end{proof}

We need the following lemma, which
follows with little modification from \cite[Lemma 6.1]{Ch}.  
\begin{lemma}
\label{wf-lemma}
Suppose $V_0 \subset T^*X$, $P \in \Psi^{k,0}_h$ (or
$P \in \Psi_{h,db}^{2,0}$ differential with homogeneous principal
symbol if $\gamma \cap \partial X  \neq \emptyset$), $T >0$, $A$ an operator, and $V \subset T^*X$ a neighbourhood of $\gamma$ satisfying
\begin{eqnarray}
\left\{ \begin{array}{l}
\forall \rho \in \{ p^{-1}(0) \} \setminus V, \,\,\, \exists \, 0 <t<T \,\, \text{and} \,\, \epsilon = \pm 1 \,\,\, 
\text{such that} \\
\exp(\epsilon s H_p)(\rho) \subset \{ p^{-1}(0) \} \setminus V \,\,
\text{for} \,\, 0 < s < t, \,\,  \\
\exp(\epsilon s H_p)(\rho) \text{ is non-glancing for } 0 \leq s \leq
t, \text{ and} \\
\exp(\epsilon t H_p)(\rho) \in V_0;  \\
\end{array} \right.
\end{eqnarray}
and $A$ is microlocally elliptic in $V_0 \times V_0 $.  If $B \in \Psi^{0,0}(X, \Omega_X^\half)$ and $\WF (B) \subset T^*X 
\setminus V$, then
\begin{eqnarray*}
\left\| Bu \right\| \leq C \left( h^{-1} \left\| Pu \right\| + \| Au \| \right) + \O (h^\infty) \|u\|.
\end{eqnarray*}
\end{lemma}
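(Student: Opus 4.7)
The plan is to combine semiclassical elliptic regularity with a finite-cover propagation-of-singularities argument along the Hamilton flow, using the microlocal propagator constructed in Corollary \ref{ml-prop-cor} to handle boundary reflections. First, decompose $\WF(B) = (\WF(B) \cap \{p \neq 0\}) \cup K$, where $K = \WF(B) \cap p^{-1}(0)$ is a compact subset of $T^*X \setminus V$ (compactness follows because $P$ is classically elliptic outside a compact set and $B$ has order $0$). On the elliptic set $\{p \neq 0\}$, a standard parametrix construction gives $\|B_\el u\| \leq Ch^{-1}\|Pu\| + \O(h^\infty)\|u\|$ for any $B_\el$ whose wavefront set lies there, so the task reduces to controlling $u$ microlocally on $K$.

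For each $\rho \in K$, the hypothesis furnishes $\epsilon_\rho = \pm 1$ and $0 < t_\rho < T$ such that the bicharacteristic $s \mapsto \exp(\epsilon_\rho s H_p)(\rho)$ stays in $p^{-1}(0) \setminus V$, is non-glancing, and lands in $V_0$ at time $t_\rho$. I would straighten this arc using Proposition \ref{hDx-prop} and, at any transversal reflection, extend through the boundary via Corollary \ref{ml-prop-cor}. In the straightened coordinates, $Pu = f$ becomes $hD_{x_1} \tilde u = \tilde f$ microlocally, which integrates to
\[
\tilde u(x_1, x') = \tilde u(t_\rho, x') - \frac{1}{ih} \int_{x_1}^{t_\rho} \tilde f(y, x')\,dy,
\]
and Lemma \ref{bdy-energy-est} (applied segment by segment between reflections, with the matching condition from Proposition \ref{ml-prop-thm} transferring the boundary trace across $\partial X$) yields
\[
\|\chi_\rho u\| \leq C_T h^{-1}\|Pu\| + C_T \|A_\rho u\| + \O(h^\infty)\|u\|,
\]
for a microlocal cutoff $\chi_\rho$ supported in a small neighbourhood of $\rho$ and $A_\rho$ a pseudodifferential operator with $\WF(A_\rho)$ contained in a neighbourhood of the endpoint $\exp(\epsilon_\rho t_\rho H_p)(\rho) \in V_0$. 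Ellipticity of $A$ on $V_0 \times V_0$ lets me build a microlocal parametrix, which converts $\|A_\rho u\|$ into $C\|Au\| + \O(h^\infty)\|u\|$.

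To globalize, cover $K$ by finitely many of the sets $\{\sigma_h(\chi_{\rho_j}) \neq 0\}$, $j = 1, \ldots, N$, and complete to a microlocal partition of unity; the wavefront condition on $B$ then gives $B = \sum_j B\chi_{\rho_j} + B_\el + R$ with $R = \O(h^\infty)_{L^2 \to L^2}$. Summing the $N$ local estimates together with the elliptic estimate, and absorbing the $\O(h^\infty)$ errors, produces the desired bound.

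The main obstacle is verifying the estimate through transversal boundary reflections: one must check that Corollary \ref{ml-prop-cor} provides an $h$-FIO whose mapping properties are compatible with the Lemma \ref{bdy-energy-est} energy estimate on both sides of each reflection, i.e., that the $L^2$ norm of the trace propagated out by the outgoing branch from Proposition \ref{u-osc-int} is comparable to that coming in. The non-glancing hypothesis is precisely what makes the factorization of Lemma \ref{p-fact-lemma} available at each reflection point, so the bookkeeping is local and finite, and the uniform bound $t_\rho < T$ ensures the constants in the energy estimate do not blow up as $\rho$ ranges over $K$.
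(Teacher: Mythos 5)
Your argument is correct and follows the natural route: the paper actually cites this lemma from \cite[Lemma 6.1]{Ch} with the remark that it holds ``with little modification,'' and your proof --- an elliptic parametrix off the characteristic set, propagation along bicharacteristics via the $hD_{x_1}$ normal form of Proposition \ref{hDx-prop} and a segment-by-segment energy estimate, the transversal-reflection propagator of \S \ref{manifold-boundary-1}, ellipticity of $A$ to close the endpoint term, and a finite cover of the compact set $\WF(B)\cap p^{-1}(0)$ --- is precisely that modification. One small bookkeeping point: for a non-periodic bicharacteristic arc you should invoke Proposition \ref{ml-prop-thm} directly at each transversal reflection rather than Corollary \ref{ml-prop-cor}, which is phrased for a closed orbit; the estimate you need is the same, but the reference is cleaner.
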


We will need the next lemma, which is essentially an operator version of the classical Three-Line Theorem from complex 
analysis.  The proof can be found in \cite[Lemma 6.3]{Ch}, which is collected from \cite[Lemma A.2]{BZ}, \cite[Lemma 4.7]{Bur2}), and
\cite[Lemma 2]{TaZw}.
\begin{lemma}
\label{BZ-lemma}
Let $\mathcal{H}$ be a Hilbert space, and assume $A,B: \mathcal{H} \to \mathcal{H}$ are bounded, self-adjoint operators 
satisfying $A^2 = A$ and $BA = AB = A$.  Suppose $F(z)$ is a family of bounded operators satisfying 
$F(z)^* = F(\bar{z})$, $\Re F \geq C^{-1} \Im z$ for $\Im z >0$, and further assume
\begin{eqnarray*}
BF^{-1}(z) B \,\, \text{is holomorphic in}\,\, \Omega:= [-\epsilon, \epsilon] + i[-\delta, \delta], \,\, 
\text{for} \,\, \frac{\delta}{\epsilon} \ll M^{-\frac{1}{N_1}} < 1
\end{eqnarray*}
for some $N_1>0$, where $\|B F^{-1}(z)B \| \leq M$.  Then for $|z| < \epsilon/2$, 
$\Im z = 0$,
\begin{eqnarray*}
(a) \quad \left\| B F^{-1}(z) B \right\| & \leq & C \frac{\log M}{\delta}, \\
(b) \quad \left\| B F^{-1}(z) A \right\| & \leq & C \sqrt{ \frac{\log M}{\delta}}.
\end{eqnarray*}
\end{lemma}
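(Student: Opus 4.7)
The plan is to reduce both estimates to a scalar Phragm{\'e}n--Lindel{\"o}f / harmonic measure argument on the rectangle $\Omega$, exploiting that the coercivity hypothesis $\Re F \geq C^{-1} \Im z$ gives an automatically strong bound on the top edge.

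\textbf{Step 1: A priori bound on the top edge.} For $\Im z > 0$, the hypothesis $\Re F(z) \geq C^{-1} \Im z$ implies that $F(z)$ is invertible with $\|F(z)^{-1}\| \leq C / \Im z$. Hence on the segment $\{\Im z = \delta\}$ we have $\|B F(z)^{-1} B\| \leq C/\delta$, while on the other three sides of $\Omega$ the a priori bound $\|BF(z)^{-1}B\| \leq M$ is assumed.

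\textbf{Step 2: Proof of (a) via holomorphic interpolation.} Fix unit vectors $u,v \in \mathcal{H}$ and form the scalar holomorphic function $\phi(z) := \langle B F(z)^{-1} B u, v\rangle$, which is holomorphic on $\Omega$ by hypothesis. Then $\log |\phi|$ is subharmonic with boundary data $\leq \log(C/\delta)$ on the top edge and $\leq \log M$ on the remaining portion of $\partial \Omega$. Conformally mapping $\Omega$ to the unit disk and applying the Poisson formula (equivalently, harmonic measure from a point $x$ on the real axis with $|x| \leq \epsilon/2$), one gets
\[
\log|\phi(x)| \leq \omega_T(x)\log(C/\delta) + \omega_R(x) \log M ,
\]
where $\omega_T, \omega_R$ are the harmonic measures of the top edge and of the rest. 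The hypothesis $\delta/\epsilon \ll M^{-1/N_1}$ ensures $\omega_R(x)$ is small enough that $M^{\omega_R(x)} \leq C \log M$, giving the bound $|\phi(x)| \leq C \log M / \delta$. Taking the supremum over unit $u,v$ yields (a).

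\textbf{Step 3: Proof of (b) via the adjoint symmetry trick.} For (b) form instead $\phi_2(z) := \langle B F(z)^{-1} A u, v\rangle$, and introduce the symmetrized function
\[
\Phi(z) := \phi_2(z)\, \overline{\phi_2(\bar{z})} .
\]
Since $F(z)^* = F(\bar z)$, the function $\overline{\phi_2(\bar z)}$ is holomorphic in $z$, so $\Phi$ is holomorphic on $\Omega$. On the real axis $\Phi(x) = |\phi_2(x)|^2$, while on $\partial \Omega$ the bound for $\Phi$ combines the top-edge bound on one factor with the interior bound $M$ (via $BA = A$) on the other, so $|\Phi| \leq CM/\delta$ on the top and bottom edges and $|\Phi| \leq M^2$ on the sides. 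Applying the same harmonic measure argument as in Step 2 to $\Phi$, with the improved boundary data concentrated on the top and bottom (which carry most of the harmonic measure when $\delta \ll \epsilon$), produces $|\Phi(x)| \leq C\log M/\delta$. Taking square roots yields $|\phi_2(x)| \leq C\sqrt{\log M/\delta}$ and hence (b).

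\textbf{Main obstacle.} The non-trivial point is extracting the logarithmic improvement over the naive interpolation bound $\sqrt{M/\delta}$ that comes from a straight geometric mean of boundary data. This requires using the fact that the a priori bound $C/\Im z$ actually holds throughout the upper half of $\Omega$, not just on the top edge, so the harmonic-measure estimate can be iterated (or set up conformally) to trade the power of $M$ for $\log M$ under the quantitative hypothesis $\delta/\epsilon \ll M^{-1/N_1}$. The projection identities $A^2 = A$ and $BA = AB = A$ are essential in Step 3 to control the cross-terms that appear when forming $\Phi$.
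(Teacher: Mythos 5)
The paper does not actually prove Lemma \ref{BZ-lemma}; it cites \cite[Lemma 6.3]{Ch}, which in turn is assembled from \cite[Lemma A.2]{BZ}, \cite[Lemma 4.7]{Bur2}, and \cite[Lemma 2]{TaZw}. So there is no in-paper proof to compare against; what follows addresses the correctness of your attempt.

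There is a genuine gap in Step 2, and it is fatal to the quantitative conclusion. You set up the harmonic measure at a real point $x$ with $|x| < \epsilon/2$ as $\omega_T(x)$ for the top edge and $\omega_R(x)$ for the remainder of $\partial\Omega$, and you assert that the hypothesis $\delta/\epsilon \ll M^{-1/N_1}$ makes $\omega_R(x)$ small enough that $M^{\omega_R(x)} \leq C\log M$. That assertion is false: in a thin rectangle $[-\epsilon,\epsilon]\times[-\delta,\delta]$ with $\delta/\epsilon \ll 1$, the harmonic measure at a central real point of the \emph{bottom} edge is approximately $1/2$, matching the top edge by symmetry; only the short vertical sides carry exponentially small measure, and that is what the condition $\delta/\epsilon \ll M^{-1/N_1}$ controls. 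Consequently $\omega_R(x)\approx 1/2$ and your interpolation delivers $|\phi(x)| \lesssim \sqrt{M/\delta}$, not $C\log M/\delta$. The same structural defect carries over to Step 3, where even with the symmetrized function $\Phi$ and the improved bound $CM/\delta$ on both the top and bottom edges, the resulting estimate is $|\Phi(x)|\lesssim M/\delta$, giving $|\phi_2(x)|\lesssim\sqrt{M/\delta}$ rather than the desired $\sqrt{\log M/\delta}$.

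The missing idea — which your ``Main obstacle'' paragraph correctly names but which the steps never actually implement — is to exploit that the coercivity bound $\|F^{-1}(z)\|\le C/\Im z$ holds for \emph{all} $\Im z>0$, not just on the top edge. The correct device is to shrink the rectangle on the good side: set $\gamma = \delta/\log M$ and work in $[-\epsilon,\epsilon]\times[-\delta,\gamma]$, where the new top edge $\{\Im z = \gamma\}$ carries the bound $C\log M/\delta$ while the bottom and sides still carry $M$. Now the harmonic measure at $x+i0$ of the new top is $\delta/(\delta+\gamma)\approx 1-1/\log M$, the measure of the bottom is $\approx 1/\log M$, and the sides are negligible by the $\delta/\epsilon$ hypothesis. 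Hence $\log|\phi(x)|\le\log(C\log M/\delta)+\tfrac{1}{\log M}\log M+o(1)$, which is exactly the logarithmic gain. Until the argument is rebuilt around this asymmetric sub-rectangle (or an equivalent device), the bound you claim to extract from the harmonic measure inequality does not follow.
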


\begin{proof}[Proof of Theorem \ref{main-theorem-2}]
Let $\psi_0$ satisfy the assumptions of Proposition \ref{Q(z)u-prop}.  Then
\begin{eqnarray*}
\| (1 -\psi_0)^w u \| \leq Ch^{-1} \| Q(z) u\| .
\end{eqnarray*}
Further, since 
\begin{eqnarray*}
\left\| \left[ Q, \psi_0^w \right] u \right\| \leq \left\| \left[ Q, \psi_0^w \right] (1 - \tilde{\psi}_0^w)u \right\| 
,
\end{eqnarray*}
for some $\tilde{\psi}_0$ satisfying the assumptions of Proposition \ref{Q(z)u-prop} and $\WF \tilde{\psi}_0 \subset 
\{ \psi_0 = 1\}$, so using Theorem \ref{main-theorem-5}, the fact that
$[Q, \psi_0^w]$ is compactly supported (in the hyperbolic regions if
$\gamma \cap \partial X \neq \emptyset$) and 
of order $h$, we have
\begin{eqnarray*}
\| \psi_0^w u \| & \leq & Ch^{-N_0} \left( \| \psi_0^w Q u \| +
  \left\| \left[ Q, \psi_0^w \right] u \right\| \right) + \O(h^\infty)
\| u \|\\
& \leq & C h^{-N_0} \left(  \|\psi_0^w Q u\| + h^{-1} \| h Q u\|
\right) + \O(h^\infty) \|u \| \\
& \leq & C h^{-N_0} \| Qu \| + \O(h^\infty)\| u \|.
\end{eqnarray*}
This follows immediately from Lemma \ref{wf-lemma} with $A = h^{-1}[Q,
\psi_0^w](1 - \tilde{\psi}_0)^w$ and $B = \psi_0^w$.

Now let $F(w)$ be the family of operators $F(w) = ih^{-1} Q(z_0 + hw)$, $A = \chi_{\supp \phi}^w$, $B = \id$. 
 Fix $\delta >0$ independent of $h$, $\epsilon = (Ch)^{-1}$, $M = h^{-N_0}$, and apply Lemma \ref{BZ-lemma} to get
\begin{eqnarray*}
\|B F^{-1} B \| & \leq & C \log (h^{-N_0}); \\
\|B F^{-1} A \| & \leq & C \sqrt{ \log (h^{-N_0}) },
\end{eqnarray*}
and (\ref{main-theorem-2-est-1}-\ref{main-theorem-2-est-2}) follows.
\end{proof}
\subsection{Proof of the Main Theorem}

The proof of the Main Theorem now will follow as in \cite{Ch} by
a commutator argument, although we will need to take some care at the
boundary.  Let $a^w$ be a symbol which is microlocally $1$ away from
$\gamma$, and for $z \in [-\epsilon_0, \epsilon_0] + i(-c_0h, c_0h)$,
define as in \cite{BZ} and \cite{Ch} and the introduction
\ben
Q(z) := P(h) - z - iCha^w; \,\,\, C>0 \,\, \text{fixed}.
\een
For the analysis near the boundary, choose also $\psi \in \Ci ( \reals
)$ satisfying (\ref{psi-assump-100}-\ref{psi-assump-101}).

Let $m_j^\pm \in T^*X$, for $j = 1, \ldots, K$
denote the points where $\gamma$ reflects off the boundary, with
$m_j^\pm$ denoting the point of intersection with the boundary of the
outgoing and incoming bicharacteristics respectively, and let $m_j$ be
the projection of $m_j^\pm$ onto $T^*(\partial X)$.  Let $U_j \subset
T^*(\partial X)$
denote a neighbourhood of $m_j$ which is small enough so that a
factorization of $P$ as in Lemma \ref{p-fact-lemma} is possible in a
neighbourhood of $U_j$.  Shrinking $U_j$ if necessary, we assume also that the construction
in Lemma \ref{comm-lemma} is valid in a neighbourhood of $U_j$.
That is, if $P$ is factorized as in Lemma \ref{p-fact-lemma} near
$U_j$, we write 
\be
P = (hD_1 - A_-(x, hD'))(hD_1 - A_+(x, hD')) \,\,\, \text{near} \,\,
m_j,
\ee
and there is an operator $A_{b,j}(x, hD')$ which is $1$ microlocally
near $U_j$, zero away from $U_j$ and commutes with $(hD_1-A_+(x,
hD'))$ microlocally near $U_j$.  

Let $\gamma_\pm^j$ be a small interval on the outgoing/incoming
bicharacteristic near $m_j^\pm$, and let $\widetilde{U}_j \subset
T^*X$ be a neighbourhood of $\gamma_\pm^j$ such that 
\be
(\WF (\psi(P) A_{b,j}))|_{\partial X} \subset \widetilde{U}_j,
\ee
and $\psi(P) A_{b,j} \equiv 1$ on $\gamma_\pm^j \cap \widetilde{U}_j$.  Choose
$\chi_j \in \Ci_c(T^*X)$, $\chi_j \equiv 1$ on $\widetilde{U}_j$ with
sufficiently small support that
\ben
\label{chij-grad}
\psi(P) A_{b,j} \equiv 1 \text{ on } \supp \nabla \chi_j \cap \gamma.
\een
Finally, set 
\be
\chi_0 = 1 - \sum_j \chi_j.
\ee

Now for $A \in \Psi_{h,db}^{0,0}$ as in the statement of the Main Theorem
with wavefront set sufficiently close to $\gamma$, let $A_0 \in
\Psi_h^{0,0}$ have wavefront set close to $\gamma$ and satisfy 
\ben
&& A_0 \equiv 1 \text{ on } \left\{\WF A \setminus \left\{ \bigcup_j \widetilde{U}_j \right\}
\right\}, \nonumber \\
&& A_0 \equiv 1 \text{ on } \left(\bigcup_{j=0}^K \supp \nabla \chi_j\right) \cap
\gamma, \label{chi0-grad} \\
&& A_0 \equiv 0 \text{ elsewhere } .  \nonumber
\een
We define $\tilde{A} \in \Psi_{h,db}^{0,0}$
satisfying 
\ben
\label{tilde-A-WF}
\tilde{A} \equiv 1 \text{ on } \WF A 
\een
by
\be
\tilde{A} = \chi_0 A_0 + \sum_j \chi_j \psi(P) A_{b,j},
\ee
where $\psi$ satisfies (\ref{psi-assump-100}-\ref{psi-assump-101}).
Observe if $\WF A$ is sufficiently close to $\gamma$, $\tilde{A}$
satisfies \eqref{tilde-A-WF}.  
We have $Q(0)\tilde{A}u = P(h)\tilde{A}u$ since $\WF a^w \cap \WF \tilde{A}
= \emptyset$.  But
\ben
\label{P-comm-id}
P(h) \tilde{A} u  =  \left[ P(h), \tilde{A} \right] u + \tilde{A}
P(h) u
\een
and we claim
\ben
\left\| \left[ P, \tilde{A} \right] u\right\| & = & \left\| \left[
  P, \chi_0  A_0 \right]u + \sum_j \left( \left[ P, \chi_j \psi(P)
  A_{b,j} \right] \right) u  \right\| \nonumber \\
& = & \O(h) \left\| (I - A) u \right\|. \label{comm-est-1}
\een
To see this, we observe for $ u \in \Ci(X) \cap L^2(X)$,
\be
\lefteqn{ \left[
  P, \chi_0 A_0 \right]u + \sum_j  \left[ P, 
  \chi_j A_{b.j} \right] =}  \\
 & = &  \chi_0 \left[ P,A_0
  \right]u + [P, \chi_0] A_0 u \\
&& \quad \quad + \sum_j \left( \chi_j \psi(P) \left[ P, A_{b,j} \right]  +
  \left[ P , \chi_j \right] \psi(P) A_{b,j}\right) u 
\ee
We have
\be
 \| ( [P, \chi_0] A_0 + \sum_j \left[ P , \chi_j \right] \psi(P) A_{b,j}) u
   \| \leq Ch \|(I -A) u \|
\ee
from \eqref{chij-grad} and \eqref{chi0-grad}.  These two conditions
also imply
\be
&& \WF \chi_0 \left[ P,A_0
  \right] \cap \gamma = \emptyset \text{ and } \\
&& \WF \chi_j \psi(P) \left[ P, A_{b,j} \right] \cap \gamma = \emptyset,
\ee
and the symbol of $A_0$ is compactly suppported away from the boundary, so
\be
\left\| \chi_0 \left[
  P,  A_0 \right]u \right\| \leq C h \| (I -A ) u \|.
\ee
For each $j$, it suffices to consider the
remaining terms in local coordinates at the boundary.  Fix $j$ and
assume we are in the coordinates used in Lemma \ref{comm-lemma} in
$U_j$:
\be
\lefteqn{ \chi_j \psi(P)\left[ P, A_{b,j}(x, hD') \right] =} \\
&=& \chi_j \psi(P) \left[ (hD_1 - A_-(x, hD'))(hD_1 -
  A_+(x, hD')), A_{b,j} \right] \\
& = & \chi_j \psi(P) \left[ (hD_1 - A_-(x, hD')), A_{b,j}(x,hD') \right] ( hD_1 -
A_+(x, hD')),
\ee
since $A_{b,j}$ commutes with $(hD_1 - A_+(x, hD'))$.  
The principal symbol of 
\be
\chi_j \psi(P) \left[ (hD_1 - A_-(x, hD')), A_{b,j}(x,hD')
  \right] 
\ee
is 
\be
\frac{h}{i} \chi_j \psi((\xi_1 - r^\half(x, \xi'))(\xi_1 + r^\half(x, \xi')))\left\{ (\xi_1 + r^\half(x, \xi')), \sigma_h (
A_{b,j}^+)(x, \xi') \right\} ,
\ee
which is $\O(h)$ and has $h$-wavefront set away from $\gamma$.
Summing over $j$ gives \eqref{comm-est-1}.

Combining \eqref{main-theorem-2-est-2}, \eqref{P-comm-id}, and \eqref{comm-est-1}, we have
\be
 \|u \|_{L^2(X)} & \leq & \left( \left\|\tilde{A} u \right\|_{L^2(X)} + \left\| (I - A) u \right\|_{L^2(X)} 
\right) \\
& \leq & C \left( h^{-1} \sqrt{\log (1/h)}  \left\| P \tilde{A} u \right\|_{L^2(X)} +
\left\|(I-A) u \right\|_{L^2(X)} \right) \\
& \leq & C \left( \sqrt{\log (1/h)} + C^{-1} \right) \left\| (I - A ) u
\right\|_{L^2(X)} \\
&& \quad \quad \quad \quad + C \frac{ \sqrt{ \log(1/h)}}{h} \| P u
\|_{L^2(X)},
\ee
which for $0 < h \leq h_0$ is the statement of the Main Theorem.



\section[Quasimodes]{An Application: Quasimodes near Elliptic Orbits}
\label{quasi}
\numberwithin{equation}{section}

In this section, we show how the techniques of reducing microlocal
estimates near a periodic orbit to estimates on an $h$-Fourier
integral operator acting microlocally on the Poincar\'e section via
the {\it Quantum Monodromy operator} from \cite{SjZw} and \S\ref{grushin} can be used
with the quasimode construction in \cite{ISZ} to produce
well-localized quasimodes near an elliptic periodic orbit.  We also
give estimates on the number and location of approximate eigenvalues
associated to the quasimodes.

Let $X$ be a smooth, compact manifold, $\dim X = n$, and suppose $P \in \Psi^{k,0}(X)$, $k
\geq 1$, be a
semiclassical pseudodifferential operator of real principal type which
is semiclassically elliptic outside a
compact subset of $T^*X$ as in the introduction.  
Let $\Phi_t = \exp tH_p$ be the classical flow of $p$ and assume there is
a closed {\it elliptic} orbit $\gamma \subset \{ p = 0 \}$.  That $\gamma$
is elliptic means if $N \subset \{ p = 0 \}$ is a Poincar'e section
for $\gamma$ and $S:N \to S(N)$ is the Poincar\'e map, then $dS(0,0)$ has
eigenvalues all of modulus $1$.  We will also
need the following non-resonance assumption:
\ben
\label{nonres}
\left\{ \begin{array}{l} \text{if } e^{\pm i \alpha_1}, e^{\pm i \alpha_2}, \ldots, e^{\pm i \alpha_k}
 \text{ are eigenvalues of }
dS(0,0), \text{ then } \\ \alpha_1 , \alpha_2, \ldots,
 \alpha_k 
\text{ are independent over }  \pi \ZZ.
\end{array} \right.
\een

Finally, we assume if $\gamma \cap \partial X \neq \emptyset$ then
$\gamma$ reflects only transversally off $\partial X$, $\partial X$
is noncharacteristic with respect to $P$, and $P \in
\Diff_{h,db}^{2,0}$.

Under these assumptions, it is well known that there is a family of
elliptic closed orbits $\gamma_z \subset \{ p = z \}$ for $z$ near
$0$, with $\gamma_0 = \gamma$.  In this work we consider the following eigenvalue problem for $z$ in a
neighbourhood of $z=0$:
\ben
\label{ev-prob-1}
\left\{ \begin{array}{l} (P - z) u = 0; \\ \| u \|_{L^2(X)} =
  1. \end{array} \right.
\een
We prove the following Theorem.
\begin{theorem}
\label{main-theorem-1}
For each $m \in \ZZ$, $m >1$, and each $c_0>0$ sufficiently small, there is a finite, distinct family of values
\be
\{z_j \}_{j = 1}^{N(h)} \subset [-c_0 h^{1/m}, c_0 h^{1/m} ]
\ee
and a family of quasimodes $\{ u_j\} = \{u_j(h)\}$ with
\be
\WF u_j = \gamma_{z_j},
\ee
satisfying
\ben
\label{ev-prob-2}
\left\{ \begin{array}{l} (P - z_j) u_j = \O(h^\infty) \| u_j \|_{L^2(X)}; \\ \| u_j \|_{L^2(X)} =
  1. \end{array} \right.
\een
Further, for each $m \in \ZZ$, $m >1$, there is a constant $C = C(c_0,{1/m})$ such that
\ben
\label{ev-est-1}
C^{-1} h^{-n(1-1/m)} \leq N(h) \leq C h^{-n}.
\een
\end{theorem}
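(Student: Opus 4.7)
The plan is to use the Grushin reduction of Section \ref{grushin} in reverse: identity \eqref{ml-equiv-2} of Theorem \ref{ml-equiv-thm} asserts that if $v \in L^2(V)$ is microlocalized at $(0,0)$ and satisfies $(I - M(z))v = \O(h^\infty)\|v\|$, then $u := E_+(z)v$ automatically satisfies $(P - z)u = \O(h^\infty)\|u\|$ microlocally near $\gamma_z$. The explicit formula $E_+ v = \chi K_f v + (1-\chi) K_b v$ from \eqref{Eplus-def}, combined with the wavefront propagation of Corollary \ref{ml-prop-cor}, transports the wavefront set of $v$ along the full orbit and forces $\WF u = \gamma_z$ whenever $v$ is a nontrivial Hermite function concentrated at $(0,0)$. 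Thus the problem reduces to producing a sufficiently large family of pairs $(z_j, v_j)$ with $M(z_j) v_j \equiv v_j$ modulo $\O(h^\infty)$.

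Since $\gamma$ is elliptic with the nonresonance assumption \eqref{nonres}, I plan to put $M(z)$ in a quantum Birkhoff normal form near $(0,0)$, following the construction of Iantchenko--Sj\"ostrand--Zworski \cite{ISZ}. This yields a microlocally unitary $h$-FIO $F$ and a classical symbol $\Phi = \Phi(z, I; h)$ with leading behavior
\[
\Phi(z, I; h) = T(0)\, z + \sum_{j=1}^{n-1} \alpha_j\, I_j + \O\!\big((|z| + |I|)^2\big),
\]
where $e^{\pm i\alpha_j}$ are the eigenvalues of $dS(0,0)$ and $T(0)$ is the period of $\gamma$, such that
\[
F\, M(z)\, F^{-1} \equiv \exp\!\left(-\tfrac{i}{h}\, \Phi^w(z,\, I_1^w, \ldots, I_{n-1}^w;\, h)\right) \pmod{\O(h^\infty)}
\]
microlocally near $(0,0)$, where $I_j^w = \tfrac12\big((hD_{x_j})^2 + x_j^2\big)$ are the mutually commuting quantum actions.

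The $I_j^w$ are simultaneously diagonalized by the joint Hermite basis $\{\varphi_\alpha\}_{\alpha \in \NN^{n-1}}$ with $I_j^w \varphi_\alpha = h(\alpha_j + \tfrac12) \varphi_\alpha$, so the eigenvalue condition reduces to the scalar Bohr--Sommerfeld quantization
\[
\Phi\big(z,\, h(\alpha + \tfrac12);\, h\big) \in 2\pi h \ZZ + \O(h^\infty).
\]
Since $\partial_z \Phi(0,0;0) = T(0) \neq 0$, for each $(\alpha, k) \in \NN^{n-1} \times \ZZ$ the implicit function theorem produces $z_{\alpha,k}$ and a quasimode $u_{\alpha,k} := E_+(z_{\alpha,k}) F^{-1} \varphi_\alpha$ satisfying \eqref{ev-prob-2}. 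For the lower bound in \eqref{ev-est-1}, I restrict to the lattice points with $h|\alpha| \leq c_0 h^{1/m}$: this keeps $\varphi_\alpha$ microlocalized inside the ball of radius $h^{1/(2m)}$ where the Birkhoff approximation is accurate, and also enforces $|z_{\alpha,k}| \leq c_0 h^{1/m}$ (after adjusting the range of $k$). The number of admissible $\alpha$ is $\asymp h^{(n-1)(1/m - 1)}$, and for each such $\alpha$ the number of integers $k$ with $|z_{\alpha,k}| \leq c_0 h^{1/m}$ is $\asymp h^{1/m - 1}$, yielding $N(h) \gtrsim h^{-n(1-1/m)}$. Distinctness of the $z_{\alpha,k}$ is forced by \eqref{nonres}, since it rules out all nontrivial lattice solutions of $\sum_j (\alpha_j - \alpha_j') \alpha_j = 2\pi (k - k')$. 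The upper bound $N(h) \leq C h^{-n}$ is immediate from the cardinality bound $|\alpha|, |k| \lesssim h^{-1}$ (alternatively, from Weyl's law).

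The hardest step will be establishing the quantum Birkhoff normal form to all polynomial orders in $h$ uniformly in $z$: at each step of the iterative construction one must solve a homological equation whose solvability depends sharply on the nonresonance condition \eqref{nonres}, and errors must be tracked through the iteration and through the FIO conjugation from Proposition \ref{AF=FB}; a Borel-type summation in $h$ then upgrades the construction to $\O(h^\infty)$. These small-divisor issues are the technical heart of \cite{ISZ}, and the present elliptic setting allows one to adapt that machinery, with minor additional bookkeeping at the boundary reflections handled via the microlocal propagator of Corollary \ref{ml-prop-cor}.
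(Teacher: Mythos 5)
Your proposal follows essentially the same route as the paper: reduce via the Grushin inverse to finding approximate fixed vectors of $M(z)$, conjugate $M(z)$ to a function of commuting quantum actions via the quantum Birkhoff normal form of \cite{ISZ}, feed in Hermite functions, solve a Bohr--Sommerfeld quantization condition in $z$, and count lattice points under the constraints $h|\alpha|\lesssim h^{1/m}$ and $h|k|\lesssim h^{1/m}$. The only minor deviation is technical: where you invoke the implicit function theorem on the effective phase $\Phi$, the paper solves the quantization condition $2z-\zeta_\beta(z)=2\pi kh$ as a formal series in powers of $h^{1/m}$ and then performs a Borel-type resummation to obtain a genuine value $\tilde z_{k,\beta}$; both mechanisms deliver the same conclusion.
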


\subsection{The Model Case}
\label{model-case-section}
We consider the case $n = 2$, the first nontrivial dimension.  Recall the model for $p$ near an elliptic periodic orbit is $p \in \Ci
( T^*( \SS^1 \times \reals))$, 
\be
p = \tau +  \frac{\alpha}{2} ( x^2 + \xi^2),
\ee
with $\alpha >0$ satisfying $\alpha \notin \pi \ZZ$.  Then we study
\eqref{ev-prob-1} for 
\be
P = hD_t +  \frac{\alpha}{2} ( x^2 + h^2 D_x^2).
\ee
Let 
\be
Q & = &  \frac{\alpha}{2} ( x^2 + h^2 D_x^2) \\
& = & \Op_h^w \left(  \frac{\alpha}{2} ( x^2 +
\xi^2) \right).
\ee
$Q$ is just $\alpha/2$ times the harmonic osciallator, so we have 
\be
Q v_k = h \frac{ \alpha}{2} (2k+1) v_k
\ee
for
\be
v_k & := &  h^{-1/4}H_k(x/h^\half) e^{-x^2/2h}, \\
\| v_k \|_{L^2} & = & 1,
\ee 
where $H_k$ are the (normalized) Hermite polynomials of degree $k$ (see, for
example, \cite{EvZw}).  Note $\WF v_k = (0,0)$.  Now we make an ansatz of
\be
u =  g_k(t) v_k(x) 
\ee
for $g_k(t)$ to be determined.  Plugging $u$ into \eqref{ev-prob-1}
yields
\be
hD_t g_k + \frac{\alpha}{2}h(2k+1) g_k = z g_k,
\ee
which implies
\be
g_k(t) = \exp \left(\frac{it}{h} \left(z -
    \frac{\alpha}{2}(2k+1)h\right)\right). 
\ee
Since the spectrum of $hD_t$ on $\SS^1$ is $\{ 2 \pi m h \}_{m \in
  \ZZ}$, we have
\ben
\label{z-range-1}
z = \frac{ \alpha}{2} (2k+1) h + 2 \pi m h.
\een
In the model case, since there is no microlocalization necessary (and,
in particular, $p$ is not elliptic at infinity), we actually have
dense spectrum in any interval.

In order to motivate our general construction, we present the same
example from the point of view of the monodromy operator.  Here we
think of $Q-z$ as a $z$-dependent family of operators on $L^2( V)$, where $V \subset \reals$ is
an open neighbourhood of $0$.  Then the monodromy operator $M(z)$ is defined
microlocally as the time $t=1$ solution to the ordinary differential
equation
\ben
\label{mono-inv-eq-1}
\left\{ \begin{array}{l} hD_t M(z, t) + (Q-z) M(z,t) = 0, \\ M(z,0) =
    \id_{L^2(V) \to L^2(V)}. \end{array} \right.
\een
Our general technique will be to find eigenfunctions of $M(z) =
M(z,1)$ with eigenvalue
$1$.  Using again $v_k$ as in the
previous paragraph, we try
\be
M(z,t) v_k = e^{-i  2 \pi m t} v_k,
\ee
with $m \in \ZZ$ so that $M(z,1) v_k = v_k$.  This yields from \eqref{mono-inv-eq-1} 
\be
\left( -h 2 \pi m +\frac{\alpha}{2} h (2k+1) -z \right) v_k = 0
\ee
which is the same as \eqref{z-range-1}.

\subsection{Quasimodes on the Poincar\'e section}
\label{qm-section}
Theorem \ref{ml-equiv-thm} and the definition of the monodromy
operator $M(z)$ motivate us to study the normal form for a family of
elliptic 
symplectomorphisms
\be
S_z:  W_1 \to W_2 
\ee
under the nonresonance condition \eqref{nonres} on $dS(0)$, where $W_1$
and $W_2$ are neighbourhoods of $0 \in \reals^{2n-2}$.  We use the
standard 
notation of \cite{ISZ} and write
\be
\imath_j & = & x_j^2 + \xi_j^2, \text{ and}\\
I_j & = &  \imath_j^w = x_j^2 + h^2 D_{x_j}^2.
\ee
According to the results of \cite{IaSj} and \cite{ISZ}, there is a symplectic choice
of coordinates near $(x, \xi; z) = (0,0;0)$ such that
\ben
\label{S-z-n-form}
S_z = \exp H_{q_z} + \O( (x, \xi;z)^\infty),
\een
for
\be
q_z = \sum_{j=1}^{n-1} \lambda_j(z) \imath_j  + R(z, \imath_1, \ldots,
\imath_{n-1}).
\ee
Here the remainder $R(z, \imath) = \O( \imath^2)$ and the
$\lambda_j(z)$ are positive and depend smoothly on $z$.

Further, if $M(z)$ is the monodromy operator quantizing $S_z$ and
\ben
\label{quant-cond-1}
&&\text{(i)  } z \in [-\epsilon_0 h^{1/m}, \epsilon_0 h^{1/m}] +
i(-c_0h, c_0h), \\
&&\text{(ii) }\imath_j \leq h^{1/m} \label{quant-cond-2}
\een
for $m \in \ZZ$, $m >1$, then
there is a family of unitary $h$-FIOs $V(z)$ such that
\ben
\label{Mz-n-form}
e^{iz/h} M(z) = V(z)^{-1}e^{-i (Q(z,h)-z)/h} V(z) + \O_{L^2 \to L^2}(h^\infty),
\een
where
\ben
\label{Qzh-n-form}
Q(z,h) & = & \sum_{j=0}^\infty h^j q_j(z, I), \text{ with}\\
q_j(z,I) & = & \O(I) \nonumber
\een
and
\be
q_0(z,\imath) = q_z (\imath ).
\ee

Now let $\beta \in \NN^{n-1}$ be a multi-index and define
\be
v_\beta = c_\beta h^{-(n-1)/4} e^{-|x|^2/2h}\prod_{j=1}^{n-1}
H_{\beta_j}(x_j/h^\half ),
\ee
with $H_{\beta_j}$ the Hermite polynomials as in \S \ref{model-case-section} and $c_\beta$ chosen independent of $h$ to normalize $v_\beta$ in
$L^2( \reals^{n-1})$.  The functions $v_\beta$ satisfy
\be
I_j v_\beta = h ( 2 \beta_j + 1 ) v_\beta,
\ee
and with $\11= (1, \ldots, 1) \in \NN^{n-1}$ we write
\be
I v_\beta = h (2 \beta + \11 ) v_\beta.
\ee
Hence we have
\ben
Q(z,h) v_\beta & = & \left(\sum_{j=0}^\infty h^j q_j(z, h(2 \beta + \11))
\right) v_\beta \nonumber \\
& =: & \zeta_\beta(z) v_\beta, \label{zeta-beta-def}
\een
where
\be
\zeta_\beta(z) = h \sum_{j=1}^{n-1} \lambda_j(z) (2 \beta_j + 1) +
\O(h^2).
\ee
The quantization condition \eqref{quant-cond-2} implies we have the
restriction on $\zeta_\beta$:
\be
| h \sum_{j=1}^{n-1} \lambda_j(z) (2 \beta_j + 1)  | \leq C h^{1/m},
\ee
for $0 < {1/m} <1$, giving
\ben
\# \{ \zeta_\beta(z) \} & = & \#\left\{ \left|h \sum_{j=1}^{n-1} \lambda_j(z) (2 \beta_j +
1) \right| \leq C h^{1/m} \right\} \nonumber \\
& \simeq & \# \{ |\beta| \leq h^{{1/m} -1} \} \nonumber \\
& \simeq & h^{({1/m} -1)(n-1)} + o(1). \label{zeta-cond}
\een

\subsection{The proof of Theorem \ref{main-theorem-1}}
Observe the functions $v_\beta$ constructed above satisfy
\be
\WF v_\beta = (0,0) \in \reals^{2n-2}.
\ee
Beginning with $v_\beta$ we want to construct $\tilde v_{\beta,k}$ and
find values of $z$, $\beta$, and $k \in \ZZ$ so that
\be
(\id-M(z)) \tilde v_\beta = \O(h^\infty).
\ee
Let
\be
\tM(z) = V(z) M(z) V(z)^{-1} = e^{-i (Q(z,h)-z)/h} 
\ee
with $V(z)$ and $Q(z,h)$ as in \eqref{Mz-n-form}, and observe $\tM(z)
= \tM(z,1)$ for 
\be
\tM(z,t) = \exp (-it (Q(z,h)-z)/h)
\ee
satisfying
\ben
\label{tM-eq}
\left\{ \begin{array}{l}
hD_t \tM(z,t) + Q(z,h) \tM(z,t) = z \tM(z,t) \\ \tM(z,0) = \id. \end{array}
\right. 
\een
The spectrum of $hD_t$ on $\reals/ \ZZ$ is $\{ h2 \pi k \}$ for $k \in
\ZZ$, so we want the solution space to \eqref{tM-eq} intersected with the
solution space to 
\be
(e^{iz/h} - \tM(z,1)) v = v
\ee
to contain the ``ansatz'' space  
\ben
\label{v-k-b}
 v_{k, \beta}(t, x) :=  e^{-i t2 \pi k} v_\beta(x) .
\een
More precisely, $v_{k, \beta}(1, x) = v_\beta{x}$, so we want to solve
\be
\left\{ \begin{array}{l}
hD_t \tM(z,t) v_{\beta,k}  + (Q(z,h)-z) \tM(z,t) v_{\beta,k} = - z
\tM(z,t) v_{\beta,k} \\ \tM(z,0) v_{\beta,k} = v_{\beta,k}. \end{array}
\right. 
\ee
That is, we want to find $z$ satisfying
\be
2z - \zeta_\beta(z) = 2 \pi k h,
\ee
where $\zeta_\beta (z)$ is given by \eqref{zeta-beta-def}.

Expanding $Q(z,h)$ in a formal series in $z$ as we may do according to
the quantization condition \eqref{quant-cond-2}, we write
\ben
\label{Q-z-series}
Q(z,h) = \sum_{l=0}^\infty z^l Q_l(h, I) 
\een 
microlocally, with 
\be
Q_0 = \sum_{j=1}^{n-1} \lambda_j(0) I_j + \O(I^2),
\ee
and
\be
Q_l = \O(I).
\ee
Hence we will seek 
\ben
\label{z-h-series}
z_{k, \beta}=  \sum_{j=0}^\infty z_{k,\beta}^{(j)},
\een
with $z_{k,\beta}^{(j)} =
\O(h^{ (j+1)/m})$.  For $z_{k, \beta}^{(0)}$, we solve
\be
2 z_{k, \beta}^{(0)} = h\sum_{j=1}^{n-1} \lambda_j(0) ( 2 \beta_j + 1 ) +
2k \pi h
\ee
which is $\O(h^{1/m})$ if
\ben
\label{k-cond}
|k| \leq C h^{1/m -1}.
\een
For $z_{k,\beta}^{(1)}$ we plug $z_{k, \beta}^{(0)}+ z_{k,
  \beta}^{(1)}$ into \eqref{Q-z-series} to get the equation
\be
2z_{k, \beta}^{(0)}+ 2z_{k,
  \beta}^{(1)} & = & h\sum_{j=1}^{n-1} \lambda_j(0) ( 2 \beta_j + 1 ) +
2k \pi h + \sum_{l=1}^\infty (z_{k, \beta}^{(0)}+ z_{k,
  \beta}^{(1)})^l Q_l(h, h(2\beta + \11)) \\
& = & 2 z_{k, \beta}^{(0)} + z_{k, \beta}^{(0)} Q_l(h, h(2\beta + \11)) +
\O(h^{ 3/m}),
\ee
provided $z_{k,\beta}^{(1)} = \O(h^{2/m})$.  Hence we choose
\be
2 z_{k,\beta}^{(1)} =  z_{k, \beta}^{(0)} Q_l(h, h(2\beta + \11)) .
\ee
Continuing in this fashion, we select $z_{k, \beta}^{(j)}$ for $j \geq
2$ using the following equation:
\be
2 \sum_{r=0}^j z_{k, \beta}^{(r)} = \sum_{r=0}^{j-1} \left( \sum_{l=0}^{j-r-1} z_{k, \beta}^{(l)} \right)^r Q_r(h, h(2 \beta + \11)),
\ee
modulo $\O(h^{(j+2)/m})$, hence $z_{k,\beta}^{(j)} = \O(h^{(j+1)/m})$.

Now there is no reason why \eqref{z-h-series} should converge in any
sense, so we want to find a convergent series
\be
\tz_{k,\beta} =\sum_{j=0}^\infty \tz_{k, \beta}^{(j)}
\ee
with $\tz_{k, \beta}^{(j)} = \O(h^{(j+1)/m})$, satisfying
\ben
\label{z-asymp}
\tz_{k,\beta} - \sum_{0}^{mN} z_{k, \beta}^{(j)} = \O(h^N)
\een
for every $N>0$.
For this, we follow the proof of
Borel's Lemma from \cite{EvZw}.  Choose $\chi \in \Ci_c([-1,2])$
satisfying $\chi \equiv 1$ on $[0,1]$.  Set
\be
\tz_{k,\beta} = \sum_{j=0}^\infty \chi( \lambda_j h ) z_{k,
  \beta}^{(j)},
\ee
where $\lambda_j \to \infty$, $\lambda_j < \lambda_{j+1}$ has yet to be selected.  Observe for each
$h>0$, this is a finite sum, hence converges.  We calculate:
\be
\tz_{k,\beta} - \sum_{j=0}^{mN+m} z_{k, \beta}^{(j)} & = &
\sum_{mN+m+1}^\infty \chi( \lambda_j h ) z_{k,
  \beta}^{(j)} + \sum_0^{mN+m} z_{k, \beta}^{(j)} ( \chi( \lambda_j h) -1)
\\
& =:& A + B.
\ee
But since $x \chi(x)$ is uniformly bounded, we have
\be
|A| & \leq & \sum_{mN+m+1}^\infty C_j h^{(j+1)/m} \frac{ \lambda_j
  h}{\lambda_jh} \chi(\lambda_j h) \\
& \leq & \sum_{mN+m+1}^\infty C_j' h^{(j-m+1)/m} \lambda_j^{-1} \\
& \leq & h^N \sum_{mN+1}^\infty 2^{-j}
\ee
if $\lambda_j$ is sufficiently large.  

To estimate $B$, we observe for $0 < \lambda_{mN+m} h \leq 1$, $B = 0$
since $\chi \equiv 1$ on $[0,1]$.  If $\lambda_{mN+m} < h$, we calculate
\be
|B| & \leq &  \sum_0^{mN+m} C_j
h^{(j+1)/m}  ( \chi( \lambda_j h) -1) \\
& \leq & C_N h^{1/m} \lambda_{mN+m}^{N} h^N,
\ee
which is \eqref{z-asymp}.

Now for fixed $(\beta, k)$ and $N>0$, 
 we have the crude estimate
\be
\tz_{\beta,k}^l - \left(\sum_{j=0}^{mN+m} (z_{k,\beta}^{(j)}) \right)^l
= \left(\tz_{\beta,k} - \sum_{j=0}^{mN+m} (z_{k,\beta}^{(j)})\right)( l \O(1)),
\ee
which from the definitions of $z_{k,\beta}$, $\tz_{k,\beta}$, and $Q_l(h,I)$ gives:
\be
\lefteqn{ hD_t \tM(\tz_{k,\beta},t) v_\beta + (Q(\tz_{k,\beta},t)-\tz_{k,\beta})
\tM(\tz_{k,\beta},t) v_\beta = } \\
& = &  hD_t \tM(\tz_{k,\beta},t) v_\beta \\
&& \quad + \left( \sum_{l=0}^{mN+m}
  \left(\sum_{j=0}^{mN} (z_{k,\beta}^{(j)}) \right)^l Q_l(h,
  h(2\beta + \11)) - \sum_{j=0}^{mN+m} (z_{k,\beta}^{(j)}) \right) \tM(
\tz_{k, \beta},t) v_\beta \\
&& \quad + \O(h^N) \| v_\beta \|_{L^2( \reals^{n-1})}
  \\
& = &  (2k \pi h - \tz_{k,\beta})
\tM(\tz_{k,\beta},t) v_\beta + \O(h^N) \| v_\beta \|_{L^2(
  \reals^{n-1})}.
\ee
Hence 
\be
\tM(\tz_{k,\beta},t) v_\beta = e^{i t (2 \pi k -\tz_{k,\beta}/h)}  v_\beta + t\O(h^{N-1})
\| v_\beta \|_{L^2( \reals^{n-1})},
\ee
so
\be
(e^{i\tz_{k ,\beta}/h} - \tM (\tz_{k,\beta})) v_\beta = \O(h^{N-1})\| v_\beta\|_{L^2(\reals^{n-1})}
\ee
for any $N$, or
\be
(e^{i\tz_{k,\beta}/h} - \tM (\tz_{k,\beta})) v_\beta = \O(h^{\infty})\| v_\beta\|_{L^2(\reals^{n-1})}.
\ee

Now the definition of $\tM$ implies
\be
M(\tz_{k,\beta}) V(\tz_{k,\beta})^{-1} v_\beta =  V(\tz_{k,\beta})^{-1}
v_\beta + \O(h^\infty) \| V(\tz_{k,\beta})^{-1} v_\beta\|_{L^2(\reals^{n-1})},
\ee
so
\be
u_{\tz_{k,\beta}}:= E_+ V(\tz_{k,\beta})^{-1} v_\beta,
\ee
with $E_+$ defined in \eqref{Eplus-def} satisfies \eqref{ev-prob-2}.

Finally, the quantization conditions
(\ref{quant-cond-1}-\ref{quant-cond-2}) and the estimates \eqref{zeta-cond} and
\eqref{k-cond} give
\be
\# \{ z : (Q(z,h)-z)v = \O(h^\infty) \} \geq C^{-1} h^{-n(1-1/m)},
\ee
which is \eqref{ev-est-1}.


\begin{center}
\end{center}
\newpage

\appendix

\addcontentsline{toc}{chapter}{Appendices}

\numberwithin{equation}{section}

\section{Tools from ODE Theory}
Next we consider the ordinary differential equation problem
\ben
\label{ODE-1}
\left\{ \begin{array}{l} \phi ' (t) = A(t) \phi(t), \\ \phi(0) =
  \id , \end{array} \right.
\een
where $A(t): \reals \to M_n$ is an $n \times n$ matrix-valued
function.  It is well known that if $A(t)$ is smooth and grows at most
polynomially in $t$ we can solve \eqref{ODE-1} for all $t$ using an
iterated integral.  More precisely, let
\be
A_0 & = & \id, \\
A_1(t) & = & \int_0^t A(t_1) d t_1, \text{ and} \\
A_k(t) & = & \int_0^t A(t_1)A_{k-1}(t_1) dt_1, \text{ for } k \geq 2.
\ee
Then for $k \geq 1$, 
\be
\frac{d}{dt} A_k(t) = A(t) A_{k-1}(t),
\ee
and 
\be
\phi(t) = \sum_{k=0}^\infty A_k(t)
\ee
solves \eqref{ODE-1}.  To see the series converges for any $t$,
observe that since
\be
\|A(t)\|_\infty \leq C t^N
\ee
for some $N$, by induction we have the estimate
\be
\|A_k(t)\|_\infty \leq C \frac{N!t^{N+k}}{(N+k)!} \leq C_N \frac{t^{N+k}}{k!}.
\ee
This is the classical ``time-ordered integral'' solution to \eqref{ODE-1}.

Motivated by the requirements of \S \ref{theorem-proof-chapter} we
want to consider the following problem: Find a smooth matrix-valued
function $B(t): [0,1] \to M_n$ such that $B(t)$ has compact support in
$(0,1)$ and we can solve
\ben
\label{ODE-2}
\left\{ \begin{array}{l} \psi'(t) = B(t) \psi(t) \\
\psi(0) = \id, \,\, \psi(1) = \phi(1), \end{array} \right.
\een
where $\phi(t)$ solves \eqref{ODE-1}.  
\begin{lemma}
\label{ODE-deform-lemma}
There exists $B(t) \in \Ci_c((0,1);M_n)$ and $ \psi(t) \in \Ci([0,1];
M_n)$ satisfying \eqref{ODE-2}.
\end{lemma}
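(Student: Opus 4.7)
The plan is a straightforward time-reparametrization argument. I would introduce a smooth cutoff function $\chi : [0,1] \to [0,1]$ with $\chi(0) = 0$, $\chi(1) = 1$, and $\chi$ locally constant near both endpoints (so that $\chi'$ is smooth and compactly supported in the open interval $(0,1)$). This is the same family of cutoffs used throughout Section~\ref{theorem-proof-chapter} and its existence is standard.

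Next I would set $\psi(t) := \phi(\chi(t))$, where $\phi$ is the time-ordered integral solution to \eqref{ODE-1}. Since $\phi \in \Ci([0,\infty); M_n)$ by the convergence argument recalled just above the lemma, and $\chi$ is smooth, the composition $\psi$ is smooth on $[0,1]$. The endpoint conditions are immediate: $\psi(0) = \phi(0) = \id$ and $\psi(1) = \phi(1)$.

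To identify the correct $B(t)$, I would just apply the chain rule and the defining ODE of $\phi$:
\[
\psi'(t) = \chi'(t)\,\phi'(\chi(t)) = \chi'(t)\,A(\chi(t))\,\phi(\chi(t)) = \chi'(t)\,A(\chi(t))\,\psi(t).
\]
This suggests defining $B(t) := \chi'(t)\, A(\chi(t))$. Smoothness of $B$ on $[0,1]$ follows from smoothness of $\chi$ and $A$, and $\supp B \subset \supp \chi' \Subset (0,1)$ gives the required compact support in the open interval. With this $B$ and this $\psi$, equation \eqref{ODE-2} is satisfied by construction.

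There is essentially no obstacle here: the only subtlety is arranging $\chi$ so that $\chi'$ genuinely vanishes in neighborhoods of $0$ and $1$ (rather than merely at the endpoints), which is needed for $B \in \Ci_c((0,1); M_n)$ as opposed to just $B \in \Ci([0,1]; M_n)$ vanishing at the endpoints; a standard mollification of a piecewise linear ramp produces such a $\chi$.
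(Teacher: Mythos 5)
Your proposal is correct and is essentially the same argument as the paper's: the paper also chooses a cutoff $\chi$ that is constant near both endpoints, sets $\psi(t)=\phi(\chi(t))$, and takes $B(t)=\chi'(t)A(\chi(t))$ via the chain rule. No gaps.
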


\begin{proof}
Let $\chi(t) \in \Ci([0,1]; [0,1])$
satisfy

(i)  $\chi \equiv 0$ for $t \in [0,1/3]$, 

(ii)  $\chi \equiv 1$ for $t \in [2/3, 1]$,

(iii)  $\chi' >0$ for $t \in (1/3, 2/3)$.

Then $\psi(t) = \phi( \chi(t))$ satisfies \ref{ODE-2} with 
\be
B(t) = \chi'(t) A(\chi(t)).
\ee
\end{proof}

\section{Tools from Symplectic Geometry}
\numberwithin{equation}{section}

We will need some facts about Hamiltonian vector fields.  The
following Lemma is known as Jacobi's identity.
\begin{lemma}
\label{H-push-lemma}
Suppose $\kappa : U \to V$ is a symplectomorphism, $q \in \Ci(U)$, and
$H_q$ is the Hamiltonian vector field of $q$.  Then
\be
\kappa_* H_q = H_{(\kappa^{-1})^* q}.
\ee
\end{lemma}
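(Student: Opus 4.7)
The plan is to verify the identity by checking it pointwise against the defining equation $\iota_{H_q}\omega = dq$ (with whichever sign convention the paper uses), exploiting the fact that $\kappa^*\omega = \omega$ throughout.

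First I would set $\tilde q = (\kappa^{-1})^* q = q \circ \kappa^{-1} \in \Ci(V)$, so the goal becomes $\kappa_* H_q = H_{\tilde q}$. By the non-degeneracy of the symplectic form $\omega$ on $V$, it suffices to show that at each point $\kappa(p) \in V$ and for every tangent vector $X \in T_{\kappa(p)} V$,
\[
\omega_{\kappa(p)}\bigl((\kappa_* H_q)_{\kappa(p)}, X\bigr) = d\tilde q_{\kappa(p)}(X).
\]

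Next I would write $X = d\kappa_p(Y)$ for the unique $Y \in T_p U$ (which exists since $\kappa$ is a diffeomorphism) and compute the left side by pulling back:
\[
\omega_{\kappa(p)}\bigl(d\kappa_p(H_q|_p), d\kappa_p(Y)\bigr) = (\kappa^*\omega)_p(H_q|_p, Y) = \omega_p(H_q|_p, Y) = dq_p(Y),
\]
where the middle equality uses that $\kappa$ is symplectic. On the other side, since $q = \tilde q \circ \kappa$, the chain rule gives $dq_p(Y) = d\tilde q_{\kappa(p)}(d\kappa_p(Y)) = d\tilde q_{\kappa(p)}(X)$. Equating the two expressions yields the desired identity.

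There is essentially no obstacle: this is a one-line computation once one commits to the defining characterization of Hamiltonian vector fields and uses $\kappa^*\omega = \omega$. The only care needed is bookkeeping: making sure the pushforward $\kappa_*$ (rather than the pullback) is what turns the function $q$ on $U$ into the function $(\kappa^{-1})^* q$ on $V$, which is why the inverse appears on the right-hand side. A remark worth including for the reader is that, as a corollary, if $\kappa_t$ is a family of symplectomorphisms generated by $H_{g_t}$ in the sense of \eqref{kappa-deform-eq}, then for any $a \in \Ci(U)$ one has $\frac{d}{dt}\kappa_t^* a = \kappa_t^*(H_{g_t} a) = \{g_t, \kappa_t^* a\}\circ \id$ after unwinding, which is the form in which the lemma is actually used in Steps 1--3 of the proof of Theorem \ref{main-theorem-5}.
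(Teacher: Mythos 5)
Your proof is correct and matches the paper's argument essentially line by line: both verify $\iota_{\kappa_*H_q}\omega = d\bigl((\kappa^{-1})^*q\bigr)$ pointwise by writing a test vector on $V$ as a pushforward, using $\kappa^*\omega=\omega$, and applying the chain rule to identify $dq_p(Y)$ with $d\tilde q_{\kappa(p)}(d\kappa_p Y)$. The closing remark about the application to $\frac{d}{dt}\kappa_t^*a$ is accurate but is an addendum rather than a different approach.
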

\begin{proof}
Let $\omega$ be the symplectic structure on $V$.  That $\kappa$ is a
symplectomorphism means for $p \in U$, 
\be
(\kappa^*  \omega )|_p = \omega|_{\kappa(p)}.
\ee
Let $Y$ be a vector field on $V$.  We need to determine how the
$1$-form $(\kappa_* H_q) \lrcorner \omega$ acts on $Y$.  We calculate
for $p' \in V$:
\be
\omega_{p'} \left( [\kappa_* H_q]|_{p'}, Y|_{p'} \right) & =
& \omega|_{p'} \left( [\kappa_* H_q]|_{p'}, \kappa_*
  [\kappa^{-1}_*Y |_{\kappa^{-1}(p')}] \right) \\
& = & (\kappa^* \omega)|_{\kappa^{-1}(p')} \left( H_q|_{\kappa^{-1}(p')}, (\kappa^{-1}_* Y)|_{\kappa^{-1}(p')} \right)
\\ 
& = & Y (q(\kappa^{-1}))|_{p'}.
\ee
\end{proof}


\section{Semi-hyperbolic geodesics in $3$ dimensions}

In this appendix, we modify the example of Colin de Verdi\`ere-Parisse
\cite{CVP} to extend to three dimensions and have a semi-hyperbolic geodesic.  

Consider the Riemannian manifold
\be
M = \reals_x / \ZZ \times \reals_y \times \reals_z
\ee
equipped with the metric
\be
ds^2 = \cosh^2y (2z^4 - z^2 +1)^2 dz^2 + dy^2 + dz^2.
\ee
Thus the matrix for the metric
\be
g_{ij} = \left\{ \begin{array}{l} \cosh^2y (2z^4 - z^2 +1)^2,\,\,
    i=j=1, \\ 1, \,\, i=j=2,3, \\
    0, \,\, i \neq j. \end{array} \right. ,
\ee
and we calculate the Christoffel symbols:
\be
&& \Gamma_{2,1}^1 = \Gamma_{1,2}^1 = \tanh y, \\
&& \Gamma_{3,1}^1 = \Gamma_{1,3}^1 = (8z^3 - 2z)(2z^4-z^2+1)^{-1}, \\
&& \Gamma_{1,1}^2 = -\sinh y \cosh y (2z^4 -z^2 +1)^2, \\
&& \Gamma_{1,1}^3 = -(8z^3-2z)(2z^4 -z^2 +1) \cosh^2y,
\ee
with all other Christoffel symbols equal to zero.  The geodesic
equtions are
\be
\ddot{x} & = & -2 (\tanh y) \dot{y} \dot{x} -2 ((8z^3 - 2z)(2z^4-z^2+1)^{-1})
\dot{z} \dot{x} \\
\ddot{y} & = & \sinh y \cosh y (2z^4 -z^2 +1)^2 \dot{x}^2 \\
\ddot{z} & = &  (8z^3-2z)(2z^4 -z^2 +1) \cosh^2y \dot{x}^2.
\ee
Setting $v_x = \dot{x}$, $v_y = \dot{y}$, and $v_z = \dot{z}$, we get
the first order system
\be
\dot x & = & v_x, \\
\dot v_x & = & -2 (\tanh y) v_y v_x -2 ((8z^3 - 2z)(2z^4-z^2+1)^{-1})
v_z v_x, \\
\dot y & = & v_y, \\
\dot v_y & = & \sinh y \cosh y (2z^4 -z^2 +1)^2 v_x^2, \\
\dot z & = & v_z, \\
\dot v_z & = & (8z^3-2z)(2z^4 -z^2 +1) \cosh^2y v_x^2.
\ee
There are trivially three periodic geodesics, given by the solutions
\be
x(t) & = & v_x(0) t + x(0), \\
y(t) & = & 0, \\
z(t) & = & 0, \pm 1/2.
\ee

\begin{figure}
\centerline{
\input{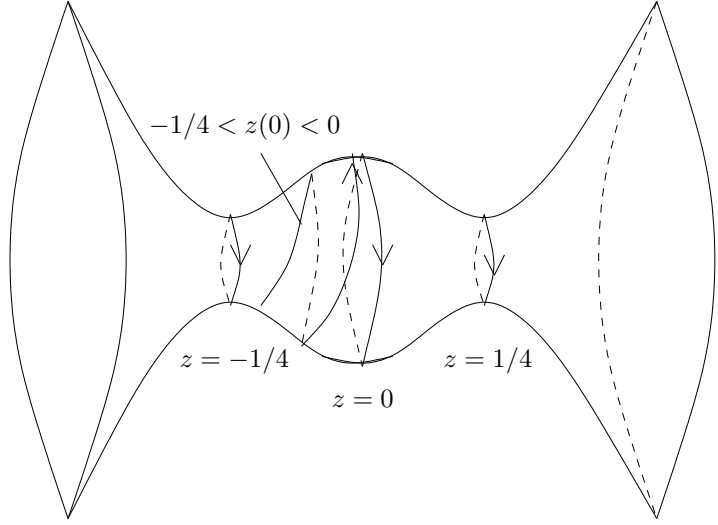}}
\caption{\label{fig:fig20} The $x\text{-}z$ hypersurface in $M$ with some
  representative orbits.}
\end{figure}

Next we examine the Laplace-Beltrami operator on $M$.  We compute
\be
\Delta & = & |g|^{-1/2} \partial_i |g|^{1/2} g^{ij} \partial_j \\
& = & \cosh^{-2} y (2z^4 -z^2 +1)^{-2} \partial_x^2 + \partial_y^2
+ \partial_z^2 \\
&& \quad + \tanh y \partial_y + (8z^3 -2z) (z^4 - z^2
+1)^{-1} \partial_z.
\ee
The isometry $T: L^2(M, d\text{Vol}_g) \to L^2(M, dx \,
dy \, dz)$ given by
\be
Tu(x,y,z) = \cosh^{1/2} y (2z^4 - z^2 +1)^{1/2} u(x,y,z)
\ee
conjugates $\Delta$ into a self-adjoint operator $\tDelta$.  A
computation yields
\be
\tDelta & = & T \Delta T^{-1} \\
& = & \cosh^{-2} y (2z^4 - z^2 +1)^{-2} \partial_x^2 + \partial_y^2
+ \partial_z^2 -\frac{1}{4} ( 1 + \sech^2 y) \\
&& \quad + \frac{1}{4}
(8z^3-2z)^2(2z^4 -z^2 +1)^{-2} - \frac{1}{2} (24z^2-2)(2z^4 -z^2
+1)^{-1}.
\ee
In keeping with the theme of this work, we want to examine asymptotic
behaviour of eigenfunctions for this operator.  In order to separate
variables, let
\be
\phi_{k, \lambda}(x,y,z) = e^{ikx} \psi_{k, \lambda}(y,z), 
\ee
and compute:
\be
\lefteqn{ - \tDelta \phi_{k, \lambda} = } \\
& = & \Big( - \Delta_{y,z}
  +k^2\cosh^{-2} y (2z^4 - z^2 +1)^{-2} +\frac{1}{4} ( 1 + \sech^2 y)
  \\
&&  - \frac{1}{4}
(8z^3-2z)^2(2z^4 -z^2 +1)^{-2} + \frac{1}{2} (24z^2-2)(2z^4 -z^2
+1)^{-1} \Big) \phi_{k, \lambda}.
\ee
Rearranging, we have the following equation for $\psi_{k,\lambda}$:
\be
&& \Big( - \Delta_{y,z}  +k^2 (\cosh^{-2} y (2z^4 - z^2 +1)^{-2}-1) \\
&& \quad  +\frac{1}{4} ( 1 + \sech^2 y) - \frac{1}{4}
(8z^3-2z)^2(2z^4 -z^2 +1)^{-2}  \\
&& \quad + \frac{1}{2} (24z^2-2)(2z^4 -z^2
+1)^{-1} \Big) \psi_{k, \lambda} \\
&& \quad \quad \quad \quad \quad \quad \quad \quad \quad \quad \quad
\quad \quad  = ( \lambda -k^2) \psi_{k, \lambda}.
\ee
We divide by $k^2$ and use $h = 1/k$ as the semiclassical parameter, giving
\be
P(h) \psi_h & = & (-h^2 \Delta_{yz} + V(y,z) ) \psi_h \\
& = & ( h^2 \lambda -1) \psi_h,
\ee
with
\be
V(y,z) & = &   \cosh^{-2} y (2z^4 - z^2 +1)^{-2}-1 + \\
&& h^2\frac{1}{4} ( 1 + \sech^2 y) - h^2\frac{1}{4}
(8z^3-2z)^2(2z^4 -z^2 +1)^{-2}  \\
&& + h^2\frac{1}{2} (24z^2-2)(2z^4 -z^2
+1)^{-1}.
\ee
The semiclassical principal symbol is
\be
\sigma_h(P) & = & \eta^2 + \zeta^2 + \cosh^{-2} y (2z^4 - z^2
+1)^{-2}-1 \\
& =: & \eta^2 + \zeta^2 + \tV.
\ee
Observe $\tV$ has nondegenerate critical points at $y=0$, $z = 0, \pm
1/4$.  The signatures of $\partial^2 \tV$ are $(-,+)$,
$(-,-)$, and $(-,-)$, respectively.  Thus the quadratic part of the
normal forms for $\sigma_h(P)$ takes the form 
\be
&& \lambda_1 y \eta + \frac{\lambda_2}{2}(z^2
+ \zeta^2), \text{ near }y=0, z=0, \text{ and} \\
&& \lambda_1 y \eta + \lambda_2 z \zeta, \text{ near } y =0, z = \pm
1/4.
\ee



\begin{thebibliography}{\hspace{1cm}}



\bibitem[AbMa]{AbMa}{\sc Abraham, R. and Marsden, J.}  {\it Foundations of Mechanics}.  
W. A. Benjamin, Inc., New York, 1967.

\bibitem[AuMa]{AuMa}{\sc Aurich, R. and Marklof, J.}  Trace Formulae
  for $3$-dimensional Hyperbolic Lattices and Application to a
  Strongly Chaotic Tetrahedral Billiard.  {\it Phys. D.}  {\bf 92},
  No. 1-2, 1996.  p. 101-129.

\bibitem[BoCh]{BoCh}{\sc Bony, J.-M. and Chemin, J.-Y.}  Espaces fonctionnels associ\'{e}s au calcul de Weyl-H\"{o}rmander.  
{\it Bull. Soc. math. France.}  {\bf 122}, 1994, p. 77-118.





\bibitem[Bur]{Bur2}{\sc Burq, N.}  Smoothing Effect for Schr\"{o}dinger Boundary Value Problems.  {\it Duke Math. Journal.} 
 {\bf 123}, No. 2, 2004, p. 403-427.



\bibitem[BuZw]{BZ}{\sc Burq, N. and Zworski, M.}  Geometric Control in the Presence of a Black Box.  
{\it J. Amer. Math. Soc.}  {\bf 17}, 2004, p. 443-471.





\bibitem[Chr1]{Ch}  {\sc Christianson, H.}  Semiclassical Non-concentration
  near Hyperbolic Orbits.  {\it J. Funct. Anal.}  {\bf 262} (2007), no. 2, 145--195. 

\bibitem[Chr2]{Ch-str}{\sc Christianson, H.}  Dispersive Estimates for
  Manifolds with one Trapped Orbit.  to appear, {\it Commun. PDE}.
   \\ {\tt http://www.math.mit.edu/$\sim$hans/papers/sm.pdf}




\bibitem[CVP]{CVP}{\sc Colin de Verdi\`ere, Y. and Parisse, B.}  Equilibre Instable en R\'egime Semi-classique: I - 
Concentration Microlocale.  {\it Commun. PDE.}  {\bf 19}, 1994, p. 1535-1563; 
 Equilibre Instable en R\'egime Semi-classique: II - conditions de Bohr-Sommerfeld.  

{\it Ann. Inst. Henri Poincar\'e (Phyique th\'eorique).}  {\bf 61}, 1994, 347-367.






\bibitem[EvZw]{EvZw}{\sc Evans, L.C. and Zworski, M.}  {\it Lectures on Semiclassical Analysis}.  \\
{\tt http://math.berkeley.edu/$\sim$evans/semiclassical.pdf}.









\bibitem[HaKa]{kaha}{\sc Hasselblatt, B. and Katok, A.}  {\it Introduction to the Modern Theory of Dynamical Systems}.  
University of Cambridge Press, Cambridge, 1995.






\bibitem[HeSj]{HeSj}{\sc Helffer, B. and Sj\"ostrand, J.}
  Semiclassical analysis for Harper's equation.  III.  Cantor
  structure of the spectrum.  {\it M\'em. Soc. Math. France.}  {\bf
    39}, 1989, p. 1-124.


\bibitem[HoZe]{hz}{\sc Hofer, H. and Zehnder, E.}  {\it Symplectic Invariants and Hamiltonian Dynamics}.  
Birkh\"{a}user Verlag, Basel, 1994.


\bibitem[Hor]{hormander3}{\sc H\"{o}rmander, L.}  {\it The Analysis of Linear Partial Differential Operators III}.  
Springer-Verlag, Berlin, 1985.


\bibitem[IaSj]{IaSj}{\sc Iantchenko, A. and Sj\"{o}strand, J.}  Birkhoff Normal Forms for Fourier Integral Operators II.  
{\it American Journal of Mathematics}.  {\bf 124}, 2002.  p. 817-850.

\bibitem[ISZ]{ISZ}{\sc Iantchenko, A., Sj\"ostrand, J., and Zworski,
  M.}  Birkhoff Normal Forms in Semiclassical Inverse Problems.  {\it
  Math. Res. Lett.}  {\bf 9}, 2002.  p. 337-362.





























\bibitem[Sj\"{o}]{Sjo2a}{\sc Sj\"{o}strand, J.}  Geometric Bounds on
  the Density of Resonances for Semiclassical Problems.  {\it Duke
      Mathematical Journal.}  {\bf 60}, No. 1, 1990.  p. 1-57.





\bibitem[SjZw1]{SjZw}{\sc Sj\"{o}strand, J. and Zworski, M.}  Quantum Monodromy and Semi-classical Trace Formul{\ae}.  
{\it Journal de Math\'{e}matiques Pures et Appliques}.  {\bf 81}, 2002.  1-33.

\bibitem[SjZw2]{SjZw3}{\sc Sj\"{o}strand, J. and Zworski, M.}  Quantum
  Monodromy Revisited.  \\
{\tt http://www.math.berkeley.edu/~zworski}.


\bibitem[SjZw3]{SjZw2}{\sc Sj\"{o}strand, J. and Zworski, M.}  Fractal Upper Bounds on the Density of Semiclassical 
Resonances.  \\
{\tt http://xxx.lanl.gov/pdf/math.SP/0506307}.


\bibitem[TaZw]{TaZw}{\sc Tang, S.H. and Zworski, M.}  From Quasimodes to Resonances.  {\it Math. Res. Lett.}  
{\bf 5}, 1998.  261-272.











\end{thebibliography}
\end{document}